\documentclass[dvipsnames]{amsart}[12]

\usepackage[a4paper]{geometry}
\usepackage[backend=biber, datamodel=mrnumber, maxbibnames=99, sortcites]{biblatex}
\usepackage{amsmath}
\usepackage{amsthm}
\usepackage{mathrsfs}
\usepackage{amsfonts}
\usepackage{amssymb}
\usepackage{amscd}
\usepackage{array}
\usepackage{amssymb}
\usepackage[all, cmtip]{xy}
\usepackage{tikz}
\usetikzlibrary{arrows}
\usetikzlibrary{cd}
\usepackage{ulem}
\usepackage{enumitem}
\setlist{
  listparindent=\parindent,
  parsep=0pt,
}

\usepackage{hyperref}

\usepackage{xcolor}
\usepackage{quiver}

\newtheorem{theorem}{Theorem}[subsection]
\newtheorem{lemma}[theorem]{Lemma}

\newtheorem{definition}[theorem]{Definition}
\newtheorem{corollary}[theorem]{Corollary}
\newtheorem{proposition}[theorem]{Proposition}
\newtheorem{remark}[theorem]{Remark}
\newtheorem{construction}{Construction}
\newtheorem{warning}[theorem]{Warning}

\newcommand{\CC}{\mathbb{C}}

\newcommand{\RR}{\mathbb{R}}
\newcommand{\PP}{\mathbb{P}}
\newcommand{\ZZ}{\mathbb{Z}}

\newcommand{\EE}{\mathbb{E}}
\newcommand{\LL}{\mathbb{L}}
\renewcommand{\AA}{\mathbb{A}}

\newcommand{\OO}{\mathscr{O}}

\newcommand{\Hom}{Hom}

\newcommand{\FF}{\mathbb{F}}

\newcommand{\fX}{\mathfrak X}
\newcommand{\sE}{\mathscr{E}}

\newcommand{\relSpec}{\underline{\mathrm{Spec}}}
\newcommand{\VV}{\mathbb{V}}

\renewcommand{\AA}{\mathbb{A}}

\newcommand{\fM}{\mathfrak M}

\newcommand{\vir}{\mathrm{vir}}

\newcommand{\Gm}{\mathbb G_m}

\newcommand{\rank}{\mathrm{rank}}
\newtheorem{example}{Example}

\DeclareMathOperator{\Spec}{Spec}
\DeclareMathOperator{\Ext}{Ext}
\renewcommand{\Hom}{\mathrm{Hom}}

\DeclareMathOperator{\Sym}{Sym}

\newcommand{\mf}{\mathfrak}

\newif\ifmoditem
\newcommand{\setupmodenumerate}{%
  \global\moditemfalse
  \let\origmakelabel\makelabel
  \def\moditem##1{\global\moditemtrue\def\mesymbol{##1}\item}%
  \def\makelabel##1{%
    \origmakelabel{##1\ifmoditem\rlap{\mesymbol}\fi\enspace}%
    \global\moditemfalse}%
}

\newcommand{\mls}{\mathscr}
\newcommand{\mc}{\mathcal}

\begin{document}

\title{Virtual classes: An introduction with exercises}

\author{Xuanchun Lu}
\author{Rachel Webb}

\subjclass[2020]{14N35, 14C15}

\begin{abstract} We introduce virtual classes in the sense of Behrend-Fantechi, explaining how these directly generalize Fulton's localized top Chern class. We derive Siebert's formula for a virtual class on any Deligne-Mumford stack with a perfect obstruction theory having a global resolution.
\end{abstract}

\maketitle

\section*{Introduction}
If $X$ is a scheme or Deligne-Mumford stack over a field $k$, a \textit{virtual class} on $X$ is an element of the Chow group $A_*(X)$ that somehow captures information about the singularities of $X$. There are many approaches to making this idea precise, most
using local equations for embedding $X$ in a smooth space. Capping with the virtual class defines a notion of ``integration'' for $A^*(X)$, making it useful for applications to enumerative geometry.
In these expository notes we present the virtual class construction of Behrend--Fantechi, a construction using some derived category data called a \textit{perfect obstruction theory}, and the application of Behrend--Fantechi's construction to Gromov--Witten theory.

There are other important constructions of virtual classes that we will not discuss here. In symplectic geometry, Li-Tian \cite{LT} were the first to give a construction and the introduction of \cite{HS} discusses more recent developments. \textit{Kuranishi charts} are the symplectic counterpart to Behrend--Fantechi's perfect obstruction theory. In derived algebraic geometry, Sch\"urg-To\"en-Vezzosi \cite{STV} explained how the cotangent complex of a quasi-smooth scheme or stack encodes the same data. Both the symplectic and the derived approaches were used in the cited locations to construct virtual classes for Gromov--Witten theory.

There is at least  one other exposition of Behrend--Fantechi's work available, namely Battistella-Carocci-Manolache's excellent article \cite{BCM}. Their paper focuses on explicit computations of virtual classes, including explicit computations in Gromov--Witten theory. By contrast, our presentation centers on the conceptual link between Behrend--Fantechi's construction and Fulton's localized top Chern class. We also include a more thorough treatment of Siebert's formula \cite[Thm 4.6]{siebert} for the virtual class, following our paper \cite{LW2}.

We note that an abridged version of this article (without exercises or solution notes) is submitted to the Proceedings of the Bootcamp of the 2025 Summer Research Institute in Algebraic Geometry.

\subsection*{Outline of the notes}
The notes are organized into ten sections with exercises accompanying the last nine. All proofs are either cited from the literature or left as exercises.

Section 0 is a non-rigorous overview of the paper, explaining at a high level the connection between Behrend-Fantechi virtual classes and Fulton's localized top Chern class. Sections 1-3 present two constructions of an obstruction theory that rely only on ideas in Fulton's book \cite{fulton}.
Sections 4-6 introduce Behrend-Fantechi obstruction theories and the associated virtual classes, Sections 7-8 explain their basic application to Gromov-Witten theory, and Section 9 discusses Siebert's formula.

\subsection*{Prerequisites}

We assume familiarity with chapters 1-6 of Fulton's work \cite{fulton}. The reader interested in virtual classes on Deligne-Mumford stacks should also be familiar with Vistoli's paper \cite{vistoli}. These are the only prerequisites for Sections 0-4. Beginning in Section 5 we also assume minimal knowledge of the derived category of sheaves on $X$ with quasi-coherent cohomology.
Finally, sections 5-9 will be easier for readers with some familiarity with algebraic stacks, especially global quotient stacks. We suggest the lecture \cite{webbtalk} for intuition and the paper \cite{fantechi} for a rigorous introduction to stacks.

\subsection*{Notation and conventions}

\begin{itemize}
\item We work over a ground field $k$. All schemes and stacks are locally finite type over $k$, in particular locally Noetherian.
\item All varieties are irreducible.
\item All Chow groups are taken to have rational coefficients.
\item If $\sE$ is a coherent sheaf on a scheme or algebraic stack $X$, then $Sym(\sE)$ is its symmetric algebra; i.e., $Sym(\sE) = \oplus_{n \geq 0} Sym^n(\sE).$

\item A vector bundle $E$ on an algebraic stack $X$ is an $X$-stack equal to the relative spectrum of the symmetric algebra of a finite rank locally free sheaf on $X$.

\item If $I$ is a quasicoherent ideal sheaf on an algebraic stack $X$ or a section of a vector bundle on $X$,
    then we use $\VV(I)$ for the associated closed substack of $X$.

\item If $\EE = \ldots \to E^{i-1} \xrightarrow{d^{i-1}} E^{i} \xrightarrow{d^{i} }\ldots $ is a complex of $\mls O_X$-modules then $h^i(\EE) = \mathrm{ker}(d^i)/\mathrm{image}(d^{i-1})$.

\end{itemize}
\subsection*{Acknowledgements}
Lu was supported by the ERC Advance Grant MSAG.
Webb was supported by the NSF grant DMS 2501528.
These notes were first drafted for the Bootcamp of the 2025 Summer Research Institute in Algebraic Geometry, held at Colorado State University. We thank the participants of our working group for their questions and comments on these notes. An anonymous referee also provided helpful corrections.

\section{Overview: what is a virtual class?}

This section is an overview of the entire set of lecture notes. It prioritizes intuition and motivation over precise definitions or results.

\subsection{A problem posed by physics}\label{sec:problem}

In theoretical physics, string theory \textit{compactifies} spacetime by introducing a 6-real dimensional Calabi-Yau manifold $X$ at every point. In other words, string theory replaces our usual $\RR^4$ model for spacetime with $\RR^4 \times X$. (Classical spacetime has four dimensions to represent our three physical dimensions plus one time dimension.)  The geometry of $X$ determines physical \textit{correlation functions} that compute partical interactions in the spacetime model $\RR^4 \times X$. These correlation functions depend on additional discrete data, including a class in $H_2(X)$.\footnote{
The other discrete data are an integral genus $g \geq 0$ and number of markings $n \geq 0$, as well as a choice of $n$ elements of $H^*(X; \CC)$. We will fix $g=n=0$ in this overview so that Gromov-Witten invariants of $X$ only depend on the choice of class $\beta$.
}

From a mathematical perspective, physics provides a machine that produces numerical invariants of $X$. One mathematical model for this machine is \textit{Gromov-Witten theory.}
Mathematically, how are Gromov-Witten invariants of $X$ defined?

The first example of a Calabi-Yau having 6 real dimensions, or 3 complex dimensions, is the quintic $Q = \mathbb{V}(x_0^5 + x_1^5 + x_2^5 + x_3^5 + x_4^5)$, a hypersurface in $\PP^4_{\CC}$. To define the invariant of $Q$ of class $dH$, where $H$ is the class of a line in $\PP^4_{\CC}$ and $d$ is positive, we introduce the moduli space (stack)
\[
\overline{\mathcal{M}}(Q, dH) \quad \text{ parametrizing maps }C \to Q\text{ with finitely many automorphisms.}
\]
Here, $C$ is a complex projective curve of arithmetic genus zero, allowed to have mild (nodal) singularities so that the moduli space $\overline{\mathcal{M}}(Q, dH)$ is proper.
If $\overline{\mathcal{M}}(Q, dH)$ is just a collection of finitely many reduced points--- which is the expected behavior by a dimension count---then we define the associated correlator of class $dH$ to be that number of points.
This is just the number of curves in $Q$ of homology class $d$.
But, as the following example shows, $\overline{\mathcal{M}}(Q, dH)$ is generally not zero dimensional!

\begin{example}
The moduli space $\overline{\mathcal{M}}(Q, H)$
is not a collection of finitely many points.
Indeed,
for every point $[a : b : c] \in \VV(a^{5} + b^{5} + c^{5}) \subset \PP^{2}$,
there is a line on $Q$ given by
$[\lambda : \mu] \mapsto [\lambda : -\lambda : a\mu : b\mu : c\mu]$.
These lines are clearly mutually distinct.

It is classical that a \emph{generic} quintic threefold
contains exactly $2875$ lines.
In their paper \cite{albano-katz},
Albano and Katz give an explicit description of which lines
in $\overline{\mc{M}}(Q, dH)$
deform to lines on a generic quintic.

\end{example}

This leads to a problem: \textit{define} a class \[[\overline{\mathcal{M}}(Q, dH)]^{vir} \in A_0(\overline{\mathcal{M}}(Q, dH),\]
consisting of a number of points that are ``supposed'' to comprise this moduli space (this number should also be the correct correlator value from a physical perspective). That class $[\overline{\mathcal{M}}(Q, dH)]^{vir}$ is called the \textit{virtual class} of the moduli space.

\subsection{An answer from intersection theory}

Luckily, a definition of $[\overline{\mathcal{M}}(Q, dH)]^{vir}$ can be found in Fulton's seminal work on intersection theory (though not under this name): it is the localized top Chern class in \cite[Sec~14.1]{fulton}. Let us recall this theory.

If $E$ is a vector bundle on a scheme $Y$
its \textit{top Chern class} is the self intersection of the zero section $0: Y \to E$. In a formula,
\[
c_{top}(E) \cap [Y] = 0^!0_*[Y].
\]
Here $0_*$ is proper pushforward and $0^!$ is Gysin pullback. The formula reveals that our first sentence abused grammar a bit: really, $c_{top}(E)$ is a function $A_*(Y) \to A_*(Y)$, and its value on the fundamental class $[Y]$ is the self intersection of the zero section.

Fulton's theory of localized top Chern classes says that if $E$ is a vector bundle on a pure dimensional scheme $Y$, then for any section $s$, the class $c_{top}(E) \cap [Y]$ \textit{localizes} to $\mathbb{V}(s)$. By definition this means there is a class, which we will suggestively call $[\mathbb{V}(s)]^{vir}_E$, living in $A_{\dim Y - \mathrm{rank}(E)}(\mathbb{V}(s))$, such that
\[
i_*[\mathbb{V}(s)]^{vir}_E = c_{top}(E) \cap [Y].
\]
While we can find such $[\mathbb{V}(s)]^{vir}_E$ for any $s$, if we happen to know that $s$ is regular, then we can use the fundamental class:
\[
[\mathbb{V}(s)]^{vir}_E = [\mathbb{V}(s)] \quad \quad \quad \quad \text{if $s$ is regular}.
\]

\begin{definition}
Let $X$ be a scheme. If $E$ is a vector bundle on a scheme $Y$ with a section $s$ and we are given an isomorphism $X \simeq \mathbb{V}(s)$, the associated \emph{virtual class} on $X$ is $[\mathbb{V}(s)]^{vir}_E.$
\end{definition}

There is a pleasant reinterpretation of this definition. The preceeding discussion shows that
\[
0^!0_*[Y] = i_*[\mathbb{V}(s)]
\]
when $s$ is a regular section; in other words, the self-intersection of the zero section of $E$ is exactly the class of the zero locus of a regular section.\footnote{A regular section does not always exist---for instance $\mls O_{\PP^1}(-1)$ has no regular sections---but this is a distraction from our purpose.}
This is a rigorous version of the intuition that the intersection of two varieties can be computed by ``moving'' one of them until the intersection is transverse: in our situation, we expect to compute the self-intersection of the zero section by replacing one of these sections with a ``sufficiently generic'' (in this case regular) section. The literal intersection of our regular section with the zero section is by definition equal to $\mathbb{V}(s)$. Now, the preceeding discussion is asserting that even when $s$ is \textit{not} regular, there is a class in $\mathbb{V}(s)$ that is equal (after pushing forward to $Y$) to the class of the zero locus of a regular section. Summary:
\[
c_{top}(E) \cap [Y]\text{ is the class of a regular section of $E$, or the \textit{virtual} class of \textit{any} section.}
\]

We close with some constructions of virtual classes on specific spaces $X$, including $X = \overline{\mathcal{M}}(Q, dH).$
\begin{example}
Let $X = \PP^1$. Embed $X$ in $Y = \PP^1$ via the identity map, so $X$ is the zero locus of the zero section $s$ of the zero vector bundle $E$. Then
\[i_*[X]^{vir}_E = c_{top}(0) \cap [\PP^1] = [\PP^1]\]
since the top Chern class of the zero bundle is the identity map. Moreover $i_*$ is the identity, so $[X]^{vir}_E = [X]$ for this choice of $(s, E)$.
\end{example}

\begin{example}
Let $X = \PP^1$. Embed $X$ in $Y = \PP^3$ as the twisted cubic, so $X$ is cut out by a section of $\mls O_{\PP^3}(2)^{\oplus 3}$. Then
\[
i_*[X]^{vir}_E = c_{top}(\mls O_{\PP^3}(2)^{\oplus 3}) \cap [\PP^3] = 8[pt]
\]
where $[pt]$ is the class of a point in $\PP^3$. Since $i_*: A_0(\PP^1) \to A_0(\PP^3)$ is injective we have $[X]^{vir}_E = 8[pt]$ for this choice of $(s, E)$. Note that 8 points is the expected intersection of three quadrics.
\end{example}

\begin{example}
Let $X = \overline{\mathcal{M}}(Q, dH)$ and let $Y = \overline{\mathcal{M}}(\PP^4, dH)$. Let $i: Q \to \PP^4$ denote the inclusion. The functor from $X$ to $Y$ sending a map $f: C \to Q$ to $i\circ f: C \to \PP^4$ is a closed embedding (one shows it is a proper monomorphism).
\[
\text{\underline{Claim 1:} $Y$ is smooth, hence pure-dimensional, of dimension $5d+1$.}\]

There is moreover a natural vector bundle $E$ on $Y$ with a section whose vanishing locus is $X$. This vector bundle arises as follows. On $Y$, we have a \textit{universal family}
\[
\begin{tikzcd}
C \arrow[d, "\pi"] \arrow[r, "f"] & \PP^4\\
Y = \overline{\mathcal{M}}(\PP^4, d)
\end{tikzcd}
\]
where $C \to Y$ is a family of nodal genus-zero curves and the fibers of $f$ are degree-$d$ maps.
\[\text{\underline{Claim 2:}
$\pi_*f^*\mls O_{\PP^4}(5)$ is a vector bundle of rank $5d+1$ on $Y$ with a section $s$ and $\mathbb{V}(s) = X$.}\]

This data gives us a virtual class in $A_{\dim(Y) - \mathrm{rank}(E)}(X) = A_0(X)$; i.e., we have
\[
[\overline{\mathcal{M}}(Q, dH)]^{vir}_E \in A_0(\overline{\mathcal{M}}(Q, dH)),\]
solving our problem from Section \ref{sec:problem}. This class pushes forward to the top Chern class of $E$ on $\overline{\mathcal{M}}(\PP^4, dH)$.
\end{example}

\subsection{Relationship to Behrend-Fantechi obstruction theories}

We were able to construct a virtual class on $\overline{\mathcal{M}}(Q, dH)$ because we were able to globally embed it in a pure-dimensional scheme as the zero locus of a section of a vector bundle. For a general scheme $X$, we cannot necessarily globally embed it in this way. In fact, for more general moduli spaces arising from Gromov-Witten theory such an embedding is a nontrivial result and not clearly canonical.\footnote{
For moduli spaces $\overline{\mathcal{M}}_{g, n}(\PP^r, \beta)$, global embeddings are discussed in \cite[Appendix A]{GP}, which heavily relies on \cite{FP}. The argument is extended to more general moduli of stable maps in \cite{AGOT}.
}

However, every scheme $X$ has an open cover such that every set in the cover has  such an embedding (in fact, any affine open cover works).
Then we have a problem of articulating ``transition functions between gluing data'' and using these to ``glue'' the resulting virtual fundamental classes.
Two key ingredients of the paper \cite{BF} of Behrend-Fantechi are the following.
\begin{itemize}
\item They define a \textit{perfect obstruction theory} on $X$ to be a morphism $\mathbb{E} \to \mathbb{L_X}$ in the derived category of $X$ satisfying certain properties, where $\mathbb{L_X}$ is the cotangent complex. In a sense made precise in Propositions \ref{prop:local1} and \ref{prop:local2}, this object glues local data of an embedding of $X$ as the zero locus of a section of a vector bundle.
\item Given a perfect obstruction theory $\mathbb{E} \to \mathbb{L_X}$, they define an associated virtual class $[X]^{vir}_{\EE}$. Later, we will see that this class globalizes the localized top Chern classes arising from the local data described above.
\end{itemize}

The goal of these notes is to explain how the Behrend-Fantechi construction of a virtual fundamental class globalizes the construction as Fulton's localized top Chern class, and how these ideas can be applied to Gromov-Witten theory. Indeed, a key feature of Gromov-Witten  theory is certain compatibility relations between the virtual classes of different  spaces involved. Behrend-Fantechi's construction of a virtual class on these spaces is canonical enough to make such comparisons possible (as opposed to a construction via a global embedding, where the compatibilities are not clear).

We will finish the notes by presenting a formula for the virtual class, due to Siebert \cite{siebert}, that is slightly less general than Behrend-Fantechi's construction but does apply to all moduli spaces arising in Gromov-Witten theory. This formula drastically simplifies the argument that the Gromov-Witten virtual classes are compatible with each other.

\section{Normal cones and normal spaces}
Just like one needs at least some commutative ring theory before one can discuss schemes, we need to develop something of the theory of \textit{cones} before we can discuss virtual classes.
Let $X$ be a scheme or algebraic stack over $k$. Recall from \cite[Tag 01LL]{stacks-project} the definition of the relative spectrum $\relSpec_X(-)$.
\begin{definition}\label{def:cone}
A \emph{cone} on $X$ is a scheme of the form $\relSpec_X(S^\bullet)$ for some quasi-coherent sheaf $S^\bullet$ of $\ZZ_{\geq 0}$-graded $\mls O_X$-algebras such that $S^0 = \mls O_X$ and the canonical $\mls O_X$-algebra homomorphism $Sym(S^1) \to S^\bullet$ is surjective. A \textit{morphism of cones} is a morphism of schemes $\relSpec_X(S^\bullet) \to \relSpec_X(T^\bullet)$ induced by a graded $\mls O_X$-algebra homomorphism $T^\bullet  \to  S^\bullet$.
\end{definition}
One should read the condition ``$Sym(S^1) \to S^\bullet$ is surjective'' as saying that $S^1$ locally generates $S^\bullet.$ Geometrically we are requiring the canonical map $\relSpec_X(S^\bullet) \to \relSpec_X(Sym(S^1))$ to be a closed embedding.

The grading on $S^\bullet$ equips $\relSpec_X(S^\bullet)$ with a $\Gm$-action (namely an $X$-morphism $\Gm \times \relSpec_X(S^\bullet) \to \relSpec_X(S^\bullet)$ satisfying various properties), and this is the sense in which $\relSpec_X(S^\bullet)$ is a cone.

\subsection{First example of cones}

Our first examples of cones on $X$ are those arising from coherent sheaves on $X$.

\begin{definition}\label{def:abelian}
If $\mls E$ is a coherent sheaf on $X$, its associated cone is $\relSpec_X(Sym(\mls E))$.
\end{definition}

The cones in Definition \ref{def:abelian}
have additional structure: they have a + operation making them into abelian group schemes over $X$. Specifically, the + operation on $\relSpec_X(Sym(\mls E))$ is the $X$-morphism induced from the diagonal morphism of $\mls O_X$-algebras $Sym(\mls E) \to Sym(\mls E) \otimes_{\mls O_X} Sym(\mls E)$. Moreover, this + operation is compatible with the $\Gm$ action, meaning that there is a commuting diagram equivalent to the equality $t(a+b) = ta + tb$ for $t$ a point of $\Gm$ and $a, b$ points of $\relSpec_X(S^\bullet)$.

Siebert \cite[Sec 1.1]{siebert} calls cones arising from coherent sheaves \emph{linear spaces}; Behrend-Fantechi \cite[Sec 1]{BF} call them \emph{abelian cones}. We will follow Behrend-Fantechi. The functor from coherent sheaves to abelian cones is contravariant and an equivalence of categories. Note that
if $\mls E$ is locally free, then its associated cone is a vector bundle on $X$. Hence we have containments of categories:
\[
\left\{\text{vector bundles} \right\} \subsetneq \left\{\text{abelian cones} \right\} \subsetneq \left\{\text{cones} \right\}
\]

As stated in the diagram, a general cone $\relSpec_X(S^\bullet)$ may not be abelian. However, the surjective homomorphism $Sym(S^1) \to S^\bullet$ gives this cone a canonical closed embedding into the abelian cone $\relSpec_X(Sym(S^1))$ (cf. Exercise \ref{cone1}).

\begin{warning}
There is a choice of convention in Definition \ref{def:abelian}: one could associate either the cone $\relSpec_X(Sym(\mls E))$ or $\relSpec_X(Sym(\mls E^\vee))$ to a coherent sheaf $\mls E$ on $X$. If $\mls E$ is locally free, then the sheaf of sections of $\relSpec_X(Sym(\mls E^\vee))$ is $\mls E$. For this reason, in some contexts it is more natural to associate the cone $\relSpec_X(Sym(\mls E^\vee))$ to a coherent sheaf $\mls E$. For us, $\relSpec_X(Sym(\mls E))$ is more natural since with this convention, any cone embedds into the abelian cone associated to the degree-1 part of its algebra.
\end{warning}

\subsection{More examples of cones}
To define virtual classes, the following examples of cones are essential.

\begin{definition}
Let $X \to Y$ be a closed embedding of schemes (or algebraic stacks) and let $\mls I$ be the ideal sheaf. The \emph{normal cone} $C_{X/Y}$ is a cone and the \emph{normal space} $N_{X/Y}$ is an abelian cone, given by the formulas
\[
C_{X/Y} = \relSpec_X(\bigoplus_{n\geq 0} \mls I^n/\mls I^{n+1}) \quad \quad \quad N_{X/Y} = \relSpec_X(Sym(\mls I/\mls I^2)).
\]
\end{definition}
Observe that $N_{X/Y}$ is the abelian cone corresponding to the conormal sheaf $\mls N_{X/Y} = \mls I/\mls I^2$. Behrend-Fantechi call $N_{X/Y}$ the \emph{normal sheaf} because it is the abelian cone that is the total space of the literal normal sheaf $(\mls I/\mls I^2)^\vee$. Since $N_{X/Y}$ is not actually a sheaf, we will use Siebert’s term for this object, namely the \emph{normal space} of $X$ in $Y$. Observe that there is a closed embedding
\begin{equation}\label{eq:cone1}
C_{X/Y} \hookrightarrow N_{X/Y}
\end{equation}
Sometimes the containment $C_{X/Y} \subseteq N_{X/Y}$ is equality:

\begin{lemma}\label{lem:cone3}
If $X \hookrightarrow Y$ is a quasi-regular embedding, then $C_{X/Y} = N_{X/Y}$ and these cones are vector bundles.
\end{lemma}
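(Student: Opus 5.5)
The plan is to reduce to a local statement about graded algebras and then invoke the defining property of quasi-regular embeddings. Recall (Fulton, Sec.~B.7) that a closed embedding $X \hookrightarrow Y$ with ideal $\mls I$ is \emph{quasi-regular} if locally on $Y$ the ideal is generated by a sequence $f_1,\dots,f_d$ such that the natural surjection of graded $\mls O_X$-algebras
\[
\phi\colon (\mls O_Y/\mls I)[T_1,\dots,T_d] \longrightarrow \bigoplus_{n\geq 0} \mls I^n/\mls I^{n+1}, \qquad T_i \mapsto \overline{f_i} \in \mls I/\mls I^2,
\]
is an isomorphism. Both $C_{X/Y}$ and $N_{X/Y}$ are relative spectra over $X$, and the closed embedding \eqref{eq:cone1} is induced by the canonical surjection $\psi\colon Sym(\mls I/\mls I^2) \to \bigoplus_n \mls I^n/\mls I^{n+1}$. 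So it suffices to show that $\psi$ is an isomorphism and that $\mls I/\mls I^2$ is locally free. Both statements are local on $X$ (and, for stacks, local in the smooth topology, since formation of $\mls I^n/\mls I^{n+1}$ commutes with flat base change). We may therefore assume $Y = \Spec A$ and $\mls I = I = (f_1,\dots,f_d)$ with $\phi$ an isomorphism.

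First I show that $I/I^2$ is free over $A/I$ on the classes $\overline{f_i}$. These classes generate $I/I^2$, and $\phi$ restricted to degree one is the map $(A/I)^d \to I/I^2$. Since $\phi$ is injective, this map is injective, hence an isomorphism. Therefore $I/I^2$ is locally free of rank $d$, and $N_{X/Y}$ is a vector bundle.

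Next I compare the two algebra maps. The map $\phi$ factors as
\[
(A/I)[T_1,\dots,T_d] \xrightarrow{\ \sim\ } Sym_{A/I}(I/I^2) \xrightarrow{\ \psi\ } \bigoplus_n I^n/I^{n+1}.
\]
The first arrow sends $T_i \mapsto \overline{f_i}$, and it is an isomorphism by the previous step, since the symmetric algebra of a free module is a polynomial ring. The composite $\phi$ is an isomorphism, so $\psi$ is an isomorphism too. Hence the closed embedding $C_{X/Y} \hookrightarrow N_{X/Y}$ is an isomorphism, and $C_{X/Y} = N_{X/Y}$ is a vector bundle.

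The only real subtlety is which definition of quasi-regular is in force. If the paper instead means \emph{regular} embedding (locally cut out by a regular sequence), the main obstacle becomes the classical theorem that a regular sequence is quasi-regular: $\phi$ is injective. For that step I would cite Fulton, Sec.~A.6 / B.7 (or Matsumura, Thm.~16.2), or prove it by induction on $d$, using that $f_d$ is a nonzerodivisor modulo $(f_1,\dots,f_{d-1})$ to lift relations among the leading forms. I would also check that the gluing on stacks is harmless, since both cones are defined by quasi-coherent graded algebras and $\psi$ is a global map whose isomorphy is a local property.
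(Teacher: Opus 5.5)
Your proof is correct and follows the same route as the paper. The paper gives no argument of its own and defers to the Stacks Project's lemma on quasi-regular ideal sheaves, which is proved exactly as you do it: the degree-one part of the isomorphism $(A/I)[T_1,\dots,T_d]\to\bigoplus_n I^n/I^{n+1}$ shows $I/I^2$ is free on the $\overline{f_i}$, and then $Sym(I/I^2)\to\bigoplus_n I^n/I^{n+1}$ is identified with that isomorphism. Your remarks on flat-local descent for stacks, and on regular versus quasi-regular embeddings (equivalent under the paper's locally Noetherian hypothesis), are consistent with the paper's conventions.
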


\begin{remark}
A quasi-regular embedding of schemes is defined in \cite[Tag 063J]{stacks-project} (but see also \cite[Tags 063D, 061P]{stacks-project}).
Due to our running hypothesis that all schemes are locally Noetherian, an embedding $X \hookrightarrow Y$ is quasi-regular if and only if it is regular \cite[Tag 063L]{stacks-project}.

Since regularity is not even \'etale local on the target \cite[Tag 06BL]{stacks-project}, it does not make sense to talk about regularity for a morphism of algebraic stacks. Fortunately quasi-regularity is fpqc-local on $Y$ \cite[Tag 068N]{stacks-project}, so we can define a quasi-regular embedding of algebraic stacks as in \cite[\S5.3]{RydhLuna}.
\end{remark}

Normal cones have functoriality properties similar to those of conormal sheaves. We will need the following lemma to define virtual classes later.
\begin{lemma}\label{lem:cone2}
Suppose we have a cartesian diagram of schemes or algebraic stacks
\[
\begin{tikzcd}
X' \arrow[r] \arrow[d] & X \arrow[d] \\
Y' \arrow[r] & Y
\end{tikzcd}
\]
where vertical maps are closed embeddings. Then there is a closed embedding $C_{X'/Y'} \hookrightarrow C_{X/Y} \times_X X' $.
\end{lemma}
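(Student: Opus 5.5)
We reduce the statement to commutative algebra on graded algebras. Let $\mls I \subseteq \mls O_Y$ be the ideal sheaf of $X$ and $g: Y' \to Y$ the bottom map, with $f: X' \to X$ its restriction. Because the square is cartesian, $X' = X \times_Y Y'$, so the ideal sheaf of $X'$ in $Y'$ is $\mls I' = g^{-1}\mls I \cdot \mls O_{Y'}$, the image of $g^*\mls I \to \mls O_{Y'}$. Since relative spectrum commutes with base change,
\[
C_{X/Y} \times_X X' = \relSpec_{X'}\Big( f^*\bigoplus_{n \geq 0} \mls I^n/\mls I^{n+1}\Big),
\]
and $C_{X'/Y'} = \relSpec_{X'}(\bigoplus_n \mls I'^n/\mls I'^{n+1})$. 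A closed embedding of these cones over $X'$ is induced by a \emph{surjective} homomorphism of graded $\mls O_{X'}$-algebras
\[
\phi: f^*\bigoplus_{n\geq 0} \mls I^n/\mls I^{n+1} \longrightarrow \bigoplus_{n \geq 0} \mls I'^n/\mls I'^{n+1},
\]
so the proof consists of constructing $\phi$ and checking surjectivity.

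To construct $\phi$, note $f^*(\mls I^n/\mls I^{n+1}) = g^*(\mls I^n/\mls I^{n+1})$, which is a quotient of $g^*\mls I^n$. Pulling back functions gives a natural map $g^{-1}\mls I^n \to \mls I'^n$, sending $g^{-1}\mls I^{n+1}$ into $\mls I'^{n+1}$. Tensoring with $\mls O_{Y'}$ yields an $\mls O_{Y'}$-linear map $g^*(\mls I^n/\mls I^{n+1}) \to \mls I'^n/\mls I'^{n+1}$, which is an $\mls O_{X'}$-module map since both sides are killed by $\mls I'$. Compatibility with multiplication holds because products of pulled-back functions are pulled back products, so $\phi$ is a graded algebra map. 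Its construction is canonical and compatible with restriction to opens, so it glues.

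For surjectivity, the question is local and we may assume $Y=\Spec A$, $Y'=\Spec A'$, $I\subseteq A$, and $I' = IA'$. Then $I'^n = I^nA'$ is generated as an $A'$-module by the images of elements of $I^n$, which is exactly the statement that $I^n\otimes_A A' \to I'^n$ is surjective. Passing to the quotients $I'^n/I'^{n+1}$ gives surjectivity in each degree. Equivalently, degree $0$ is the identity, degree $1$ is the surjection $f^*(\mls I/\mls I^2) \to \mls I'/\mls I'^2$, and both algebras are generated in degree $1$.

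For algebraic stacks, we would check everything smooth-locally on an atlas of $Y$ and pull the atlas back to $Y'$. All constructions (ideal sheaves, the graded algebras $\bigoplus \mls I^n/\mls I^{n+1}$, base change, relative spectrum) are compatible with flat base change, and surjectivity of a map of quasi-coherent sheaves is smooth local, so the scheme case implies the stack case.

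The main point requiring care is the identification $\mls I' = \mls I\cdot\mls O_{Y'}$, so that $\mls I'^n = \mls I^n \mls O_{Y'}$. This uses the cartesian hypothesis, and it is the only place where that hypothesis enters. Everything else is formal.
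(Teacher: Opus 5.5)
Your argument is correct and is essentially the paper's. The paper proves this by citing \cite[Tag 0635]{stacks-project}, which is exactly the surjectivity of the canonical graded map from the pullback of $\bigoplus_n \mls I^n/\mls I^{n+1}$ to $\bigoplus_n \mls I'^n/\mls I'^{n+1}$ for a cartesian square, and that result rests on the same local identity $I'^n = I^nA'$ you use (the paper also uses it in its solution to Exercise~\ref{BF0}(a)). Your smooth-local reduction supplies the stack case, which the scheme-level citation leaves implicit. One small point: if $V \to Y$ is an atlas, then $V \times_Y Y'$ need not be a scheme, so you should pass to a further atlas of it, and the relevant square stays cartesian when you do.
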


\begin{remark}\label{rmk:local-picture}
 From Exercises \ref{cone4} and \ref{cone5}, you can see that if $Y$ is a $k$-scheme and $X \hookrightarrow Y$ is a $k$-point, then $C_{X/Y}$ is a ``local picture’’ of $Y$ near $X$ while $N_{X/Y}$ is a smallest vector space containing $C_{X/Y}$.
\end{remark}
\begin{remark}\label{rmk:normalcone-dim}
As explained in \cite[B.6.6]{fulton}, if $Y$ has pure dimension, then $C_{X/Y}$ also has pure dimension equal to the dimension of $Y$. Compare this with Remark \ref{rmk:local-picture}.
\end{remark}

\noindent
\textbf{Exercises}
\begin{enumerate}[label=1.\arabic*, leftmargin=*]
\item\label{cone1}Let $X$ be an algebraic stack and let $C = \relSpec_X(S^\bullet)$ be a cone over $X$. The \emph{abelian hull} of $C$ is the abelian cone $A(C) = \relSpec_X(Sym(S^1))$ (see \cite[p. 50]{BF}).
\begin{itemize}
\item[(a)] Show that there is a natural closed embedding $C \hookrightarrow A(C)$. Apply this to construct the closed embedding \eqref{eq:cone1}.
\item[(b)] Show that $C \hookrightarrow A(C)$ has the following universal property: If $E$ is an abelian cone on $X$ and $C \hookrightarrow E$ is a closed embedding, then there is a unique factorization
\[
\begin{tikzcd}
C \arrow[r] \arrow[rr, bend left] & A(C) \arrow[r, dashrightarrow] & E.
\end{tikzcd}
\]
Moreover the arrow $A(C) \to E$ is a closed embedding.
\end{itemize}

\item \label{cone2} Prove Lemma \ref{lem:cone3}.
\item \label{cone3} \begin{itemize}
\item[(a)] Prove Lemma \ref{lem:cone2}.
\item[(b)] Show moreover that the rank of $N_{X/Y}$ is the codimension of $X$ in $Y$.
\end{itemize}
\item \label{cone4} Let $k$ be a field, let $Y = \Spec(k[x]/x^2)$, and let $X \subset Y$ be the closed subvariety with ideal sheaf $(x)$. Compute $C_{X/Y} \subset N_{X/Y}$ and show that these cones are not equal.

\item \label{cone5} Let $k$ be a field, let $Z = \AA^2_k$, let $Y = \VV(y^2+x^3-x^2) \subset Z$, and let $X \subset Y$ be the origin. Describe $C_{X/Y}, N_{X/Y}, C_{Y/Z}$, and $N_{Y/Z}$.

\item \label{cone6} This exercise, based on \cite[Example 3.5]{BCM}, will show that we can have $C_{X/Y} = N_{X/Y}$ even when $X \hookrightarrow Y$ is not a regular embedding.
Let $Y = \AA^2_k$ and let $X = \VV(x^2, xy) \subset Y$.
\begin{itemize}
 \item[(a)] Show that $X$ is the $y$-axis with an embedded point at the origin in the $x$-direction.
\item[(b)] Show that there are closed embeddings $C_{X/Y} \to N_{X/Y} \to X \times \AA^2_k$ where the composition is induced by the ring map
\[
k[x,y]/(x^2, xy)[A, B] \to \bigoplus_{n \geq 0} \mls I^n/\mls I^{n+1} \quad \quad \quad A \mapsto x^2, B \mapsto xy
\]
\item[(c)] Show that the ideal sheaves of the embeddings $C_{X/Y} \to X\times \AA^2_k$ and $N_{X/Y} \to X \times \AA^2_k$ are both generated by $yA-xB$. Conclude that $C_{X/Y} = N_{X/Y}$.
\item[(d)] Show that the fiber of $C_{X/Y}$ over the origin is isomorphic to $\AA^2$, and that the fiber over the generic point of the $y$-axis is isomorphic to $\AA^1$.
\item[(e)] Show that $X \hookrightarrow Y$ is not regular. (\textit{Hint: A regularly embedded subscheme of a smooth scheme is Cohen-Macaulay \cite[Tag 00SB]{stacks-project}.})
\end{itemize}

\item \label{cone7} This exercise, based on \cite[Example 3.3]{BCM}, will show that when $X$ has multiple components, it is not necessarily true that $C_{X/Y}$ is the union of the normal cones of the components. (Note that, by \cite[Sec 3.1]{BCM}, if $Y$ has multiple components then $C_{X/Y}$ is the union of the normal cones of $X$ in each of the components.) Let $Y = \PP^3$ with homogeneous coordinates $[x:y:z:w]$ and let $X = \VV(xz, yz) \subset Y$.
\begin{itemize}
\item[(a)] Show that $X$ is the union of a projective line $L$ and a projective plane $H$.
\item[(b)] Show equalities $C_{L/Y} = N_{L/Y}$, $C_{H/Y} = N_{H/Y}$, and $C_{X/Y} = N_{X/Y}$.
\item[(c)] Show that neither of the canonical morphisms $C_{L/Y} \to C_{X/Y}$ or $C_{H/Y} \to C_{X/Y}$ is a closed embedding, and that their union $C_{L/Y} \sqcup C_{H/Y} \to C_{X/Y}$ is not set-theoretically surjective, by considering the fiber of the composition
\[
C_{L/Y} \sqcup C_{H/Y} \to C_{X/Y} \to X
\]
over the intersection point $L \cap H \in X$.

\end{itemize}

\item \label{cone8}
Let $Y$ be a smooth scheme and $E$ be a vector bundle on $Y$.
Let $s$ be a section of $E$, and let $X = \mathbb{V}(s) \subseteq Y$.
Show that $C_{X/Y}$ is the limit of the graph of $t^{-1}s$ as $t$ approaches $0$.
More precisely, let
$$
    \Gamma = \left\{(t^{-1}s(y), t) \in E \times (\AA^{1}\setminus 0) \mid
        y \in Y\right\},
$$
and let $\overline{\Gamma}$ be the closure of $\Gamma$ in $E \times \AA^{1}$.
Show that $C_{X/Y} \cong \overline{\Gamma} \times_{\AA^{1}} 0$.
Moreover, show that the embedding $C_{X/Y} \subseteq E$ coincides with the one
given by the canonical obstruction theory.
\textit{(Hint: Show that $\overline{\Gamma}$ is isomorphic to the total space of the
    deformation to the normal cone
    (see \cite[Remark~5.1.1]{fulton}).}
\end{enumerate}

\section{First construction of an obstruction theory and virtual class}

\label{sec:Lec2}

In this section we will give our first definition of a virtual class. Under this definition, a virtual class is identical to the localized Euler class defined in \cite[Sec 14.1]{fulton}. To set up the definition we recall a fundamental construction in intersection theory (see \cite[Chapter 6]{fulton}). In this section, all $X$, $Y$, etc. are $k$-schemes.

\subsection{A fundamental construction: Gysin pullback}

\begin{definition}\label{def:gysin}
Let $i: X \hookrightarrow Y$ be a regular embedding of schemes of codimension $d$, and suppose we have a fiber square
\[
\begin{tikzcd}
X' \arrow[r] \arrow[d, "f"] & Y' \arrow[d] \\
X \arrow[r, "i"] & Y.
\end{tikzcd}
\]
The  \emph{refined Gysin homomorphism} $i^!: A_*(Y') \to A_{*-d}(X')$  is defined to be the composition
\[
A_*(Y') \xrightarrow{\sigma} A_*(C_{X'/Y'}) \xrightarrow{j_*} A_*(f^*N_{X/Y}) \xrightarrow{(p^*)^{-1}} A_{*-d}(X'),
\]
where
\begin{itemize}
\item $\sigma$ sends the cycle corresponding to an (irreducible)
    subvariety $V \subset Y'$ to $[C_{V \cap X' / V}]$ (this defines a morphism of Chow groups by \cite[Prop 5.2]{fulton}).
\item $j_*$ is proper pushforward along the closed embeddings $C_{X'/Y'} \to f^*C_{X/Y} \xrightarrow{\sim} f^*N_{X/Y}$ given by Lemmas \ref{lem:cone2} and \ref{lem:cone3}, respectively.
\item $(p^*)^{-1}$ is inverse to the flat pullback along $p: f^*N_{X/Y} \to X'$. (Note that $N_{X/Y}$ is a vector bundle of rank $d$ by Lemma \ref{lem:cone3}, and so this flat pullback is an isomorphism by \cite[Thm 3.3(a)]{fulton}.
\end{itemize}
\end{definition}

\begin{remark}
We have given a formula for $\sigma$ on any subvariety of $Y'$: if $V \subseteq Y'$ is a subvariety, then $\sigma([V]) = [C_{V \times_Y X / V}].$ One extends this formula linearly to define $\sigma$ on all cycles. If $V$ is a pure dimensional scheme, then the formula
\[\sigma([V]) = [C_{V \times_Y X / V}]\]
is still true, but if $V$ doesn't have pure dimension then this formula need not hold (Exercise \ref{vc1}).
\end{remark}

Gysin homomorphisms have the following functoriality properties.
\begin{lemma}[{\cite[Thm 6.2]{fulton}}]
\label{lem:pushforward} Suppose we have a cartesian diagram
\[
\begin{tikzcd}
X'' \arrow[r] \arrow[d, "q"] & Y'' \arrow[d, "p"]\\
X' \arrow[d] \arrow[r] & Y' \arrow[d]\\
X \arrow[r, "i"] & Y
\end{tikzcd}
\]
where $i$ is a regular embedding and $p$ is proper. Then
\[
i^!p_*(\alpha) = q_*i^!(\alpha) \quad \quad \quad \quad \text{for all } \alpha \in A_*(Y'').
\]
\end{lemma}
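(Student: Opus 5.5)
The plan is to reduce to the case $\alpha = [V]$ for $V \subseteq Y''$ a subvariety, since both sides are additive and $A_*(Y'')$ is generated by classes of subvarieties. Replacing $Y''$ by $V$ and $Y'$ by the closed subscheme $p(V)$ (which is a subvariety since $p$ is proper), we may assume $Y''$ and $Y'$ are varieties and $p: Y'' \to Y'$ is proper and surjective. We then have $p_*[Y''] = \deg(Y''/Y')[Y']$, where the degree is zero if $\dim Y' < \dim Y''$. Throughout, let $N = N_{X/Y}$, which is a vector bundle of rank $d$ by Lemma~\ref{lem:cone3}. Write $N' = N|_{X'}$ and $N'' = N|_{X''}$, with projections $\pi'$ and $\pi''$.

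The key step is to compare the specialization maps along the proper map of cones $C_{X''/Y''} \to C_{X'/Y'}$, viewed inside $N'' \to N'$. Here the proper morphism $\tilde q: N'' \to N'$ is the base change of $q$. I would prove
\[
\tilde q_*[C_{X''/Y''}] = \deg(Y''/Y')\,[C_{X'/Y'}]
\]
in $A_*(N')$. The approach is via deformation to the normal cone, following Fulton Ch.~5. The deformation spaces $M_{X''}Y'' \to M_{X'}Y'$ are compatible, and the induced map is proper because it is built from the proper map $Y''\times\PP^1 \to Y'\times \PP^1$ by blowing up and removing compatible open subsets. Over $\AA^1\setminus 0$ this map is $p\times \mathrm{id}$, which has degree $\deg(Y''/Y')$. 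The special fibers are the cones, as Cartier divisors (principal, cut out by $t$) in the respective deformation spaces. Pushforward commutes with intersecting with the principal Cartier divisor $t=0$ (Fulton Prop.~2.3(c)/Thm~2.4 style). This gives $\tilde q_*[C_{X''/Y''}] = [\text{special fiber of } (p\times\mathrm{id})_*[Y''\times\AA^1\setminus 0]\text{-closure}] = \deg\cdot [C_{X'/Y'}]$. I expect this step to be the main obstacle: one must check properness of the map of deformation spaces and that the closure of the generic part maps onto the closure, with the correct multiplicity.

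Granting this, we conclude as follows. We have $i^![Y''] = (\pi''^*)^{-1}[C_{X''/Y''}]$ as classes in $A_*(N'')$. Flat pullback along the vector bundle projection commutes with proper pushforward (Fulton Prop.~1.7), so $\tilde q_* \pi''^* = \pi'^* q_*$. Applying $\tilde q_*$ and inverting $\pi'^*$ then gives
\[
q_* i^![Y''] = (\pi'^*)^{-1}\tilde q_*[C_{X''/Y''}] = \deg(Y''/Y')\,(\pi'^*)^{-1}[C_{X'/Y'}] = i^!p_*[Y''].
\]
In the degree-zero case, the cone $C_{X'/Y'}$ has dimension $\dim Y' < \dim Y'' = \dim C_{X''/Y''}$ (Remark~\ref{rmk:normalcone-dim}), so $\tilde q_*$ kills $[C_{X''/Y''}]$ for dimension reasons, consistent with the displayed formula. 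Finally, we must note that the reduction to varieties is compatible with $\sigma$. This holds because $i^!$ on a subvariety $V$ is computed using $V\times_Y X$, and the squares involving $V$ and $p(V)$ remain cartesian over $X$.
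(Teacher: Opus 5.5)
Your proof is correct, but it is not the argument behind the statement. The notes give no proof and simply cite Fulton's Theorem~6.2, which Fulton proves with Segre classes.

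After the same reduction to $\alpha=[V]$, $W=p(V)$, Fulton writes the refined intersection as $\{c(N)\cap s(V\times_Y X,V)\}_{k-d}$ (his Proposition~6.1(a); compare Exercise~\ref{vc6}). The identity then follows from two facts: the push-forward formula $q_*s(V\times_YX,V)=\deg(V/W)\,s(W\times_YX,W)$ for Segre classes (his Proposition~4.2(a)), and the projection formula for Chern classes.

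You instead prove the cone-level identity $\tilde q_*[C_{X''/Y''}]=\deg(Y''/Y')\,[C_{X'/Y'}]$ directly on the deformation spaces, using the projection formula for the principal divisor $t=0$. You then pass through $(\pi^*)^{-1}$ using the compatibility of flat pull-back with proper push-forward.

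What each route buys:
\begin{itemize}
\item Fulton's route makes the theorem a formal consequence of Segre-class calculus. The proof of his Proposition~4.2(a) is itself a blow-up-plus-projection-formula argument parallel to yours.
\item Your route works directly with the definition of $i^!$ via $\sigma$ and $(p^*)^{-1}$ used in these notes, and never mentions Segre classes. That makes it the form that adapts to the virtual pull-backs of Construction~\ref{const3}, where, as the notes remark in Section~\ref{sec:siebert}, Segre classes of cone stacks are not available in general.
\end{itemize}

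The step you flagged as the main obstacle goes through. Let $\mls I'$ be the ideal of $X'$ in $Y'$ and $\mls I''$ that of $X''$ in $Y''$, and write $M^{\circ}_{X'}Y'=\relSpec_{Y'}\bigl(\bigoplus_{n\in\ZZ}(\mls I')^{n}t^{-n}\bigr)$ with $(\mls I')^{n}=\mls O_{Y'}$ for $n\le 0$. The equality $(\mls I'')^{n}=(\mls I')^{n}\mls O_{Y''}$ exhibits $M^{\circ}_{X''}Y''$ as a closed subscheme of $M^{\circ}_{X'}Y'\times_{Y'}Y''$. Hence $M^{\circ}_{X''}Y''$ is proper over $M^{\circ}_{X'}Y'$. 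Both spaces are integral with function fields $K(Y'')(t)\supseteq K(Y')(t)$, so the fundamental class pushes forward with multiplicity exactly $\deg(Y''/Y')$.
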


\begin{lemma}[{\cite[Thm 6.4]{fulton}}]
Suppose we have a cartesian diagram
\[
\begin{tikzcd}
X'' \arrow[r] \arrow[d] & Y'' \arrow[d] \arrow[r] & Z \arrow[d, "j"]\\
X' \arrow[d] \arrow[r] & Y' \arrow[d] \arrow[r] & W\\
X \arrow[r, "i"] & Y
\end{tikzcd}
\]
where $i$ and $j$ are regular embeddings. Then
\[
i^!j^!(\alpha) = j^!i^!(\alpha)  \quad \quad \quad \quad \text{for all } \alpha \in A_*(Y').
\]
\end{lemma}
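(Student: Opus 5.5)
The plan is to reduce to Fulton's treatment of refined Gysin maps. By linearity, and since $j^!$ and $i^!$ are both defined on cycles through the specialization map $\sigma$, it suffices to take $\alpha = [V]$ with $V \subseteq Y'$ a subvariety. Replacing $Y'$ by $V$ and restricting all fiber products changes nothing. Indeed, both sides only involve the cartesian squares obtained by pulling back along $V \to Y'$. Lemma \ref{lem:pushforward}, applied to the proper closed embedding $V \hookrightarrow Y'$, lets us compute after pushforward. So we may assume $Y'$ is a variety and $\alpha = [Y']$.

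The key construction is a double deformation to the normal cone. Write $N = N_{X/Y}$ and $N' = N_{Z/W}$. These are vector bundles by Lemma \ref{lem:cone3}. Set $X''' = X' \times_W Z$, i.e.\ the corner completing the square with $X''$.

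First, I will express $i^!$ as intersecting with the zero section in $f^*N$. By Definition \ref{def:gysin}, $i^![Y'] = 0^![C_{X'/Y'}]$, where $0^!$ is the Gysin map of the zero section of the pulled-back bundle $f^*N$ (this is $(p^*)^{-1}$). Likewise, $j^![Y'] = 0^![C_{Y''/Y'}]$ in the pullback of $N'$.

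Second, I reduce to the compatibility of $j^!$ with specialization. Here $j^!$ commutes with $\sigma$: for a cone $C \subseteq f^*N$ one needs
\[
j^!\sigma(\beta) = \sigma(j^!\beta).
\]
I would prove this using the deformation space $M^\circ_{X'}Y' \to \mathbb{P}^1$. Its fiber over $\infty$ is $C_{X'/Y'}$ and its general fiber is $Y'$. The specialization $\sigma$ is the composite of pulling back to $M^\circ \setminus C$ and then Gysin-restricting to the divisor $C$. The point is that $j^!$ commutes with (a) flat pullback and (b) Gysin maps for a Cartier divisor. I take (b) to be the basic case of commutativity, where one of the two embeddings is a principal Cartier divisor. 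That case is proved by Fulton's intersection-with-divisors theory, using commutativity of divisor intersections (Fulton Ch.~2).

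Third, I finish by reducing to vector bundle zero sections. With $\sigma$ commuting with $j^!$, we get
\[
j^!i^![Y'] = j^!0^![C_{X'/Y'}] \quad\text{and}\quad i^!j^![Y'] = 0^!\sigma(j^![Y']) = 0^! j^![C_{X'/Y'}].
\]
So it remains to show that $j^!$ commutes with $0^!$ for the zero section of a vector bundle. Since $0^!$ is the inverse of flat pullback $p^*$, and $j^!$ commutes with flat pullback, this follows.

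The main obstacle is the second step. One must verify carefully that $j^!$ commutes with intersecting with the Cartier divisor $C_{X'/Y'} \subset M^\circ$, including the case where the divisor's pullback to $Z$ contains components (the excess case). I would first try to handle this by Fulton's Theorem 2.4, the symmetry $D\cdot[D'] = D'\cdot[D]$, applied to the divisor and to the divisor pieces obtained by deforming $W$ to $C_{Z/W}$ as well. This symmetrically reduces $j^!$ itself to divisor intersections plus flat pullback.
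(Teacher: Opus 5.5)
Your proposal is correct and is essentially Fulton's own proof of Theorem 6.4, which the paper cites rather than reproves. The steps are the same: reduce to $\alpha=[Y']$; show that $j^!$ commutes with specialization by writing $\sigma$ through $M^\circ_{X'}Y'$ and invoking the case where one embedding is a Cartier divisor (itself obtained by deforming along $j$ and applying Fulton's Theorem~2.4 on commutativity of divisor intersections); then conclude using compatibility of $j^!$ with flat pullback and proper pushforward.

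One point needs to be stated more carefully. The identity ``$j^!\sigma(\beta)=\sigma(j^!\beta)$'' holds only after pushing the right side forward along $C_{X''/Y''}\hookrightarrow C_{X'/Y'}\times_{X'}X''$ (Lemma~\ref{lem:cone2}). The reason is that $M^\circ_{X''}Y''$ is only a closed subscheme of $M^\circ_{X'}Y'\times_W Z$, agreeing with it away from the special fibre. This is harmless, since intersecting with $C$ kills classes supported on $C\times_W Z$, where $\mathcal{O}(C)$ is trivial.
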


\begin{lemma}[{\cite[Thm 6.5]{fulton}}]
\label{lem:composition} Suppose we have a cartesian diagram
\[
\begin{tikzcd}
X' \arrow[r] \arrow[d] & Y' \arrow[r] \arrow[d] & Z' \arrow[d] \\
X \arrow[r, "i"] & Y \arrow[r, "j"] & Z
\end{tikzcd}
\]
where $i$ and $j$ are regular embeddings of codimensions $d$ and $e$, respectively. Then $j \circ i$ is a regular embedding of codimension $d+e$, and
\[
(j \circ i)^!(\alpha) = i^! j^!(\alpha)  \quad \quad \quad \quad \text{for all } \alpha \in A_*(Z').
\]
\end{lemma}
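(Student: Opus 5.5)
The plan is Fulton's proof of Theorem 6.5: first show that $j\circ i$ is a regular embedding of codimension $d+e$, then compare the two Gysin maps through a deformation to the normal cone.

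\emph{Regularity.} The question is local. Near a point of $X$, write $Y=\VV(f_1,\dots,f_e)$ for a regular sequence in $\mls O_Z$. Then write $X=\VV(\bar g_1,\dots,\bar g_d)$ for a regular sequence in $\mls O_Y=\mls O_Z/(f)$, and lift the $\bar g_j$ to $g_j\in\mls O_Z$. The sequence $f_1,\dots,f_e,g_1,\dots,g_d$ is regular in $\mls O_Z$ and cuts out $X$. This gives codimension $d+e$.

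\emph{Reduction to a cone.} By linearity it suffices to take $\alpha=[V]$ for a subvariety $V\subseteq Z'$. Write $W=V\times_Z Y$ and $T=V\times_Z X$. Then $j^![V]$ is computed from the cone $C_{W/V}\subseteq N|_W$, where $N=N_{Y/Z}$. Applying $i^!$ to it, and using that $i^!$ commutes with flat pullback along the bundle projection, we get
\[
i^!j^![V]=(p^*)^{-1}\,i^!\big[C_{W/V}\big],
\]
where $i^!$ is now applied to the fiber square of $C_{W/V}$ over $N|_X\to N$. The right side is obtained by intersecting the cone $C_{W/V}$ with the zero section and then restricting over $X$.

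\emph{Deformation to the normal cone.} Use the deformation $M^\circ_YZ\to\PP^1$. Its general fiber is $Z$ and its special fiber is $N_{Y/Z}$. Pull it back to $V$, giving $M^\circ_W V$ with general fiber $V$ and special fiber $C_{W/V}$. The key claim is that $(j\circ i)^!$ is compatible with specialization in this family. Concretely, $X\times\PP^1$ is regularly embedded in the total space $M^\circ_YZ$ with codimension $d+e$. Restricting to the fiber over $\infty$ gives $X\hookrightarrow Z$, and restricting to the fiber over $0$ gives $X\hookrightarrow N_{Y/Z}$, which is the composite of $X\hookrightarrow Y$ with the zero section. Since Gysin maps commute with specialization (rational equivalence of the fibers over $0$ and $\infty$), this yields
\[
(j\circ i)^![V]=(j\circ i)^!_{N}\big[C_{W/V}\big].
\]

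\emph{The linear case.} It remains to compute the Gysin map for $X\hookrightarrow N$ applied to the cone class $[C_{W/V}]$, where that embedding is $X\hookrightarrow Y$ followed by the zero section $Y\hookrightarrow N$. Here one uses two facts. First, the Gysin map for the zero section of a vector bundle is $(p^*)^{-1}$. Second, the normal bundle of $X$ in $N$ splits as $N|_X\oplus N_{X/Y}$. The Gysin map then factors as $(p^*)^{-1}$ followed by $i^!$, which matches the expression for $i^!j^![V]$ above.

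\emph{Main obstacle.} The hard step is justifying that the refined Gysin map commutes with specialization in the deformation family. Equivalently, one must show it is independent of the choice between the fibers over $0$ and $\infty$. This needs Fulton's compatibility of $i^!$ with rational equivalence and with flat pullback, together with a check that the embedding of $X\times\PP^1$ into $M^\circ$ is regular, with normal bundle restricting correctly on both fibers. If that check becomes messy, I would fall back on Fulton's alternative route: first prove the excess/commutativity statement (Thm 6.4), then deduce functoriality by applying commutativity to the square formed by $X\times_Z Z'$ inside $Y\times_Z Z'$.
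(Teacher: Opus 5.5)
Your route is a sound way to prove Fulton's Thm~6.5, which is all the paper cites. You deform $Z$ to $N=N_{Y/Z}$ inside $M^\circ_YZ$, carry $X\times\PP^1$ along as a regularly embedded subscheme of codimension $d+e$, and so reduce to the case where $j$ is the zero section $j_0\colon Y\hookrightarrow N$. The regularity argument and the bookkeeping of your reduction step are fine. However, the two steps carrying the real content are justified by facts that do not suffice.

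\emph{The specialization step.} Write $\iota\colon X\times\PP^1\hookrightarrow M^\circ_YZ$ and $\mathcal M=M^\circ_WV$. Well-definedness of $\iota^!$ on Chow groups and compatibility with flat pullback only handle the general fibre: restrict to $\PP^1\setminus 0$, where everything is a product. On the special fibre you need $0^!\,\iota^![\mathcal M]=\iota^!\,0^![\mathcal M]$, where $0^!$ is the Gysin map of $\{0\}\hookrightarrow\PP^1$. This is an instance of the commutativity lemma stated just before this one \cite[Thm~6.4]{fulton}, applied to $\iota$ and $\{0\}\hookrightarrow\PP^1$. Combine it with $0^![\mathcal M]=[C_{W/V}]$ and \cite[Thm~6.2(c)]{fulton} to identify $\iota^!$ over the special fibre with the Gysin map of $j_0\circ i\colon X\hookrightarrow N$.

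\emph{The linear case.} This is the more serious point. The identity $(j_0\circ i)^!=i^!j_0^!$ is the statement you are proving, specialized to a zero section. The splitting $N_{X/N}\cong N_{X/Y}\oplus N|_X$ does not prove it: the Gysin map is computed from normal cones, and knowing that the ambient bundle splits does not tell you the cone. Here is what works.
\begin{enumerate}
\item By Lemma~\ref{lem:pushforward}, push $[C_{W/V}]$ to $N|_W$ and write it as $p^*\beta$ with $\beta=j^![V]$.
\item By linearity, take $\beta=[U]$ for a subvariety $U\subseteq W$.
\item Set $U_X=U\times_YX$ and embed it in $N|_U$ by the zero section. Check that its normal cone is the product $C_{U_X/U}\times_{U_X}N|_{U_X}$, sitting inside $N_{X/Y}|_{U_X}\oplus N|_{U_X}$. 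Locally, with $U=\Spec A$, $U_X=\VV(I)$ and $N|_U=\Spec A[x_1,\dots,x_e]$, this is the identity $\mathrm{gr}_{IA[x]+(x)}A[x]\cong\mathrm{gr}_I(A)[x]$.
\item The class of this product cone is the flat pullback of $[C_{U_X/U}]$ along the first projection. Hence $(j_0\circ i)^![N|_U]=i^![U]$.
\end{enumerate}
It follows that $(j\circ i)^![V]=(j_0\circ i)^![C_{W/V}]=i^!j^![V]$.

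\emph{The fallback.} It does not work as described. Apply the commutativity lemma to $j\circ i$ and $j$ over $Z'$, and use the excess formula on both sides. You only get $c_e(N|_{X'})\cap(j\circ i)^!\alpha=c_e(N|_{X'})\cap i^!j^!\alpha$, and $c_e$ cannot be cancelled.
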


\subsection{First construction of a virtual class}
Let $X$ be a scheme. The simplest way a virtual class can arise on $X$ is from
the data of a rank-$d$ vector bundle $E$ on a scheme $Y$ cutting out $X$ as the zero locus of a section. This is done as follows.

\begin{construction}\label{const:1} Suppose $X$ is embedded as a closed subscheme of $Y$ cut out by a section $s$ (not necessarily regular) of a vector bundle $E$ of rank $d$ on $Y$. In other words, there is a fiber square
\[
\begin{tikzcd}
X \arrow[r, "i"] \arrow[d] & Y \arrow[d, "s"]\\
Y \arrow[r, "s_E"] & E
\end{tikzcd}
\]
where $s_E$ is the zero section. The closed embedding $\phi:C_{X/Y} \to E|_X$ arising from Lemma \ref{lem:cone2} is called an \emph{obstruction theory} and the Gysin homomorphism $s_E^!: A_*(Y) \to A_{*-d}(X)$ is the associated \emph{virtual pullback}. To be compatible with later notation we will also write the virtual pullback as $i^!_E$. If $Y$ has pure dimension $n$, the corresponding \emph{virtual class} is defined as
\[
[X]^{vir}_{E} := i_E^! [Y] \in A_{n-d}(X).
\]
\end{construction}
\begin{remark}\label{rmk:pure-dim}
In order for the data $s: Y \to E, X = \mathbb{V}(s)$ to define a virtual class on $Y$, we require $Y$ to be pure dimensional. This is for a few reasons.
\begin{itemize}
\item Requiring $Y$ to have pure dimension ensures that $[X]^{vir}_E$ has pure dimension.
\item By requiring $Y$ to have pure dimension we get a simpler formula for $[X]^{vir}_E$ (see Exercise \ref{vc1}).
\item Using this simpler formula, one can prove Lemma \ref{lem:vc1}. The lemma need not hold if $Y$ does not have pure dimension: see \cite[Example 2.6.4]{fulton}.
\end{itemize}
\end{remark}

\begin{remark}
    \label{rmk:bf-versus-bcm}
Let $i: X \to Y$ be a closed embedding of schemes. The reader may notice that in Construction \ref{const:1}, what we really need to define $i^!_E$ is a closed embedding $ C_{X/Y} \to E_{X/Y}$, where $E_{X/Y}$ is a vector bundle on $X$. This suggests one could define an obstruction theory for $i$ to be such a closed embedding, with associated virtual pullback
$i^!_E: A_*(Y) \to A_{*-d}(X)$ defined to be the composition
\[
A_*(Y) \xrightarrow{\sigma} A_*(C_{X/Y}) \xrightarrow{\phi_*} A_*(E_{X/Y}) \xrightarrow{(p^*)^{-1}} A_*(X),
\]
where $\sigma$ and $(p^*)^{-1}$ are defined as in Definition \ref{def:gysin}. This is indeed the approach taken in \cite[Construction 4.3]{BCM}. We de-emphasize it here because we cannot see a way that this construction is a special case of Behrend-Fantechi's construction \ref{const3}.
\end{remark}

Note that $[X]^{vir}_{E}$ is precisely the localized Euler class defined in \cite[Sec 14.1]{fulton}. In particular, by \cite[Section 14.1]{fulton} we have
\begin{equation}\label{lem:vc2}
i_*[X]^{vir}_{E} = c_{top}(E) \cap [Y].
\end{equation}
An important example of a virtual class arises when the section $s$ defining $X$ happens to be quasi-regular, as explained in the following lemma. (This lemma is a not a corollary of \eqref{lem:vc2} as $i_*$ may not be injective.)

\begin{lemma}\label{lem:vc1}
If $X$ is the zero locus of a quasi-regular section $s$ of a vector bundle $E$ on a pure-dimensional scheme $Y$, then the associated virtual class is the usual fundamental class:
\[
[X]^{vir}_E = [X].
\]
\end{lemma}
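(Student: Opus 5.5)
We unwind the definition of $[X]^{vir}_E = s_E^![Y]$ from Definition \ref{def:gysin}, applied to the fiber square of Construction \ref{const:1} with $Y' = Y$ mapping to $E$ via $s$. By definition,
\[
s_E^![Y] = (p^*)^{-1} j_* \sigma([Y]),
\]
where $p: E|_X \to X$ is the projection and $j: C_{X/Y} \to E|_X$ is the closed embedding of Lemma \ref{lem:cone2}, composed with $E|_X \cong s_E^*N_{Y/E}$. Since $Y$ is pure dimensional, $\sigma([Y]) = [C_{X/Y}]$. (This is the ``simpler formula'' of Remark \ref{rmk:pure-dim}; for a pure-dimensional $Y$ it follows from \cite[Prop 5.2]{fulton} applied component by component, using that $C_{X/Y}$ is the union of the normal cones of $X\cap Y_i$ in the components $Y_i$, with the correct multiplicities.) So the task reduces to identifying the cycle $j_*[C_{X/Y}]$ inside $E|_X$.

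Next we use quasi-regularity. Since $s$ is a quasi-regular section, $X \hookrightarrow Y$ is a quasi-regular embedding, so by Lemma \ref{lem:cone3} the cone $C_{X/Y}$ equals $N_{X/Y}$ and is a vector bundle on $X$. Its rank equals the codimension, and locally this is the number of components of $s$, namely $d$. Concretely, if $s$ is given locally by $(s_1,\dots,s_d)$ in a trivialization of $E$, the ideal is $I=(s_1,\dots,s_d)$. Quasi-regularity says that $I/I^2$ is free on the images of the $s_i$ and that $\Sym(I/I^2) \to \bigoplus I^n/I^{n+1}$ is an isomorphism. The map $j$ is induced by the surjection $E^\vee|_X \to I/I^2$ sending the dual basis to the $\bar s_i$, and this surjection is an isomorphism because the $\bar s_i$ form a basis. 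Hence $j: C_{X/Y} \to E|_X$ is an isomorphism of vector bundles of rank $d$. The main point requiring care is checking that the embedding of Lemma \ref{lem:cone2} really is this map; this is a local computation with the ideal of the zero section $(e_1,\dots,e_d)$ pulling back to $(s_1,\dots,s_d)$.

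It follows that $j_*[C_{X/Y}] = [E|_X] = p^*[X]$, since flat pullback of the fundamental class of $X$ along the bundle projection is the fundamental class of $E|_X$, including multiplicities on nonreduced components, as flat pullback preserves them. Applying $(p^*)^{-1}$ gives $[X]^{vir}_E = [X]$. The step we expect to be the main obstacle is the identification $\sigma([Y]) = [C_{X/Y}]$ with correct multiplicities when $Y$ is pure dimensional but reducible or nonreduced. We would handle it by writing $[Y] = \sum m_i [Y_i]$ and invoking \cite[Example 4.3.4 / Prop 5.2]{fulton}, which gives $[C_{X/Y}] = \sum m_i [C_{X\cap Y_i/Y_i}]$.
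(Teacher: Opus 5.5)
Your proposal is correct and follows essentially the paper's route via Exercises \ref{vc1} and \ref{vc3}. Pure-dimensionality gives $\sigma([Y]) = [C_{X/Y}]$, quasi-regularity makes $C_{X/Y} \to E|_X$ an isomorphism, and so $(p^*)^{-1}[E|_X] = [X]$. The only difference is that you read off $\mls E^\vee|_X \cong I/I^2$ directly from the definition of a quasi-regular sequence rather than from the Koszul resolution, which is a harmless and slightly more direct shortcut.
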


\begin{example}\label{ex:two-classes}
A space can carry various virtual classes. For example, let $X = \PP^1_k$.

On the one hand, $X \subseteq X$ is the zero locus of the zero section of the zero vector bundle on $X$. The associated virtual class is $[X]$.

On the other hand, $X$ can be embedded as the twisted cubic in $\PP^3_k = \mathrm{Proj}(k[x, y, z, w])$. Via this embedding, it is the zero locus of the section of $\mls O_{\PP^3}(2)^{\oplus 3}$ given by $(xz-y^2, yw-z^2, xw-yz)$. The associated virtual class is $8[pt]$ (see Exercises \ref{vc10} and \ref{excess3}).
\end{example}

\subsection{Compatible obstruction theories}\label{sec:compatible-OTs} Finally, virtual classes enjoy the following functoriality property. Suppose we have a sequence of vector bundles
\[
0 \to E' \to E \to E'' \to 0
\]
on a pure-dimensional scheme $Y$, and assume this sequence arises by applying $\relSpec_Y(Sym(-))$ to a short exact sequence of locally free sheaves on $Y$. Suppose $s: Y \to E$ is a section with vanishing locus $X$. Let $X''$ be the vanishing locus of the induced section of $E''$, and note that we have closed embeddings
\[
X \xrightarrow{i} X'' \rightarrow Y.
\]
\begin{lemma}\label{lem:vc3}
There is a section $s'$ of $E'|_{X''}$ such that $\mathbb{V}(s') = X$, and
\[
i^!_{E'} [X'']^{vir}_{E''} = [X]^{vir}_E.
\]
\end{lemma}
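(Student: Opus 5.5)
First we construct $s'$. Write $q: E \to E''$ for the quotient map and $s'' = q \circ s$, so $X'' = \VV(s'')$. On $X''$ the section $s|_{X''}$ of $E|_{X''}$ maps to zero in $E''|_{X''}$. Since $0 \to E' \to E \to E''\to 0$ is exact (it comes from a short exact sequence of locally free sheaves), $E'|_{X''}$ is the kernel of $E|_{X''}\to E''|_{X''}$. Hence $s|_{X''}$ factors uniquely through a section $s'$ of $E'|_{X''}$. The zero locus of $s'$ in $X''$ equals the zero locus of $s|_{X''}$, because $E' \to E$ is a closed embedding of bundles. That locus is $X \cap X'' = X$. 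All of this can be checked locally, where $E = E'\oplus E''$ and $s = (s_1,s_2)$.

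For the equality of classes we express both sides as Gysin pullbacks of zero sections. We use the cartesian diagram
\[
\begin{tikzcd}
X \arrow[r,"i"] \arrow[d] & X'' \arrow[r] \arrow[d] & Y \arrow[d,"s"] \\
E'|_{X''}\ \text{(zero section)} \arrow[r] & E|_{X''} \arrow[r] & E.
\end{tikzcd}
\]
Concretely, $X = \VV(s)$ is the fiber product of $s: Y\to E$ with the zero section $Y \to E$. We factor the zero section of $E$ as
\[
Y \xrightarrow{0} E' \xrightarrow{j} E,
\]
where $j$ is the inclusion of the subbundle. Then $j$ is the pullback of the zero section $Y \to E''$ along the flat map $q: E \to E''$. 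Both maps are regular embeddings, of codimensions $\rank E'$ and $\rank E''$. Pulling $j$ back along $s$ gives $s^{-1}(E') = \VV(s'') = X''$. Pulling the zero section $Y \to E'$ back along the induced map $X'' \to E'$, which is $s'$, gives $X$. So we have a cartesian diagram of the shape in Lemma~\ref{lem:composition}, with top row $X \to X'' \to Y$ and bottom row $Y \to E' \to E$.

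Applying Lemma~\ref{lem:composition} gives
\[
[X]^{vir}_E = s_E^![Y] = 0_{E'}^!\, j^![Y].
\]
It then remains to prove two identities:
\begin{enumerate}
\item[(a)] $j^![Y] = [X'']^{vir}_{E''}$ in $A_*(X'')$;
\item[(b)] the refined Gysin map $0_{E'}^!$, for the square with $X'' \to E'$ given by $s'$, equals $i^!_{E'}$ as defined in Construction~\ref{const:1}. That construction uses the square with $X''\xrightarrow{s'} E'|_{X''}$ and the zero section of $E'|_{X''}$.
\end{enumerate}

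Step (b) is formal. Refined Gysin maps depend only on the fiber square and the normal bundle. The square with the zero section of $E'|_{X''}$ is obtained from the square with the zero section of $E'$ over $Y$ by the base change $E'|_{X''}\to E'$. In both cases the cone $C_{X/X''}$ embeds into the same bundle $E'|_X$. So the maps $\sigma$, $j_*$ and $(p^*)^{-1}$ in Definition~\ref{def:gysin} coincide.

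Step (a) is where I expect the real work. Definition~\ref{def:gysin} for $j^!$ uses $C_{X''/Y}$ embedded in $N_{E'/E}|_{X''} \cong E''|_{X''}$. Construction~\ref{const:1} for $s_{E''}^!$ uses $C_{X''/Y}$ embedded in $E''|_{X''}$ via the square with the zero section of $E''$. I would compare them using the cartesian square $E' = q^{-1}(0_{E''})$. Since $q$ is flat, $N_{E'/E} = q^*N_{Y/E''}$. The embeddings $C_{X''/Y}\to E''|_{X''}$ obtained from the two squares agree, because both come from Lemma~\ref{lem:cone2} applied to a composite of cartesian squares: the square for $X''\subset Y$ over $E'\subset E$ maps to the square for $Y \subset E''$ via $q$. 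I would verify this compatibility of Lemma~\ref{lem:cone2} with composition of squares locally on ideals. There, $\mathcal I_{X''}$ is the image of the ideal of $E'$ in $E$, which is generated by the coordinates of $E''$, so it amounts to checking a map of graded algebras. Given this, $j^![Y]$ and $s_{E''}^![Y]$ are computed by the same $\sigma$, pushforward and $(p^*)^{-1}$. Since $Y$ is pure dimensional, $s_{E''}^![Y] = [X'']^{vir}_{E''}$, which completes the proof.
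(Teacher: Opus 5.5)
Your proposal is correct and follows the paper's own route (Exercise~\ref{vc5}). You factor the zero section of $E$ as $Y \to E' \to E$, apply Lemma~\ref{lem:composition} to the cartesian diagram with top row $X \to X'' \to Y$, and identify the Gysin map of $E' \hookrightarrow E$ on $A_*(Y)$ with $s_{E''}^!$; the differences are minor. The paper gets that last identification by local splitting: the induced closed embedding $N_{E'/E} \to p_{E'}^*E''$ has equal ranks and so is an isomorphism. You instead use flat base change of normal cones together with compatibility of Lemma~\ref{lem:cone2} with composite squares. You also make explicit the base-change identification of $0_{E'}^!$ with $i^!_{E'}$, which the paper leaves implicit. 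Your first displayed diagram is not cartesian as drawn, but the concrete diagram you describe immediately afterwards is the right one.
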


\noindent
\textbf{Exercises}
\begin{enumerate}[label=2.\arabic*, leftmargin=*]
\item\label{vc10}
\begin{enumerate}
\item Justify the claims in Example \ref{ex:two-classes}. To compute the second virtual class, use \eqref{lem:vc2}.
\item Generalize the first part of the example by showing that when $X$ is a pure dimensional scheme, the fundamental class $[X]$ can be realized as a virtual fundamental class with respect to the identity embedding $X \hookrightarrow X$. (You can appeal to Lemma \ref{lem:vc1}, but try doing it directly.)
\end{enumerate}
\item\label{vc0} The goal of this exercise is to show that if $p: E \to Y$ is a rank-$d$ vector bundle on a scheme $Y$, the inverse to flat pullback $(p^*)^{-1}: A_*(E) \to A_{*-d}(Y)$ can also be described as a Gysin homomorphism (see \cite[Cor 6.5]{fulton}).
\begin{itemize}
\item[(a)] If $s$ is any section of $E$, show that $s: Y \to E$ is a regular embedding.
\item[(b)] Show that $s^!: A_*(E) \to A_{*-d}(Y)$ is inverse to $p^*$. (\textit{Hint: Use Lemma \ref{lem:composition}, or rather its extension to l.c.i. morphisms in \cite[Prop 6.6]{fulton}.} Alternatively, use Lemma~\ref{lem:pushforward}.)
\item[(c)] If $X = \VV(s)$ and $i: X \hookrightarrow Y$ is the inclusion, and if $0_E: Y \to E$ is the zero section, show that
\[
i_*s^!(\alpha) = (p^*)^{-1}(0_{E, *}\alpha) \quad \quad \quad \quad\text{for all }\alpha \in A_*(Y).
\]
(\textit{Hint: Lemma \ref{lem:pushforward}.})
\end{itemize}
\item\label{vc1} Let $X$ be the zero locus of a section $s$ of a vector bundle $E$ on a scheme $Y$, and let $i: X \to Y$ be the embedding.
\begin{itemize}
\item[(a)] Show that if $Y$ has pure dimension, then the morphism $\sigma: A_*(Y) \to A_*(X)$ in Definition \ref{def:gysin} (taking $X=X'$ and $Y=Y'$) sends $[Y]$ to $[C_{X/Y}]$. Note that $Y$ need not be irreducible.
\item[(b)] Show that if $Y$ does not have pure dimension then the formula $\sigma([Y]) = [C_{X/Y}]$ need not hold.
\item[(c)] Use (a) to show that if $Y$ has pure dimension, then $[X]^{vir}_E$ is intersection of $C_{X/Y}$ with zero section of $i^*E$; that is, \[[X]_E^{vir} = (p^*_E)^{-1}[C_{X/Y}]\]
where $(p^*_E)^{-1}$ is inverse to flat pullback along $p_E: i^*E \to X$.

\end{itemize}

\item\label{vc11} Let $X$ be embedded as a closed subscheme of an $n$-dimensional scheme $Y$, cut out by a section of a rank-$d$ vector bundle $E$ on $Y$. Use Remark \ref{rmk:normalcone-dim} and perhaps Exercise \ref{vc1} to explain why $[X]^{vir}_E$ is an ($n-d$)-cycle.

\item \label{vc2} If $X \subset \PP^n$ is cut out by homogeneous polynomials $f_i$ of degree $d_i$, show that $X$ is the zero locus of a section of $\oplus \mls O(d_i)$. Hence such $X$ carries a virtual class (induced by the choice of equations $f_i$). Write down the map of $\mls O_X$-algebras inducing the corresponding obstruction theory.

\item \label{vc3} Use the following steps to prove Lemma \ref{lem:vc1}. Let $X$ be a closed subscheme of $Y$ equal to the zero locus of a quasi-regular section $s$ of a vector bundle $E \to Y$ of rank $d$. Recall that since $Y$ is locally Noetherian, quasi-regularity of $s$ is equivalent to Koszul-regularity.
\begin{itemize}
\item[(a)] Let $\mls E$ be the sheaf of sections of $E \to Y$, so $E = \relSpec_Y(Sym(\mls E^\vee))$. If $s: \mls O_Y \to \mls E$ is a Koszul-regular section defining $X$, show there is an exact sequence
\begin{equation}\label{eq:koszul}
0 \to \Lambda^d \mls E^\vee \ldots \to \Lambda^2 \mls E^\vee \xrightarrow{f} \mls E^\vee \to I \to 0
\end{equation}
where $I$ is the ideal sheaf. What is the map $f$? (If you are not familiar with the Koszul complex, consult the Wikipedia article.)
\item[(b)] By restricting \eqref{eq:koszul} to $X$, show that there is an isomorphism $\mls E^\vee|_X \to I/I^2$ induced by $s$.
\item[(c)] Deduce that the obstruction theory $C_{X/Y} \hookrightarrow E$ is an isomorphism in this situation. Use this and Exercise \ref{vc1} to prove Lemma \ref{lem:vc1}.
\end{itemize}

\item \label{vc4} Let $X$ be the zero locus of a section $s$ of a vector bundle $E$ on a scheme $Y$, and let $i: X \to Y$ be the embedding. Prove equation \eqref{lem:vc2}. (\textit{Hints: Lemma \ref{lem:pushforward}, Exercise \ref{vc0}, and the self intersection formula.})

\item \label{vc5}
Prove Lemma \ref{lem:vc3} using the following steps.
\begin{itemize}
\item[(a)] Show there is a fiber diagram
\[
\begin{tikzcd}
X \arrow[r] \arrow[d] & X'' \arrow[r] \arrow[d] & Y \arrow[d, "s"]\\
Y \arrow[r, "s_{E'}"] & E' \arrow[r, "f"] \arrow[d] & E \arrow[d] \\
& Y \arrow[r, "s_{E''}"] & E''.
\end{tikzcd}
\]
The morphism $X'' \to E'$ defines the desired section $s'$.
\item[(b)] Show that $f: E' \to E$ is a regular embedding and that the Gysin homomorphism $A_*(Y) \to A_*(X'')$ induced by $f$ is equal to that induced by $s_{E''}$.
\item[(c)] Use Lemma \ref{lem:composition}
to deduce that $i^!_{E'}[X'']^{vir}_{E''} = [X]^{vir}_{E}.$
\end{itemize}

\item \label{vc6}  Just as vector bundles have Chern classes, cones have Segre classes \cite[Section 4.1]{fulton}. If a pure-dimensional cone $C$ is contained in a vector bundle $E$, the example \cite[Example 4.1.8]{fulton} gives a formula for the intersection of $C$ with the zero section of $E$ in terms of Segre classes of $C$ and Chern classes of $E$. Use this and Exercise \ref{vc1} to give a formula for the virtual class.
\end{enumerate}

\section{Excess intersection formula}

In this section, all $X$, $Y$, etc. are $k$-schemes unless otherwise stated.

Let $Y$ be $\PP^3_k$ with homogeneous coordinates $[x:y:z:w]$ and consider the twisted cubic $X = \VV(xz-y^2, yw-z^2, xw-yz)$ in $Y$. The twisted cubic is isomorphic to $\PP^1$ and in particular it is smooth and 1-dimensional. We computed the virtual class on $X$ associated to this embedding in Example \ref{ex:two-classes}: it was $8[pt]$. In this section we will present a theorem (Lemma \ref{lem:excess1}) that can be used as an alternative approach to computing that class.

Before stating that result we need some preliminaries. In the following lemma, a morphism of vector bundles is defined to be a morphism of the underlying cones (see Definition \ref{def:cone}).
\begin{lemma}\label{lem:quotient}
Let $E \to F$ be a  closed embedding of vector bundles over a scheme or Deligne-Mumford stack $X$. Then there exists a short exact sequence of locally free sheaves
\[
0 \to \mls K \to \mls F \xrightarrow{f} \mls E \to 0
\]
such that applying the functor $\relSpec_X(Sym(-))$ to $f: \mls F \to \mls E$ yields the closed embedding $E \to F$.
\end{lemma}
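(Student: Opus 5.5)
Write $E = \relSpec_X(Sym(\mls E))$ and $F = \relSpec_X(Sym(\mls F))$ with $\mls E, \mls F$ locally free; in the paper's convention these are the duals of the sheaves of sections. Since the functor from coherent sheaves to abelian cones is a contravariant equivalence of categories, the morphism of cones $E \to F$ comes from a unique $\mls O_X$-linear map $f: \mls F \to \mls E$. This uses that a morphism of cones between abelian cones is induced by a graded algebra map $Sym(\mls F) \to Sym(\mls E)$, which is determined by its degree-one part $\mls F \to \mls E$. I will take $\mls K = \ker(f)$. Two things then need proof: that $f$ is surjective, and that $\mls K$ is locally free.

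For surjectivity, the closed embedding hypothesis says that $Sym(\mls F) \to Sym(\mls E)$ is a surjection of quasi-coherent sheaves of algebras. This holds because a closed embedding between relative spectra affine over $X$ corresponds to a surjective map of algebras; on a Deligne–Mumford stack one checks this on an étale atlas. The map is graded, so its degree-one part $f: \mls F \to \mls E$ is surjective.

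For local freeness, both statements are local, so I work on an étale (or Zariski) cover where $X = \Spec A$ is affine and $\mls E, \mls F$ are free, or at least projective. A surjection of finite projective modules $F \to E$ splits, so the kernel is a direct summand of $F$ and hence finitely generated projective, that is, locally free. On a stack, local freeness of $\mls K$ can be checked on an atlas, since kernels commute with flat pullback. The sequence $0 \to \mls K \to \mls F \to \mls E \to 0$ is then an exact sequence of locally free sheaves, and applying $\relSpec_X(Sym(-))$ to $f$ recovers $E \to F$ by construction.

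The main obstacle is the first step: justifying that every morphism of cones between vector bundles comes from a linear map $\mls F \to \mls E$. This amounts to a graded algebra map $Sym(\mls F) \to Sym(\mls E)$ being determined by a sheaf map in degree one, which it is because $Sym$ is left adjoint to the forgetful functor and the map preserves degree. The equivalence asserted earlier in the paper covers this. A second point to check is that "closed embedding" really translates into surjectivity of the algebra map in the stack setting, which follows from fpqc descent along an atlas.
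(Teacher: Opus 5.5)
Your proposal is correct and is the natural argument the paper has in mind: the paper leaves this lemma as Exercise \ref{excess-1} with no solution note, and the intended steps are exactly yours. A morphism of cones $E \to F$ is a graded algebra map $Sym(\mls F) \to Sym(\mls E)$ determined by its degree-one part $f$; the closed-embedding hypothesis makes that map, and hence $f$, surjective; and $\ker(f)$ is locally free because a surjection onto a locally free sheaf splits étale locally.
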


\begin{definition}\label{def:quotient-bundle}
Let $E \to F$ be a  closed embedding of vector bundles over a scheme or Deligne-Mumford stack $X$. The quotient $F/E$ is defined to be the vector bundle on $X$ associated to the locally free sheaf $\mls K$ arising in Lemma \ref{lem:quotient}.
\end{definition}

\noindent
Our statement of the Excess Intersection Formula is copied from \cite[Thm 6.3]{fulton}.

\begin{theorem}[Excess Intersection Formula]\label{thm:excess}
Consider a fiber diagram
\[
\begin{tikzcd}
X'' \arrow[r] \arrow[d, "q"] & Y'' \arrow[d, "p"]\\
X' \arrow[r, "i'"] \arrow[d, "g"] & Y' \arrow[d] \\
X \arrow[r, "i"] & Y
\end{tikzcd}
\]
where $i$ and $i'$ are regular embeddings of codimensions $d$ and $d'$, respectively. By Lemma \ref{lem:cone2} there is a closed embedding $N_{X'/Y'} \to g^*N_{X/Y}$.
Then there is an equality of homomorphisms $A_*(Y'') \to A_{*-d}(X'')$
\[
i^!(-) = c_{d-d'}(E) \cap i'^!(-), \quad \quad \quad \text{where }E := g^*N_{X/Y}/N_{X'/Y'}.
\]
The bundle $E$ is called the \emph{excess bundle}.

\end{theorem}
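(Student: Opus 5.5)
By linearity and the pushforward compatibility of Lemma \ref{lem:pushforward}, it suffices to check the identity on $\alpha = [V]$ for $V \subseteq Y''$ a subvariety. Replacing $Y''$ by $V$ and $X''$ by $W := V \times_Y X = V\times_{Y'} X'$, we may assume $Y''=V$ is a variety of dimension $k$. Let $N := q^*g^*N_{X/Y}$ and $N' := q^*N_{X'/Y'}$. By Lemmas \ref{lem:cone2} and \ref{lem:cone3} we have closed embeddings
\[
C := C_{W/V} \hookrightarrow N' \hookrightarrow N .
\]
Here the second embedding is the pullback of $N_{X'/Y'} \to g^*N_{X/Y}$. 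The first embedding, the one used to define $i'^!$, composes with it to give the embedding used to define $i^!$. I would verify this compatibility from the functoriality of $\bigoplus \mls I^n/\mls I^{n+1}$ under the maps of ideals $I_X \mathscr O_{Y'} \to I_{X'}$. By Definition \ref{def:gysin} we then have
\[
i^![V] = 0_N^![C], \qquad i'^![V] = 0_{N'}^![C],
\]
where $0^!$ denotes $(p^*)^{-1}$, which by Exercise \ref{vc0} is also the Gysin pullback along the zero section.

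The theorem therefore reduces to a statement about cones: if $C \subseteq N' \subseteq N$ with $N'\subseteq N$ a subbundle and excess bundle $E = N/N'$, then
\[
0_N^![C] = c_{top}(E) \cap 0_{N'}^![C].
\]
By Lemma \ref{lem:quotient} we have the exact sequence $0\to N'\to N \to E\to 0$ of bundles. Let $\pi: N \to E$ be the projection. Then $N' = \pi^{-1}(0_E)$, so the inclusion $N' \hookrightarrow N$ is the pullback of the zero section $0_E: W \to E$ along $\pi$, and it is a regular embedding of codimension $e=\rank E$. Factor the zero section as $0_N = (N' \hookrightarrow N)\circ 0_{N'}$. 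By Lemma \ref{lem:composition},
\[
0_N^![C] = 0_{N'}^!\,\big((N'\hookrightarrow N)^![C]\big).
\]
Since $N'\hookrightarrow N$ is the pullback of $0_E$ along $\pi$, we have $(N'\hookrightarrow N)^! = 0_E^!$ refined along $\pi$. Hence $(N'\hookrightarrow N)^![C] = 0_E^![C]$, computed in the fiber square
\[
C\cap N' = C \to C \to E .
\]
Because $C \subseteq N'$, the map $C \to E$ factors through the zero section, so $C \times_E W = C$. This is exactly the situation of the self-intersection formula, with the refined Gysin map of a zero section applied to a class already supported on the zero locus. There the normal cone of $C$ in $C$ is the zero cone, and the excess bundle is all of the pullback of $E$. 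Directly from Definition \ref{def:gysin}, with $\sigma$ sending $[C]$ to the zero section of the pullback of $E$, and using Fulton's characterization $c_e(E)\cap \beta = 0^!0_*\beta$, we get
\[
0_E^![C] = c_e(\rho^*E)\cap [C],
\]
where $\rho: C \to W$ is the projection. Then, using that Chern classes commute with Gysin pullback,
\[
0_{N'}^!\big(c_e(\rho^*E)\cap[C]\big) = c_e(E)\cap 0_{N'}^![C],
\]
which is the desired formula with $d - d' = e$.

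The main obstacle is this last block: making rigorous that the refined Gysin pullback along $0_E$ of a cycle mapping into the zero section is capping with $c_{top}$. This is essentially \cite[Prop 6.3]{fulton} and the commutativity of Chern classes with Gysin maps \cite[Prop 6.3]{fulton}. I would prove it first for $C$ a variety, using that the normal cone of $C$ in $C$ is $C$ itself, so that $j_*\sigma[C]$ is the zero section of $\rho^*E$. The needed input is then $(p^*)^{-1}0_*[C] = c_e\cap[C]$, which is the definition of the top Chern class via the self-intersection of the zero section. A secondary point to check carefully is the compatibility of the cone embeddings under the composition $N' \subseteq N$, which underlies the reduction in the first paragraph.
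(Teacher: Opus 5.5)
Your proof is correct. The notes give no proof of their own here, since the statement is copied from \cite[Thm 6.3]{fulton}. So your proposal is a self-contained derivation from the tools stated just before it, not a variant of an argument in the text.

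Both points you flag go through:
\begin{itemize}
\item In a fibre square $I_{X'}=I_X\mls O_{Y'}$ and $I_W=I_{X'}\mls O_V$. Hence the graded maps $\bigoplus_n I_X^n/I_X^{n+1}\to\bigoplus_n I_{X'}^n/I_{X'}^{n+1}\to\bigoplus_n I_W^n/I_W^{n+1}$ (pulled back to $W$) compose to the map defining $C_{W/V}\hookrightarrow N$.
\item The identity $(N'\hookrightarrow N)^!=0_E^!$ is the zero-excess case. It is immediate from Definition~\ref{def:gysin}: $\pi$ is flat, so $N_{N'/N}\cong p_{N'}^*E$.
\end{itemize}
You are also right to compute $0_E^![C]=c_e(\rho^*E)\cap[C]$ directly ($\sigma[C]=[C]$ on the zero section of $\rho^*E$, then $(p^*)^{-1}0_*=c_e\cap$) rather than quoting Fulton's self-intersection formula. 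Fulton deduces that formula from Theorem~6.3, as the notes point out in Exercise~\ref{excess2}, so quoting it would be circular.

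The substance of your route is the cone lemma $0_N^![C]=c_e(N/N')\cap 0_{N'}^![C]$ for a cone $C\subseteq N'\subseteq N$. It contains \eqref{eq:excess8} (take $C=N'$) and Lemma~\ref{lem:excess1} (take $C=C_{X/Y}=N_{X/Y}$) as special cases. You prove it via Lemma~\ref{lem:composition}, which in Fulton comes after Theorem~6.3. That is legitimate in these notes, where the lemma is stated first, but the cone lemma needs only Chapter~3 facts.

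Let $p_N,p_{N'},p_E$ be the bundle projections to $W$, let $j\colon N'\hookrightarrow N$, and put $\gamma=0_{N'}^![C]$, so $[C]=p_{N'}^*\gamma$ in $A_*(N')$.
\begin{itemize}
\item Since $N'=N\times_E W$ with $\pi$ flat, flat pullback commutes with proper pushforward, giving $j_*p_{N'}^*\gamma=\pi^*0_{E*}\gamma$.
\item Since $p_N=p_E\circ\pi$, we get $(p_N^*)^{-1}\pi^*=(p_E^*)^{-1}$.
\item Hence $0_N^![C]=(p_E^*)^{-1}0_{E*}\gamma=c_e(E)\cap\gamma$.
\end{itemize}
This needs no refined Gysin maps and no commutation of Chern classes with Gysin maps.

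If you prefer the Segre-class viewpoint of Exercise~\ref{vc6} and Section~\ref{sec:siebert}, a third proof is $0_N^![C]=\{c(N)\cap s(C)\}_{k-d}=\sum_j c_j(E)\cap\{c(N')\cap s(C)\}_{k-d+j}$. Only $j=e$ survives: $c_j(E)=0$ for $j>e$, and $\{c(N')\cap s(C)\}_m=0$ for $m<k-d'$ because $C$ is a $k$-dimensional cone in the rank-$d'$ bundle $N'$.
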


The excess intersection formula has the following consequence for virtual classes.

\begin{lemma}\label{lem:excess1}
Let $Y$ be pure dimensional and let $i: X \hookrightarrow Y$ be a regular embedding cut out by a section (not necessarily regular!) of a vector bundle $E$. Then there is a closed embedding $N_{X/Y} \hookrightarrow i^*E$, and
\begin{equation}\label{eq:excess1}
[X]^{vir}_E = c_{top}(i^*E/N_{X/Y}) \cap [X].
\end{equation}
\end{lemma}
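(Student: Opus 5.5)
We plan to apply the Excess Intersection Formula (Theorem \ref{thm:excess}) to the fiber square of Construction \ref{const:1},
\[
\begin{tikzcd}
X \arrow[r, "i"] \arrow[d, "i"] & Y \arrow[d, "s"]\\
Y \arrow[r, "s_E"] & E,
\end{tikzcd}
\]
regarded as the bottom square of the diagram in Theorem \ref{thm:excess}. The identifications are $Y'' = Y' = Y$, $X'' = X' = X$, $p = \mathrm{id}$ and $g = i$. The bottom regular embedding is the zero section $s_E \colon Y \to E$, of codimension $d = \rank(E)$. The top one is $i \colon X \to Y$, regular by hypothesis, of some codimension $d'$; by Exercise \ref{cone3}(b) this equals $\rank N_{X/Y}$.

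First we identify the closed embedding. The normal bundle of the zero section satisfies $N_{Y/E} \cong E$; this follows from Lemma \ref{lem:cone3} together with the local computation that the ideal of the zero section is generated by the fiber coordinates, so its conormal sheaf is the dual of the sheaf of sections. Lemma \ref{lem:cone2} then gives a closed embedding $N_{X/Y} = C_{X/Y} \hookrightarrow i^*N_{Y/E} = i^*E$, where the equality $N_{X/Y} = C_{X/Y}$ uses Lemma \ref{lem:cone3} again. This is exactly the obstruction theory $\phi$ of Construction \ref{const:1}. We then check that it is a closed embedding of vector bundles in the sense of Lemma \ref{lem:quotient}, so that the quotient $i^*E/N_{X/Y}$ of Definition \ref{def:quotient-bundle} makes sense and has rank $d-d'$.

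Next we apply Theorem \ref{thm:excess} with $\alpha = [Y] \in A_*(Y'')$:
\[
[X]^{vir}_E = s_E^![Y] = c_{d-d'}(i^*E/N_{X/Y}) \cap i^![Y].
\]
Here $c_{d-d'}$ is the top Chern class of the excess bundle. Since $i$ is a regular embedding, it is in particular a quasi-regular embedding, and $Y$ is pure-dimensional. By Exercise \ref{vc1}(a) we have $\sigma([Y]) = [C_{X/Y}] = [N_{X/Y}]$, and flat pullback gives $(p^*)^{-1}[N_{X/Y}] = [X]$. Hence $i^![Y] = [X]$; equivalently, this is Exercise \ref{vc10}(b) or Lemma \ref{lem:vc1} applied to the identity embedding. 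Substituting yields \eqref{eq:excess1}.

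The main obstacle is the bookkeeping that the embedding $N_{X/Y} \to g^*N_{Y/E}$ produced in Theorem \ref{thm:excess} coincides with $C_{X/Y} \hookrightarrow i^*E$ under $N_{Y/E} \cong E$. It also requires the map of locally free sheaves $\mls E^\vee|_X \to I/I^2$ induced by $s$ to be surjective with locally free kernel. Surjectivity holds because the components of $s$ generate $I$. The kernel is locally free because both sheaves are locally free on $X$, with $I/I^2$ locally free since $i$ is regular, so the kernel of a surjection between them is locally free. The identification of the two embeddings should be a local computation in which $E$ is trivial and $s = (s_1, \dots, s_d)$.
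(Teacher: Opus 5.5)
Your proposal is correct and follows the route the paper intends in Exercise \ref{excess1}: apply Theorem \ref{thm:excess} to the square of Construction \ref{const:1}, with the zero section $s_E$ on the bottom, the regular embedding $i$ on top and $p=\mathrm{id}$, and then use $i^![Y]=[X]$ for a regular embedding into the pure-dimensional $Y$. The step you flag as the main obstacle is essentially automatic: by definition, the embedding used in Theorem \ref{thm:excess} and the obstruction theory of Construction \ref{const:1} are both the embedding of Lemma \ref{lem:cone2} (via $N_{Y/E}\cong E$), and Lemma \ref{lem:quotient} already guarantees that the kernel is locally free.
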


\begin{remark}
If $X \hookrightarrow Y$ is a regular embedding, there can be many possible bundle-section pairs $(E, s)$ such that $X = \mathbb{V}(s)$, and these different pairs $(E, s)$ define different virtual classes on $X$. Moreover, such a section $s$ need not be a regular section of $E$ even though $X \hookrightarrow Y$ is a regular embedding.
For example, if $X = \mathbb{V}(s)$ for some section $s$ of a bundle $E$, then the bundle $E \oplus \mls O_Y$ has a section $(s, 0)$ that is certainly not regular, but $\mathbb{V}((s, 0)) = X$.
A second example is the twisted cubic:
 in Exercise \ref{excess0} we see that the twisted cubic in $\PP^3$ is regularly embedded, but cut out by a section of $\mls O_{\PP^3}(2)^{\oplus 3}$ that is not regular.

By Lemma \ref{lem:excess1}, if the section $s$ of $E$ cutting out $X$ is actually quasi-regular, then $[X]^{vir}_E = [X]$, recovering Lemma \ref{lem:vc1}.
\end{remark}

\noindent
\textbf{Exercises}
\begin{enumerate}[label=3.\arabic*, leftmargin=*]
\item \label{excess-1}
Prove Lemma \ref{lem:quotient}.
\item \label{excess1} Use the excess intersection formula to prove Lemma \ref{lem:excess1}.
\item \label{excess2} Recall that if $i: X \hookrightarrow Y$ is a regular embedding of codimension $d$ with normal bundle $N$, then there is a \emph{self intersection formula}
\[
i^!i_*(\alpha) = c_{d}(N)\cap \alpha
\]
where $ i^!$ is the Gysin homomorphism $A_*(Y) \to A_{*-d}(X)$.
\begin{itemize}
\item[(a)] Use this formula to prove that if $p: E \to X$ is a vector bundle on $X$ and $F \subseteq E$ is a subbundle such that $E/F$ is also a vector bundle, then
\begin{equation}\label{eq:excess8}
(p_E^*)^{-1}[F] = c_{top}(E/F) \cap [X].
\end{equation}
Note that this is a special case of the formula \cite[Example 4.1.8]{fulton}.
\item[(b)] Use \eqref{eq:excess8} to prove \eqref{eq:excess1} without reference to the excess intersection formula. (Note that in \cite[Sec 6.3]{fulton} the excess intersection formula is used to prove the self intersection formula.)
\end{itemize}

\item\label{excess0} Show that inclusion of the twisted cubic $\VV(xz-y^2, yw-z^2, xw-yz) \hookrightarrow \PP^3$ is a regular embedding. Explain why a section of  $\mls O_{\PP^3}(2)^{\oplus 3}$ cutting out the twisted cubic cannot be regular.

\item \label{excess3}
Let $X$ be the twisted cubic $\VV(xz-y^2, yw-z^2, xw-yz)$ in $Y = \PP^3$.
We already computed the virtual class of $X \simeq \PP^1$ with respect to this embedding in Exercise \ref{vc10}; in this exercise we will compute it again using the excess intersection formula, following \cite[Example 5.1]{BCM}.
\begin{itemize}
\item[(a)] Construct a morphism $\mls O_{\PP^3}(2)^{\oplus 3} \to \mls O_{\PP^3}(3)$ sending a section $(A, B, C)$ to $wA + yB - zC$. Show that the pullback of this morphism to $\PP^1$ has kernel $N_{X/Y}$. (\textit{Hint: this morphism arises as the dual of a partial presentation
of the ideal sheaf of $X$ in $Y$.})

\item[(b)]
Show that there is an exact sequence
\[
0 \to N_{X/Y} \to \mls O_{\PP^1}(6)^{\oplus 3} \to \mls O_{\PP^1}(8) \to 0.
\]
(\textit{Hint: show that the cokernel of the inclusion $N_{X/Y} \to \mls O_{\PP^1}(6)^{\oplus 3}$ is a subsheaf of $\mls O_{\PP^1}(9)$, hence is torsion free.})

\item[(c)] Use \eqref{eq:excess1} to conclude that the virtual class on $\PP^1$ induced from its identification with the twisted cubic is $8[pt]$ where $[pt]$ is the class of a point.
\end{itemize}
\item \label{excess5} Let $X = \VV(x^2, xy)$ be cut out of $Y = \PP^2$ by the bundle $E = \mls O_{\PP^2}(2) \oplus \mls O_{\PP^2}(2)$.
We will compute the virtual class of $X$ (induced by the defining equations of $X$ as in see Exercise \ref{vc2}) following \cite[Example 4.8]{BCM}.

\begin{itemize}

\item[(a)] Show that $C_{X/Y}$ is the union of two subschemes: $C_0$, with projection $C_0 \to X$ supported over $x=y=0$ in $X$; and $L$, with projection $L \to X$ a line bundle on the reduced subscheme of $X$. (Hint: use the computation of the normal cone in Exercise \ref{cone6}.)
Hence
\[
(p_E^*)^{-1}[C_{X/Y}] = (p_E^*)^{-1}[L] + (p_E^*)^{-1}[C_0].
\]

\item[(b)] Show that $C_0$ is the spectrum of $k[x,y,A,B]/(x^2, xy, y^2, yA-xB)$. Hence its underlying reduced scheme is $\AA^2_k$ and its multiplicity is 2. Conclude that
\[
(p^*_E)^{-1}[C_0] = 2[P]
\]
where $[P]$ is the class of a point in $X$.
\item[(c)]
Show that the inclusion $L  \to E$ is the inclusion of the second summand of $E = \mls O_{\PP^1}(2) \oplus \mls O_{\PP^1}(2),$ so in particular $L \simeq \mls O_{\PP^1}(2).$ Hence there is an exact sequence
\[
0 \to L \to \mls O_{\PP^1}(2) \oplus \mls O_{\PP^1}(2) \to \mls O_{\PP^1}(2) \to 0.
\]
Use \eqref{eq:excess8} to conclude that
\[
(p^*_E)^{-1}[L] = 2[P].
\]

\item[(d)] Conclude that $[X]^{vir}_E = 4[P].$

\end{itemize}

\item\label{excess6}

Let $X$ be a scheme. A \underline{short exact sequence of vector bundles} is a sequence of abelian cones that arises from applying $\relSpec_X(Sym(-))$ to a short exact sequence of locally free sheaves on $X$ (compare Definition \ref{def:quotient-bundle}).  Suppose we have the following diagram of short exact sequences of vector bundles on $X$, with all vertical maps given by closed embeddings.
\[
\begin{tikzcd}
0 \arrow[r] & E \arrow[r] \arrow[d, equal] & F \arrow[d] \arrow[r] & G\arrow[d] \arrow[r] & 0 \\
0 \arrow[r] & E \arrow[r] & F' \arrow[r] & G' \arrow[r] & 0
\end{tikzcd}
\]
Show that the square with $F, G, F'$, and $G'$ is fibered.
\end{enumerate}

\section{Behrend-Fantechi obstruction theories I}\label{sec:BFI}
At this point, to define a virtual class on $X$, we need to embed $X$ as the zero locus of a section $s$ of a vector bundle $E$ on a pure-dimensional scheme $Y$: the data $(Y, E, s)$ is used to define $[X]^{\vir} \in A_*(X).$
    On the other hand, we will see in Section \ref{sec:pot} that a \textit{Behrend-Fantechi obstruction theory} on $X$ is a certain kind of morphism $\EE_X \to \LL_X$ in the derived category of $X$, and this data can also be used to define a virtual class $[X]^{vir} \in A_*(X)$. The motivation for the latter construction is that the triple $(Y, E, s)$ defining $X$ may only be available locally on $X$, and the morphism $\EE_X \to \LL_X$ in the derived category is a way of gluing together such local data.
\footnote{When $X$ is a moduli stack, this morphism often arises intrinsically from the deformation theory of $X$.}

The relationship between the two virtual class constructions is made fully precise in Propositions \ref{prop:local1} and \ref{prop:local2}. Informally, how can one see they are related? First of all, if $(Y, E, s)$ is as above and $\sE$ is the dual of the sheaf of sections of $E$, then $s$ defines a morphism $s^\vee: \sE|_X \to \mls I \subseteq \mls O_Y $ where $\mls I$ is the ideal sheaf of $X$ in $Y$. The morphism $\EE_X \to \LL_X$ defining a Behrend-Fantechi obstruction theory should be viewed as a generalization of $s^\vee$.
Second of all, the surjection $s^\vee$ induces the closed embedding $C_{X/Y} \hookrightarrow E|_X$ that defines $[X]^{\vir}$. Similarly, we will see in Section \ref{sec:pot} that the derived category morphism $\EE_X \to \LL_X$ produces a closed embedding $\mf C_X \hookrightarrow st(\EE_X)$ of algebraic stacks that defines $[X]^{\vir}\in A_*(X)$
We remark that the stack $\mf C_X$ is called the \textit{intrinsic normal cone}.

We present the Behrend-Fantechi construction of obstruction theories in Sections 5-7. Section 5 gives technical motivation for the construction, Section 6 presents the construction itself, and Section 7 discusses important features of the construction.

\subsection{A motivating computation}

To motivate the definitions of $\mf C_X$ and $st(\EE_X)$, we present the following computation. Suppose we have a commuting diagram of $k$-schemes

\begin{equation}\label{eq:BF2}
\begin{tikzcd}
X \arrow[r, "i"] \arrow[dr, "j"'] & Y \arrow[d, "f"]\\
 & Y'
\end{tikzcd}
\end{equation}
where $i$ and $j$ are closed embeddings and $f$ is smooth.

By \cite[Tag 06BB(3)]{stacks-project}, there is an exact sequence of coherent sheaves
\[
0 \to \mls N_{X/Y'} \to \mls N_{X/Y} \to i^*\Omega_{Y/Y'} \to 0
\]
where $\mls N_{X/Y}$ (resp. $\mls N_{X/Y'}$) is the conormal sheaf of the embedding $X \hookrightarrow Y$ (resp. $X \hookrightarrow Y'$).
Applying $\relSpec_X(Sym(-))$ yields a sequence of abelian cones
\begin{equation}\label{eq:cotangent}
0 \to i^*T_{Y/Y'} \to N_{X/Y} \to N_{X/Y'} \to 0.
\end{equation}
One can show that $C_{X/Y} \subseteq N_{X/Y}$ is
the preimage of $C_{X/Y'} \subseteq N_{X/Y'}$ (Exercise \ref{BF0}),
hence we have a sequence of cones
\[
    0 \to i^*T_{Y/Y'} \to C_{X/Y} \to C_{X/Y'} \to 0.
\]

\begin{lemma}\label{lem:BF1}
Let $E$ be a vector bundle on $Y$ with a section $s$ such that $i: X \to \VV(s)$ is an isomorphism. Then the the composition $i^*T_{Y/Y'} \to C_{X/Y} \to i^*E$ is a closed embedding, and if we denote the quotient bundle by $E'$ then there is an induced closed embedding $C_{X/Y'} \to E'$ such that
\begin{equation}\label{eq:BF1.1}
(p^*_{i^*E})^{-1}([C_{X/Y}]) = (p^*_{E'})^{-1}([C_{X/Y'}]).
\end{equation}
\end{lemma}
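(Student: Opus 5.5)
The plan has three stages: first show that $i^*T_{Y/Y'} \to i^*E$ is a closed embedding of vector bundles, then build the quotient $E'$ and the induced embedding $C_{X/Y'} \hookrightarrow E'$, and finally compare the two Gysin-type classes using \eqref{eq:excess8}.

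\textbf{The embedding of $i^*T_{Y/Y'}$.} Translate everything into sheaves. The section $s$ gives a surjection $s^\vee\colon \sE|_X \to \mls I/\mls I^2 = \mls N_{X/Y}$, where $\sE$ is the dual of the sheaf of sections of $E$; this is the map inducing $C_{X/Y} \hookrightarrow N_{X/Y} \hookrightarrow i^*E$. The composite $i^*T_{Y/Y'} \to i^*E$ is then induced by the composite of sheaves
\[
\sE|_X \to \mls N_{X/Y} \to i^*\Omega_{Y/Y'}.
\]
This composite is surjective, since both maps are: the second is surjective by the exact sequence from \cite[Tag 06BB(3)]{stacks-project}. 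Its target $i^*\Omega_{Y/Y'}$ is locally free because $f$ is smooth. A surjection between locally free sheaves has locally free kernel $\mls K$, so Lemma \ref{lem:quotient} and Definition \ref{def:quotient-bundle} apply. This shows $i^*T_{Y/Y'} \to i^*E$ is a closed embedding of vector bundles with quotient $E' = \relSpec_X(Sym(\mls K))$.

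\textbf{The induced embedding of $C_{X/Y'}$.} Note that $\mls N_{X/Y'}$ is the kernel of $\mls N_{X/Y} \to i^*\Omega_{Y/Y'}$. Restricting $s^\vee$ to kernels gives a map $\mls K \to \mls N_{X/Y'}$, and a snake lemma argument shows it is surjective. Applying $\relSpec_X(Sym(-))$ gives closed embeddings $N_{X/Y'} \hookrightarrow E'$ and hence $C_{X/Y'} \hookrightarrow E'$. These fit into a map of short exact sequences
\[
0 \to i^*T_{Y/Y'} \to N_{X/Y} \to N_{X/Y'} \to 0, \qquad 0 \to i^*T_{Y/Y'} \to i^*E \to E' \to 0,
\]
with identity on the left. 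By Exercise \ref{excess6}, the right-hand square is cartesian. Combining this with Exercise \ref{BF0}, which says $C_{X/Y}$ is the preimage of $C_{X/Y'}$ in $N_{X/Y}$, we conclude that $C_{X/Y} = \pi^{-1}(C_{X/Y'})$, where $\pi\colon i^*E \to E'$ is the quotient map.

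\textbf{The equality \eqref{eq:BF1.1}.} The map $\pi$ is a smooth surjection; étale locally it is a trivial $i^*T_{Y/Y'}$-bundle. Since the preimage is a fiber product along a flat map, flat pullback gives
\[
\pi^*[C_{X/Y'}] = [C_{X/Y}]
\]
as cycles, multiplicities included. Write $p_{E'}\circ \pi = p_{i^*E}$, so $p_{i^*E}^* = \pi^* p_{E'}^*$. Set $\beta = (p_{E'}^*)^{-1}[C_{X/Y'}]$. Then
\[
p_{i^*E}^*\beta = \pi^*[C_{X/Y'}] = [C_{X/Y}].
\]
Applying $(p_{i^*E}^*)^{-1}$ to both sides gives \eqref{eq:BF1.1}.

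I expect the main obstacle to be the cartesian property $C_{X/Y} = \pi^{-1}(C_{X/Y'})$ scheme-theoretically, rather than only set-theoretically. That is what is needed for the flat pullback to give the cycle equality with correct multiplicities, and it rests on Exercise \ref{BF0}. I would verify it locally: there $Y \cong Y' \times \AA^r$ étale locally, and the normal cone of $X$ in $Y$ becomes $C_{X/Y'} \times \AA^r$.
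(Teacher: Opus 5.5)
Your proposal is correct and follows essentially the same route as the paper's solution to Exercise \ref{BF0}(b): embed $i^*T_{Y/Y'}$ in $i^*E$ through $N_{X/Y}$, form $E'$, paste the cartesian squares from Exercises \ref{BF0}(a) and \ref{excess6} to get $C_{X/Y} = \pi^{-1}(C_{X/Y'})$, and conclude by flat pullback using $p^*_{i^*E} = \pi^* p^*_{E'}$. The only difference is cosmetic: you get that $C_{X/Y'} \to E'$ is a closed embedding from surjectivity of $\mls K \to \mls N_{X/Y'}$ via the snake lemma, whereas the paper deduces it from the cartesian square by descent of closed immersions along the flat map $i^*E \to E'$.
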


Equation \eqref{eq:BF1.1} says that ``the virtual classes associated to the closed embeddings $C_{X/Y} \to i^*E$ and $C_{X/Y'} \to E'$ are the same.'' In other words, \textit{the virtual class is determined by the closed embedding $C_{X/Y} \to i^*E$ modulo $i^*T_{Y/Y'}$}.

In Section \ref{sec:inc}, we will explain how the italicized phrase can be reformulated in the language of algebraic stacks. The idea is that $i^*T_{Y/Y'}$ is a group scheme over $X$ acting on both $C_{X/Y}$ and $i^*E$ and that the closed embedding $C_{X/Y} \to i^*E$ is equivariant. By definition, this means we have a closed embedding of global quotient stacks (which happen to be schemes)
\[
C_{X/Y'} \simeq [C_{X/Y}/i^*T_{Y/Y'}] \to [i^*E/i^*T_{Y/Y'}] \simeq E'.
\]
In fact, if $Y'$ is itself smooth over $\Spec(k)$, then $i^*T_{Y/\Spec(k)}$ acts on both $C_{X/Y}$ and $i^*E$, and we have a closed embedding of algebraic stacks (which are not schemes in general)
\begin{equation}\label{eq:BF0.2}
[C_{X/Y}/i^*T_{Y/\Spec(k)}] \to [i^*E/i^*T_{Y/\Spec(k)}].
\end{equation}
The algebraic stack $[C_{X/Y}/i^*T_{Y/\Spec(k)}]$ is the \textit{intrinsic normal cone} $\mf C_X$ and the stack $[i^*E/i^*T_{Y/\Spec(k)}]$ is the stack $st(\EE|_X)$ mentioned in the introduction to Section \ref{sec:BFI}. The implication of Lemma \ref{lem:BF1} is that we should be able to read the virtual class associated to $(Y, E, s)$ from the closed embedding \eqref{eq:BF0.2}. We will see in Section \ref{sec:pot} that this embedding can be described by a perfect obstruction theory $\EE_X \to \LL_X$ as well as by the triple $(Y, E, s)$.

\subsection{(Relative) Intrinsic normal cone}\label{sec:inc}
Let $X$ be a Deligne-Mumford stack and let $X \to B$ be a morphism to an algebraic stack $B$, all defined over $k$.\footnote{If you are not comfortable with general stacks, you can safely take $X$ and $B$ to be schemes. The reason we allow more general objects at this point is that when we construct a virtual cycle on the moduli of stable maps to projective space, $X$ will be $\overline{\mathcal{M}}_{g, n}(\PP^n, d)$ and $B$ will be the algebraic stack of prestable marked curves.}
Until now, we have taken $B = \Spec(k)$ for the base, but for applications to Gromov--Witten theory it will be convenient to develop the theory for a more general base $B$.
The \textit{relative intrinsic normal cone }$\mf C_{X/B}$ to a morphism $X \to B$ is defined in \cite[Section 7]{BF}. It is a closed substack of an algebraic stack $\mf N_{X/B}$ over $X$ called the \textit{intrinsic normal space}.\footnote{Behrend--Fantechi \cite{BF} call $\mf N_{X/B}$ the intrinsic normal sheaf.} (In particular, $\mf C_{X/B}$ is an algebraic stack over $X$.) We will discuss the true definition of $\mf C_{X/B} \to \mf N_{X/B}$ in Section \ref{sec:pot}, and here content ourselves with a description (rather than a definition). Suppose we have a commuting diagram
\begin{equation}\label{eq:diagram}
\begin{tikzcd}
U \arrow[r, hookrightarrow] \arrow[d] & \arrow[d] V \\
X \arrow[r] & B
\end{tikzcd}
\end{equation}
such that $U$ and $V$ are schemes, $U \to X$ is \'etale, $V \to B$ is smooth, and $i:U \to V$ is a closed immersion. Then the restriction of $\mf C_{X/B} \to \mf N_{X/B}$ to $U$ is equal to
\begin{equation}\label{eq:inc}
[C_{U/V}/i^*T_{V/B}] \to [N_{U/V}/i^*T_{V/B}].
\end{equation}
We can always find a diagram \eqref{eq:diagram} where moreover $U \to X$ is surjective (see Exercise \ref{BF5}).
However, this description does not uniquely determine the $X$-stacks $\mf C_{X/B} \hookrightarrow \mf N_{X/B}$ (to do that, we would need to specify some kind of descent datum).  We conclude by explaining very concretely what is meant by $[N_{U/V}/i^*T_{V/B}]$ and $[C_{U/V}/i^*T_{V/B}].$\\

\noindent
\underline{Definition of $[N_{U/V}/i^*T_{V/B}]$.} For this, we use that an abelian cone $E$ on $X$ is an abelian group scheme over $X$, and a morphism $d: E^0 \to E^1$ of abelian group schemes on $X$ defines an action of $E^0$ on $E^1$ by the rule
\[
a \cdot b = d(a) + b \quad \quad \quad a \in E^0, b \in E^1.
\]
Notice that the definition of this action uses that both $E^0$ and $E^1$ are abelian groups. Using this action, we can define a functor
\begin{equation}\label{eq:st}  \left\{\begin{array}{c}
\text{2-term complexes $[E^0 \to E^1]$}\\
\text{of abelian cones, with $E^0$ a vector bundle}
\end{array}\right\} \longrightarrow \left\{\begin{array}{c}
\text{algebraic stacks}\end{array}\right\}
\end{equation}
sending $[E^0 \to E^1]$ to $[E^1/E^0]$; i.e., to the quotient of $E^1$ by the $X$-group scheme $E^0$.\footnote{
We need $E^0$ to be locally free so that it is a \textit{smooth} group scheme over $X$. This is part of what is needed for the quotient $[E^1/E^0]$ to be \textit{algebraic} (as opposed to just a stack).
} There is a canonical morphism $i^*T_{V/B} \to N_{U/V}$ (see Exercise \ref{BF1}), and the stack $[N_{U/V}/i^*T_{V/B}]$ is simply the output of the functor  \eqref{eq:st} on the complex $i^*T_{V/B} \to N_{U/V}$. \\

\noindent
\underline{Definition of $[C_{U/V}/i^*T_{V/B}]$.} Since $C_{U/V}$ is not an abelian cone, we cannot directly use the functor \eqref{eq:st} to define $[C_{U/V}/i^*T_{V/B}]$. However, we know $C_{U/V}$ is a closed subscheme of $N_{U/V}$.
\begin{lemma}\label{lem:BF4}
The action of $i^*T_{V/B}$ on $N_{U/V}$ preserves $C_{U/V}$.
\end{lemma}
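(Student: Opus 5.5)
The plan is to realize the action map $i^*T_{V/B} \times_U N_{U/V} \to N_{U/V}$, $(a,b) \mapsto d(a)+b$, as the map on normal spaces induced by a morphism of ambient spaces. Once that is done, it automatically carries the normal cone into the normal cone by the functoriality in Lemma \ref{lem:cone2}.

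The auxiliary space is $W = V \times_B V$, with projections $q_1, q_2 : W \to V$; both are smooth, being base changes of $V \to B$. The diagonal $\Delta : V \to W$ is a section of $q_2$. We embed $U$ in $W$ by $\Delta \circ i$. Since this is only a statement about closed subschemes of $N_{U/V}$, we may work Zariski-locally on $U$ and $V$ throughout, and this reduction is what I would do first.

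\textbf{Step 1 (splitting of $N_{U/W}$ and $C_{U/W}$).} Apply the motivating computation around \eqref{eq:cotangent} to the triangle $U \to W \xrightarrow{q_2} V$ with $q_2$ smooth. Together with Exercise \ref{BF0}, this gives an exact sequence
\[0 \to i^*T_{W/V} \to N_{U/W} \to N_{U/V} \to 0,\]
and identifies $C_{U/W}$ with the preimage of $C_{U/V}$. Here $T_{W/V} = q_1^*T_{V/B}$, so $i^*T_{W/V} \cong i^*T_{V/B}$. The section $\Delta$ of $q_2$ splits this sequence: on conormal sheaves, $U \subset V \xrightarrow{\Delta} W$ gives a map $\mls N_{U/W} \to \mls N_{U/V}$ that is a retraction. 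Hence $N_{U/W} \cong N_{U/V} \times_U i^*T_{V/B}$, and under this isomorphism
\[C_{U/W} \cong C_{U/V} \times_U i^*T_{V/B}.\]

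\textbf{Step 2 (the other projection).} The projection $q_1$ also restricts to $i$ on $U$ and is smooth. Applying the same sequence to $U \to W \xrightarrow{q_1} V$ gives a surjection of abelian cones $N_{U/W} \to N_{U/V}$ under which $C_{U/W}$ is the preimage of $C_{U/V}$. In particular, $C_{U/W}$ maps into $C_{U/V}$.

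\textbf{Step 3 (identification, the main obstacle).} It remains to check that the composite
\[N_{U/V} \times_U i^*T_{V/B} \cong N_{U/W} \to N_{U/V}\]
from Steps 1 and 2 is exactly $(b,a) \mapsto b + d(a)$, where $d$ is the canonical map of Exercise \ref{BF1}. Granting this, Steps 1 and 2 give $C_{U/V} + d(i^*T_{V/B}) \subseteq C_{U/V}$, which is the lemma.

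The check is a computation with conormal sheaves, done locally. Write the ideal of $U$ in $V$ as generated by $f_1, \dots, f_r$, and work étale-locally on $V$ over a smooth chart of $B$ where $V$ has relative coordinates. Then $U \subset W$ has ideal generated by $q_1^*f_j$ together with the diagonal ideal. The map induced by $q_1^*$ sends the class of $f_j$ to $q_1^* f_j$. Comparing with $q_2^* f_j$ via the diagonal, one gets
\[q_1^*f_j \equiv q_2^*f_j + \textstyle\sum_k (\partial f_j/\partial x_k)\,(q_1^*x_k - q_2^*x_k)\]
modulo the square of the ideal. Here the second term is precisely the component $\mls I/\mls I^2 \to i^*\Omega_{V/B}$ defining $d$. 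Dualizing gives the claimed formula $b + d(a)$.

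I expect the bookkeeping of this step, including the signs and which projection plays which role, to be the only real difficulty. If working over a stacky base is awkward, one can first pull back along a smooth atlas of $B$, since $T_{V/B}$ and both cones are compatible with that base change.
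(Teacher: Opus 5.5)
Your proposal is correct and follows the paper's argument essentially step for step. The paper also embeds $U$ in $V\times_B V$ via the diagonal, uses the splitting of one projection together with Exercise~\ref{BF0} to identify the normal cone with $i^*T_{V/B}\times C_{U/V}$, and shows that the other projection induces the action map (you swap the roles of the two projections, which changes nothing). The only real difference is in the final compatibility check: the paper does it by taking $h^{-1}$ of a morphism of cotangent-complex triangles, whereas you do it by a direct local Taylor expansion, and the paper itself notes that such a direct check also works.
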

\noindent
Because of the lemma, it makes sense to quotient $C_{U/V}$ by the action of $i^*T_{V/B}$. We define $[C_{U/V}/i^*T_{V/B}]$ to be this quotient. In fact, the \textit{morphism} $[C_{U/V}/i^*T_{V/B}] \to [N_{U/V}/i^*T_{V/B}]$ is the quotient of the closed embedding $C_{U/V} \to N_{U/V}$ by the action of $i^*T_{V/B}$. Because of this, the morphism of algebraic stacks $\mf C_{X/B} \to \mf N_{X/B}$ is (by definition) a closed embedding.

\subsection{More on cone stacks}\label{sec:conestacks}
If $[E^0 \to E^1]$ is a 2-term complex of abelian cones with $E^0$ a vector bundle, the output $[E^1/E^0]$ of the functor in \eqref{eq:st} is called a \emph{cone stack}.
More generally, if $C$ is a cone (not necessarily abelian) and $E^0$ is a vector bundle with a morphism of cones $E^0 \to C$ (Definition \ref{def:cone}), then $C$ has a canonical closed embedding in an abelian cone $E^1$ and the composition $E^0 \to C \to E^1$ is a morphism of abelian cones. If the induced action of $E^0$ on $E^1$ preserves $C$ then we can take the stack quotient $[C/E^0]$, and this is also called a \emph{cone stack} (in particular, the intrinsic normal cone arises this way). The most general definition of a cone stack is in \cite[Definition 1.5]{BF}; such a stack is characterized by locally arising as a quotient in the way we just described.

If $E^1$ is also a vector bundle then $[E^1/E^0]$ is a \emph{vector bundle stack}. A morphism of 2-term complexes of abelian cones is a commuting diagram
\[
\begin{tikzcd}
{[\; E^0} \arrow[r] \arrow[d] & {E^1 \;]} \arrow[d] \\
{[\; F^0} \arrow[r] & {F^1 \;]},
\end{tikzcd}
\]
and this morphism is a \emph{quasi-isomorphism} if it is obtained by applying $\relSpec_X(Sym(-))$ to a quasi-isomorphism of 2-term complexes of finite rank locally free sheaves. A vector bundle stack $[E^1/E^0]$ only depends on the complex $[E^0 \to E^1]$ up to quasi-isomorphism, in the following sense.

\begin{lemma}\label{lem:BF88}
Suppose we have a quasi-isomorphism of complexes
\[
\phi: [E^0 \to E^1] \to [F^0 \to F^1].
\]
Then there is an induced isomorphism of algebraic stacks $[E^1/E^0] \to [F^1/F^0]$.
\end{lemma}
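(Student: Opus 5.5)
The plan is to build the morphism of stacks from the morphism of complexes, and then check that it is an isomorphism fppf (or smooth) locally on $X$, where the quasi-isomorphism can be put into a standard form.

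First, the morphism $\phi$ consists of maps $\phi^0: E^0 \to F^0$ and $\phi^1: E^1 \to F^1$ of abelian group schemes compatible with the differentials. The map $\phi^1$ is therefore equivariant with respect to the homomorphism $\phi^0$. Explicitly, for points $a$ of $E^0$ and $b$ of $E^1$,
\[
\phi^1(d(a)+b) = d(\phi^0(a)) + \phi^1(b).
\]
Equivariance for a group homomorphism induces a morphism of quotient stacks $[E^1/E^0] \to [F^1/F^0]$. On $T$-points, an $E^0$-torsor $P$ with equivariant map $P \to E^1$ is sent to the induced torsor $P \times^{E^0} F^0$ with its induced map to $F^1$. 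Checking that a morphism of stacks is an isomorphism is local on $X$, so we may pass to an open cover of $X$ on which all four sheaves are free and the quasi-isomorphism has a convenient shape.

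Second, I would reduce to a special case. I expect to factor $\phi$ through a mapping-cylinder-type complex. Set
\[
G = [E^0 \oplus F^0 \to E^1 \oplus F^0],
\]
whose differential is $(a,c) \mapsto (d a, c)$. The inclusion $[E^0 \to E^1] \to G$ and the projection from $G$ induced by $\phi$ and the identity of $F^0$ should both be quasi-isomorphisms of termwise locally free complexes. One of these maps is "split injective with acyclic cokernel" and the other is "surjective with acyclic kernel". It is therefore enough to treat a quasi-isomorphism that is termwise surjective with kernel $[K^0 \xrightarrow{\sim} K^1]$, an isomorphism of vector bundles. Working with dual sheaves, surjectivity of sheaf maps corresponds to closed embeddings of bundles, so I would phrase this as follows. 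Up to the bookkeeping of the contravariant functor, we have $E^i \cong F^i \oplus K$ locally, with $E^0 \to E^1$ equal to $d_F \oplus \mathrm{id}_K$ after a change of splitting.

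In this split form, $[(F^1 \oplus K)/(F^0 \oplus K)]$ maps isomorphically to $[F^1/F^0]$. The reason is that $K$ acts freely and transitively on the $K$-factor, so quotienting by $K$ first gives $F^1$ with residual action of $F^0$. This is a standard "quotient in stages" statement for stacks.

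The main obstacle will be the change of splitting. A general acyclic kernel sitting inside a quasi-isomorphism need not split off compatibly with the differentials: the differential may have an off-diagonal component $K^0 \to F^1$. I would handle this by an explicit automorphism of $E^1 = F^1 \oplus K^1$ that kills the off-diagonal term, using that $K^0 \to K^1$ is invertible. Such an automorphism is compatible with the group actions, and hence gives an isomorphism of quotient stacks commuting with the maps to $[F^1/F^0]$. Finally, the local isomorphisms agree with the globally defined morphism from the first step, so they glue, and $[E^1/E^0] \to [F^1/F^0]$ is an isomorphism.
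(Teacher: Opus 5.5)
Your argument is sound, but it takes a different route from the paper. The paper (Exercise~\ref{BF4}) has three steps. (a) A morphism of complexes induces a morphism $[E^1/E^0] \to [F^1/F^0]$, exactly as in your first step. (b) A chain homotopy $k\colon E^1 \to F^0$ from $\phi$ to $\psi$ induces a 2-isomorphism between the induced morphisms, via $(p,f) \mapsto (p, f - k\alpha(p))$ on $P \times^{E^0} F^0$. (c) On an affine \'etale chart all terms are projective, so the quasi-isomorphism $\phi$ is a homotopy equivalence (the hint cites Weibel), and by (b) a homotopy inverse $\psi$ induces a quasi-inverse of the morphism from (a).

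You avoid homotopies and 2-morphisms altogether. You factor $\phi$ through $G = E \oplus [F^0 \xrightarrow{\mathrm{id}} F^0]$, reduce to a termwise split surjection with contractible kernel, split that sequence of complexes, and quotient in stages. Both arguments rest on the same fact: an acyclic two-term complex of vector bundles is locally contractible. The paper's packaging shows the construction factors through the homotopy category, so the isomorphism depends only on the homotopy class of $\phi$, up to 2-isomorphism. That is convenient for knowing $st(\EE)$ is well defined. Yours builds the isomorphism by hand and needs nothing beyond quotients in stages, at the cost of splitting bookkeeping.

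Two points in that bookkeeping need fixing. Work covariantly with the bundles themselves (equivalently their sheaves of sections, not the $Sym$-generators).

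\begin{itemize}
\item The degree-one map $G^1 = E^1 \oplus F^0 \to F^1$ must be $(b,c) \mapsto \phi^1(b) + d_F(c)$. Its surjectivity is exactly where you use that $\phi$ is surjective on $h^1 = \mathrm{coker}(d)$.
\item The kernel $K$ is a subcomplex, so $d_G(K^0) \subseteq K^1$. Hence the off-diagonal term is $\beta\colon F^0 \to K^1$, not $K^0 \to F^1$. It is removed by the automorphism $(y,k) \mapsto (y, k - d_K^{-1}\beta(y))$ of the degree-zero term $G^0 = F^0 \oplus K^0$, paired with the identity on $G^1$; this commutes with the projection to $F$. An automorphism of $G^1$ alone would not suffice, since it would require $\beta$ to factor through $d_F$.
\end{itemize}
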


The lemma has the following corollary, which also furnishes some concrete examples of vector bundle stacks.

\begin{corollary}\label{cor:examples}
Let $f: E^0 \to E^1$ be a morphism of vector bundles on a scheme or Deligne-Mumford stack $X$.
\begin{enumerate}
\item If $f$ is a closed embedding, then $[E^1/E^0]$ is the quotient vector bundle $E^1/E^0$ of Definition \ref{def:quotient-bundle}.
\item If $f$ arises by applying $\relSpec_X(Sym(-))$ to an injective morphism of finite rank locally free sheaves $\mls E^1 \to \mls E^0$ with locally free cokernel $\mls K$, then $[E^1/E^0]$ is the classifying stack of $K$, where $K = \relSpec_X(Sym(\mls K))$ is viewed as an $X$-group scheme.
\end{enumerate}

\end{corollary}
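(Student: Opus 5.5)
Both parts will follow from Lemma \ref{lem:BF88} by finding a simpler complex quasi-isomorphic to $[E^0 \to E^1]$ and computing the quotient stack for that simpler complex directly.

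Fix conventions first. The morphism $f: E^0 \to E^1$ comes from a morphism of locally free sheaves $f^\vee: \mls E^1 \to \mls E^0$, where $E^i = \relSpec_X(Sym(\mls E^i))$. The functor $\relSpec_X(Sym(-))$ is contravariant, so the "complex" $[E^0 \to E^1]$ corresponds to the two-term complex of sheaves $[\mls E^1 \to \mls E^0]$. Quasi-isomorphisms of the bundle complexes are, by definition, induced by quasi-isomorphisms of these sheaf complexes.

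For (1), suppose $f$ is a closed embedding. By Lemma \ref{lem:quotient} we get a short exact sequence $0 \to \mls K \to \mls E^1 \to \mls E^0 \to 0$ of locally free sheaves, and by Definition \ref{def:quotient-bundle} the quotient bundle is $E^1/E^0 = \relSpec_X(Sym(\mls K))$. The inclusion $[\mls K \to 0] \to [\mls E^1 \to \mls E^0]$ is a quasi-isomorphism: both complexes have cohomology $\mls K$ in the $\mls E^1$ slot and $0$ in the other. Applying $\relSpec_X(Sym(-))$ gives a quasi-isomorphism $[E^0 \to E^1] \to [0 \to E^1/E^0]$. Lemma \ref{lem:BF88} then gives $[E^1/E^0] \cong [(E^1/E^0)/0] = E^1/E^0$, since quotienting by the trivial group does nothing.

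For (2), suppose $\mls E^1 \to \mls E^0$ is injective with locally free cokernel $\mls K$. The surjection $[\mls E^1 \to \mls E^0] \to [0 \to \mls K]$ is a quasi-isomorphism, because the only cohomology is $\mls K$ in the $\mls E^0$ slot. This yields a quasi-isomorphism $[K \to 0] \to [E^0 \to E^1]$. The direction is opposite to the one in Lemma \ref{lem:BF88}, but the lemma produces an isomorphism of stacks, so we can apply it to this map and invert the result. We conclude $[E^1/E^0] \cong [0/K] = BK$, where $K$ acts trivially on the zero bundle $X$.

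The main point needing care is that Lemma \ref{lem:BF88} applies to these maps. It requires quasi-isomorphisms between complexes whose terms are finite rank locally free sheaves, and here $\mls K$ and $0$ are locally free, so this holds. One should also check that the induced morphism on quotient stacks is the expected one: in (1) the map $E^1 \to E^1/E^0$, and in (2) the map $pt \to BK$ pulled back along $X$. This amounts to tracing the equivariant map $E^1 \to F^1$ along the group homomorphism $E^0 \to F^0$.
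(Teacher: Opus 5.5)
Your proof is correct and follows the route the paper intends: the paper gives no written proof, but Exercise \ref{BF0.5}(b) asks you to deduce Corollary \ref{cor:examples} from Lemma \ref{lem:BF88}, which is exactly what you do by passing to the quasi-isomorphic complexes $[0 \to E^1/E^0]$ and $[K \to 0]$. One small remark: in (2) nothing is ``opposite'' about the direction, since Lemma \ref{lem:BF88} applies to any quasi-isomorphism, here with source $[K \to 0]$ and target $[E^0 \to E^1]$, and it directly gives $BK = [X/K] \simeq [E^1/E^0]$.
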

The classifying stack of $K$ in Corollary \ref{cor:examples}, denoted $BK$, is equal to the stack quotient $[X/K]$ where $K$ acts trivially on $X$. As a topological space $[X/K]$ is just $X$, and the stabilizer group at a geometric point $x \to X$ is the fiber of $K$ at $x$.\\

\noindent
\textbf{Exercises.}

\begin{enumerate}[label=4.\arabic*, leftmargin=*]
\item \label{BF0}\begin{itemize}
\item[(a)] Show that $C_{X/Y} \subseteq N_{X/Y}$ is the preimage of $C_{X/Y'} \subseteq N_{X/Y'}$ under the morphism $N_{X/Y} \to N_{X/Y'}$ appearing in \eqref{eq:cotangent}.

\item[(b)]Prove Lemma \ref{lem:BF1}. (\textit{Hint: use Exercise \ref{cone1}.})
\end{itemize}

\item \label{BF0.5} The point of this exercise is to develop intuition for quotients $[E^1/E^0]$ of abelian cones.
\begin{itemize}
\item[(a)] Show that the inclusion of the $x$-axis $\AA^1_k \to \AA^2_k$ is a morphism of abelian cones over $\Spec(k)$. Show that the induced action of $\mathbb{G}_a$ on $\AA^2_k$ is free (meaning that for every $T$-point $T \to \AA^2_k$ the induced action $\AA^1_k(T) \times \AA^2_k(T) \to \AA^2_k(T)$ is free), so the stack $[\AA^2_k/\AA^1_k]$ has a chance of being a scheme (a quotient of a scheme by a free group action could also be an algebraic space). In fact by Corollary \ref{cor:examples} the stack $[\AA^2_k/\AA^1_k]$ is the vector space quotient $\AA^2_k/\AA^1_k$.
\item[(b)] Use Lemma \ref{lem:BF88} to prove Corollary \ref{cor:examples}.
\end{itemize}

\item \label{BF1} In the definition of $[N_{U/V}/i^*T_{V/B}]$, we claimed there was a canonical map $i^*T_{V/B} \to N_{U/V}$. Derive this map from the universal differential $d: \mls O_V \to \Omega_{V/B}.$
\item \label{BF2} Prove Lemma \ref{lem:BF4}. (\textit{Hint: Use the composition
$V \xrightarrow{\Delta_{V}} V \times_{B} V \xrightarrow{p_{1}} V$
to obtain an identification
$i^{\ast}T_{V/B} \times N_{U/V} \cong N_{U/V\times_{B}V}$,
and show that the post-composition of this isomorphism with the natural map
$N_{U/V \times_{B} V} \xrightarrow{p_{2\ast}} N_{U/V}$
is coincides with the action map.})
\item\label{BF3} In this exercise you will compute $\mf C_{X/B} \to \mf N_{X/B}$ in two special cases.
\begin{itemize}
\item[(a)] Describe $\mf C_{X/B} $, $\mf N_{X/B}$, and $\mf C_{X/B} \to \mf N_{X/B}$ when $X \to B$ is a closed embedding of schemes.
\item[(b)] Describe $\mf C_{X/B} $, $\mf N_{X/B}$, and $\mf C_{X/B} \to \mf N_{X/B}$ when $X \to B$ is a smooth morphism of schemes.
\end{itemize}
\item\label{BF5} Let $X$ be a Deligne-Mumford stack and let $X \to B$ be a morphism to an algebraic stack $B$.
\begin{itemize}
\item[(a)] Prove that for any geometric point $\bar x \to X$ there is a square \eqref{eq:diagram} where the image of $U$ contains the image of $\bar x$, $U$ and $V$ are schemes, $U \to X$ is \'etale, $Y \to B$ is smooth, and $U \to V$ is a closed immersion. (\textit{Hints: \cite[Tags 04II, 057G]{stacks-project}.})
\item[(b)] Show that we can moreover assume $U \to X$ is surjective.
\end{itemize}
\item\label{BF4} This requires some knowledge of global quotient stacks.
\begin{itemize}
\item[(a)] Let $\phi: [E^0 \xrightarrow{d} E^1] \to [F^0 \xrightarrow{d} F^1]$ be a morphism of complexes of vector bundles. Construct an associated morphism $[E^1/E^0] \to [F^1/F^0].$
\item[(b)] Let $\phi, \psi: [E^0 \xrightarrow{d} E^1] \to [F^0 \xrightarrow{d} F^1]$ be two morphisms of complexes. A \emph{homotopy} from $\phi$ to $\psi$ is a morphism $k: E^1 \to F^0$ satisfying $\psi^0 = kd + \phi^0$ and $\psi^1 = dk + \phi^1$. Show that a homotopy $\phi \to \psi$ induces a 2-isomorphism of morphisms of algebraic stacks $\phi, \psi:[E^1/E^0] \to [F^1/F^0]$.
\item[(c)] Use part (b) to prove Lemma \ref{lem:BF88}. (\textit{Hint: \cite[Thm 10.4.8]{weibel}.}
\end{itemize}
\end{enumerate}

\section{Behrend-Fantechi obstruction theories II}

In this section we give Behrend-Fantechi's definition of a (perfect) obstruction theory (Definition \ref{def:BF}) and associated virtual class (Construction \ref{const3}) after a brief summary of a prerequisite concept, namely the cotangent complex. The benefit of these newest definitions of an obstruction theory and virtual class is that they do not depend on an embedding $X \to Y$. We will see in Propositions \ref{prop:local1} and \ref{prop:local2} are in fact direct generalizations of Construction \ref{const:1}.

\subsection{Cotangent complex}
Let $X$ be a Deligne-Mumford stack and let $B$ be an algebraic stack over $k$. Every quasicompact and quasiseparated morphism $X \to B$ of algebraic stacks has a \textit{cotangent complex} $\LL_{X/B} \in D_{qc}(X)$ with vanishing cohomology in positive degrees.\footnote{
The category $D_{qc}(X)$ is the derived category of $\mls O_X$-modules in the \'etale topology on $X$ with quasi-coherent cohomology sheaves.
}
In these notes, we will only make use of $\LL_{X/B}$ when $X$ is Deligne-Mumford and $B$ is algebraic, so we restrict ourselves to this case.

This complex should be viewed as a sort of left derived functor to taking K\"ahler differentials and it has functoriality properties analogous to those of the sheaf of differentials. Here are some key properties of the cotangent complex (see \cite[Thm 8.1]{Olsson07}).

\begin{enumerate}[label=(C\arabic*)]
\item \label{eq:C1}
There is a canonical morphism $\LL_{X/B} \to \Omega_{X/B}$ (where we view $\Omega_{X/B}$ as a complex concentrated in degree 0). This morphism is a quasi-isomorphism if $X \to B$ is smooth, and in general it induces an isomorphism $h^0(\LL_{X/B}) \simeq \Omega_{X/B}$.
\item \label{eq:C2} If $X \to B$ admits a factorization $X \xrightarrow{i} B' \xrightarrow{p} B$ where $i$ is a closed immersion with ideal sheaf $\mls I$ and $p$ is smooth, then
\[
\tau_{\geq -1}\LL_{X/B} = [0 \to \mls I/\mls I^2 \xrightarrow{d} i^*\Omega_{B'/B} \to 0]
\]
where $\mls I/\mls I^2$ is in degree -1, $i^*\Omega_{B'/B}$ is in degree 0, $d$ is induced by the differential $\mls O_B \to \Omega_{B/B'}$, and $\tau_{\geq -1}$ is the canonical truncation functor defined in \cite[Tag 0118]{stacks-project}. If $i$ is regular then $\LL_{X/B}$ (no truncation) is equal to $[\mls I/\mls I^2 \xrightarrow{d} i^*\Omega_{B'/B}]$.
\item \label{eq:C3} Given a commuting square
\[
\begin{tikzcd}
X \arrow[r, "f"] \arrow[d] & X' \arrow[d] \\
B \arrow[r] & B'
\end{tikzcd}
\]
there is a canonical map $Lf^* \LL_{X'/B'} \to \LL_{X/B}$. If the square is fibered and either $X' \to B'$ or $B \to B'$ is flat, then this map is a quasi-isomorphism.
\item \label{eq:C4} Given a factorization $X \xrightarrow{h} Y \to B$ of $f$, there is a distinguished triangle in $D_{qc}(X)$:
\[
Lh^* \LL_{Y/B} \to \LL_{X/B} \to \LL_{X/Y} \to.
\]

\end{enumerate}
\begin{example}
Let $p\colon C \to B$ be a family of nodal curves.
Then the canonical morphism $\LL_{C/B} \to \Omega_{C/B}$ is an isomorphism (see Exercise \ref{POT10}).
\end{example}

\subsection{Perfect obstruction theory}\label{sec:pot}
Let $X$ be a scheme or Deligne-Mumford stack over an algebraic stack $B$.

\begin{definition}\label{def:BF}
A \emph{perfect obstruction theory} on $X$ relative to $B$ is a morphism $\phi: \EE_{X/B} \to \LL_{X/B}$ in $D^{\leq 0}_{qc}(X)$ such that
\begin{enumerate}
\item $\EE_{X/B}$ is locally quasi-isomorphic to a complex of finite rank locally free sheaves supported in degrees $[-1, 0]$ (we say that $\EE_{X/B}$ is \emph{perfect}).
\item $\phi$ induces an isomorphism $h^0(\EE_{X/B}) \to h^0(\LL_{X/B})$ and a surjection $h^{-1}(\EE_{X/B}) \to h^{-1}(\LL_{X/B})$ (we say that $\phi$ is an \emph{obstruction theory}).
\end{enumerate}
\end{definition}

Just as the obstruction theory in Construction \ref{const:1} endcodes the embedding of a normal cone in a vector bundle, so also Definition \ref{def:BF} encodes a closed embedding of $\mf C_{X/B}$ in a vector bundle stack as follows.

If $\EE$ is a complex in $D_{qc}(X)$, let $\tau_{[-1,0]}(\EE)$ denote the canonical truncation of $\EE$ to a 2-term complex $[-1,0]$. Writing $\tau_{[-1,0]}(\EE) = [\mls E^{-1} \to \mls E^0],$ we define
\[
st(\EE) := [E^1/E^0], \quad \quad \quad \quad \quad \text{where }E^i = \Spec(Sym(\mls E^{-i})),
\]
and $[E^1/E^0]$ is defined as in \eqref{eq:st}.\footnote{As we've written it here, $E^0$ needn't be a vector bundle. We will only apply this construction in situations where $\EE$ is locally on $X$ represented by a 2-term complex $[\mls E^{-1} \to \mls E^0]$ with $\mls E^0$ locally free, and hence the stack $st(\EE)$ is algebraic, as this can be verified locally on $X$. } (Note that by Lemma \ref{lem:BF88}, the stack $st(\EE)$ only depends on the quasi-isomorphism class of $\EE$.)  Recall that in Section \ref{sec:inc} we did not give a true definition of $\mf N_{X/B}$; we can do this now.
\begin{definition}\label{def:inc}
If $X$ is a Deligne-Mumford stack and $B$ is an algebraic stack, the \textit{intrinsic normal space} to a morphism $X \to B$ is $\mf N_{X/B} := st(\LL_{X/B})$.
\end{definition}

\noindent
The definition of a perfect obstruction theory is chosen so that $st(\EE_{X/B})$ is a vector bundle stack and so that the following lemma holds.
\begin{lemma}\label{lem:BF5}
A perfect obstruction theory $\EE_{X/B} \to \LL_{X/B}$ induces a closed embedding
\[\mf  N_{X/B} = st(\LL_{X/B}) \to st(\EE_{X/B}).\]
\end{lemma}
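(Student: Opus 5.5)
The plan is to work étale locally on $X$, where both stacks are quotients of abelian cones by vector bundles, and then check that the local embeddings glue.

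\emph{Local presentation.} By Exercise \ref{BF5}, choose a diagram \eqref{eq:diagram} with $U \to X$ étale and surjective, $V \to B$ smooth and $i: U \to V$ a closed immersion. By \ref{eq:C2} and \ref{eq:C3}, on $U$ we have
\[
\tau_{[-1,0]}\LL_{X/B}|_U \simeq [\mls I/\mls I^2 \to i^*\Omega_{V/B}].
\]
Hence $\mf N_{X/B}|_U = [N_{U/V}/i^*T_{V/B}]$. Shrinking $U$ further, the perfect complex $\EE_{X/B}|_U$ is represented by $[\mls E^{-1} \to \mls E^0]$ with both terms locally free. Since $i^*\Omega_{V/B}$ is locally free in degree $0$, and after shrinking $U$ if necessary, I will represent $\phi$ by an actual map of complexes
\[
[\mls E^{-1} \to \mls E^0] \to [\mls I/\mls I^2 \to i^*\Omega_{V/B}].
\]
To arrange this, I first replace $\mls E^0$ by $\mls E^0 \oplus i^*\Omega_{V/B}$ (adding the identity summand to $\mls E^{-1}$ as well), which keeps the same quasi-isomorphism class. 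After that replacement I may take $\phi^0$ surjective, in fact a split surjection.

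\emph{Closed embedding of cones.} Use the mapping cone and the long exact sequence. The condition that $h^0(\phi)$ is an isomorphism and $h^{-1}(\phi)$ is surjective, together with the surjectivity of $\phi^0$, gives surjectivity of
\[
\mls E^{-1} \oplus_{\mls E^0} \ldots \;\to\; \mls E^{-1}\to \mls I/\mls I^2 \times_{i^*\Omega_{V/B}} \mls E^0.
\]
More precisely, the induced map $\mls E^{-1} \to \mls I/\mls I^2 \times_{i^*\Omega} \mls E^0$ is surjective. This is a diagram chase: given $(a,e)$ with the same image, surjectivity on $h^{-1}$ and injectivity on $h^0$ let us lift the pair modulo the image of $\mls E^{-1}$. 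Applying $\relSpec(Sym(-))$, a surjection of sheaves gives a closed embedding. So $N_{U/V}\times_{i^*T_{V/B}}E^0 \hookrightarrow E^1$, and this embedding is $E^0$-equivariant. Quotienting by $E^0$, and using that $[N_{U/V}\times_{i^*T} E^0 / E^0] \simeq [N_{U/V}/i^*T_{V/B}]$ because $E^0 \to i^*T_{V/B}$ is a smooth surjective group map with kernel acting freely, yields a closed embedding
\[
\mf N_{X/B}|_U \hookrightarrow [E^1/E^0] = st(\EE_{X/B})|_U.
\]
Being a closed embedding of stacks can be checked after the smooth cover $E^0 \to [E^1/E^0]$, where it is the closed embedding of schemes just constructed.

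\emph{Gluing and independence of choices.} The map $st(\LL)\to st(\EE)$ is defined globally: $st$ is functorial in the truncated complex, and by Exercise \ref{BF4} homotopies give $2$-isomorphisms. So the local representatives of $\phi$ glue up to canonical $2$-isomorphism, and Lemma \ref{lem:BF88} makes the result independent of the chosen resolutions. Being a closed embedding is étale local on the target. I expect the main obstacle to be this globalization: making $st(-)$ a well-defined functor on the derived category of $[-1,0]$ complexes, rather than on chosen resolutions, so that the local closed embeddings are restrictions of a single global morphism. For this I would appeal to Behrend–Fantechi's equivalence between $D^{[-1,0]}$ with locally free presentations and Picard stacks \cite[Sec 2]{BF}. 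The local surjectivity chase is routine by comparison.
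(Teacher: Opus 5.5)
Your strategy is sound, but it mirrors the paper's rather than following it. In Exercise~\ref{POT3} the paper keeps the standard presentation $[\mls I/\mls I^2\to i^*\Omega_{V/B}]$ of $\tau_{\geq -1}\LL_{X/B}|_U$. It instead replaces $\EE_{X/B}|_U$ by the quasi-isomorphic complex $[\mls E^{-1}\times_{\mls E^0}\Omega_{V/B}\to\Omega_{V/B}]$, built from a lift $\Omega_{V/B}\to\mls E^0$ of $\Omega_{V/B}\to\mathrm{coker}(d)$. Then $\phi$ is the identity in degree $0$ and, by the four lemma, surjective in degree $-1$, and the embedding is $[N_{U/V}/i^*T_{V/B}]\hookrightarrow[E'/i^*T_{V/B}]$, with $E'$ the cone of $\mls E^{-1}\times_{\mls E^0}\Omega_{V/B}$.

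You instead keep $[\mls E^{-1}\to\mls E^0]$ and re-present $\tau_{\geq -1}\LL_{X/B}|_U$ as $[\mls I/\mls I^2\times_{\Omega}\mls E^0\to\mls E^0]$, with $\Omega=i^*\Omega_{V/B}$. This factors $\phi$ as a map that is the identity in degree $0$, followed by the quasi-isomorphism $(pr_1,\phi^0)$. Your chase showing $\mls E^{-1}\to\mls I/\mls I^2\times_{\Omega}\mls E^0$ is onto is correct. It uses only injectivity on $h^0$ and surjectivity on $h^{-1}$, which is more direct than the paper's step. The price is that your new degree $-1$ sheaf is not locally free, so you must separately identify the stack of $[\mls I/\mls I^2\times_{\Omega}\mls E^0\to\mls E^0]$ with $[N_{U/V}/i^*T_{V/B}]$. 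By contrast, every complex the paper replaces stays locally free (so Lemma~\ref{lem:BF88} applies as stated), and its source is literally the local model of $\mf N_{X/B}$.

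That identification is where your write-up is wrong as stated. Because $\relSpec(Sym(-))$ is contravariant, $\phi^0$ induces $i^*T_{V/B}\to E^0$ (a subbundle once $\phi^0$ is onto), not a smooth surjection $E^0\to i^*T_{V/B}$. Likewise, the cone of the fiber-product sheaf $\mls I/\mls I^2\times_{\Omega}\mls E^0$ is the pushout $N_{U/V}\sqcup_{i^*T_{V/B}}E^0$ of abelian cones. A fiber product over $i^*T_{V/B}$ is not even defined, since nothing maps to $i^*T_{V/B}$. The correct argument runs as follows. If $\phi^0$ is onto, then locally $\mls E^0\cong \Omega\oplus\mls K$, and your complex splits as $[\mls I/\mls I^2\to \Omega]\oplus[\mls K\xrightarrow{\mathrm{id}}\mls K]$. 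Its quotient stack is then $[N_{U/V}/i^*T_{V/B}]\times[K/K]\cong[N_{U/V}/i^*T_{V/B}]$, where $K=\relSpec_U(Sym(\mls K))$ is the quotient bundle $E^0/i^*T_{V/B}$; that is what acts freely.

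Your way of making $\phi^0$ onto also needs repair. A chain map from the added summand $[\Omega\xrightarrow{\mathrm{id}}\Omega]$ to $[\mls I/\mls I^2\xrightarrow{\bar d}\Omega]$ has the form $(a,\bar d a)$, so it cannot be the identity in degree $0$ unless $\bar d$ splits. Take $a$ to be a lift of $\Omega\to\Omega/\phi^0(\mls E^0)$ through $\mls I/\mls I^2$; this exists because $h^0(\phi)$ is onto and $\Omega$ is projective on affine $U$. The added summand is null-homotopic, so $\phi$ is unchanged. Alternatively, skip this step: $(pr_1,\phi^0)$ is already a quasi-isomorphism using only surjectivity of $h^0(\phi)$. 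But then you need invariance of $st$ under quasi-isomorphisms of complexes whose degree-$0$ term alone is locally free, which extends Lemma~\ref{lem:BF88}.

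Two smaller slips: the atlas of $[E^1/E^0]$ is $E^1$, not $E^0$. And $\phi$ is represented by a chain map because its source is a bounded complex of projectives on affine $U$, not because $\Omega$ is locally free. The globalization you worry about is handled in the paper just as you propose: the morphism comes from functoriality of $st$, and closedness is checked \'etale locally.
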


\noindent
Showing the existence of a closed substack $\mf C_{X/B} \subseteq \mf N_{X/B}$ recovering the local description in Section \ref{sec:inc} is hard work, and a main result of \cite{BF}.
Nevertheless, combining that result with Lemma \ref{lem:BF5} we get a composition of closed embeddings
\begin{equation}\label{eq:closed}
\mf C_{X/B} \to \mf N_{X/B} \to st(\EE_{X/Y}).
\end{equation}
This composition $\mf C_{X/B} \hookrightarrow st(\EE_{X/B})$ is the analog of the closed embedding $C_{X/Y} \hookrightarrow E|_X$ in Construction \ref{const:1},
and we can use this embedding to define a virtual class on $X$.

\subsection{Virtual class}
The definition of a virtual class associated to \eqref{eq:closed} uses flat pullback along the structure morphism of the vector bundle stack $p: st(\EE_{X/Y}) \to X$. In particular, it uses the Chow group of the Artin stack $\EE_{X/Y}$. The reader unfamiliar with Kresch's theory \cite{kresch} for Chow groups of algebraic stacks stratified by global quotient stacks can either take the existence and properties of $p^*$ as a black box or consult Section \ref{sec:globalres} below. Before giving the construction, we define the \textit{rank} of a perfect obstruction theory.

\begin{definition}\label{def:rank}
If $\EE$ is perfect, then locally on $X$ we can write $\EE = [\mls E^{-1} \to \mls E^0]$ with $\mls E^i$ a finite rank locally free sheaf, and the \textit{rank} of $\EE$ is defined as $\mathrm{rank}(\mls E^0) - \mathrm{rank}(\mls E^{-1})$.
\end{definition}

\begin{remark}
Our definition of the rank of $\EE$ agrees with that in \cite{BF}, but is the negative of that in \cite{BCM}.
\end{remark}

\begin{construction}\label{const3}
Let $f: X \rightarrow B$ be a morphism from a scheme or Deligne-Mumford stack $X$ to an algebraic stack $B$, and suppose $\phi: \EE_{X/B} \to \LL_{X/B}$ is a perfect obstruction theory of rank $d$.
The  \emph{virtual pullback} $f_\Phi^!: A_*(B) \to A_{*+d}(X)$  is defined to be the composition
\[
A_*(B) \xrightarrow{\sigma} A_*(\mf C_{X/B}) \xrightarrow{j_*} A_*(st(\EE_{X/B})) \xrightarrow{(p^*)^{-1}} A_{*+d}(X),
\]
where
\begin{itemize}
\item $\sigma$ is defined via specialization to the  normal cone as in \cite[Section 5.1]{kresch}; in particular, it sends the cycle corresponding to a pure-dimensional integral closed substack $V \subset B$ to $[\mf C_{V \times_B X / V}]$.
\item $j_*$ is proper pushforward along \eqref{eq:closed}.
\item $(p^*)^{-1}$ is inverse to pullback along $p: st(\EE_{X/B}) \to X$. (The stack $st(\EE_{X/B})$ is a vector bundle stack, so $p^*$ is an isomorphism by \cite[Prop 5.3.2]{kresch}.)
\end{itemize}
If moreover $B$ has pure dimension $n$, the induced \emph{virtual class} on $X$ is defined to be
\[
[X]^{vir}_\phi := f^!_{\phi}[B] = (p^*)^{-1}[\mf C_{X/B}] \quad \in A_{n+d}(X).
\]

\end{construction}

\begin{remark}
If $B$ is an algebraic stack, then the Chow group $A_*(B)$ is defined in \cite{kresch}. Not all elements of this group correspond  to integral closed substacks of $B$, so our formula for $\sigma$ in Construction \ref{const3} is not complete. A complete description of how $\sigma$ acts on elements of $A_*(B)$ can be found in \cite{LW2}.
\end{remark}

\subsection{Relationship to Fulton's localized top Chern class}

Fulton's localized top Chern class can be recovered as a particular example of a Behrend-Fantechi virtual class:

\begin{proposition}\label{prop:local1}
Let $X$ be the zero locus of a section $s$ of a vector bundle $E$ on a pure-dimensional scheme $Y$ and let $i: X \to Y$ be the inclusion.
If $\mls E$ is the dual of the sheaf of sections of $E$, then setting $\EE_{X/Y} = [\mls E \to 0]$ there is a perfect obstruction theory
$\phi: \EE_{X/Y} \to \LL_{X/Y}$
so that
\[i^{!}_{\phi} = i^{!}_{E},\]
where the left side was defined in Construction \ref{const3} and the right side in Construction \ref{const:1}.
\end{proposition}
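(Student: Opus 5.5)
Here $\EE_{X/Y} = [\mls E|_X \to 0]$, with $\mls E|_X$ in degree $-1$. The morphism $\phi$ comes from the cotangent complex of the closed embedding. By \ref{eq:C2}, applied to the factorization $X \xrightarrow{i} Y \xrightarrow{\mathrm{id}} Y$ (the identity is smooth and $\Omega_{Y/Y}=0$), we have $\tau_{\geq -1}\LL_{X/Y} = [\mls I/\mls I^2 \to 0]$. The section $s$ dualizes to $s^\vee: \mls E \to \mls I \subseteq \mls O_Y$, which is surjective onto $\mls I$ because $X = \VV(s)$. Restricting to $X$ gives a surjection $\mls E|_X \to \mls I/\mls I^2$.

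To define $\phi$, I note that $\EE_{X/Y}$ is concentrated in degrees $\geq -1$ and $\LL_{X/Y}$ lies in $D^{\leq 0}$. Hence morphisms $\EE_{X/Y} \to \LL_{X/Y}$ agree with morphisms $\EE_{X/Y} \to \tau_{\geq -1}\LL_{X/Y}$: the truncation triangle contributes $\Hom(\EE, \tau_{\leq -2}\LL) = 0$ and $\Hom(\EE, \tau_{\leq -2}\LL[1]) = 0$ for degree reasons. So I take $\phi$ to be the map of complexes given by $\mls E|_X \to \mls I/\mls I^2$ in degree $-1$. Checking the perfect obstruction theory axioms is then immediate. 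The complex $\EE$ is a locally free sheaf in degree $-1$, hence perfect of rank $-d$. On $h^0$ both sides vanish, and $h^{-1}(\phi)$ is the surjection above.

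Next I compare the cones. Exercise \ref{BF3}(a) gives $\mf N_{X/Y} = N_{X/Y}$ and $\mf C_{X/Y} = C_{X/Y}$, which is also visible from \eqref{eq:inc} with $U=X$, $V=Y$ and $T_{V/B}=0$. By definition $st(\EE_{X/Y}) = [E|_X/0] = E|_X$. The closed embedding of Lemma \ref{lem:BF5} is $\relSpec_X Sym$ applied to the surjection $\mls E|_X \to \mls I/\mls I^2$. Composing with $C_{X/Y} \hookrightarrow N_{X/Y}$, the embedding \eqref{eq:closed} becomes $C_{X/Y} \hookrightarrow N_{X/Y} \hookrightarrow E|_X$. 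I must check that this is the embedding $C_{X/Y} \to E|_X$ of Lemma \ref{lem:cone2} used in Construction \ref{const:1}, which comes from the fiber square of $s$ against the zero section $s_E$. This is the step I expect to require the most care. The ideal of the zero section in $E=\relSpec_Y Sym(\mls E)$ is generated by $\mls E$ in degree one. Pulling back along $s$ sends that generator to $s^\vee(\mls E) = \mls I$. The induced graded map $Sym(\mls E|_X) \to \bigoplus \mls I^n/\mls I^{n+1}$ is therefore the one induced by $s^\vee$ in degree one, and it factors through $Sym(\mls I/\mls I^2)$. This matches the map above.

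Finally I compare the virtual pullbacks. Both are the composition $\sigma$, then $j_*$, then $(p^*)^{-1}$. The maps $j_*$ and $(p^*)^{-1}$ agree by the previous paragraph; for the latter, a vector bundle stack with trivial $E^0$ is a vector bundle and Kresch's flat pullback agrees with Fulton's. It remains to see that Kresch's specialization map $\sigma$ for the morphism $X \to Y$ agrees with Fulton's $\sigma$ in Definition \ref{def:gysin}. For schemes, both are defined by deformation to the normal cone: both send $[V]$ to $[C_{V\cap X/V}]$ on integral $V$ and are linear, so they coincide. Note that Definition \ref{def:gysin} is applied here with the regular embedding $s_E$ and $Y' = Y$, $X' = X$, so its $\sigma$ is indeed this specialization map. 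Hence $i^!_\phi = i^!_E$. The degrees are consistent: rank $-d$ gives $A_* \to A_{*-d}$.
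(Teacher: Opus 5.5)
The parts of your proof that compare the cones and the maps $\sigma$, $j_*$, $(p^*)^{-1}$ are fine, and they are more detailed than the paper's sketch. The gap is in the construction of $\phi$.

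You claim that $\Hom(\EE,\tau_{\leq -2}\LL)$ and $\Hom(\EE,\tau_{\leq -2}\LL[1])$ vanish ``for degree reasons''. This is false for general $X$. The $t$-structure only kills maps from $D^{\leq n}$ to $D^{\geq n+1}$. It does not kill maps from a complex in degrees $\geq -1$ to one in degrees $\leq -2$.

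Since $\EE = \mls E|_X[1]$ with $\mls E|_X$ locally free, you have $\Hom(\EE,\tau_{\leq -2}\LL_{X/Y}[1]) = H^0(X, \mls E|_X^\vee\otimes\tau_{\leq -2}\LL_{X/Y})$ (hypercohomology). This group can receive contributions such as $H^2(X, \mls E|_X^\vee\otimes h^{-2}(\LL_{X/Y}))$. Likewise $\Hom(\EE,\tau_{\leq -2}\LL_{X/Y})$ can contain $H^1(X, \mls E|_X^\vee\otimes h^{-2}(\LL_{X/Y}))$.

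These groups do vanish when $X$ is affine. That is exactly why Lemma \ref{lem:lift} assumes $X$ affine: its solution uses that $\EE$ is then a complex of projectives, so derived Homs are computed by chain maps. Proposition \ref{prop:local1} has no affineness hypothesis. So your argument does not show that the surjection $\mls E|_X \to \mls I/\mls I^2$, viewed as a map to $\tau_{\geq -1}\LL_{X/Y}$, lifts to any morphism into $\LL_{X/Y}$. The uniqueness you assert also fails in general, though you do not need it.

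The missing idea is to build $\phi$ directly rather than by lifting a truncation, which is what the paper does. Apply \ref{eq:C3} to the cartesian square that exhibits $X$ as the pullback of the zero section $0\colon Y \to E$ along $s$. This gives a canonical morphism $Li^*\LL_{Y/E} \to \LL_{X/Y}$. The zero section is a regular embedding with conormal sheaf $\mls E$, so \ref{eq:C2} gives $\LL_{Y/E}\simeq \mls E[1]$. Hence this is a global morphism $\mls E|_X[1]\to\LL_{X/Y}$, and its truncation is exactly the surjection $\mls E|_X \to \mls I/\mls I^2$ induced by $s$.

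With $\phi$ defined this way, the rest of your argument goes through unchanged. The obstruction-theory axioms, the embedding $\mf C_{X/Y}\hookrightarrow st(\EE_{X/Y})$, and hence $i^!_\phi$ all depend only on $\tau_{\geq -1}\phi$.
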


If $X$ is affine, there is a relative version of the perfect obstruction theory in Proposition \ref{prop:local1} as follows. Let $X \to Y \to B$ be morphisms of schemes with $B$ pure dimensional, assume $Y \to B$ is smooth, and let $X \to Y$ be the inclusion of the zero locus of a section $s$ of a vector bundle $E$ on $Y$. Let $\mls E$ be the dual of the sheaf of sections of $E$. Then $s$ defines a homomorphism $\mls E \to \mls O_Y$ with image the ideal sheaf $\mls I$ of $X$ in $Y$, and hence its restriction is a morphism $\mls E|_X \to \mls I/\mls I^2$. Composing with the canonical homomorphism $\mls I/\mls I^2 \to \Omega_{Y/B}|_X$ we get a perfect complex
\[
\EE_{X/B} = [\mls E|_X \to \Omega_{Y/B}|_X].
\]

\begin{lemma}\label{lem:lift}
If $X$ is affine, the natural complex map $\EE_{X/B} \to \tau_{\geq -1} \LL_{X/B}$ lifts uniquely to a perfect obstruction theory $\EE_{X/B} \to \LL_{X/B}$.
\end{lemma}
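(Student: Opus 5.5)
The proof splits into two parts: first lift the complex map $\EE_{X/B} \to \tau_{\geq -1}\LL_{X/B}$ to a morphism into $\LL_{X/B}$, then check the obstruction theory conditions. Here $\EE_{X/B} = [\mls E|_X \to \Omega_{Y/B}|_X]$ is concentrated in degrees $[-1,0]$ with locally free terms, so it is perfect in the sense of Definition \ref{def:BF}. By \ref{eq:C2}, applied to the factorization $X \hookrightarrow Y \to B$ with $Y \to B$ smooth, $\tau_{\geq -1}\LL_{X/B} = [\mls I/\mls I^2 \to \Omega_{Y/B}|_X]$. The natural map $\EE_{X/B} \to \tau_{\geq -1}\LL_{X/B}$ is the identity in degree $0$ and the surjection $s^\vee|_X \colon \mls E|_X \to \mls I/\mls I^2$ in degree $-1$.

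For the lifting, consider the distinguished triangle of the truncation,
\[
\tau_{\leq -2}\LL_{X/B} \to \LL_{X/B} \to \tau_{\geq -1}\LL_{X/B} \xrightarrow{+1}.
\]
Apply $\Hom(\EE_{X/B}, -)$ to get the long exact sequence
\[
\Hom(\EE_{X/B}, \tau_{\leq -2}\LL_{X/B}) \to \Hom(\EE_{X/B}, \LL_{X/B}) \to \Hom(\EE_{X/B}, \tau_{\geq -1}\LL_{X/B}) \to \Hom(\EE_{X/B}, \tau_{\leq -2}\LL_{X/B}[1]).
\]
Existence and uniqueness of the lift both follow once the two outer groups vanish. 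To see this, represent $\EE_{X/B}$ by its bounded complex of locally free sheaves and use that $X$ is affine. Then $\Hom(\EE_{X/B}, M) = H^0(R\Gamma(X, \EE_{X/B}^\vee \otimes^L M))$. Since $X$ is affine, quasi-coherent higher cohomology vanishes, so this group equals $H^0$ of the global sections of the complex $\EE_{X/B}^\vee \otimes M$.

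Now count degrees. $\EE_{X/B}^\vee$ lives in degrees $[0,1]$. For $M = \tau_{\leq -2}\LL_{X/B}$ or $M = \tau_{\leq -2}\LL_{X/B}[1]$, the complex $M$ is concentrated in degrees $\leq -2$ or $\leq -3$ respectively. Hence $\EE_{X/B}^\vee \otimes M$ lives in degrees $\leq -1$, and its $H^0$ vanishes.

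The step most likely to be the obstacle is justifying this computation in $D_{qc}(X)$. One must say why $\Hom$ out of a bounded complex of finite rank locally free sheaves into a complex bounded above with quasi-coherent cohomology is computed by global sections on an affine scheme. I would handle this in one of two ways: cite that $D_{qc}(X) \simeq D(\mathrm{QCoh}(X))$ for affine $X$, or argue with a spectral sequence, using that $\mathcal{E}xt^i(\mls E^j, \mls F) = 0$ for $i > 0$ and locally free $\mls E^j$. This is exactly where the affineness hypothesis enters; on a non-affine $X$ the lift could fail to exist or fail to be unique.

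Finally, check that the lift is an obstruction theory. Since $h^0$ and $h^{-1}$ of $\LL_{X/B}$ agree with those of $\tau_{\geq -1}\LL_{X/B}$, it suffices to compare cohomology in the explicit two-term complexes. On $h^0$ we get $\mathrm{coker}(\mls E|_X \to \Omega_{Y/B}|_X) \to \mathrm{coker}(\mls I/\mls I^2 \to \Omega_{Y/B}|_X)$. These cokernels are equal because $\mls E|_X \to \mls I/\mls I^2$ is surjective, so this map is an isomorphism. On $h^{-1}$ the map is $\ker(\mls E|_X \to \Omega_{Y/B}|_X) \to \ker(\mls I/\mls I^2 \to \Omega_{Y/B}|_X)$. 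It is surjective because any element of the target lifts to $\mls E|_X$ by surjectivity of $s^\vee|_X$, and the lift automatically lies in the kernel, since the map $\mls E|_X \to \Omega_{Y/B}|_X$ factors through $\mls I/\mls I^2$.
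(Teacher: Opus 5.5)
Your proposal is correct and follows essentially the same route as the paper: apply $\Hom(\EE_{X/B},-)$ to the truncation triangle $\tau_{\leq -2}\LL_{X/B} \to \LL_{X/B} \to \tau_{\geq -1}\LL_{X/B}$, then kill the two outer groups by a degree count that uses affineness of $X$. The paper phrases this count via the Hom complex of a finite complex of projectives, whereas you use $R\Gamma(X, \EE_{X/B}^\vee \otimes M)$. The obstruction-theory conditions are then checked on the truncation, which you spell out more explicitly than the paper's ``clear from the construction.''
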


We call the perfect obstruction theory in Lemma \ref{lem:lift} the perfect obstruction theory for $X \to B$ associated to $(Y, E, s)$. Note that if $Y = B$ this perfect obstruction theory recovers the one in Proposition \ref{prop:local1}. The next result says that every perfect obstruction theory is locally one associated to a vector bundle-section pair.

\begin{proposition}\label{prop:local2}
Let $\phi\colon \EE_{X/B} \to \LL_{X/B}$ be a perfect obstruction theory. Then there exist an affine \'etale cover $U \to X$,
a smooth morphism $Y \to B$,
and a vector bundle-section pair $(E, s)$ on $Y$
such that (i) $U\to B$
factors as
$U = \mathbb{V}(s) \to Y \to B$, and (ii) the obstruction theory for $U \to B$ associated to $(Y, E, s)$ is isomorphic to $\phi|_U$.
\end{proposition}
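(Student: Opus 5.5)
Work étale locally and pass to a global resolution of $\EE_{X/B}$ over an affine chart. By Exercise \ref{BF5}, choose an affine étale surjection $U \to X$ together with a closed embedding $i: U \hookrightarrow V$ into a scheme $V$ smooth over $B$. Shrinking $U$ further, we may also assume $U$ is affine and $\phi|_U$ is represented by a two-term complex $[\mls F^{-1} \to \mls F^0]$ of finite rank free $\mls O_U$-modules. By \ref{eq:C2}, $\tau_{\geq -1}\LL_{U/B} = [\mls I/\mls I^2 \to i^*\Omega_{V/B}]$. Since $U$ is affine and $\mls F^{\bullet}$ consists of projectives, $\phi|_U$ composed with truncation is represented by an honest chain map $(\phi^{-1},\phi^0)$ from $[\mls F^{-1}\to\mls F^0]$ to this complex.

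Next, modify the chain map so that $\phi^0$ is surjective and $\phi^{-1}$ lifts to the ambient space. Replace $V$ by $Y = V \times_B \AA^m_B$, with $m$ chosen so that $\Omega_{Y/B}|_U$ surjects. Concretely, $\mls F^0$ is free, so pick generators and map the new coordinates' differentials onto them. Alternatively, enlarge $\mls F^{\bullet}$ by adding the acyclic complex $[\mls G \xrightarrow{\mathrm{id}} \mls G]$ with $\mls G$ free surjecting onto $i^*\Omega_{V/B}$. Doing both and adjusting, one arranges that $\mls F^0 \cong i^*\Omega_{Y/B}$ and that $\phi^0$ is the identity. Here $U \hookrightarrow Y$ is the embedding $U \hookrightarrow V$ followed by a zero section $V \to V\times \AA^m$, and its ideal $\mls J$ gains the $m$ coordinate functions.

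The obstruction-theory conditions now say that $h^0$ is an isomorphism (automatic once $\phi^0$ is an isomorphism) and that $h^{-1}$ is surjective. A diagram chase then shows that $\phi^{-1}: \mls F^{-1} \to \mls J/\mls J^2$ is surjective. Let $\mls E$ be a free $\mls O_Y$-module of the same rank as $\mls F^{-1}$; after shrinking, every free module on $U$ extends to one on an open of $Y$. Lift $\phi^{-1}$ to a map $\mls E \to \mls J \subseteq \mls O_Y$, which is possible because $U$ is affine and $\mls E$ is free. This defines a section $s$ of $E = \relSpec_Y(\mathrm{Sym}\,\mls E)$ with $\VV(s) \supseteq U$. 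Its ideal $\mls J' \subseteq \mls J$ satisfies $\mls J' + \mls J^2 = \mls J$ near $U$, so by Nakayama $\mls J' = \mls J$ after shrinking $Y$ to a Zariski neighborhood of $U$. This gives $U = \VV(s)$.

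Finally, by construction the complex $\EE_{U/B}$ associated to $(Y,E,s)$ is exactly $[\mls F^{-1}\to\mls F^0]$, with the same map to $\tau_{\geq-1}\LL_{U/B}$. The uniqueness statement in Lemma \ref{lem:lift} then identifies the two lifts to $\LL_{U/B}$, giving condition (ii). I expect the main obstacle to be the middle step: arranging $\phi^0$ to be an isomorphism onto $i^*\Omega_{Y/B}$ while keeping a strict chain map. Adding $\AA^m$ factors and acyclic summands changes both the target complex and the representative, so compatibility with the differentials must be checked carefully. The Nakayama shrinking is routine, but it has to be done compatibly with keeping $U$ affine and étale over $X$.
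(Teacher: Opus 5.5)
Your outer structure matches the paper's proof: an \'etale chart $U\hookrightarrow V$ with a strict chain map, a four-lemma chase for surjectivity of $\phi^{-1}$, a Nakayama lift to $\mls O_Y^{\oplus r}\to\mls J$, and uniqueness from Lemma~\ref{lem:lift}. The gap is the step that carries the content, namely producing a representative whose degree-$0$ component is an isomorphism onto $i^*\Omega_{Y/B}$ (Exercise~\ref{POT3}(a) in the paper). You leave it as ``doing both and adjusting,'' and the construction you do describe fails as written.

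Write $\Omega:=i^*\Omega_{V/B}$. Consider any chain map from $[\mls F^{-1}\to\mls F^0]\oplus[\mls G\xrightarrow{\mathrm{id}}\mls G]$ to $[\mls J/\mls J^2\to\Omega_{Y/B}|_U]=[\mls I/\mls I^2\xrightarrow{\delta}\Omega]\oplus[\mls O_U^{m}\xrightarrow{\mathrm{id}}\mls O_U^{m}]$. Its $\Omega$-component in degree $0$ on $\mls G$ is forced to equal $\delta\circ a'$, where $a'$ is the corresponding degree-$(-1)$ component. So the naive block map sending $\mls F^0$ onto the $dt_j$ and $\mls G$ onto $\Omega$ is not a chain map.

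More fundamentally, a chain map with surjective degree-$0$ component induces a surjection on $h^0$. Hence no adjustment can work without using that $h^0(\phi)$ is surjective, and you never use it at this step. Your remark that the $h^0$-isomorphism is ``automatic once $\phi^0$ is an isomorphism'' is false: take $[0\to\mls O_U]\to[\mls O_U\xrightarrow{\mathrm{id}}\mls O_U]$ with $\phi^0=\mathrm{id}$. The remark also inverts the logic, since this hypothesis is the input to the middle step, not a consequence of it.

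The paper closes this step as follows. Since $h^0(\phi)$ is an isomorphism, the projection $\Omega\to\Omega_{U/B}\cong\mathrm{coker}(d)$ lifts by projectivity to $f\colon\Omega\to\mls F^0$, and $\mls F^{-1}\oplus\Omega\to\mls F^0$ is then surjective. So $[\mls F^{-1}\to\mls F^0]$ is quasi-isomorphic to $[\mls F^{-1}\times_{\mls F^0}\Omega\to\Omega]$ (Lemma~\ref{lem:pullback-qiso}), whose map to $[\mls I/\mls I^2\to\Omega]$ can be taken to be the identity in degree $0$. The paper never enlarges $V$.

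Your stabilization can also be completed with the same input. Surjectivity of $h^0(\phi)$ says $(\delta,\phi^0)\colon\mls I/\mls I^2\oplus\mls F^0\to\Omega$ is surjective, so pick a splitting $(a',g')$. Take $\mls G=\Omega$, $m=\mathrm{rank}\,\mls F^0$, an isomorphism $b\colon\mls F^0\xrightarrow{\sim}\mls O_U^m$, and
\[
\Psi^0=\begin{pmatrix}\phi^0&\delta a'\\ b&-bg'\end{pmatrix}\colon \mls F^0\oplus\Omega\to\Omega\oplus\mls O_U^m,
\]
\[
\Psi^{-1}=\begin{pmatrix}\phi^{-1}&a'\\ bd&-bg'\end{pmatrix}\colon \mls F^{-1}\oplus\Omega\to\mls I/\mls I^2\oplus\mls O_U^m.
\]
This works for three reasons:
\begin{enumerate}
\item It is a chain map because $\phi^0 d=\delta\phi^{-1}$.
\item It represents $\phi|_U$: restricted to the summand $[\mls F^{-1}\to\mls F^0]$ and projected to $[\mls I/\mls I^2\to\Omega]$ it is $(\phi^{-1},\phi^0)$, and the added summands are contractible.
\item $\Psi^0$ is invertible: left multiplication by $\begin{pmatrix}1&-\phi^0b^{-1}\\0&1\end{pmatrix}$ turns it into $\begin{pmatrix}0&\delta a'+\phi^0g'\\ b&-bg'\end{pmatrix}=\begin{pmatrix}0&\mathrm{id}\\ b&-bg'\end{pmatrix}$.
\end{enumerate}
With this in place your remaining steps go through. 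One small correction: lifting $\mls F^{-1}\to\mls J/\mls J^2$ to $\mls O_Y^{\oplus r}\to\mls J$ needs $Y$, not just $U$, to be affine, so shrink $Y$ first.
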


\begin{example}\label{ex:gysin}
The virtual pullback in Construction \ref{const3} generalizes the Gysin homomorphism in Definition \ref{def:gysin}. Let $i: X \to Y$ be a regular embedding of schemes of codimension $d$. Then $\LL_{X/Y}$ is perfect of tor amplitude $[-1, 0]$, and the identity map $\LL_{X/Y} \to \LL_{X/Y}$ is a perfect obstruction theory with the property that $i^!_\phi$ agrees with the Gysin homomorphism $i^!$. If $Y$ has pure dimension then $[X]^{vir}_\phi = [X]$.
\end{example}

The following is a generalization of Lemma \ref{lem:vc1} (see \cite[Prop 7.3]{BF}).
\begin{lemma}\label{lem:BF9}
If $X \to B$ is locally of finite presentation and $\phi:\EE_{X/B} \to \LL_{X/B}$ is a perfect obstruction theory such that $h^{-1}(\EE_{X/B})$ vanishes and $h^0(\EE_{X/B})$ is locally free, then $X \to B$ is smooth and
\[[X]^{vir}_{\phi} = [X].\]
\end{lemma}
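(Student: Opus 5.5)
The plan has three steps: show $X \to B$ is smooth, identify $\mf C_{X/B}$ with the zero section of $st(\EE_{X/B})$, and then compute $(p^*)^{-1}$ on the zero section. Write $\Omega = h^0(\EE_{X/B})$, which is locally free by hypothesis.

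\textbf{Step 1: $X \to B$ is smooth.} Since $h^{-1}(\EE_{X/B}) = 0$ and $\phi$ is surjective on $h^{-1}$, we get $h^{-1}(\LL_{X/B}) = 0$. Also $h^0(\LL_{X/B}) \simeq \Omega$ is locally free. The question is local, so we may use a chart as in \eqref{eq:diagram}: $U \to V$ a closed immersion with ideal $\mls I$, $V \to B$ smooth, $U\to X$ étale. By \ref{eq:C2} and \ref{eq:C3}, the truncation $\tau_{\geq -1}\LL_{X/B}|_U$ is $[\mls I/\mls I^2 \to i^*\Omega_{V/B}]$. Its $h^{-1}$ vanishes and its $h^0$ is locally free, so the sequence
\[0 \to \mls I/\mls I^2 \to i^*\Omega_{V/B} \to \Omega_{U/B} \to 0\]
is exact with all terms locally free and locally split. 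By the Jacobian criterion for smoothness of a closed subscheme of a smooth $B$-scheme (e.g.\ \cite[Tag 06BB]{stacks-project}), together with the finite presentation hypothesis, $U \to B$ is smooth. Smoothness is étale local, so $X \to B$ is smooth.

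\textbf{Step 2: $\mf C_{X/B}$ is the zero section.} Since $X\to B$ is smooth, we may take the chart with $U = V$, i.e.\ $i$ the identity. Then $C_{U/V} = N_{U/V} = U$, so by \eqref{eq:inc} we have $\mf C_{X/B} = \mf N_{X/B} = [U/T_{U/B}] = B T_{X/B}$ locally; compare Exercise \ref{BF3}(b). On the other side, $\EE_{X/B}$ has cohomology only in degree $0$, equal to $\Omega$, which is locally free. Hence $\EE_{X/B} \simeq \Omega[0]$, and by Lemma \ref{lem:BF88} we get $st(\EE_{X/B}) = [X/T]$ with $T = \relSpec_X(Sym(\Omega))$. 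Because $\phi$ is an isomorphism on $h^0$ and both $h^{-1}$ vanish, $\phi$ is a quasi-isomorphism, at least after truncation $\tau_{\ge -1}$, which is all that $st$ sees. So the embedding $\mf C_{X/B} \to st(\EE_{X/B})$ of \eqref{eq:closed} is an isomorphism: the inclusion of the zero section $X \to [X/T] = BT$, which here is a bijection.

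\textbf{Step 3: compute the class.} It remains to show $(p^*)^{-1}[BT] = [X]$, where $p: BT \to X$ has relative dimension $-\rank T$. Equivalently, $p^*[X] = [BT]$, which holds because flat pullback of the fundamental class of $X$ along $p$ is the fundamental class of the total space; compare the scheme-theoretic analogue used in Exercise \ref{vc1}. The degree is consistent: $\rank \EE = \rank \Omega = \dim X - \dim B$, so $[X]\in A_{n+d}(X)$.

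\textbf{Main obstacle.} The main difficulty is Step 2. One must verify that the closed embedding of \eqref{eq:closed} is globally the identity of $BT$, not merely an isomorphism chart by chart. I would handle this by noting that a closed embedding which is an isomorphism étale locally is an isomorphism. I would also need to check that $p^*$ on Kresch's Chow groups sends $[X]$ to $[BT]$; I would take this from \cite[Prop 5.3.2]{kresch}.
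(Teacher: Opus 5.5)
Your proposal is correct and follows essentially the route the paper intends: the smoothness criterion of \cite[Tag 0D0L]{stacks-project}, which you effectively reprove on \'etale charts via $\tau_{\geq -1}\LL_{U/B} \simeq [\mls I/\mls I^2 \to \Omega_{V/B}|_U]$. This is followed by the identifications $\mf C_{X/B} = \mf N_{X/B} \simeq st(\EE_{X/B}) = BT_{X/B}$, which give $(p^*)^{-1}[\mf C_{X/B}] = [X]$. Two small points: first, as the paper's hint warns, the scheme-theoretic criterion must still be transported to a stacky base $B$, which you can do by base changing $U \subset V \to B$ along a smooth atlas of $B$, since the conormal sheaf and relative differentials pull back compatibly. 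Second, the map $\mf C_{X/B} \to st(\EE_{X/B})$ is the identity of $BT$, not a ``zero section'' $X \to BT$.
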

\noindent
\textbf{Exercises}
\begin{enumerate}[label=5.\arabic*, leftmargin=*]

\item  \label{POT1} Let $X \to B$ be a morphism and let $U \to X$ be an \'etale morphism (for example an open immersion).
\begin{enumerate}
\item[(a)] Prove that there is a canonical isomorphism $\LL_{X/B}|_U \simeq \LL_{U/B}$. (Hint: use \ref{eq:C1} and \ref{eq:C4}.)
\item[(b)] If $\phi: \EE_{X/B} \to \LL_{X/B}$ is a perfect obstruction theory, prove that $\phi|_U: \EE_{X/B}|_U \to \LL_{X/B}|_U \simeq \LL_{U/B}$ is also an obstruction theory.
\end{enumerate}

\item \label{POT10}
Let $p\colon C \to B$ be a family of nodal curves.
Show that $\LL_{C/B}$ is isomorphic to $\Omega_{C/B}[0]$,
where $\Omega_{C/B}$
is the sheaf of relative differentials.
Bonus: generalise this to families of curves with
planar singularities.
\textit{({Hint: }Reduce this to an \'etale-local computation,
and write down what happens near a node using \cite[Tag 0CBY]{stacks-project}.)}

\item \label{POT2} Show that our construction in \S \ref{sec:inc}
    does indeed give a local description of
    $st(\LL_{X/B}) = \mathfrak{N}_{X/B}$.
\item \label{POT3}
\begin{itemize}
    \item[(a)] Show that if $\phi: \EE_{X/B} \to \LL_{X/B}$ is a perfect obstruction theory, then locally on $X$ we can find a diagram \eqref{eq:diagram} such that $\EE_{X/B}|_U$ is quasi-isomorphic to $\mls E^{-1} \to \mls E^0$ a sequence of locally free sheaves, $(\tau_{\geq -1}\LL_{X/B})|_U$ is quasi-isomorphic to $[\mls I/\mls I^2 \to \Omega_{V/B}|_U]$ where $\mls I$ is the ideal sheaf of $U \to V$, and $\tau_{\geq -1}( \phi|_U)$
    is given by a commuting square
\[
\begin{tikzcd}
\mls E^{-1} \arrow[r] \arrow[d] & \mls E^0 \arrow[d] \\
\mls I/\mls I^2 \arrow[r] & \Omega_{V/B}
\end{tikzcd}
\]
where $\mls E^0 \to \Omega_{V/B}$ is an isomorphism and $\mls E^{-1} \to \mls I/\mls I^2$ is surjective (not just on cohomology!).
\item[(b)] Use (a) to prove Lemma \ref{lem:BF5}.
\end{itemize}

\item \label{POT0} Let $X \to B$ be a morphism and assume $B$ has pure dimension.
\begin{enumerate}
    \item[(a)] Show that the intrinsic normal cone $\mf C_{X/B}$ has dimension $\dim B$. You may use the fact that if $Y$ is a $B$-scheme with pure relative dimension $d$ and $G$ is an algebraic group of relative dimension $e$ over $B$, then the dimension of $[Y/G]$ is $d-e$ (see \cite[Tag 0AFL]{stacks-project}). (Observe that $d-e$ could be negative!) (\textit{Hint: Remark \ref{rmk:normalcone-dim}.})
    \item[(b)] Show that if $\EE$ is perfect (of amplitude $[-1, 0]$), then $st(\EE)$ is a vector bundle stack of relative dimension $-\rank(\EE)$ over $X$.
\item[(c)] Use (a) and (b) to show that if $\phi: \EE_{X/B} \to \LL_{X/B}$ is a perfect obstruction theory, then $[X]^{vir}_\phi$ lives in $A_{\dim B + \rank(\EE_{X/B})}(X).$
\end{enumerate}

\item\label{POT3.5}
Prove the claims in Example \ref{ex:gysin}.

\item \label{POT100} Prove Lemma \ref{lem:lift}.

\item \label{POT5} Prove Propositions \ref{prop:local1} and \ref{prop:local2}. (\textit{Hint: use Exercise \ref{POT3}(a). })

\item \label{POT9} Prove Lemma \ref{lem:BF9}. (\textit{Hint: \cite[Tag 0D0L]{stacks-project}, but note that this is stated for schemes. Note also that $\tau_{\geq -1}\LL_{X/B}$ would be written $NL_{X/B}$ in the notation of the reference.})

\end{enumerate}
\section{Behrend-Fantechi obstruction theories III}

This section discusses various useful features of Behrend-Fantechi's construction.

\subsection{Global resolutions}\label{sec:globalres}
Let $\phi: \EE_{X/B} \to \LL_{X/B}$ be a perfect obstruction theory. We explain an important technical assumption on $\phi$ that in the original paper \cite{BF} of Behrend-Fantechi was necessary to define $[X]^{vir}_\phi$, and even today yields a clean formula for the virtual class (see Section \ref{sec:siebert}).

A \textit{global resolution} for $\EE_{X/B}$ is a 2-term complex $\mathbb{F} = [\mls F^{-1} \to \mls F^0]$ of locally free sheaves and a quasi-isomorphism
\begin{equation}\label{eq:globalres}
\EE_{X/B} \xrightarrow{\sim} \FF.
\end{equation}
These data induce a morphism over $X$
\[
\pi: \relSpec_X(Sym(\mls F^{-1})) \to st(\EE_{X/B})
\]
that is a smooth cover of the stack $st(\EE_{X/B})$ by the scheme $F^1 := \relSpec_X(Sym(\mls F^{-1})).$

In the presence of a global resolution $[X]^{vir}_\phi$ may be computed as follows. Let $C(F)$ denote the fiber product
\[
\begin{tikzcd}
C(F) \arrow[r] \arrow[d] & F^1 \arrow[d, "\pi"]\\
\mf C_{X/B} \arrow[r] & st(\EE_{X/B}).
\end{tikzcd}
\]
Let $p_{F^1}: F^1 \to X$ and $p_{st(\EE_{X/B})}: st(\EE_{X/B}) \to X$ be the structure morphisms. Kresch's paper \cite{kresch} shows that since $p_{F^1} = p_{st(\EE_{X/B})} \circ \pi$, we have an equality $p_{F^1}^* = \pi^* \circ p_{st(\EE_{X/B})}^*$. It follows that
\begin{equation}\label{eq:no-artin}
[X]^{vir}_\phi = (p^*_{st(\EE_{X/B})})^{-1}[\mf C_{X/B}] = (p_{F^1}^*)^{-1} \pi^*[\mf C_{X/B}] = (p_{F^1}^*)^{-1} [C(F)].
\end{equation}
In other words, we can compute $[X]^{vir}_{\phi}$ by intersecting $C(F)$ with the zero section of the vector bundle $F^1$.
Note that the formula \eqref{eq:no-artin} for $[X]^{vir}_\phi$ does not require any intersection theory on Artin stacks; this is the reason why global resolutions were originally an essential technical assumption in \cite{BF}.

Using global resolutions, we can state the following generalization of Lemma \ref{lem:excess1} (see \cite[Prop 7.3]{BF}).

\begin{lemma}\label{lem:BF8}
If $X \to B$ is smooth and $\phi:\EE_{X/B} \to \LL_{X/B}$ is a perfect obstruction theory with a global resolution, then $E := \relSpec_X(Sym(h^{-1}(\EE_{X/B})))$ is a vector bundle on $X$ and
\[
[X]^{vir}_{\phi} = c_{top}(E) \cap [X].
\]
\end{lemma}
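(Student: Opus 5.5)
The strategy is to reduce to the formula \eqref{eq:no-artin} and then apply \eqref{eq:excess8}. Fix a global resolution $\EE_{X/B} \simeq \FF = [\mls F^{-1} \xrightarrow{\delta} \mls F^0]$.

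\emph{Step 1: the cohomology sheaves.} Since $X \to B$ is smooth, \ref{eq:C1} gives $\LL_{X/B} \simeq \Omega_{X/B}[0]$, with $\Omega_{X/B}$ locally free. The obstruction theory condition then says $h^0(\FF) = \mathrm{coker}(\delta) \simeq \Omega_{X/B}$. Hence $\mathrm{coker}(\delta)$ is locally free, so $\mathrm{im}(\delta)$ is locally a direct summand of $\mls F^0$ and is itself locally free. It follows that $\mls K := \ker(\delta) = h^{-1}(\EE_{X/B})$ is locally free and locally a direct summand of $\mls F^{-1}$. We therefore get short exact sequences of locally free sheaves
\[
0 \to \mls K \to \mls F^{-1} \to \mathrm{im}(\delta) \to 0, \qquad 0 \to \mathrm{im}(\delta) \to \mls F^0 \to \Omega_{X/B} \to 0.
\]
In particular $E = \relSpec_X(Sym(\mls K))$ is a vector bundle, and $F^1 \to E$ is surjective with kernel $\relSpec_X(Sym(\mathrm{im}\,\delta))$.

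\emph{Step 2: the cones.} For smooth $X \to B$, the intrinsic normal cone is the zero section. Locally we may take $U = V$ in \eqref{eq:diagram}; then $C_{U/V} = N_{U/V} = U$ and $\mf C_{X/B} = [U/T_{U/B}] = BT_{X/B}$ (compare Exercise \ref{BF3}(b)). Likewise $\mf N_{X/B} = st(\Omega_{X/B}[0]) = BT_{X/B}$, so $\mf C_{X/B} = \mf N_{X/B}$. The composite $\mf C_{X/B} \to st(\EE_{X/B}) = [F^1/F^0]$ is the closed embedding of Lemma \ref{lem:BF5}. We then compute $C(F) = \mf C_{X/B} \times_{st(\EE)} F^1$. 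Its points are points of $F^1$ that lie, modulo the $F^0$-action, over the zero section, i.e.\ the $F^0$-orbit of the zero section, which is the image of $d: F^0 \to F^1$. Here $d$ is $\relSpec(Sym(-))$ of $\delta$, so this image is the closed subbundle $F' := \relSpec_X(Sym(\mathrm{im}\,\delta))\subseteq F^1$. Scheme-theoretically, I would verify $C(F) = F'$ locally. Since quasi-isomorphism invariance (Lemma \ref{lem:BF88}) lets us split off $\mls K$, locally the complex is $[\mls K \oplus \mls M \to \mls M \oplus \Omega]$ with identity on $\mls M$. Then $st(\EE) = BT \times [K/0]$ twisted appropriately, and the fiber of the zero section $BT \to st(\EE)$ along $F^1 = K \times M$ is $\{0\}\times M$. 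I expect this identification of $C(F)$ with the subbundle $F'$, including its scheme structure and the claim that it is globally the sub-bundle rather than merely locally, to be the main obstacle. To get it globally, I would note that $F'$ is a closed subscheme of $F^1$ defined intrinsically by $\delta$, and that the local identifications are compatible because they are all given by the same map $d$.

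\emph{Step 3: conclude.} By \eqref{eq:no-artin}, $[X]^{vir}_\phi = (p_{F^1}^*)^{-1}[F']$. By Step 1, $F' \subseteq F^1$ is a subbundle with quotient bundle $F^1/F' = E$ in the sense of Definition \ref{def:quotient-bundle}, since $\mls K$ is the kernel of $\mls F^{-1} \to \mathrm{im}\,\delta$. Formula \eqref{eq:excess8} from Exercise \ref{excess2}(a), applied over the pure-dimensional (componentwise) $X$, then gives $(p_{F^1}^*)^{-1}[F'] = c_{top}(E)\cap[X]$, as desired.
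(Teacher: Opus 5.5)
Your proposal is correct, and it is essentially the argument the paper defers to via \cite[Prop 5.6]{BF}: smoothness gives $\mf C_{X/B}=\mf N_{X/B}=BT_{X/B}$, its preimage $C(F)\subseteq F^1$ is the image subbundle $\relSpec_X(Sym(\mathrm{im}\,\delta))$ with quotient $E$, and \eqref{eq:excess8} then yields $c_{top}(E)\cap[X]$. The global identification you flag as the main obstacle is harmless: $C(F)$ and $\relSpec_X(Sym(\mathrm{im}\,\delta))$ are both closed subschemes of $F^1$, and equality of closed subschemes can be checked \'etale locally on $X$, so your local splitting computation already settles it.
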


\subsection{A practical criterion to be an obstruction theory}

The fundamental theorem for the cotangent complex $\LL_{X/B}$ says that this complex controls the deformation theory of maps to $X$. The following result says that $\phi: \EE_{X/B} \to \LL_{X/B}$ is an obstruction theory if and only if $\EE_{X/B}$ also controls the deformation theory of maps to $X$ (in an appropriate sense). This criterion is often used to show that a moduli stack $X$ has an obstruction theory. To state the criterion, recall that given a solid commuting diagram
\begin{equation}\label{eq:deform}
\begin{tikzcd}
T \arrow[r, "f"] \arrow[d] & X \arrow[d] \\
T' \arrow[r] \arrow[ur, dashrightarrow] & B
\end{tikzcd}
\end{equation}
with $T$ a scheme and $T \to T'$ a square zero extension by a quasicoherent sheaf of ideals $I$ on $T$, there is an obstruction $o(f) \in \Ext^1(Lf^*\LL_{X/B}, I)$ whose vanishing is necessary and sufficient for the existence of a dotted arrow (and appropriate 2-morphisms) making \eqref{eq:deform} (2-)commute.
Explicitly, the obstruction $o(f)$ is the following composition of morphisms of cotangent complexes induced by the square \eqref{eq:deform}:
\[
    Lf^*\LL_{X/B} \to \LL_{T/B} \to \LL_{T/T'} \to \tau_{\geq -1}\LL_{T/T'} = I[1].
\]

When $o(f)=0$ the set of such lifts is a torsor under $\Ext^0(Lf^*\LL_{X/B}, I)$. Finally, observe that an obstruction theory $\phi: \EE_{X/B} \to \LL_{X/B}$ induces morphisms
\[
\Phi^i: \Ext^i(Lf^*\LL_{X/B}, I) \to \Ext^i(Lf^*\EE_{X/B}, I).
\]

\begin{proposition}\label{prop:criterion}
The following are equivalent.
\begin{enumerate}
\item $\phi$ is an obstruction theory.
\item For every square of the form \eqref{eq:deform},
\begin{itemize}
    \item[(a)] $\Phi^1(o(f))=0$ if and only if the set of lifts in \eqref{eq:deform} is nonempty, and
\item[(b)] if $\Phi^1(o(f))=0$ then $\Phi^0$ is an isomorphism.
\end{itemize}
\end{enumerate}
\end{proposition}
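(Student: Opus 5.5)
We argue by reducing the deformation-theoretic conditions in (2) to statements about cohomology sheaves of the cone of $\phi$, following \cite[Thm 4.5]{BF}. Complete $\phi$ to a distinguished triangle
\[
\EE_{X/B} \xrightarrow{\phi} \LL_{X/B} \to \mathbb{K} \to \EE_{X/B}[1].
\]
Condition (1), that $h^0(\phi)$ is an isomorphism and $h^{-1}(\phi)$ is surjective, is equivalent, by the long exact cohomology sequence, to $h^0(\mathbb{K}) = h^{-1}(\mathbb{K}) = 0$. Since $\EE_{X/B}$ and $\LL_{X/B}$ lie in $D^{\leq 0}_{qc}$, this says exactly that $\mathbb{K} \in D^{\leq -2}_{qc}(X)$.

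For any $f\colon T \to X$ and quasi-coherent $I$ on $T$, apply $\Ext^*(Lf^*(-), I)$ to the triangle. This gives a long exact sequence
\[
\Ext^0(Lf^*\mathbb{K}, I) \to \Ext^0(Lf^*\LL_{X/B}, I) \xrightarrow{\Phi^0} \Ext^0(Lf^*\EE_{X/B}, I) \to \Ext^1(Lf^*\mathbb{K}, I) \to \Ext^1(Lf^*\LL_{X/B}, I) \xrightarrow{\Phi^1} \Ext^1(Lf^*\EE_{X/B}, I).
\]

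\textbf{(1) $\Rightarrow$ (2).} If $\mathbb{K} \in D^{\leq -2}$, then $Lf^*\mathbb{K} \in D^{\leq -2}$, because derived pullback is right t-exact. Hence $\Ext^i(Lf^*\mathbb{K}, I) = 0$ for $i \leq 1$, since $I$ sits in degree $0$. The sequence then shows that $\Phi^0$ is an isomorphism and $\Phi^1$ is injective. Condition (b) follows directly. For (a), injectivity of $\Phi^1$ gives $\Phi^1(o(f)) = 0 \iff o(f) = 0$, and $o(f) = 0$ is equivalent to the existence of a lift by the fundamental property of the cotangent complex recalled just before the statement.

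\textbf{(2) $\Rightarrow$ (1).} This is the main step, and the difficulty is choosing test diagrams \eqref{eq:deform} that detect $h^0(\mathbb{K})$ and $h^{-1}(\mathbb{K})$. The question is étale local, so we may take $T = U \to X$ affine étale and $f$ the inclusion.

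\emph{Killing $h^0(\mathbb{K})$.} Use the trivial square-zero extension $T' = T[I] := \Spec_T(\mathcal{O}_T \oplus I)$, with $T' \to T \to X \to B$. A lift exists, so $o(f) = 0$, and (b) says $\Phi^0$ is an isomorphism for every $I$. Thus $\Hom(f^*h^0(\EE_{X/B}), I) \to \Hom(f^*h^0(\LL_{X/B}), I)$ is bijective for all quasi-coherent $I$ on affine $T$. By Yoneda, $h^0(\phi)$ is an isomorphism, so $h^0(\mathbb{K}) = 0$.

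\emph{Killing $h^{-1}(\mathbb{K})$.} We need $\Phi^1$ to be injective on suitable classes. It suffices to show that $\Hom(h^{-1}(\mathbb{K})|_T, I) = 0$ for all $I$, i.e.\ that $\Ext^1(Lf^*\mathbb{K}, I) = 0$ once $h^0(\mathbb{K}) = 0$ is known. From the exact sequence, and because $\Phi^0$ is now surjective, this amounts to showing that $\Phi^1$ is injective. The key claim is that every class $\omega \in \Ext^1(Lf^*\LL_{X/B}, I)$ arises as an obstruction $o(f)$ for some square-zero extension $T \to T'$ over $B$. We would realize $\omega$ as a class in $\Ext^1(\LL_{T/B}, I)$ via $Lf^*\LL_{X/B} \simeq \LL_{T/B}$ (valid since $f$ is étale), and then use that $\Ext^1(\LL_{T/B}, I)$ classifies square-zero extensions of $T$ by $I$ over $B$, using the truncation $\tau_{\geq -1}$ as in \ref{eq:C2}.

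Granting this, suppose $\Phi^1(\omega) = 0$. Then (a) produces a lift, so $\omega = o(f) = 0$, and $\Phi^1$ is injective. Consequently $\Ext^1(Lf^*\mathbb{K}, I) = \Hom(h^{-1}(\mathbb{K})|_T, I) = 0$ for all $I$, whence $h^{-1}(\mathbb{K}) = 0$.

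The main obstacle is this realization of every $\Ext^1$ class as an obstruction class when $B$ is a stack. We would handle it by choosing a smooth chart of $B$ and working with Illusie's classification of extensions, which is exactly the content of \cite[Thm 4.5]{BF}.
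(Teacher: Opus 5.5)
Your proof is correct and follows the cone argument of \cite[Thm~4.5]{BF}; the paper gives no proof of its own and simply cites that result. The direction (1) $\Rightarrow$ (2) uses $Lf^*\mathbb{K}\in D^{\leq -2}$, and for (2) $\Rightarrow$ (1) you use the trivial extension to kill $h^0(\mathbb{K})$ and realize every class in $\Ext^1(\LL_{T/B},I)$ as an obstruction for an \'etale $f$ to kill $h^{-1}(\mathbb{K})$. Your smooth-chart plan for a stacky $B$ also works: shrink $T$ so that $T\to B$ factors through a smooth $V\to B$. Then $\Ext^1(\LL_{T/V},I)\to\Ext^1(\LL_{T/B},I)$ is surjective because $\Ext^1(\Omega_{V/B}|_T,I)=0$ on affine $T$, and by functoriality the obstruction class of an extension over $V$ maps to its obstruction class over $B$.
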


For a proof, see e.g. \cite[Thm~4.5]{BF} or \cite[Lem~4.3.2]{webb}.

\subsection{Functoriality}

The virtual pullbacks defined in Construction \ref{const3} are a generalization of the Gysin homomorphisms defined in Definition \ref{def:gysin} (see Exercise \ref{POT3.5}).
Thus, it is natural and desirable to develop a
calculus for virtual pullbacks parallel
to the functoriality properties of Gysin pullbacks
in \S\ref{sec:Lec2}.
This was done in \cite{manolache-pullback},
and we collect some of the main results below.

\begin{remark}\label{rmk:POT2.5}
    Suppose that we have a cartesian diagram of algebraic stacks
    \begin{equation}\label{eq:this}\begin{tikzcd}
        X^{\prime} \arrow[r, ] \arrow[d, "q"'] & B^{\prime} \arrow[d, "p"]\\
        X \arrow[r, "f"] & B
    \end{tikzcd}\end{equation}
    with $X$ and $X'$ Deligne-Mumford, and let $\phi: \EE_{X/B} \to \LL_{X/B}$ be a perfect obstruction theory for $f$. Then the composition $Lp^*\EE_{X/B} \to Lp^*\LL_{X/B} \to \LL_{X'/B'}$ is a perfect obstruction theory (Exercise \ref{POT14}).
    It is customary to denote the induced virtual pullback $A_*(B') \to A_*(X')$ by $f^!_\phi$.
\end{remark}

\begin{lemma}[Pushforward, {\cite[Theorem 4.1(i)]{manolache-pullback}}]\label{lem:functoriality1}
    Suppose we have a cartesian diagram \eqref{eq:this} with $X$ and $X'$ Deligne-Mumford and $p$ projective. Then
    \[
        f_\phi^{!}p_\ast(\alpha) = q_{\ast}f_{\phi}^{!}(\alpha)
    \]
    for all $\alpha \in A_{\ast}(B^{\prime})$.
\end{lemma}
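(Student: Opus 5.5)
The plan is to follow the three-step definition of the virtual pullback in Construction \ref{const3} and check that each step commutes with proper pushforward along the base change of $p$. Write $p' : \mf C_{X'/B'} \to \mf C_{X/B}$ and $p'' : st(Lp^*\EE_{X/B}) \to st(\EE_{X/B})$ for the induced maps. Here $st(Lp^*\EE_{X/B}) = q^*st(\EE_{X/B}) = st(\EE_{X/B})\times_X X'$, since $st$ commutes with pullback of perfect complexes. Because $p$ is projective, $q$ is projective, and so is $p''$, being a base change of $q$. It therefore suffices to prove three identities:
\begin{enumerate}
\item[(i)] $\sigma_{X/B}\, p_* = p'_*\, \sigma_{X'/B'}$;
\item[(ii)] $j_*\, p'_* = p''_*\, j'_*$;
\item[(iii)] $(p_{st}^*)^{-1}\, p''_* = q_*\, (p_{st'}^*)^{-1}$.
\end{enumerate}
Concatenating them gives $f^!_\phi p_* = q_* f^!_\phi$.

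Identity (ii) is simply functoriality of proper pushforward. It uses the commuting square of closed embeddings $\mf C_{X'/B'} \hookrightarrow st(q^*\EE)$ over $\mf C_{X/B} \hookrightarrow st(\EE)$. That square commutes because the embedding for $X'$ is obtained from the composite $Lp^*\EE_{X/B} \to Lp^*\LL_{X/B}\to\LL_{X'/B'}$ of Remark \ref{rmk:POT2.5}, together with the natural closed embedding $\mf C_{X'/B'} \hookrightarrow \mf C_{X/B}\times_B B'$, which is the stacky version of Lemma \ref{lem:cone2}. Identity (iii) is the standard compatibility of flat pullback with proper pushforward in a fiber square, here the square with $st(q^*\EE)\to st(\EE)$ over $q: X'\to X$. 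Since $p_{st}^*$ is an isomorphism, it is enough to show $p_{st}^* q_* = p''_* p_{st'}^*$. To avoid Artin-stack subtleties, I would first reduce by smooth-locally choosing a global resolution as in \eqref{eq:no-artin} and check it for vector bundles, where it is \cite[Prop 1.7]{fulton}. Otherwise I would cite Kresch's proper flat compatibility for stacks.

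The main obstacle is (i): specialization to the intrinsic normal cone commutes with proper pushforward. I would prove it using the deformation to the normal cone. For a morphism $X\to B$, Kresch defines $\sigma$ through the deformation space $M^\circ_{X/B}\to \PP^1$. Its general fibre is $B$ and its special fibre contains $\mf C_{X/B}$, and $\sigma$ is obtained by pulling back along $B\times(\PP^1\setminus 0)$, extending the class, and then applying the Gysin map for the divisor $0\in\PP^1$. The deformation space is functorial: the projective map $p$ induces a proper map $M^\circ_{X'/B'}\to M^\circ_{X/B}\times_B B'\to M^\circ_{X/B}$ over $\PP^1$, restricting to $p$ on general fibres and to $p'$ on special fibres. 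Then (i) follows from three facts: flat pullback to the open part commutes with proper pushforward; closure and extension of classes is compatible with pushforward, by the exact localization sequence; and Gysin pullback for the Cartier divisor $0$ commutes with proper pushforward, by Lemma \ref{lem:pushforward}, which extends to stacks via Kresch. Projectivity of $p$ is used exactly here, to guarantee that pushforward is defined on Kresch's Chow groups of the Artin stacks involved. An alternative for (i) is to check the formula on generators $[V]$ with $V\subseteq B'$ integral, using $\sigma[V]=[\mf C_{V\times_B X/V}]$ and the degree formula for cones under a generically finite proper map. However, Kresch's Chow group is not generated by such classes, so I would favour the deformation-space argument.
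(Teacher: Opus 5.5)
Your proposal is correct and follows essentially the argument behind the cited result; the paper gives no proof of its own beyond \cite[Theorem 4.1(i)]{manolache-pullback}. That argument writes $f^!_\phi$ as specialization, then pushforward along the closed embedding into $st(\EE_{X/B})$, then the inverse of flat pullback (equivalently Kresch's Gysin map $0^!$), checks that each step commutes with projective pushforward, and handles specialization through the functorial deformation space as you describe. The one thing to drop is the proposed smooth-local reduction to a global resolution in step (iii), since equalities in Chow groups cannot be checked smooth-locally; your fallback of citing Kresch's compatibility of flat pullback with projective pushforward in fibre squares is what makes that step work.
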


\begin{lemma}[Commutativity, {\cite[Theorem 4.3]{manolache-pullback}}]
    \label{lem:functoriality pull pull}
    Consider a fibre diagram of algebraic stacks
    \[
    \begin{tikzcd}
       Z \arrow[r] \arrow[d] & Y \arrow[d, "g"]\\
       X \arrow[r, "f"] & B
    \end{tikzcd}
    \]
    where $X, Y$, and $Z$ are Deligne-Mumford, $f$ has a perfect obstruction theory $\phi$, and $g$ has a perfect obstruction theory $\psi$. Then
    \[
    f_\phi^{!}g_\psi^{!}(\alpha) = g_\psi^{!}f_\phi^{!}(\alpha)
    \]
    for all $\alpha \in A_{\ast}(Z)$.
\end{lemma}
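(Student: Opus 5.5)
Since this commutativity statement is cited from Manolache, the plan is to reduce it to Fulton's commutativity of refined Gysin maps (Lemma 2.1.3 in the form of \cite[Thm 6.4]{fulton}) by deformation to the normal cone. The proof has two stages: first rewrite $f^!_\phi$ as a composite of a specialization map and a Gysin pullback along a zero section; then commute the two specializations.

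\textbf{Step 1: factor the virtual pullback.} By Construction \ref{const3} (together with Remark \ref{rmk:POT2.5} for base change), $f^!_\phi$ applied to $A_*(Y)$ factors as
\[
A_*(Y)\xrightarrow{\sigma_f} A_*(\mf C_{Z/Y}) \to A_*(\mf C_{X/B}\times_X Z) \to A_*(st(\EE_{X/B})|_Z) \xrightarrow{0^!} A_*(Z).
\]
Here the middle arrow uses the analogue of Lemma \ref{lem:cone2} for intrinsic normal cones, which gives a closed embedding $\mf C_{Z/Y}\hookrightarrow \mf C_{X/B}\times_X Z$. The last arrow is Gysin pullback along the zero section of a vector bundle stack; by Kresch it is inverse to flat pullback. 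The same factorization holds for $g^!_\psi$.

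\textbf{Step 2: commute the specializations.} Let $\alpha\in A_*(B)$. Both composites $f^!g^!\alpha$ and $g^!f^!\alpha$ should be computed through a single class on the double cone $\mf C_{Z/\mf C_{Y/B}}$ or $\mf C_{Z/\mf C_{X/B}}$. Following Kresch's and Vistoli's treatment of the double deformation, I would use the deformation space $M^\circ$ over $\AA^2$ of the pair of maps. Its specialization to $(0,0)$ produces a cone stack $\mf D$ over $Z$ that maps to both $\mf C_{X/B}|_Z\times_Z\mf C_{Y/B}|_Z$-type iterated cones. I would then argue that the two orders of specialization, first along one axis and then the other, give the same class $[\mf D_\alpha]$ by rational equivalence on the $\AA^2$ family. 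This is the key claim, and the main obstacle: it requires showing that the relevant cone classes embed compatibly into $st(\EE_{X/B})|_Z\times_Z st(\EE_{Y/B})|_Z$. That compatibility comes from the closed embeddings of Lemma \ref{lem:BF5} and equation \eqref{eq:closed}, applied on both the $X$ side and the $Y$ side.

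\textbf{Step 3: intersect with the zero sections.} Once both composites are expressed as $0^!_{\EE_X}0^!_{\EE_Y}[\mf D_\alpha]$ and $0^!_{\EE_Y}0^!_{\EE_X}[\mf D_\alpha]$, they agree by commutativity of Gysin pullbacks along zero sections of vector bundle stacks. This is Fulton's Lemma 2.1.3 transported to stacks via Kresch.

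If it simplifies matters, I would first assume that both obstruction theories have global resolutions. Then by \eqref{eq:no-artin} every stack-theoretic step becomes a statement about cones $C(F)$ inside honest vector bundles, and Steps 2--3 become exactly Fulton's argument in \cite[Thm 6.4]{fulton} with a double deformation to the normal cone. The general case would follow by the same argument using Kresch's intersection theory.
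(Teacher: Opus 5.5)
Your architecture is the right one: write each virtual pullback as $0^!\circ\iota_*\circ\sigma$, peel off the outer pieces, and reduce to commuting two specialization maps. The paper gives no argument of its own and defers to \cite{manolache-pullback}; the standard argument is Fulton's proof of \cite[Thm 6.4]{fulton} transported to Kresch's setting. However, the two steps that carry the content are not supplied. First, Step 3 presupposes that $f^!_\phi g^!_\psi\alpha$ equals $0^!_{\mf E}0^!_{\mf F}$ of an iterated specialization, where $\mf E=st(\EE_{X/B})$ and $\mf F=st(\EE_{Y/B})$. To get there you must move $f^!_\phi$ past $0^!_{\mf F}$ and past the pushforward along $\mf C_{Y/B}\hookrightarrow\mf F$. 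This needs three facts:
\begin{enumerate}
\item $0^!_{\mf F}=(p^*)^{-1}$;
\item virtual pullbacks are compatible with smooth pullback, since deformation spaces commute with flat base change;
\item virtual pullbacks are compatible with proper pushforward (Lemma \ref{lem:functoriality1}).
\end{enumerate}
After that, the lemma reduces to $\sigma_f\sigma_g\alpha=\sigma_g\sigma_f\alpha$ in $A_*(\mf C_{X/B}|_Z\times_Z\mf C_{Y/B}|_Z)$, after pushforward. Here $\sigma_f$ is the specialization for the base-changed morphism $\mf C_{Y/B}|_Z\to\mf C_{Y/B}$, not for a cone ``$\mf C_{Z/\mf C_{Y/B}}$''. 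The embedding compatibility you single out as the main obstacle is the easy part. Both iterated cones lie in $\mf C_{X/B}|_Z\times_Z\mf C_{Y/B}|_Z$ via the base-change embeddings (the analogue of Lemma \ref{lem:cone2}), and that product maps into $\mf E|_Z\times_Z\mf F|_Z$ by \eqref{eq:closed}.

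Second, and more seriously, Step 2 as you formulate it is false. The closure of $V\times\mathbb{G}_m^2$ in $M^\circ_{X/B}\times_B M^\circ_{Y/B}$ need not be flat over $\AA^2$. So there is no cone $\mf D_\alpha$ over $(0,0)$ whose class computes both orders; the two iterated specializations are in general different cycles that agree only in the Chow group. For example, take $B=\AA^3_{x,y,z}$, $X=\{x=0\}$, $Y=\{y=0\}$, $V=\{y=x^2\}$, and write $x=t\xi$, $y=s\eta$. The closure is $\{s\eta=t^2\xi^2\}$, whose fibre over $s=t=0$ is all of $\AA^3_{\xi,\eta,z}$, of dimension $3>\dim V$. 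The two orders of specialization give the distinct cycles $2[\xi=0]$ and $[\eta=0]$.

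The missing idea is the divisor mechanism. Each specialization is the Gysin map of a principal Cartier divisor ($\{s=0\}$, resp.\ $\{t=0\}$) applied to a lift of the class. Since $\mf C_{Y/B}\to B$ is not flat, the deformation space of $\mf C_{Y/B}|_Z\to\mf C_{Y/B}$ is only a closed substack of the slice $M^\circ_{X/B}\times_B\mf C_{Y/B}$. Hence ``specialize in $s$, then deform in $t$'' differs from ``restrict the two-parameter class to $s=0$'' by a class supported on $\{t=0\}$. That difference is killed by the Gysin map of the principal divisor $\{t=0\}$, because its normal bundle is trivial. You then conclude from commutativity of Gysin maps of Cartier divisors (\cite[Thm 2.4]{fulton}, and Kresch's version for Artin stacks). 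This must be phrased with Chow-level Gysin maps rather than cones attached to cycles, since classes in $A_*(B)$ for an Artin stack need not be combinations of integral substacks.

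Finally, assuming global resolutions does not reduce you to Fulton's setting. By \eqref{eq:no-artin} it only removes the Artin stack on the target side; $A_*(B)$ and $\sigma$ still need Kresch's theory. And the divisor mechanism above, not a single cone at $(0,0)$, is what drives Fulton's argument.
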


Finally, suppose $X$ and $Y$ are Deligne-Mumford stacks and $B$ is an algebraic stack and we have a sequence of morphisms
$X \xrightarrow{f} Y \xrightarrow{g} B$.
If we have perfect obstruction theories for $f$, $g$, and $g \circ f$ that are ``compatible'' then we expect $f^! \circ g^! = (g \circ f)^!$. The correct notion of compatibility is as follows.

\begin{definition}[{\cite[Definition 4.5]{manolache-pullback}}]
    Let $X \xrightarrow{f} Y \xrightarrow{g} B$
    be as above,
    and let $\phi_{X/Y}, \phi_{X/B}, \phi_{Y/B}$
    be perfect obstruction theories for $f, g\circ f,$ and $g$ respectively.
    They form a \emph{compatible triple}
    if there exists a morphism of distinguished triangles
    \[
    \begin{tikzcd}
        f^{\ast}\EE_{Y/B} \arrow[r]\arrow[d, "\phi_{Y/B}"] & \EE_{X/B} \arrow[r] \arrow[d, "\phi_{X/B}"] & \EE_{X/Y} \arrow[r, "+1"]\arrow[d, "\phi_{X/Y}"] & {}\\
        Lf^{\ast}\LL_{Y/B} \arrow[r] & \LL_{X/B} \arrow[r] & \LL_{X/Y}\arrow[r, "+1"] & .{}
    \end{tikzcd}
    \]
\end{definition}

\begin{remark}
    When the obstruction theories arise from
    descriptions as vanishing loci of sections,
    the compatibility condition above translates to the
    condition in Section \ref{sec:compatible-OTs}.
\end{remark}

\begin{theorem}[Functoriality, {\cite[Theorem 4.8]{manolache-pullback}}]\label{thm:compat-triple}
    Let $X \xrightarrow{f} Y \xrightarrow{g} B$ be morphisms of algebraic stacks such that $X$ and $Y$ are Deligne-Mumford.
    Let $(\phi_{X/Y}, \phi_{X/B}, \phi_{Y/B})$
    be a compatible triple of obstruction theories.
    Then, for any $\alpha \in A_{\ast}(B)$,
    we have
    \[
    f^!_\phi g_\phi^!(\alpha) = (g \circ f)^!_\phi(\alpha).
    \]
\end{theorem}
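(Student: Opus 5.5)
The plan is to reduce to the intrinsic normal cone of the composite $g\circ f$, following Manolache's strategy. The key device is a deformation of $\mf C_{X/B}$ to a cone built from $\mf C_{Y/B}$ and $\mf C_{X/Y}$.

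\textbf{Step 1: compare both sides as cycles on a cone stack.} First unwind both sides via Construction \ref{const3}.
\begin{itemize}
\item The right side is $(p^*)^{-1}$ applied, in $st(\EE_{X/B})$, to the specialization $\sigma_{X/B}(\alpha)$ on $\mf C_{X/B}$.
\item The left side first specializes $\alpha$ to $\mf C_{Y/B}$ and intersects with the zero section of $st(\EE_{Y/B})$. It then specializes the result along $f$ to $\mf C_{X/Y}$ and intersects with $st(\EE_{X/Y})$.
\end{itemize}
Using Lemma \ref{lem:functoriality pull pull} (commutativity) and the fact that $(p^*)^{-1}$ is itself a Gysin-type pullback along the zero section (Exercise \ref{vc0}, in its stack version), rewrite the left side. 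It becomes a single intersection, with the zero section of the vector bundle stack $f^*st(\EE_{Y/B})\times_X st(\EE_{X/Y})$, of the class $\sigma_{X/Y}$ applied to the cycle $[\mf C_{Y/B}\times_Y X]$-type specialization. Equivalently, the left side is computed from the cone stack $\mf C_{X/\mf C_{Y/B}}$: apply $f^!_\phi$ to $\mf C_{Y/B}\to st(\EE_{Y/B})$, base changed along $X \to Y$.

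\textbf{Step 2: the double deformation.} Here I use a deformation to the normal cone over $\PP^1$ (or $\AA^1$) for the morphism $X\to Y\times \AA^1$ built from the deformation space $M^\circ_{Y/B}$. This yields a rational equivalence, in a stack $\mathfrak{W}$ over $X\times\AA^1$, between two cycles:
\begin{itemize}
\item the fiber over $1$, namely $[\mf C_{X/B}]$;
\item the fiber over $0$, namely $[\mf C_{X/\mf C_{Y/B}}]$.
\end{itemize}
This is the stacky analogue of Fulton's proof of Lemma \ref{lem:composition} (\cite[Thm 6.5]{fulton}), applied to the composite $X \to Y \to B$.

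\textbf{Step 3: use the compatible triple.} The morphism of distinguished triangles makes $st(\EE_{X/B})$ an extension of $st(\EE_{X/Y})$ by $f^*st(\EE_{Y/B})$. Because $\phi_{X/B}$ is compatible with the other two, the whole family $\mathfrak{W}$ embeds into a vector bundle stack $\mathfrak{E}$ over $X\times\AA^1$. The fiber of $\mathfrak{E}$ at $1$ is $st(\EE_{X/B})$, and its fiber at $0$ is the split sum $f^*st(\EE_{Y/B})\oplus st(\EE_{X/Y})$. One can build $\mathfrak{E}$ as the deformation of the extension to its split form, by scaling the extension class by $t$.

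The vector bundle stack pullback $(p^*)^{-1}$ commutes with specialization to fibers. Intersecting the rational equivalence with the zero section of $\mathfrak{E}$ then identifies the two sides. This uses Lemma \ref{lem:BF88} and the fact that quasi-isomorphic complexes give isomorphic stacks.

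\textbf{Main obstacle.} The main obstacle is Step 3: showing that $\mathfrak{W}$ actually lands inside $\mathfrak{E}$. The compatibility data live only in the derived category, so one must pass to local resolutions, as in Exercise \ref{POT3}(a) and Proposition \ref{prop:local2}. Concretely:
\begin{itemize}
\item Choose local charts $U\to V$ for $X\to Y$ and $Y\to B$, with compatible surjections onto the conormal sheaves.
\item Check that the local closed embeddings glue.
\item Check that the 2-morphisms are coherent.
\end{itemize}
I would first try to sidestep this by assuming global resolutions, as in Section \ref{sec:globalres}, so that the cone stacks are replaced by honest cones in vector bundles. The identification then becomes the scheme-level argument of Exercise \ref{excess6} combined with \cite[Thm 6.5]{fulton}. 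After that I would remove the assumption using Kresch's intersection theory on Artin stacks.
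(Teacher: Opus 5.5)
Your outline follows the right strategy, namely that of Fulton's proof of his Theorem~6.5 as carried over to virtual pullbacks by Kim--Kresch--Pantev and Manolache. The paper itself gives no proof of this theorem, only the citation. The strategy has four parts:
\begin{enumerate}
\item rewrite $f^!_\phi g^!_\phi(\alpha)$ as a zero-section intersection of the cone of $X\times_Y\mf C_{Y/B}\to\mf C_{Y/B}$ inside the split stack $st(\EE_{X/Y})\times_X f^*st(\EE_{Y/B})$;
\item deform $B$ to $\mf C_{Y/B}$ through $M^\circ_{Y/B}$;
\item deform $st(\EE_{X/B})$ to the split stack by scaling the connecting morphism $\delta\colon \EE_{X/Y}\to f^*\EE_{Y/B}[1]$ by $t$;
\item compare the two ends.
\end{enumerate}
Three small corrections:
\begin{enumerate}
\item The morphism whose intrinsic normal cone $\mathfrak W$ you want is $X\times\AA^1\to M^\circ_{Y/B}$, not ``$X\to Y\times\AA^1$''.
\item Since $st$ is contravariant, $st(\EE_{X/Y})$ is the sub-object of $st(\EE_{X/B})$ and $f^*st(\EE_{Y/B})$ is the quotient; compare Section~\ref{sec:compatible-OTs}, where $E'$ is the subbundle. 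This matters for which piece gets rescaled.
\item $\mathfrak W$ need not be flat over $\AA^1$. For $X=\{x=0\}\subset Y=\{x^2=0\}\subset B=\AA^1$ one has $M^\circ_{Y/B}=\Spec k[x,t,u]/(x^2-tu)$, and the normal cone of $X\times\AA^1=\VV(x,u)$ is $\Spec k[t,a,b]/(tb)$, which has a whole component lying over $t=0$. So ``the fibre over $0$'' must mean $i_0^![\mathfrak W]$. This agrees with $[\mf C_{X/\mf C_{Y/B}}]$ only in $A_*(\mathfrak W_0)$, not as cycles. That is enough, because specialization commutes with Gysin pullback to $t=0$.
\end{enumerate}

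The genuine gap is Step~3, which is the only place the compatibility of the triple is used. You need a globally defined closed embedding $\mathfrak W\hookrightarrow\mathfrak E$ over $X\times\AA^1$. At $t=1$ it must restrict to $\mf C_{X/B}\hookrightarrow st(\EE_{X/B})$, and at $t=0$ to the embedding from Step~1. The proposal asserts this (``because $\phi_{X/B}$ is compatible with the other two\ldots'') and then defers it, and neither suggested route supplies it.

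\emph{Gluing local embeddings.} Embeddings built from charts as in Exercise~\ref{POT3}(a) depend on chain-level lifts of derived-category maps. On overlaps they therefore agree only up to 2-isomorphisms, and you would still have to produce a cocycle condition for these. One way around this is to obtain the map as $st$ of a single morphism $\mathcal E\to\tau_{\geq -1}\LL_{X\times\AA^1/M^\circ_{Y/B}}$, where $\mathcal E$ is the cone of $t\delta$. That is, construct an obstruction theory for the family, and only then check closedness locally as in Lemma~\ref{lem:BF5}. This requires an analysis of the cotangent complex of $X\times\AA^1\to M^\circ_{Y/B}$ that you do not give.

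\emph{Global resolutions.} These do not help: they give smooth charts of the three target stacks, not a map out of $\mathfrak W$. Fulton's Theorem~6.5 needs regular embeddings, and Exercise~\ref{excess6} is about vector bundles. Meanwhile the special fibre of $M^\circ_{Y/B}$ is the Artin stack $\mf C_{Y/B}$ even when $B=\Spec k$. Kresch's theory is also beside the point, since what is missing is a morphism of stacks, not a cycle identity.

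\emph{What the missing computation looks like.} Take closed embeddings $X\subset Y\subset B$ of schemes. Suppose the obstruction theories are surjections $\mls E_X\to\mls I_X/\mls I_X^2$ and $\mls E_Y\to\mls I_Y/\mls I_Y^2$, with a compatible inclusion $f^*\mls E_Y\subseteq\mls E_X$. Write $M^\circ_{Y/B}=\relSpec_B\bigoplus_{n\in\ZZ}\mls I_Y^{n}t^{-n}$, with $\mls I_Y^n=\OO_B$ for $n\le 0$. Then:
\begin{enumerate}
\item The ideal of $X\times\AA^1$ in $M^\circ_{Y/B}$ is $J=(\mls I_X,t^{-1}\mls I_Y)$.
\item The left-hand square of the compatible triple is exactly what makes $\mls E_X[t]+t^{-1}f^*\mls E_Y[t]\to J/J^2$ well defined and surjective.
\item This locally free sheaf restricts to $\mls E_X$ at $t=1$ and to $f^*\mls E_Y\oplus \mls E_X/f^*\mls E_Y$ at $t=0$, so it is your $\mathfrak E$.
\end{enumerate}
Carrying this out for intrinsic normal cones of Deligne--Mumford morphisms to Artin stacks, canonically enough to glue, is the substance of the theorem. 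Your proposal names this step but does not do it.
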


\begin{corollary}[{\cite[Corollary 4.9]{manolache-pullback}}]\label{cor:functoriality}
Let $X \xrightarrow{f} Y \xrightarrow{g} B$ be morphisms of algebraic stacks such that $X$ and $Y$ are Deligne-Mumford and $B$ has pure dimension. Let $(\phi_{X/Y}, \phi_{X/B}, \phi_{Y/B})$ be a compatible triple of obstruction theories. Then
\[
f^!_\phi [Y]^{vir}_\phi = [X]^{vir}_\phi.
\]
\end{corollary}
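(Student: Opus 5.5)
The corollary is a formal consequence of Theorem \ref{thm:compat-triple}, applied to the fundamental class $\alpha = [B]$. First I would identify both sides with virtual pullbacks. By Construction \ref{const3}, since $B$ has pure dimension, $[Y]^{vir}_\phi := g^!_{\phi}[B]$ is the virtual class of $Y$ defined by $\phi_{Y/B}$. Likewise $[X]^{vir}_\phi := (g\circ f)^!_\phi[B]$ is the virtual class of $X$ defined by $\phi_{X/B}$.

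On the left side, $f^!_\phi$ is the virtual pullback $A_*(Y) \to A_*(X)$ attached to $\phi_{X/Y}$, with $Y$ in the role of the base. I would check that this is literally the map of Construction \ref{const3} for the morphism $f\colon X \to Y$. It is also the case $p = \mathrm{id}_Y$ of Remark \ref{rmk:POT2.5}, so no base change is involved. Here $Y$ is Deligne-Mumford, hence algebraic, so it is a legitimate base in Construction \ref{const3}. Its Chow group $A_*(Y)$ makes sense, and $[Y]^{vir}_\phi$ lies there.

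Then the computation is the one-line chain
\[
f^!_\phi [Y]^{vir}_\phi = f^!_\phi g^!_\phi [B] = (g\circ f)^!_\phi [B] = [X]^{vir}_\phi,
\]
where the middle equality is Theorem \ref{thm:compat-triple} with $\alpha = [B] \in A_*(B)$.

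As a sanity check I would verify that the degrees agree. Let $n = \dim B$. The class $[Y]^{vir}_\phi$ lies in $A_{n+\rank \EE_{Y/B}}(Y)$, and applying $f^!_\phi$ shifts degree by $\rank \EE_{X/Y}$. The distinguished triangle in the compatibility definition gives additivity of ranks, $\rank \EE_{X/B} = \rank \EE_{Y/B} + \rank \EE_{X/Y}$. So both sides lie in $A_{n+\rank \EE_{X/B}}(X)$.

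There is no real obstacle, since all the work lives in Theorem \ref{thm:compat-triple}. The only point needing care is that $[Y]^{vir}_\phi$ need not be a fundamental class of a pure-dimensional stack. This is harmless, because $f^!_\phi$ is defined on all of $A_*(Y)$ and Theorem \ref{thm:compat-triple} holds for every $\alpha \in A_*(B)$.
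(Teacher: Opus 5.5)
Your proposal is correct and is the intended argument. The paper gives no written proof: it cites Manolache and leaves this as Exercise \ref{POT15}. The natural proof is yours: unwind the definitions $[Y]^{vir}_\phi = g^!_\phi[B]$ and $[X]^{vir}_\phi = (g\circ f)^!_\phi[B]$ from Construction \ref{const3}, then apply Theorem \ref{thm:compat-triple} with $\alpha = [B]$. Your degree check is a harmless extra.
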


\subsection{Induced obstruction theories}
\label{rmk:BF3}
We used Lemma \ref{lem:BF1} to motivate the definition of a Behrend-Fantechi perfect obstruction theory. However, now we have that definition, we can recognize Lemma \ref{lem:BF1} as a special case of the following more general statement (see \cite[Prop 3]{KKP}):

Assume we have a diagram of $k$-schemes
\[
\begin{tikzcd}
X \arrow[r, "i"] \arrow[dr, "j"'] & Y \arrow[d, "f"]\\
 & Y'
\end{tikzcd}
\]
where $i$ and $j$ are morphisms and $f$ is smooth. Let $\phi: \EE_{X/Y} \to \LL_{X/Y}$ be a perfect obstruction theory. If $h$ is the composition $\EE_{X/Y} \to \LL_{X/Y} \to i^*\LL_{Y/Y'}[1]$ and $\EE_{X/Y'}$ is a complex fitting into a distinguished triangle $\EE_{X/Y'} \to \EE_{X/Y} \xrightarrow{h} i^*\LL_{Y/Y'}[1]$, then the induced map $\phi': \EE_{X/Y'} \to \LL_{X/Y'}$ is a perfect obstruction theory with the property that $[X]^{vir}_{\phi} = [X]^{vir}_{\phi'}.$\\

\noindent
\textbf{Exercises}
\begin{enumerate}[label=6.\arabic*, leftmargin=*]

\item \label{POT8} Prove Lemma \ref{lem:BF8} as follows.
\begin{enumerate}
\item[(a)] Show that the abelian cone $E$ in the lemma statement is indeed a vector bundle. (This can be done even without assuming $\EE_{X/B}$ has a global resolution.)

\item[(b)] Finish the proof of the lemma. (\textit{Hint: \cite[Prop 5.6]{BF}}.)

\end{enumerate}

\item \label{POT14}
\begin{itemize}
\item[(a)] Consider a Cartesian diagram as in Remark \ref{rmk:POT2.5}. Show that the canonical map $Lp^*\LL_{X/B} \to \LL_{X'/B'}$ is an obstruction theory (not necessarily perfect). (\textit{Hint: Proposition \ref{prop:criterion}.)}
\item[(b)] Show that the composition $Lp^*\EE_{X/B} \to Lp^*\LL_{X/B} \to \LL_{X'/B'}$ in Remark \ref{rmk:POT2.5} is a perfect obstruction theory.
\end{itemize}
\item \label{POT15} Prove Corollary \ref{cor:functoriality}.

\item \label{POT7} Prove the more general statement of Lemma \ref{lem:BF1} given in Section \ref{rmk:BF3}. (\textit{Hint: first construct the morphism of distinguished triangles below.})
\[
\begin{tikzcd}
\EE_{X/Y'} \arrow[r] \arrow[d] & \EE_{X/Y} \arrow[r, "h"] \arrow[d] & i^*\LL_{Y/Y'}[1] \arrow[r] \arrow[d, equal]& {}\\
\LL_{X/Y'} \arrow[r] & \LL_{X/Y} \arrow[r] & i^*\LL_{Y/Y'}[1] \arrow[r] & {}
\end{tikzcd}
\]

\end{enumerate}

\section{Virtual class for moduli of maps: definition}
In this section we assume the base field $k$ is the complex numbers.

\subsection{Moduli stacks}

We define various moduli stacks relevant to Gromov-Witten theory. Let $X$ be a scheme.
\begin{definition}
The \textit{moduli stack of $n$-marked genus-$g$ prestable curves} is the algebraic stack $\mf M_{g, n}$ whose fiber over a scheme $S$ is the groupoid of tuples
\[(\pi: C \to S, \{s_i: S \to C\}_{i=1}^n),\] where $C$ is an algebraic space, the $s_i$ are disjoint sections of $\pi$, and $\pi$ is flat, proper, locally finitely presented, of relative dimension 1, and has geometric fibers equal to reduced connected nodal curves of genus $g$ with the images of the $s_i$ contained in the smooth locus. An isomorphism in this groupoid is an isomorphism of the algebraic spaces $C$ over $S$ commuting with the sections.
\end{definition}

\begin{definition}The \textit{moduli stack of prestable maps} to $X$ is the stack $\mc M_{g, n}(X)$ whose fiber over a scheme $S$ is the groupoid of tuples
\[(C \to S, \{s_i: S \to C\}_{i=1}^n, f: C \to X),\] where $(C \to S, \{s_i\}_{i=1}^n)$ is a family of $n$-marked genus-$g$ prestable curves and $f$ is a morphism. An isomorphism in this groupoid is an isomorphism of $n$-marked genus-$g$ prestable curves commuting with the maps to $X$.
\end{definition}

The stack $\mc M_{g, n}(X)$ has a morphism to $\fM_{g, n}$ sending $(C \to S, \{s_i\}_{i=1}^n, f: C \to X)$ to $(C \to S, \{s_i\}_{i=1}^n)$.

Certain open substacks of $\mc M_{g, n}(X)$ are particularly important.
\begin{itemize}
\item For any curve class $\beta \in H_2(X, \ZZ)$ there is an open and closed substack $\mc M_{g, n}(X, \beta)$ of $\mc M_{g, n}(X)$ whose geometric points are prestable maps $(C\to \Spec(k) \{s_i\}_{i=1}^n, f: C \to X)$ such that $f_*[C] = \beta$ (here $[C]$ refers to the fundamental class in homology and $f_*$ is pushforward in homology, not Chow).
\item There is an open substack $\overline{\mc M}_{g, n}(X, \beta) \subseteq \mc M_{g, n}(X, \beta)$ of \emph{stable} maps: an object of $\mc M_{g, n}(X, \beta)(S)$ is stable if geometric fibers over $S$ have finitely many automorphisms (as prestable maps).
\end{itemize}

\subsection{Obstruction theory}
The morphism $\mc M_{g, n}(X) \to \mf M_{g, n}$ has a canonical obstruction theory constructed as follows.
There is a universal curve $\mls C \to \mf M_{g, n}$ and its pullback to $\mc M_{g, n}(X)$ fits into a diagram
\[
\begin{tikzcd}
\mls C_{\mc M_{g, n}(X)} \arrow[d, "\pi"] \arrow[r, "f"] & X\\
\mc M_{g, n}(X).
\end{tikzcd}
\]
The morphism $f$ is called the universal map to $X$. There are canonical maps of cotangent complexes
\begin{equation}\label{eq:maps2}
f^*\LL_X \to \LL_{\mls C_{\mc M_{g, n}(X)}/\mls C} \xleftarrow{\sim} \pi^* \LL_{\mc M_{g, n}(X)/\mf M_{g, n}}
\end{equation}
hence we have a morphism of complexes $f^*\LL_X \to \pi^* \LL_{\mc M_{g, n}(X)/\mf M_{g, n}}$. The map $\pi^*$ has a left adjoint given by $R\pi_*(- \otimes \omega_{\pi}[1])$, where $\omega_{\pi}$ is the dualizing sheaf for the family of curves (see \cite[Tags 0E6Q, 0E6R]{stacks-project})). Applying adjunction to $f^*\LL_X \to \pi^* \LL_{\mc M_{g, n}(X)/\mf M_{g, n}}$ we get a morphism
\[
\phi: R\pi_*(f^*\LL_X \otimes \omega_{\pi}[1]) \to \LL_{\mc M_{g, n}(X)/\mf M_{g, n}}.
\]
It is a nontrivial result that this morphism is an obstruction theory, known as the canonical obstruction theory for $\mc M_{g, n}(X) \to \mf M_{g, n}$ (see e.g. \cite[Section 6]{BF} or \cite{webb}).

\begin{lemma}\label{lem:maps1}
The obstruction theory on $\mc M_{g, n}(X)$ is perfect if $X$ is smooth. In this case the obstruction theory is a morphism
\begin{equation}\label{eq:pot}
\phi: R\pi_*(f^*\Omega_X \otimes \omega_{\pi}[1]) \to \LL_{\mc M_{g, n}(X)/\mf M_{g, n}}.
\end{equation}

\end{lemma}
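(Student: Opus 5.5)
Since $X$ is smooth, the canonical map $\LL_X \to \Omega_X$ of \ref{eq:C1} is a quasi-isomorphism, so $\Omega_X$ is locally free of rank $r=\dim X$ (componentwise). Pulling back along $f$ gives $f^*\LL_X \simeq f^*\Omega_X$, with $Lf^* = f^*$ because $\Omega_X$ is locally free. Substituting into the canonical obstruction theory $\phi$ gives exactly the morphism \eqref{eq:pot}. It remains to show that $\EE := R\pi_*(f^*\Omega_X \otimes \omega_\pi[1])$ is perfect with amplitude in $[-1,0]$. The obstruction-theory property is the nontrivial result already quoted.

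\textbf{Step 1: perfectness.} Set $V = f^*\Omega_X \otimes \omega_\pi$, a locally free sheaf on the universal curve $\mls C_{\mc M_{g,n}(X)}$. The curve $\pi$ is flat and proper of relative dimension $1$, so $V$ is flat over $\mc M_{g,n}(X)$. By cohomology and base change (the theorem on perfectness of $R\pi_*$ of flat coherent sheaves under flat proper morphisms, \cite[Tag 0A1H]{stacks-project} and its stack analogue, checked smooth-locally on the base), $R\pi_* V$ is perfect. Its Tor-amplitude is contained in $[0,1]$ because the fibers are curves, so $H^i$ of any fiber vanishes for $i\ne 0,1$. Shifting by $[1]$ puts the amplitude in $[-1,0]$.

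\textbf{Step 2: two-term resolution.} Concretely, locally on the base (affine $S \to \mc M_{g,n}(X)$) one can resolve $R\pi_*V$ by a two-term complex of finite-rank locally free sheaves $[\mls K^0 \to \mls K^1]$. The standard method takes an $S$-relatively ample line bundle $\mls O(D)$ with $D$ a sufficiently positive divisor in the smooth locus of the fibers, and uses
\[
0 \to V \to V(D) \to V(D)|_D \to 0,
\]
where $R^1\pi_* V(D)=0$. Both pushforwards $\pi_*V(D)$ and $\pi_*(V(D)|_D)$ are then locally free by cohomology and base change. Shifting gives the required local form $[\mls E^{-1}\to\mls E^0]$ of Definition \ref{def:BF}.

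\textbf{Main obstacle.} The subtle step is justifying base change and perfectness on the stacky base, and making sure $Lf^*\LL_X=f^*\Omega_X$ is compatible with the adjunction in \eqref{eq:maps2}. I would handle both by working smooth-locally on $\mc M_{g,n}(X)$, where everything reduces to schemes, and noting that the adjunction is natural in the source complex.
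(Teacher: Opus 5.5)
Your proposal is correct and follows essentially the same route as the paper: reduce smooth-locally to a scheme, get perfectness of $R\pi_*$ from the flat--proper base change theorem, and read off Tor-amplitude $[-1,0]$ from the cohomology of a vector bundle on a curve fiber. The paper makes your one-line ``because the fibers are curves'' step explicit, using derived base change (\cite[Tag 08IR]{stacks-project}) together with the fiberwise criterion \cite[Prop 8.3.6.4]{fgaexplained}. Your Step 2 is not needed, since a perfect complex of Tor-amplitude $[-1,0]$ is already locally a two-term complex of finite free modules; it is nonetheless a valid concrete alternative in the spirit of Lemma \ref{lem:two-term}, provided $D$ is chosen \'etale-locally so that it meets every component of every fiber.
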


\subsection{Virtual class}\label{sec:vc}
By Exercise \ref{BF4} we can restrict the perfect obstruction theory $\phi$ to any of the open substacks $\mc M_{g, n}(X, \beta) \subseteq \mc M_{g, n}(X)$ to get a perfect obstruction theory for $\mc M_{g, n}(X, \beta)$. Similarly, we get a perfect obstruction theory for any $\overline{\mc M}_{g, n}(X, \beta)$.
From this perfect obstruction theory and Construction \ref{const3} we get a virtual class
\[
[\overline{\mc M}_{g, n}(X, \beta)]^{vir}_\phi = (p^*)_{\mf E_\phi}^{-1} [\mf C_{\overline{\mc M}_{g, n}(X, \beta)/\mf M_{g, n}}] \in A_{*}(\overline{\mc M}_{g, n}(X, \beta))
\]
where $\mf E_{\phi} = st(R\pi_*(f^*\LL_X \otimes \omega_\pi[1]))$ and $p_{\mf E_\phi}: \mf E_{\phi} \to \overline{\mc M}_{g, n}(X, \beta)$ is the structure morphism.
 The class $[\overline{\mc M}_{g, n}(X, \beta)]^{vir}_\phi$ has pure dimension.
\begin{lemma}\label{lem:maps6}
The virtual class $[\overline{\mc M}_{g, n}(X,\beta)]^{vir}_\phi$ is an element of $A_d(\overline{\mc M}_{g, n}(X, \beta))$, where
\[
d = (1-g)(\dim X - 3) - K_X \cdot \beta + n
\]
and $K_X = c_1(\Lambda^{\dim X}\Omega_X)$. The integer $d$ is called the \emph{expected dimension} of $\overline{\mc M}_{g, n}(X,\beta)$.
\end{lemma}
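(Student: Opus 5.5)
By Exercise \ref{POT0}(c), the virtual class of a perfect obstruction theory $\phi: \EE_{X/B} \to \LL_{X/B}$ lives in degree $\dim B + \rank(\EE_{X/B})$. So the plan has two computations: the dimension of the base $\mf M_{g,n}$, and the rank of the complex $\EE = R\pi_*(f^*\Omega_X \otimes \omega_\pi[1])$ from \eqref{eq:pot}. Their sum should equal $d$.

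\emph{Dimension of the base.} The stack $\mf M_{g,n}$ is smooth of pure dimension $3g-3+n$. I would justify this by deformation theory of prestable curves. Deformations of a nodal curve $C$ with markings are unobstructed, since $\Ext^2$ vanishes on a curve. The dimension is $\dim \Ext^1(\Omega_C(D),\mls O_C) - \dim \Ext^0(\Omega_C(D),\mls O_C)$, where $D$ is the marking divisor. By Riemann--Roch and Serre duality this equals $3g-3+n$ at the smooth locus. The nodal locus is a closed substack, so pure dimensionality follows from smoothness. I will treat this as standard.

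\emph{Rank of $\EE$.} Rank is locally constant and can be computed fiberwise. At a geometric point $(C,f)$, by cohomology and base change, the fiber of $R\pi_*(f^*\Omega_X\otimes\omega_C[1])$ has Euler characteristic equal to minus that of $f^*\Omega_X\otimes\omega_C$. In rank terms, which are $h^0 - h^{-1}$ for the shifted complex,
\[
\rank(\EE) = -\chi(C, f^*\Omega_X\otimes\omega_C).
\]
By Serre duality on the Gorenstein curve $C$, $\chi(f^*\Omega_X\otimes\omega_C) = -\chi(f^*T_X)$. Hence $\rank(\EE) = \chi(C, f^*T_X)$. Riemann--Roch for a vector bundle on a nodal curve of arithmetic genus $g$ gives
\[
\chi(f^*T_X) = \deg f^*T_X + \dim X\,(1-g) = -K_X\cdot\beta + \dim X(1-g).
\]
Here $\deg f^*T_X = \deg f^*c_1(T_X) = -K_X\cdot\beta$ by the projection formula and the definition of $\beta = f_*[C]$.

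\emph{Sum.} Adding the two pieces gives
\[
(3g-3+n) + (1-g)\dim X - K_X\cdot\beta = (1-g)(\dim X-3) - K_X\cdot\beta + n = d.
\]

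The main obstacle is the sign bookkeeping in the rank. This includes the shift $[1]$, the convention that rank is $\rank \mls E^0 - \rank \mls E^{-1}$ (Definition \ref{def:rank}), and the duality step. I would check these against the example $g=0$, $X$ a point, where the rank must be $0$. I would also verify that base change applies, which holds because $\EE$ is perfect and $\pi$ is flat and proper.
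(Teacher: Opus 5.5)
Your proposal is correct and follows the paper's own argument (Exercise \ref{maps3} and its solution): you compute $\rank(\EE)$ fiberwise as $-\chi(C, f^*\Omega_X\otimes\omega_C)=\chi(C, f^*T_X)$ via derived base change and Serre duality, evaluate this by Riemann--Roch, and add $\dim\mf M_{g,n}=3g-3+n$ using Exercise \ref{POT0}(c). The only difference is that you sketch why $\mf M_{g,n}$ is smooth of dimension $3g-3+n$, whereas the paper simply cites this; in that sketch, closedness of the nodal locus alone does not give pure dimension, so argue instead that $\dim\Ext^1-\dim\Ext^0$ computes the local dimension at every point of the smooth stack (nodal curves included), or that every nodal curve is smoothable, so the smooth locus is dense.
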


\noindent
\textbf{Exercises}
\begin{enumerate}[label=7.\arabic*, leftmargin=*]
\item \label{maps1} Explain where the morphisms in \eqref{eq:maps2} come from and why one of them is an isomorphism. Use the properties of the cotangent complex \ref{eq:C1}-\ref{eq:C4}.

\item \label{maps2} Prove Lemma \ref{lem:maps1}. (\textit{Hint: \cite[Tag 08EV]{stacks-project} and \cite[Prop 8.3.6.4]{fgaexplained}}.)

\item \label{maps3} Prove Lemma \ref{lem:maps6} as follows. We continue with the notation in \S \ref{sec:vc}.
\begin{itemize}
\item[(a)] If $f_K: C \to X$ is any stable map defined over an algebraically closed field $K$, show that the rank of $R\pi_*(f^*\Omega_X \otimes \omega_\pi[1])$ is $\chi(f_K^*\Omega_X^\vee)$.
Hence $(p^*)^{-1}_{\mf E_\phi}$ is a morphism
\[
(p^*)^{-1}_{\mf E_\phi}: A_{*}({\mf E_\phi}) \to A_{*+\chi(f_K^*\Omega_X^\vee)}(\overline{\mc M}_{g, n}(X, \beta)).
\]
\item[(b)] Use Riemann-Roch and \cite[Remark 3.2.3(c)]{fulton} to show
\[
\chi(f_K^*\Omega_X^\vee) = \dim X(1-g) - K_X \cdot \beta.
\]
\item[(c)] Use parts (a), (b), and Exercise \ref{POT0} to prove Lemma \ref{lem:maps6}. You may use that $\mf M_{g, n}$ is smooth of dimension $3g-3 + n$ (see \cite[Proposition~2]{BehrendGW}).
\end{itemize}

\end{enumerate}

\section{Virtual class for moduli of stable maps: examples}
In this section we continue to assume the base field $k$ is the complex numbers.

The virtual class on $\overline{\mc M}_{g, n}(X, \beta)$ can be calculated in some examples. We consider three. Let $\phi: \EE \to \LL_{\overline{\mc M}_{g, n}(X, \beta)/\mf{M}_{g, n}}$ denote the canonical perfect obstruction theory.

First, if $h^{-1}(\EE) = 0$ and $h^0(\EE)$ is locally free, then by Lemma \ref{lem:BF9} we have that $\overline{\mc M}_{g, n}(X, \beta)$ is smooth and
\[
[\overline{\mc M}_{g, n}(X, \beta)]^{vir}_\phi = [\overline{\mc M}_{g, n}(X, \beta)].
\]

Motivated by this, $X$ is called \textit{convex} if $H^1(C, f^*\Omega_X^\vee)=0$ for every genus-zero stable map $f: C \to X$.

\begin{lemma}\label{lem:maps3}For each of the following moduli spaces of stable maps, we have that $h^{-1}(\EE)=0$ and $h^0(\EE)$ is locally free:
\begin{itemize}
\item $\overline{\mc M}_{0, n}(X, 0)$,
\item $\overline{\mc M}_{g, n}(\Spec(k), 0)$
\item $\overline{\mc M}_{0, n}(X, \beta)$ for $X$ convex.
\end{itemize}
\end{lemma}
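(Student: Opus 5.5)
The plan is to compute the cohomology sheaves of $\EE = R\pi_*(f^*\Omega_X \otimes \omega_\pi[1])$ using Grothendieck--Serre duality fiberwise, and then appeal to cohomology and base change. Because $X$ is smooth in every case under consideration ($\Spec(k)$ certainly is, and convexity is only meaningful for smooth $X$), $f^*\Omega_X$ is locally free on the universal curve. Relative Serre duality for the flat proper family $\pi$ of nodal curves identifies the derived dual $\EE^\vee$ with $R\pi_*(f^*T_X)$, where $T_X = \Omega_X^\vee$. Over a geometric point $[f\colon C\to X]$ this gives
\[
h^{-1}(\EE|_{pt})^\vee \simeq H^1(C, f^*T_X), \qquad h^{0}(\EE|_{pt})^\vee \simeq H^0(C, f^*T_X),
\]
after passing to a local two-term locally free representative $[\mls E^{-1}\to \mls E^0]$ of $\EE$ (as in Exercise \ref{POT3}). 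The key step is therefore to show that $H^1(C, f^*T_X)=0$ at every geometric point.

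I would deduce both claims from this vanishing. Write locally $\EE^\vee = [E_0 \to E_1]$ with $E_i = (\mls E^{-i})^\vee$ locally free. The fiber of $\mathrm{coker}(E_0\to E_1)$ at a point is $H^1(C,f^*T_X)$, so if this vanishes everywhere, Nakayama shows $E_0 \to E_1$ is surjective. Its kernel $K$ is then locally free, and $\EE^\vee \simeq K[0]$. Dualizing gives $\EE \simeq K^\vee[0]$, so $h^{-1}(\EE)=0$ and $h^0(\EE)=K^\vee$ is locally free. This is the standard cohomology-and-base-change argument, with the caveat that the dual of a two-term complex has to be handled carefully.

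The remaining task is to verify the vanishing case by case. In case (3), $H^1(C,f^*T_X)=0$ is precisely the definition of convexity, since $n$ markings do not affect $C$ as a curve. Case (1) is a special case of (3) once one checks that $\beta=0$ maps are automatically fine: a stable genus-zero map of class $0$ is constant, so $f^*T_X$ is the trivial bundle $\mls O_C^{\dim X}$. Because $C$ is a connected nodal curve of arithmetic genus $0$, $H^1(C,\mls O_C)=0$. In case (2), $T_X=0$, so $f^*T_X=0$ and both cohomology groups vanish; indeed $\EE=0$ there.

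I expect the main obstacle to be justifying the duality identification $\EE^\vee \simeq R\pi_*f^*T_X$ and its compatibility with base change to points, rather than the vanishing itself. To handle this, I would use that $R\pi_*$ of a $\pi$-flat coherent sheaf is perfect and commutes with arbitrary base change (Tag 08EV), together with the adjunction already used to define $\phi$, namely that $R\pi_*(-\otimes\omega_\pi[1])$ is left adjoint to $\pi^*$. This adjunction gives $R\mathcal{H}om(R\pi_*(F\otimes\omega_\pi[1]),\mls O) \simeq R\pi_*R\mathcal{H}om(F,\mls O)$ for $F = f^*\Omega_X$ locally free.
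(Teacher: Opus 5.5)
Your proposal is correct and follows essentially the same route as the paper's solution to Exercise 8.1. There, Grothendieck duality gives $\EE^\vee\simeq R\pi_*f^*T_X$. Fiberwise vanishing of $H^1(C,f^*T_X)$, together with cohomology and base change and Nakayama, gives a local surjection $\mls F^0\twoheadrightarrow\mls F^1$ with locally free kernel. Dualizing then yields $h^{-1}(\EE)=0$ and $h^0(\EE)$ locally free, and the three cases are checked exactly as you do.

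Deriving both conclusions directly from the fiberwise vanishing, as you do, is slightly cleaner than the paper's part (a). That part tries to obtain surjectivity of the dual map from mere injectivity of $\mls E^{-1}\to\mls E^0$, which is not valid in general (e.g.\ $\mls O\xrightarrow{x}\mls O$ on $\AA^1$).
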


\noindent
The following lemma implies that projective spaces, Grassmannians, and more generally varieties of the form $G/P$ are convex.

\begin{lemma}\label{lem:maps4}If the tangent bundle of $X$ is globally generated, then $X$ is convex.
\end{lemma}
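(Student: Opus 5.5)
We must show that if $T_X$ is globally generated, then $H^1(C, f^*T_X)=0$ for every genus-zero stable map $f\colon C \to X$, where $T_X = \Omega_X^\vee$. The plan is to reduce to a statement about globally generated bundles on each irreducible component of $C$ (each a $\PP^1$), and then glue across the nodes using the tree structure of a genus-zero nodal curve.

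\textbf{Step 1: one component.} Global generation of $T_X$ means there is a surjection $\mls O_X^{\oplus N} \to T_X$. Pulling back along $f$ gives a surjection $\mls O_C^{\oplus N} \to f^*T_X$, so $f^*T_X$ is globally generated on $C$. Restricted to any component $C_v \simeq \PP^1$, it is a globally generated vector bundle. By Grothendieck's splitting theorem it decomposes as $\bigoplus \mls O_{\PP^1}(a_i)$, and global generation forces every $a_i \geq 0$. Hence $H^1(C_v, f^*T_X|_{C_v}) = 0$. More precisely, we get the stronger vanishing
\[
H^1\bigl(C_v, f^*T_X|_{C_v}(-p)\bigr) = 0 \quad \text{for any point } p \in C_v,
\]
since each summand becomes $\mls O(a_i - 1)$ with $a_i - 1 \geq -1$. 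Alternatively, one can avoid splitting: $H^1$ is right exact on a curve, so the surjection from a trivial bundle gives a surjection from $H^1(\mls O(-p))^{\oplus N} = 0$.

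\textbf{Step 2: induction on components.} Since $C$ is connected, nodal, and of arithmetic genus zero, its dual graph is a tree and every component is a smooth rational curve. Induct on the number of components. If $C$ is reducible, pick a leaf component $C_v$ meeting the rest $C' = \overline{C \setminus C_v}$ in a single node $p$; then $C'$ is again a connected genus-zero nodal curve. For a vector bundle $V$ on $C$ there is a short exact sequence
\[
0 \to V|_{C_v}(-p) \to V \to V|_{C'} \to 0,
\]
obtained from the ideal of $C'$ in $C$, which is $\mls O_{C_v}(-p)$ pushed forward. Apply it with $V = f^*T_X$. The long exact sequence gives
\[
H^1\bigl(C_v, V|_{C_v}(-p)\bigr) \to H^1(C, V) \to H^1(C', V|_{C'}).
\]
The left term vanishes by Step 1, and the right term vanishes by induction, since $V|_{C'}$ is again globally generated (pullback of the same surjection). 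So $H^1(C, f^*T_X) = 0$, and $X$ is convex.

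\textbf{Main obstacle.} The main point to check is the exact sequence at the node: that the kernel of $\mls O_C \to \mls O_{C'}$ is $\mls O_{C_v}(-p)$. This is a local computation at $xy = 0$, where the ideal of $C' = \{y=0\}$ is generated by $y$ and is supported on $C_v = \{x=0\}$, vanishing at $p$. Everything else is routine.
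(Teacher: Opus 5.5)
Your proof is correct. The paper gives no written proof of this lemma: it is Exercise \ref{GW2}, and the solution notes skip it. The nearest comparable argument is Lemma \ref{lem:ql2} in the solution to Exercise \ref{GW4}(b). That lemma proves $H^1$-vanishing on a tree of $\PP^1$'s only for line bundles of nonnegative degree, using Serre duality and an induction over the rooted dual tree to show that sections of $\omega_C\otimes L^\vee$ vanish. Your leaf-removal sequence $0\to V|_{C_v}(-p)\to V\to V|_{C'}\to 0$ does the same gluing directly on $H^1$. Combined with the splitting on each component, it handles bundles of any rank, which is what this lemma needs. Since $f|_{C'}$ need not be stable, the inductive statement should be about globally generated bundles on trees of $\PP^1$'s; your parenthetical shows that is what you mean.

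The induction is unnecessary, though: the right-exactness observation you make in Step 1 already works on the whole curve. Take the surjection $\mls O_C^{\oplus N}\to f^*T_X$ with coherent kernel $K$. The long exact sequence, together with $H^2(C,K)=0$ (since $\dim C=1$), exhibits $H^1(C,f^*T_X)$ as a quotient of $H^1(C,\mls O_C)^{\oplus N}$. That group is zero precisely because $C$ has arithmetic genus zero. This one-line argument needs neither Grothendieck's splitting theorem nor the tree structure of the dual graph. It also avoids identifying the components with $\PP^1$ over an algebraically closed field, which your component-by-component argument assumes tacitly. That assumption is harmless here, since $k=\CC$ in this section and $H^1$ commutes with field extension.
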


Second, if $\overline{\mc M}_{g, n}(X, \beta)$ is smooth we can compute its virtual class using Lemma \ref{lem:BF8}. Since $\overline{\mc M}_{g, n}(X, 0) \simeq \overline{\mc M}_{g, n} \times X$, where $\overline{\mc M}_{g, n}$ is the moduli stack of $n$-marked genus-$g$ stable curves, we can apply Lemma \ref{lem:BF8},
noting that $\EE$ has a global presentation by
\cite[Lemma~5.0.3]{CJW}.

\begin{lemma}\label{lem:maps5}
The stack $\overline{\mc M}_{g, n}(X, 0) $ is smooth and its virtual cycle is given by
\begin{equation}\label{eq:maps0}
[\overline{\mc M}_{g, n}(X, 0)]^{vir}_{\phi} = c_{g \dim X}(E) \cap [\overline{\mc M}_{g, n}(X, 0)]
\end{equation}
where $E \to \overline{\mc M}_{g, n} \times X$ is the vector bundle whose sheaf of sections is $(\pi_*\omega_\pi)^\vee \boxtimes \Omega_X^\vee$.
\end{lemma}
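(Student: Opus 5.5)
The approach is to use Lemma \ref{lem:BF8}: show $\overline{\mc M}_{g, n}(X, 0)$ is smooth over $\fM_{g,n}$, then identify $h^{-1}(\EE)$. The global resolution hypothesis of Lemma \ref{lem:BF8} is supplied by \cite[Lemma~5.0.3]{CJW}, as noted before the statement. Smoothness comes from the identification $\overline{\mc M}_{g, n}(X, 0) \simeq \overline{\mc M}_{g, n} \times X$. A degree-zero stable map from a connected curve is constant, and stability of the map forces stability of the marked curve, since contracted components must be stable. The natural map $\overline{\mc M}_{g,n}\times X \to \overline{\mc M}_{g, n}(X, 0)$, sending $(C, x)$ to the constant map at $x$, is therefore an isomorphism; I would check this on $S$-points, using that a map from a proper flat family with connected fibers that is constant on fibers factors through $S$. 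Then $\overline{\mc M}_{g,n}\times X$ is smooth over $k$ because $X$ is smooth and $\overline{\mc M}_{g,n}$ is smooth. One caveat: Lemma \ref{lem:BF8} wants $X\to B$ smooth with $B=\fM_{g,n}$. The map $\overline{\mc M}_{g,n}\times X\to \fM_{g,n}$ is the open immersion $\overline{\mc M}_{g,n}\subseteq \fM_{g,n}$ times $X$, hence smooth.

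Next compute the cohomology of $\EE = R\pi_*(f^*\Omega_X\otimes\omega_\pi[1])$. Since $f$ factors as $\mls C \xrightarrow{\pi} \overline{\mc M}_{g,n}\times X \xrightarrow{pr_2} X$, we have $f^*\Omega_X = \pi^* pr_2^*\Omega_X$. The projection formula gives
\[
\EE \simeq pr_2^*\Omega_X \otimes R\pi_*\omega_\pi[1].
\]
For a family of connected nodal curves, $R^0\pi_*\omega_\pi$ is locally free of rank $g$ (the Hodge bundle), and $R^1\pi_*\omega_\pi\simeq \mls O$ by Serre duality with $\pi_*\mls O=\mls O$; both base change, which one checks via cohomology and base change since the fiber dimensions $h^0(\omega_C)=g$ and $h^1(\omega_C)=1$ are constant. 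Hence
\[
h^{-1}(\EE) = \pi_*\omega_\pi\boxtimes\Omega_X \quad\text{and}\quad h^0(\EE)=pr_2^*\Omega_X.
\]
This matches $h^0(\LL)=\Omega_{\overline{\mc M}_{g,n}(X,0)/\fM_{g,n}}=pr_2^*\Omega_X$, a consistency check. With the paper's convention, $\relSpec(Sym(h^{-1}(\EE)))$ is the vector bundle whose sheaf of sections is the dual, $(\pi_*\omega_\pi)^\vee\boxtimes\Omega_X^\vee$. Its rank is $g\dim X$, so $c_{top}=c_{g\dim X}$, and Lemma \ref{lem:BF8} yields \eqref{eq:maps0}.

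I expect the main obstacle to be the $S$-point identification $\overline{\mc M}_{g,n}(X,0)\simeq\overline{\mc M}_{g,n}\times X$ over non-reduced bases, together with checking that the canonical obstruction theory restricted there is really the one computed above. For the first, I would try to argue via rigidity: $f$ is constant on fibers and $\pi_*\mls O_C=\mls O_S$ universally, so $f$ factors through $S$. The base-change statements for $\omega_\pi$ are routine.
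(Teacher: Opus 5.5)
Your proposal is correct and follows essentially the paper's route: identify $\overline{\mc M}_{g,n}(X,0)\simeq\overline{\mc M}_{g,n}\times X$, apply Lemma \ref{lem:BF8} with the global resolution from \cite[Lemma~5.0.3]{CJW}, and compute $h^{-1}(\EE)=\pi_*\omega_\pi\boxtimes\Omega_X$ by the projection formula. The only difference is in proving that a degree-zero family factors through $S$. The paper embeds $X$ in $\PP^n$ and shows that the fiberwise-trivial line bundle $L$ satisfies $\pi^*\pi_*L\simeq L$. You instead use rigidity via $\pi_*\mls O_C=\mls O_S$. Both work.
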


The third and final example is a \textit{genus-0 quantum Lefshetz formula.} Let $X$ be a hypersurface of degree $\ell$ in $\PP^r$. Recall that $H_2(\PP^r, \ZZ)$ is isomorphic to $\ZZ$, generated by the class of a line $L$.
For an integer $d$, it is customary to write $\overline{\mc M}_{g, n}(\PP^r, d)$ for $\overline{\mc M}_{g, n}(\PP^r, d[L]).$
 Since $\PP^r$ is convex, the moduli stack $\overline{\mc M}_{0, n}(\PP^r, d)$ is smooth and its virtual class equals the usual fundamental class. The inclusion $X \to \PP^r$ induces a closed embedding $j: \overline{\mc M}_{0, n}(X, \beta) \to \overline{\mc M}_{0, n}(\PP^r, d)$ where $d[L] = i_*\beta$.\footnote{There is an isomorphism $H_2(\PP^r, \ZZ) \simeq \ZZ$ such that class of a stable map to $X$ is the integer $d$ if the induced map to $\PP^r$ has degree $d$.} For the following result we refer to \cite{KKP}, or to \cite[Thm 8.11]{BCM} for a summary.

\begin{lemma}\label{lem:maps66}
Let $d$ be a positive integer.
Then the coherent sheaf
$\mls E := \pi_* f^* \mls O_{\PP^r}(\ell)$ on $\mc M_{0, n}(\PP^r, d)$ is locally free of rank $\ell d + 1$.
Moreover, if $E := \Spec(Sym(\mls E^\vee))$
and $\overline{\mc{M}}_{0, n}(X, d) :=
\bigsqcup_{i_{\ast}\beta = d[L]} \overline{\mc{M}}_{0,n}(X, \beta)$,
then
\begin{equation}\label{eq:maps-1}
j_*[\overline{\mc M}_{0, n}(X, d)]^{vir}_{\phi} = c_{\ell d + 1}(E) \cap [\overline{\mc M}_{0, n}(\PP^r, d)].
\end{equation}
\end{lemma}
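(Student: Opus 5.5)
The plan is to prove \eqref{eq:maps-1} in three steps. First show that $\mls E$ is a vector bundle with a section cutting out the substack of maps to $X$. Then show that the two canonical perfect obstruction theories (for $\overline{\mc M}_{0,n}(X,d)$ and for $\overline{\mc M}_{0,n}(\PP^r,d)$ relative to $\mf M_{0,n}$) form a compatible triple with the obstruction theory of the zero locus. Finally combine Corollary \ref{cor:functoriality} with the Fulton-type identity \eqref{lem:vc2}, generalized through Proposition \ref{prop:local1}.

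\textbf{Step 1: $\mls E$ is a vector bundle.} For a genus-zero prestable map $f\colon C\to\PP^r$ of degree $d$, the line bundle $f^*\mls O(\ell)$ has nonnegative degree on every component of the tree $C$. Hence $H^1(C,f^*\mls O(\ell))=0$, which one proves by induction on components using the normalization sequence. Cohomology and base change then make $\mls E=\pi_*f^*\mls O(\ell)$ locally free, with $R^1\pi_*=0$, and Riemann--Roch gives rank $\ell d+1$.

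\textbf{Step 2: the section and its zero locus.} The defining polynomial $F$ of $X$ gives a section $F\in H^0(\PP^r,\mls O(\ell))$. Pulling back produces a section $s=\pi_*f^*F$ of $E$, and one checks functorially that $\VV(s)=\overline{\mc M}_{0,n}(X,d)$: a family of maps factors through $X$ exactly when $f^*F$ vanishes on $C_S$, equivalently when $\pi_*f^*F=0$. We use that $\pi_*\mls O_C=\mls O_S$ and the formation of $\pi_*$ commutes with base change.

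\textbf{Step 3: compatible triple.} Take the vector-bundle-section obstruction theory $\EE_{j}=[\mls E^\vee|_{\overline{\mc M}(X)}\to 0]$ for $j$ from Proposition \ref{prop:local1}. Take the canonical theories $\phi_X$ and $\phi_{\PP^r}$ of Lemma \ref{lem:maps1}. The conormal sequence $0\to \mls O_X(-\ell)\to\Omega_{\PP^r}|_X\to\Omega_X\to0$ gives a triangle
\[f^*\Omega_{\PP^r}\to f^*\Omega_X\to f^*\mls O(-\ell)[1]\to.\]
Applying $R\pi_*(-\otimes\omega_\pi[1])$ and Serre/Grothendieck duality identifies the third term with $(R\pi_*f^*\mls O(\ell))^\vee[1]=\mls E^\vee[1]$, using $R^1\pi_*=0$ from Step 1. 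This yields a distinguished triangle
\[j^*\EE_{\PP^r}\to \EE_X\to \mls E^\vee[1]\to,\]
which is exactly the triangle required for a compatible triple. The main obstacle is checking that the vertical maps to the cotangent complex triangle of $\overline{\mc M}(X)\to\overline{\mc M}(\PP^r)\to\mf M_{0,n}$ commute. This means verifying that the map $\mls E^\vee[1]\to \LL_{j}$ obtained by adjunction agrees with the section-induced map $\mls E^\vee\to \mls I/\mls I^2$. I would check this by tracing both through the universal curve using the commutative diagram of cotangent complexes attached to $C_X\to C_{\PP^r}$ and $X\to\PP^r$, reducing it to the tautological statement that $d F$ restricted to $X$ is the conormal map. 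The cited reference \cite{KKP} handles this via the induced obstruction theory construction of Section \ref{rmk:BF3}, and I would follow that route if the direct check gets messy.

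\textbf{Conclusion.} Corollary \ref{cor:functoriality} gives $[\overline{\mc M}(X)]^{vir}=j^!_{E}[\overline{\mc M}(\PP^r)]^{vir}$. Since $\PP^r$ is convex, Lemmas \ref{lem:maps3} and \ref{lem:BF9} give $[\overline{\mc M}(\PP^r)]^{vir}=[\overline{\mc M}(\PP^r)]$. By Proposition \ref{prop:local1} (extended to DM stacks), $j^!_E$ is Fulton's localized top Chern class, so \eqref{lem:vc2} yields $j_*j^!_E[\overline{\mc M}(\PP^r)]=c_{\ell d+1}(E)\cap[\overline{\mc M}(\PP^r)]$, which is \eqref{eq:maps-1}.
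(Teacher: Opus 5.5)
Your argument is correct in outline. Steps 1 and 2 and your final appeal to \eqref{lem:vc2} match the paper (Exercises \ref{GW4}(b),(c) and \ref{siebert101}(b)). The difference lies in how you prove $[\mc M']^{vir}_\phi = j^!_E[\mc M]$, where $\mc M'=\overline{\mc M}_{0,n}(X,d)$ and $\mc M=\overline{\mc M}_{0,n}(\PP^r,d)$.

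You build a compatible triple and apply Corollary \ref{cor:functoriality}. The paper (Exercise \ref{siebert101}(a)) instead applies Siebert's formula:
\begin{itemize}
\item Because $\mc M\to\mf M_{0,n}$ is smooth, the Fulton class is $c(j^*T_{\mc M/\mf M_{0,n}})\cap s(C_{\mc M'/\mc M})$.
\item The triangle you derive, $j^*\EE_{\mc M/\mf M_{0,n}}\to\EE_{\mc M'/\mf M_{0,n}}\to\mls E^\vee[1]$, gives $c(\EE^\vee_{\mc M'/\mf M_{0,n}})=c(E)^{-1}c(j^*T_{\mc M/\mf M_{0,n}})$.
\item Theorem \ref{thm:siebert} then yields $\{c(E)\cap s(C_{\mc M'/\mc M})\}$ in the expected degree, which is $j^!_E[\mc M]$ by Exercise \ref{vc6}.
\end{itemize}

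Your route avoids Siebert's formula and global resolutions. Its price is exactly the step you flag: commutativity of the morphism of triangles, and in particular identifying the induced arrow $\mls E^\vee[1]\to\LL_{\mc M'/\mc M}$ with the section-induced obstruction theory. That identification is the technical heart of \cite{KKP}. The paper's route never compares maps to cotangent complexes; only the existence of the triangle and multiplicativity of Chern classes enter. Its cost is needing the global resolution supplied by Lemma \ref{lem:two-term}.

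If you keep your approach, the notes let you shrink the obstacle. Corollary \ref{cor:functoriality2} applies as soon as the left square commutes, and that square is just naturality of the canonical obstruction theory under $X\hookrightarrow\PP^r$. This works because the obstruction theory for $j$ has the global resolution $[\mls E^\vee\to 0]$. So by Corollary \ref{cor:siebert2}, whichever third arrow completes the morphism of triangles yields the same virtual pullback $j^!_E$.
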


\noindent
\textbf{Exercises}
\begin{enumerate}[label=8.\arabic*, leftmargin=*]
\item \label{GW1}
 Let $X$ be a smooth scheme.
 \begin{itemize}
\item[(a)] Let $\phi: \EE \to \LL_{\mc M_{g, n}(X)}$ denote the canonical perfect obstruction theory. Show that if $h^{-1}(\EE)=0$ then $h^0(\EE)$ is locally free.
\item[(b)] Prove Lemma \ref{lem:maps3}.
\end{itemize}

\item \label{GW2} Prove Lemma \ref{lem:maps4}.

\item \label{GW3} This problem is about Lemma \ref{lem:maps5}.
\begin{itemize}
\item[(a)] Show the dimensions of the Chow classes on either side of \eqref{eq:maps0} agree.

\item[(b)] Let $X$ be a scheme (you may assume $X$ is smooth if you like, but for this part it is not necessary). Construct an isomorphism $\overline{\mc M}_{g, n}(X, 0) \simeq \overline{\mc M}_{g, n} \times X.$

\item[(c)] Prove Lemma \ref{lem:maps5}. (\textit{Hints: Lemmas \ref{lem:BF8} and \ref{lem:two-term}}.)
\end{itemize}

\item \label{GW4} This problem is about Lemma \ref{lem:maps66}. For simplicity we write $d$ instead of $d[L]$.
\begin{itemize}
\item[(a)] Show that the dimensions of the Chow classes on either side of \eqref{eq:maps-1} agree.
\item[(b)] Show that $\mls E:= \pi_*f^*\mls O_{\PP^r}(\ell)$ is a locally free sheaf on $\overline{\mc M}_{0, n}(\PP^r, d)$ of rank $\ell d+1$.
\item[(c)] Let $s \in \Gamma(\PP^r, \mls O(\ell))$ be the section defining $X$. Show that this section induces a section $\sigma$ of $\mls E$ such that $\mathbb{V}(\sigma)$ is precisely $\overline{\mc M}_{0, n}(X, d)$.
\end{itemize}
We will complete the proof of Lemma \ref{lem:maps66} in Exercise \ref{siebert101}.

\item \label{GW5} Let $X$ be a K3 surface (this means $X$ is a complete nonsingular variety over $k$ with $\Lambda^2 \Omega_X \simeq \mls O_X$ and $H^1(X, \mls O_X)=0$).
It is a classical fact that Gromov--Witten invariants
of $X$ vanish
when the curve class $\beta$ is non-zero.
Here, we will see what happens when $\beta = 0$.
Show that $[\overline{\mc M}_{g, n}(X, 0)]^{\mathrm{vir}} = 0$
when $g \geq 2$.
What is the virtual cycle if $g = 0, 1$?
\textit{(Hint: Apply Mumford's relations for
$\lambda$-classes.)}

\end{enumerate}

\section{Siebert's formula for virtual classes}\label{sec:siebert}

In this section we once again allow the base field $k$ to be arbitrary.

Let $\phi: \EE_{X/B} \to \LL_{X/B}$ be a perfect obstruction theory on a Deligne-Mumford stack $X$. In this section we present Siebert's formula \cite[Theorem 4.6]{siebert} for $[X]^{vir}_\phi$, which assumes the existence of a global resolution of $\EE_{X/B}$ (Section \ref{sec:globalres}). Critically, this formula will \textit{not} depend on the morphism $\phi$, only on $X$ and the perfect complex $\EE_{X/B}.$ This independence is a crucial tool for proving functoriality properties of the virtual class for moduli of stable maps (Section \ref{sec:applications}).

Just as vector bundles have Chern classes, cones have Segre classes. If a pure dimensional cone $C$ is contained in a vector bundle $E$, the example \cite[Example 4.1.8]{fulton} gives a formula for the intersection of $C$ with the zero section of $E$ in terms of Segre classes of $C$ and Chern classes  of $E$. We used this formula in Exercise \ref{vc6} to give a formula for the localized top Chern class, a basic example of a virtual class. Siebert's formula is a generalization of that exercise.

\subsection{Fulton's class}

Let $X$ be a $k$-scheme with a global embedding into a scheme smooth over $k$. In \cite[Example 4.2.6]{fulton} Fulton defines a canonical class in $A_*(X)$ using such an embedding, but independent of the choice of embedding. Siebert calls this \textit{Fulton's canonical class}, denoted $c_F(X)$. The construction generalizes immediately to Deligne-Mumford stacks and the relative setting.

Let $X$ be a Deligne-Mumford stack, $B$ an algebraic stack, and fix a morphism $X \to B$. Assume there is an algebraic stack $M$ with $M \to B$ smooth and a closed immersion $i: X \to M$ over $B$. We define
\begin{equation}\label{eq:fultonclass}
c^{X \to M}_F(X/B) = c(i^*T_{M/B}) \cap s(C_{X/M}),
\end{equation}
where $c(-)$ is the total Chern class and $s(-)$ is the  total Segre class. Note that the normal cone $C_{X/M} \to X$ is representable by schemes, so its Segre class is defined. We will see in a moment that
the class $c^{X \to M}_F(X/B)$ depends only on $X \to B$.
Hence we  write
\[c_F(X/B) := c^{X \to M}_F(X/B).\]

To understand why $c_F(X/B)$ is independent of the choice of embedding, let us consider its definition \eqref{eq:fultonclass} more closely: motivated by \cite[Example 4.1.6]{fulton}, it seems that the right hand side should be the meaning of the Segre class $s(\mathfrak{C}_{X/B})$ of the intrinsic normal cone. Segre classes of cone stacks are not, to our knowledge, defined in general, but we can state the following definition and lemma.

\begin{definition}\label{def:conestack}
Let $\mf C$ be a cone stack on $X$. A \emph{global presentation} of $\mf C$ is an isomorphism $\mf C \simeq [C/T]$ where $C \to X$ is a cone, $T \to X$ is a vector bundle, and the action of $T$ on $C$ is induced by a morphism of cones $T \to C$ (Section \ref{sec:conestacks}). Then the \emph{total Segre class} of $\mf C$ is defined as
\[
s(\mf C) := c(T) \cap s(C).
\]
\end{definition}
The following lemma is proved in the exercises.
\begin{lemma}\label{lem:independent}
The Segre class in Definition \ref{def:conestack} is independent of the global presentation; that is, if $[C/T] = [C'/T']$ with $T \to C$ and $T' \to C'$ having the properties described in Definition \ref{def:conestack}, then
\[
c(T) \cap s(C) = c(T') \cap s(C').
\]
\end{lemma}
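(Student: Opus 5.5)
Given two global presentations $[C/T]\simeq \mf C \simeq [C'/T']$, the plan is to compare both to a common third presentation via the fiber product. Set $C'' := C\times_{\mf C} C'$. Since $C\to \mf C$ is a $T$-torsor and $C'\to\mf C$ is a $T'$-torsor, the projection $C''\to C$ is a $T'$-torsor, hence smooth, and similarly $C''\to C'$ is a $T$-torsor. The first task is to show that $C''$ is again a cone over $X$ and that $T\times_X T'$ acts on it through a morphism of cones $T\oplus T'\to C''$, with $[C''/(T\oplus T')]\simeq \mf C$. Locally the torsors are trivial, so the key claim is this: if $T'$ acts on a cone $C$ via a morphism of cones $T'\to C$, and $P\to C$ is a $T'$-torsor compatible with the $\Gm$-actions, then $P$ is isomorphic over $C$ to $C\times_X T'$. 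I would prove the claim by using the section given by the morphism $T'\to C$ composed with the action, together with the $\Gm$-equivariant structure (the torsor is pulled back from the zero section via the contraction $t\to 0$). In any case the main reduction is to the basic case below.

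\textbf{Basic case.} Suppose $\mf C\simeq[C/T]\simeq[C''/(T\oplus T')]$, where $C''\to C$ is a trivial $T'$-torsor, i.e.\ $C''\simeq C\times_X T'$ as cones (equivalently $C'' = C\oplus T'$ with $T'$ acting by translation on the second factor). Then
\[
s(C\oplus T') = c(T')^{-1}\cap s(C),
\]
by \cite[Example 4.1.5]{fulton}, the Segre class of a direct sum of a cone and a vector bundle. Hence
\[
c(T\oplus T')\cap s(C'') = c(T)c(T')c(T')^{-1}\cap s(C) = c(T)\cap s(C).
\]
By symmetry the same holds with $C'$ and $T'$ in place of $C$ and $T$, which proves the lemma. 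If I can only show that $C''\to C$ is a $T'$-torsor, not necessarily trivial, I would instead use that Segre classes are compatible with smooth pullback. Writing $q\colon C''\to C$ for the smooth map of relative dimension $\rank T'$, I would compute $s(C'')$ via $\PP(C''\oplus 1)$ and compare it to $c(T')^{-1}s(C)$, using \cite[Prop 4.1]{fulton}-type arguments for flat pullbacks.

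\textbf{Main obstacle.} I expect the hardest step to be the structural claim that $C''$ is a cone whose $T$- and $T'$-torsor structures are compatible with the grading, and in particular the identification $C''\cong C\oplus T'$, at least up to something with the same Segre class. The approach I would try first is to work étale-locally with explicit quotient presentations, and then use that Segre classes are determined by the cycle $[\PP(C''\oplus 1)]$, so global descent is not needed once the equality holds locally and is compatible.
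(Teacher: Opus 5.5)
Your reduction is the same as the paper's. The scheme $C''=C\times_{\mf C}C'$ is the $Z$ of Exercise \ref{siebert1}(a), and your closing chain $c(T)\cap s(C)=c(T\oplus T')\cap s(C'')=c(T')\cap s(C')$ is step (d) there. The gap is in the step everything rests on: the claim that $C''\cong C\times_X T'$ over $C$ (your basic case). This is false globally, even within the setting of the lemma.

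For a counterexample, let $0\to E\to G\to F\to 0$ be a non-split short exact sequence of vector bundles on $X$, coming from $0\to\mls F_1\to\mls G_1\to\mls E_1\to 0$. Take the two presentations $[F/0]$ and $[G/E]$ of the cone stack $F$ (Corollary \ref{cor:examples}). Then $C''=F\times_F G=G$, and $G\to F$ has no section at all. Indeed, for any section, the degree-one component of its algebra map $Sym(\mls G_1)\to Sym(\mls F_1)$ is an $\mls O_X$-linear retraction of $\mls F_1\hookrightarrow\mls G_1$, which would split the sequence. So $C''\to C$ is a nontrivial $T'$-torsor even though its restriction to the zero section of $C$ is trivial. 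Your contraction heuristic therefore has no algebraic counterpart.

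Your fallbacks do not close the gap either. Compatibility of Segre classes with flat pullback concerns cones pulled back along a morphism of the base, not a torsor $C''\to C$ over the total space. Also, an equality in $A_*(X)$ cannot be checked on an open cover of $X$. So local isomorphisms $C''|_U\cong (C\oplus T')|_U$ do not by themselves yield $s(C'')=c(T')^{-1}\cap s(C)$.

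What is true, and what the paper proves in Exercise \ref{siebert1}(c), is the local, grading-compatible version of your claim:
\begin{enumerate}
\item $T'\hookrightarrow C''$ is the fiber of $C''\to C$ over the zero section. Hence the degree-one part of the algebra of $C''$ surjects onto the locally free degree-one part of the algebra of $T'$.
\item Over affine opens of the base $X$ this surjection splits. The splitting gives a morphism of cones $C''\to T'$ retracting $T'\hookrightarrow C''$. It is $T'$-equivariant because the action on $C''$ is through the morphism of cones $T'\to C''$.
\item Consequently $C''\cong T'\times_X C$ as cones locally on $X$.
\end{enumerate}
Thus $0\to T'\to C''\to C\to 0$ is an exact sequence of cones in the sense of \cite[Example 4.1.6]{fulton}. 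The global identity you need, $s(C)=c(T')\cap s(C'')$, is \cite[Example 4.1.6(c)]{fulton}, which applies to such locally split sequences; the direct-sum formula \cite[Example 4.1.5]{fulton} alone is not enough.

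You also still owe a proof that $C''$ is a cone on which $T'$ acts through a morphism of cones $T'\to C''$. The paper obtains this by writing $C''=(C\times_X C')\times_{[(C\times_X C')/T']}C$ and applying Exercises \ref{siebert-1} and \ref{siebert0}.
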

\noindent
In particular this lemma implies our claim that $c_F^{X \to M}(X/B)$ is independent of the embedding $X \to M$. Because of this lemma, we say \textit{the Fulton class of $X \to B$ is defined} if the relative intrinsic normal cone $\mf C_{X/B}$ has a global presentation as the quotient of a cone on $X$ by a vector bundle on $X$. In this situation we define
\[c_F(X/B) := s(\mf C_{X/B}).\]

The requirement for $X$ to have a global immersion in a smooth stack over $B$ is not always satisfied \cite{example}, although it is satisfied for stable maps spaces to smooth projective varieties \cite{AGOT}. On the other hand, there is at least one other way to get a global presentation for $\mf C_{X/B}$: such a presentation arises from a perfect obstruction theory for $X \to B$ with a global resolution, as we now explain.

Let $X$ be a Deligne-Mumford stack, $B$ an algebraic stack, and fix a morphism $f:X \to B$. Assume  now that $\phi: \EE_{X/B} \to \LL_{X/B}$ is a perfect obstruction theory with a global resolution. Recall from Section \ref{sec:globalres} that this means we have a quasi-isomorphism $\EE_{X/B} \to [\mls E^{-1} \to \mls E^0]$ with $\mls E^i$ a finite rank locally free sheaf on $X$. Writing $E^i = \relSpec_X(Sym(\mls E^{-i}))$, we have a fiber diagram
\[
\begin{tikzcd}
C(E) \arrow[d] \arrow[r] & E^1 \arrow[d]  \\
\mathfrak{C}_{X/B} \arrow[r] & st(\EE_{X/B})
\end{tikzcd}
\]
where vertical maps are quotients by $E^0$.
Since the right vertical arrow is a torsor for the $X$-group scheme $E^0$, so is the left vertical arrow; that is,
\begin{equation}\label{eq:nuther}
\mf C_{X/B} = [C(E)/E^0],
\end{equation}
and in fact this is a presentation for $\mf C_{X/B}$ in the sense of Definition \ref{def:conestack} (see Exercise \ref{siebert1.5}).
The following result is new (cf. \cite[Remark 4.5]{siebert}).

\begin{corollary}\label{cor:siebert}
Let $X$ be a Deligne-Mumford stack, $B$ an algebraic stack, and fix a morphism $X \to B$. Assume that $\phi: \EE_{X/B} \to \LL_{X/B}$ is a perfect obstruction theory with a global resolution. Then the Fulton class of $X \to B$ is defined and equals $c(E^0) \cap s(C(E)).$
\end{corollary}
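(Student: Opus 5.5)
The plan is to exhibit $\mf C_{X/B} \simeq [C(E)/E^0]$ as a global presentation in the sense of Definition \ref{def:conestack}, and then read off the Segre class. Once that is done, Lemma \ref{lem:independent} shows the Fulton class is defined, and by definition it equals $s(\mf C_{X/B}) = c(E^0) \cap s(C(E))$. The work therefore splits into three checks: that $C(E)$ is a cone over $X$, that $E^0$ is a vector bundle acting on $C(E)$ through a morphism of cones $E^0 \to C(E)$, and that the quotient is $\mf C_{X/B}$.

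\textbf{Step 1: $\mf C_{X/B} = [C(E)/E^0]$.} I will use the fiber square defining $C(E)$. The map $E^1 \to st(\EE_{X/B}) = [E^1/E^0]$ is an $E^0$-torsor, and torsors are stable under base change. Hence $C(E) \to \mf C_{X/B}$ is an $E^0$-torsor, which is \eqref{eq:nuther}. By construction, the $E^0$-action on $C(E)$ is the restriction of the translation action of $E^0$ on $E^1$ through $d\colon E^0 \to E^1$.

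\textbf{Step 2: $C(E)$ is a cone and $d$ factors through it.} I will work étale locally, using Exercise \ref{POT3}(a): on $U \to X$ there is a diagram \eqref{eq:diagram} with $E^0|_U \simeq i^*T_{V/B}$ and $E^1 \to N_{U/V}$. There $C(E)|_U$ is the preimage of $C_{U/V}$ under the surjection of abelian cones $E^1|_U \to N_{U/V}$ dual to the surjection $\mls E^{-1} \to \mls I/\mls I^2$. This preimage is a closed $\Gm$-stable subscheme of $E^1$, cut out by a homogeneous ideal, because $C_{U/V}$ is a cone and the map is linear. So $C(E)$ is a closed subcone of the abelian cone $E^1$; being a cone is a local property, and the subscheme $C(E) \subseteq E^1$ is global.

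The zero section of $E^1$ lies in $C(E)$, since the vertex of $\mf C_{X/B}$ maps to the vertex. Since $E^0$ acts on $C(E)$, the orbit of the zero section, which is $d(E^0)$, lies in $C(E)$. Hence $d$ factors as $E^0 \to C(E) \hookrightarrow E^1$. This factorization is a morphism of cones: it is induced by the graded algebra map obtained from $\mathrm{Sym}(\mls E^{-1}) \to \mathrm{Sym}(\mls E^{0})$ by passing to the quotient. The action of $E^0$ on $C(E)$ is then the induced one described in Section \ref{sec:conestacks}, so $[C(E)/E^0]$ is a global presentation. This is the content of Exercise \ref{siebert1.5}.

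\textbf{Step 3: conclude.} Definition \ref{def:conestack} now applies, giving $s(\mf C_{X/B}) = c(E^0)\cap s(C(E))$. Lemma \ref{lem:independent} shows this is independent of the chosen resolution and agrees with $c_F^{X\to M}(X/B)$ whenever an embedding $X \to M$ exists.

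The main obstacle I expect is in Step 2: checking that $C(E)$ really is a cone in the sense of Definition \ref{def:cone}, that is, that $\mathrm{Sym}$ of its degree-one part surjects onto its algebra. I would handle this by presenting $C(E)$ as the quotient of $\mathrm{Sym}(\mls E^{-1})$ by a homogeneous ideal. Any such graded quotient of a symmetric algebra is automatically generated in degree one, which gives the required surjectivity.
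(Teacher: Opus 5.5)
Your Steps 1 and 3, and the factorization of $d$ through $C(E)$ in Step 2, follow the paper's route. Base change of the $E^0$-torsor $E^1 \to st(\EE_{X/B})$ gives \eqref{eq:nuther}, Exercise \ref{siebert1.5} upgrades this to a presentation, and Lemma \ref{lem:independent} with the definition $c_F(X/B) := s(\mf C_{X/B})$ finishes.

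The gap is in your proof that $C(E)$ is a cone, the step you yourself single out. Exercise \ref{POT3}(a) does not give charts in which the \emph{given} global resolution satisfies $E^0|_U \simeq i^*T_{V/B}$. Its proof replaces $[\mls E^{-1}\to\mls E^0]$ by a different, merely quasi-isomorphic complex $[\mls E^{-1}\times_{\mls E^0}\Omega_{V/B} \to \Omega_{V/B}]$. Since $C(E)$ genuinely depends on the resolution, the local picture you describe is not the restriction of your $C(E)$. For instance, adding the acyclic summand $[\mls O_X \xrightarrow{\mathrm{id}} \mls O_X]$ replaces $C(E)$ by $C(E)\times_X \AA^1_X$. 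The geometry is also reversed. A surjection $\mls E^{-1} \to \mls I/\mls I^2$ induces a closed embedding $N_{U/V} \hookrightarrow E^1|_U$, not a surjection $E^1|_U \to N_{U/V}$. In a chart where $\mls E^0|_U \to \Omega_{V/B}|_U$ is an isomorphism, $C(E)|_U$ is the \emph{image} of $C_{U/V}$ under this embedding, not a preimage.

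What your last paragraph needs is that the ideal of $C(E) \subseteq E^1$ is homogeneous, and this is better proved globally. That is the content of the remark on p.~55 of \cite{BF} cited in the hint to Exercise \ref{siebert1.5}. The argument runs as follows.
\begin{enumerate}
\item Scalar multiplication on $E^0$ and $E^1$ commutes with $d$, so it induces a $\Gm$-action on $st(\EE_{X/B})$ for which $E^1 \to st(\EE_{X/B})$ is equivariant.
\item The closed substack $\mf C_{X/B} \subseteq st(\EE_{X/B})$ is $\Gm$-stable. Check this locally: $st(\LL_{X/B}) \to st(\EE_{X/B})$ comes from a map of complexes, and $[C_{U/V}/i^*T_{V/B}] \subseteq [N_{U/V}/i^*T_{V/B}]$ is stable because $C_{U/V}$ is a cone.
\item Hence $C(E)$ is a $\Gm$-stable closed subscheme of $E^1$ containing the zero section. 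Its ideal in $\mathrm{Sym}(\mls E^{-1})$ is therefore graded with zero degree-$0$ part, and the rest of your argument goes through.
\end{enumerate}
If you prefer to stay local, you must first produce étale charts adapted to the given resolution, with $\mls E^0|_U \to \Omega_{V/B}|_U$ an isomorphism. That requires an argument beyond Exercise \ref{POT3}(a).
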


We note that by Lemma \ref{lem:independent}, in situations where $X$ has both a global embedding into a stack smooth over $B$ and a perfect obstruction theory with a global resolution, either collection of data can be used to compute the Fulton class of $X \to B$.

\subsection{Siebert's formula and applications}\label{sec:applications}

Recall from Definition \ref{def:rank} the notion of the rank of a perfect obstruction theory. If $\EE = [\mls E^{-1} \to \mls E^0]$ is a 2-term complex of finite rank locally free sheaves, then writing $E^i = \relSpec_X(Sym(\mls E^i))$ we define the total Chern class of its dual
\[
c(\EE^{\vee}) := c(E^0) \cup c(E^1)^{-1}.
\]
We are now ready to state Siebert's formula. Note that our Corollary \ref{cor:siebert} allows us to remove Siebert's hypothesis of a global embedding in a smooth stack over $B$.

\begin{remark}
Note that the above definition of $c(\EE^\vee)$ is compatible with the chern class of a finite rank locally free sheaf as defined in \cite[Tag 0FE1]{stacks-project}. That is,
if $\sE$ is a finite rank locally free sheaf and $\EE = \sE[0]$,
then $c(\EE^{\vee}) = c(\sE^{\vee})$ agrees with $c(E)$,
where $E = \relSpec_{X}(\Sym(\sE))$.
\end{remark}

\begin{theorem}[Siebert's formula, {\cite[Theorem~4.6]{siebert}}]\label{thm:siebert}
Let $X$ be a Deligne-Mumford stack, $B$ an algebraic stack of pure dimension, and fix a morphism $X \to B$. Assume  that $\phi: \EE_{X/B} \to \LL_{X/B}$ is a perfect obstruction theory with a global resolution $\EE_{X/B} \simeq [\mls E^{-1} \to \mls E^0]$. Then
\[
[X]^{vir}_\phi = \{ \; c(\EE_{X/B}^{\vee})^{-1} \cap c_F(X/B)\; \}_{\dim(B) + \mathrm{rank}(\EE_{X/B})},
\]
where $\{-\}_{d}$ means to take the dimension-$d$ part of the expression in $\{-\}$.
\end{theorem}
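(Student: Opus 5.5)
The plan is to reduce everything to an intersection on the honest vector bundle $E^1$ and then apply Fulton's Segre class formula. By \eqref{eq:no-artin}, the global resolution gives
\[
[X]^{vir}_\phi = (p_{E^1}^*)^{-1}[C(E)],
\]
where $C(E) = \mf C_{X/B}\times_{st(\EE_{X/B})} E^1$ is a cone in $E^1$. The argument therefore takes place entirely in the intersection theory of cones inside vector bundles on the Deligne--Mumford stack $X$; no Artin stacks are needed.

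First, I establish that $C(E)$ is pure dimensional. Since $C(E)\to \mf C_{X/B}$ is an $E^0$-torsor, Exercise \ref{POT0}(a) gives $\dim \mf C_{X/B} = \dim B$, so $C(E)$ has pure dimension $\dim B + \rank E^0$. Some care is needed here because purity, and not just dimension, is required. I would check it étale locally using the description \eqref{eq:inc}: there $C(E)$ is, up to smooth vector bundle factors, the normal cone $C_{U/V}$, which is pure by Remark \ref{rmk:normalcone-dim} since $V$ is smooth over the pure-dimensional $B$.

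Next, I apply \cite[Example 4.1.8]{fulton}, the formula already used in Exercise \ref{vc6}. For a pure $m$-dimensional cone $C$ contained in a vector bundle $F$ of rank $r$,
\[
(p_F^*)^{-1}[C] = \{c(F)\cap s(C)\}_{m-r}.
\]
I take $F=E^1$ and $C=C(E)$. Since $m - r = \dim B + \rank E^0 - \rank E^1 = \dim B + \rank(\EE_{X/B})$, this yields
\[
[X]^{vir}_\phi = \{c(E^1)\cap s(C(E))\}_{\dim B+\rank \EE_{X/B}}.
\]
This step needs Fulton's example extended to Deligne--Mumford stacks with rational coefficients via \cite{vistoli}. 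It also needs the cone $C(E)$ to be a cone in the sense of Definition \ref{def:cone} with the correct $\Gm$-action, which it is, as the base change of a cone stack along $E^1\to st$.

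Finally, I rewrite $c(E^1)$ as $c(E^1)c(E^0)^{-1}c(E^0) = c(\EE^\vee_{X/B})^{-1}c(E^0)$, using that Chern classes commute and act compatibly on $A_*(X)$. By Corollary \ref{cor:siebert}, $c(E^0)\cap s(C(E)) = c_F(X/B)$. Substituting gives exactly the formula in Theorem \ref{thm:siebert}.

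The main obstacle is the bookkeeping of conventions. The paper writes $E^i=\relSpec(Sym(\mls E^{-i}))$ in one place and $\relSpec(Sym(\mls E^i))$ in the definition of $c(\EE^\vee)$, and I must verify that $c(\EE^\vee)^{-1}$ equals $c(E^1)c(E^0)^{-1}$ with $E^1$ the bundle containing the cone. I also need to confirm that the purity of $C(E)$ genuinely holds, so that Fulton's formula (rather than only a dimension-graded version of it) applies.
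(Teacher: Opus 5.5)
Your proposal is correct and follows essentially the same route as the paper's own argument: write $[X]^{vir}_\phi = (p_{E^1}^*)^{-1}[C(E)]$ via the global resolution, apply \cite[Example 4.1.8]{fulton} with $\dim C(E) = \dim B + \rank E^0$ (a pure dimension, coming from Exercise \ref{POT0}), and factor $c(E^1) = c(E^1)c(E^0)^{-1}\cdot c(E^0)$ so that $c(E^0)\cap s(C(E))$ is the Fulton class by Corollary \ref{cor:siebert}. Your convention worry resolves as you hoped: the definition of $c(\EE^\vee)$ should be read with $E^i = \relSpec_X(Sym(\mls E^{-i}))$, as in the global resolution section, and with that reading $c(\EE^\vee_{X/B})^{-1} = c(E^1)c(E^0)^{-1}$.
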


The following result shows that this formula is applicable to all stable maps moduli spaces, as well as many generalizations that arise naturally in enumerative geometry.

\begin{lemma}[{\cite[Lem 5.0.3]{CJW}}]
    \label{lem:two-term}
    Let $M$ be a finite type Deligne-Mumford stack and let $\pi: C \to M$ be a family of prestable curves. If there exists a $\pi$-relatively ample line bundle on $C$, then for any vector bundle $T$ on $C$, the complex $R\pi_*T$ is globally isomorphic to a 2-term complex of vector bundles.
\end{lemma}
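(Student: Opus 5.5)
The plan is to resolve $T$ on $C$ by vector bundles whose higher direct images vanish, so that $R\pi_*T$ becomes a two-term complex of pushforwards. Throughout, let $L$ be the given $\pi$-relatively ample line bundle on $C$. Since $\pi$ is proper, flat and representable, and $M$ is quasi-compact (finite type), the relative Serre vanishing and global generation theorems hold uniformly over $M$. This can be checked on an étale atlas $U \to M$ by a finite type scheme, with the conclusions descending because they are étale local on $M$.

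\textbf{Step 1 (twist $T^\vee$ until it is generated).} Choose $n \gg 0$ so that $R^1\pi_*(T^\vee\otimes L^n)=0$ and the counit $\pi^*\pi_*(T^\vee\otimes L^n)\to T^\vee\otimes L^n$ is surjective. The fibres of $\pi$ are curves, so $R^{\geq 2}\pi_*$ vanishes. Cohomology and base change then shows that $V:=\pi_*(T^\vee\otimes L^n)$ is locally free and commutes with base change. Dualizing the surjection $\pi^*V\to T^\vee\otimes L^n$ gives an injection of vector bundles
\[
0\to T\to A\to Q\to 0, \qquad A:=\pi^*V^\vee\otimes L^n,
\]
with $Q$ locally free, since the dual of a surjection of bundles is a subbundle.

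\textbf{Step 2 (kill the higher direct images of $A$).} By the projection formula, $R^1\pi_*A = V^\vee\otimes R^1\pi_*L^n$. After enlarging $n$ if necessary, this vanishes, and $\pi_*L^n$ is locally free; the new choice of $n$ still satisfies Step 1, since all the conditions are stable under further increasing $n$. The long exact sequence then gives a surjection $R^1\pi_*A\to R^1\pi_*Q \to R^2\pi_*T=0$, so $R^1\pi_*Q=0$ as well. By cohomology and base change again, $\pi_*A$ and $\pi_*Q$ are locally free, and $R\pi_*A\simeq \pi_*A[0]$ and $R\pi_*Q\simeq\pi_*Q[0]$.

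\textbf{Step 3 (conclude).} Applying $R\pi_*$ to the short exact sequence gives a distinguished triangle $R\pi_*T\to \pi_*A\to\pi_*Q\to$. Hence $R\pi_*T$ is isomorphic to the global two-term complex $[\pi_*A\to\pi_*Q]$ of vector bundles in degrees $[0,1]$.

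The main obstacle is Step 1's uniformity on a Deligne--Mumford stack. We need a single $n$ for which relative global generation and vanishing hold everywhere, and we need these statements to make sense for coherent sheaves on $C$. I would handle this by pulling back to a finite type étale atlas $U$ of $M$, where $C_U\to U$ is a projective family of curves. There I would apply the usual relative Serre theorems with a single $n$, available by quasi-compactness. Since $\pi_*$ commutes with flat base change, the surjectivity of the counit and the vanishing of $R^1\pi_*$ descend to $M$.
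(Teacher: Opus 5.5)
Your proof is correct for the statement as formulated here. The paper gives no proof of its own: it defers to \cite[Lem 5.0.3]{CJW}, where the lemma is stated for families of twisted curves. So there is nothing in the paper to compare against, and your argument serves as a self-contained substitute.

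The steps all hold up. Twisting by a high power of the $\pi$-ample bundle $L$ embeds $T$ as a subbundle of $A=\pi^*V^\vee\otimes L^n$ with $R^1\pi_*A=0$. The quotient $Q$ inherits $R^1\pi_*Q=0$ because $R^2\pi_*T=0$. Cohomology and base change then gives $R\pi_*T\simeq[\pi_*A\to\pi_*Q]$ in degrees $[0,1]$. You also handle the uniformity in $n$ correctly, via quasi-compactness of $M$ and flat base change along a finite type \'etale atlas.

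The one point worth flagging is scope. Step 1 needs the counit $\pi^*\pi_*(T^\vee\otimes L^n)\to T^\vee\otimes L^n$ to be surjective, which uses that $\pi$ is representable; this holds for the prestable curves of this paper. For twisted curves it fails at a stacky point whenever the stabilizer acts nontrivially on the fibre of $T^\vee\otimes L^n$, because sections coming from $M$ only reach invariant vectors there. If the fibre of $T^\vee$ already carries two distinct characters, no single $n$ repairs this. Reaching the generality of \cite{CJW} would therefore need an additional ingredient, for instance a generating sheaf in place of a single power of $L$, or passing to the coarse curve.
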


\noindent
Note that in \cite{CJW} the lemma is in fact stated for families  of \textit{twisted} curves.

While explicitly computing the quantities in Siebert's formula requires more knowledge of the Chow ring of $X$ than is typically available, Siebert's formula has an important corollary: the virtual class associated to an obstruction theory $\phi: \EE_{X/B} \to \LL_{X/B}$ with a global resolution depends only on $\EE_{X/B}$ and the morphism $X \to B$, not on the morphism $\phi$. Even more useful is the generalization of this result to virtual pullbacks. While there is a generalization of Theorem \ref{thm:siebert} for computing $f^!_\phi$ (see \cite{LW2}), it is slightly complicated to state, due to the complicated definition of cycles in $A_*(B)$ when $B$ is algebraic. Here we content ourselves with the following.

\begin{corollary}[{\cite{LW2}}]\label{cor:siebert2}
Let $X$ be a Deligne-Mumford stack, $B$ an algebraic stack, and fix a morphism $X \to B$. Assume  that $\phi: \EE_{X/B} \to \LL_{X/B}$ is a perfect obstruction theory with a global resolution.  Then $f^!_\phi$ depends only on $\EE_{X/B}$ and the morphism $X \to B$, and not on $\phi$.
\end{corollary}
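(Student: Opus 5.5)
The statement to prove is Corollary \ref{cor:siebert2}: $f^!_\phi$ does not depend on the choice of $\phi$. My plan is to reduce it to showing that the cone $C(E) \subseteq E^1$ is independent of $\phi$. Fix a global resolution $\EE_{X/B} \simeq [\mls E^{-1} \to \mls E^0]$ and write $E^i = \relSpec_X(Sym(\mls E^{-i}))$. As in \eqref{eq:no-artin}, the Chow-level identity $p_{E^1}^* = \pi^* \circ p_{st(\EE_{X/B})}^*$ shows that for any $\alpha \in A_*(B)$,
\[
f^!_\phi(\alpha) = (p_{E^1}^*)^{-1}\, \pi^* j_* \sigma(\alpha) = (p_{E^1}^*)^{-1}\, j'_* \,\pi_C^*\, \sigma(\alpha),
\]
where $j' : C(E) \to E^1$ is the base change of $j$, and $\pi_C : C(E) \to \mf C_{X/B}$ is the $E^0$-torsor. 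The second equality is the compatibility of flat pullback with proper pushforward in the fiber square defining $C(E)$. The maps $\sigma$, $\pi_C^*$ and $(p_{E^1}^*)^{-1}$ do not involve $\phi$. So the only dependence on $\phi$ sits in the closed embedding $\mf C_{X/B} \hookrightarrow st(\EE_{X/B})$, or equivalently in $C(E) \hookrightarrow E^1$ together with the identification $\mf C_{X/B} = [C(E)/E^0]$.

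Now compare two obstruction theories $\phi, \psi : \EE_{X/B} \to \LL_{X/B}$. They induce two closed embeddings $\iota_\phi, \iota_\psi : \mf C_{X/B} \hookrightarrow st(\EE_{X/B})$, and hence two subcones $C_\phi(E), C_\psi(E) \subseteq E^1$. These need not coincide as subschemes: already in Construction \ref{const:1}, different surjections $\mls E|_X \to \mls I/\mls I^2$ give different embeddings $C_{X/Y} \hookrightarrow E|_X$. So I would not try to prove equality of embeddings. Instead I would prove the weaker statement that the two pushforwards to $A_*(E^1)$ agree after applying $(p_{E^1}^*)^{-1}$.

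For this I would use a deformation argument. The space of morphisms $\tau_{\ge -1}$ of complexes $[\mls E^{-1}\to\mls E^0] \to [\mls I/\mls I^2 \to \Omega]$ is locally affine. Pulling back $\mf C_{X/B}$ along $X \times \AA^1 \to X$, I would try to realize the interpolation $t\phi + (1-t)\psi$ as a family of embeddings $\mf C_{X/B} \times \AA^1 \to st(\EE) \times \AA^1$. The key issue is that the interpolation must remain an embedding at every $t$. This only needs the surjectivity-on-$h^{-1}$ and isomorphism-on-$h^0$ conditions of Definition \ref{def:BF}, and these are open conditions, but they are not obviously preserved along the whole line.

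If the interpolation fails, I would instead invoke Theorem \ref{thm:siebert} directly. That theorem expresses $[X]^{vir}$, and by its relative generalization $f^!_\phi$, purely in terms of $c(\EE^\vee)$ and the Segre class $s(\mf C_{X/B}) = c(E^0) \cap s(C(E))$. The Segre class is intrinsic to the cone stack by Lemma \ref{lem:independent} and Corollary \ref{cor:siebert}, since the presentation \eqref{eq:nuther} is a presentation of the abstract stack $\mf C_{X/B}$. It therefore depends on $\phi$ only through an isomorphism of cone stacks, and Lemma \ref{lem:independent} shows this does not affect $s(\mf C_{X/B})$.

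I would apply this Segre-class formula to $V \times_B X$ for each integral $V \subseteq B$ appearing in a cycle. I would then use Exercise \ref{vc6} (Fulton, Example 4.1.8) in the form
\[
(p_{E^1}^*)^{-1}[C] = \{c(E^1) \cap s(C)\}_{\text{expected dim}}
\]
for every pure-dimensional subcone $C \subseteq E^1$, combined with base change of $\phi$ as in Remark \ref{rmk:POT2.5}.

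The main obstacle is that $\sigma$ is not given by integral substacks alone for general classes in Kresch's $A_*(B)$. I expect to handle this by reducing to vector bundle stacks over $B$ via a presentation of $B$, as in \cite{LW2}, and then checking that each step commutes with flat pullback.
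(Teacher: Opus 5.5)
Your fallback argument is essentially the paper's route: the corollary is deduced from Siebert's formula (Theorem~\ref{thm:siebert}) and its generalization to virtual pullbacks in \cite{LW2}. The point is that $c(\EE_{X/B}^\vee)$ depends only on $\EE_{X/B}$, while $s(\mf C_{X/B}) = c(E^0)\cap s(C(E))$ is independent of the presentation by Lemma~\ref{lem:independent}; general classes in Kresch's $A_*(B)$ are deferred to \cite{LW2}, just as you do.

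You should drop the interpolation branch, because it genuinely fails. Take $X = \AA^1\setminus\{0,1\}$ over $B=\Spec(k)$ with $\EE_{X/B}=\Omega_X$, $\phi=\mathrm{id}$ and $\psi=\lambda\cdot\mathrm{id}$, where $\lambda$ is the coordinate. Then $t\phi+(1-t)\psi$ is not an isomorphism at the point $\lambda = t/(t-1)$, so it is not an obstruction theory for any $t\neq 0,1$.
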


As an example of the usefulness of Corollary \ref{cor:siebert2} we present the following simplification of Corollary \ref{cor:functoriality}.

\begin{definition}
    Let $X \xrightarrow{f} Y \xrightarrow{g} B$
    be morphisms of algebraic stacks such that $X$ and $Y$ are Deligne-Mumford and $B$ has pure dimension,
    and let $\phi_{X/Y}, \phi_{X/B}, \phi_{Y/B}$
    be perfect obstruction theories for $f, g\circ f,$ and $g$ respectively.
    They form a \emph{weakly compatible triple}
    if
    \begin{itemize}
    \item[(i)] There is a distinguished triangle
    \[
    f^*\EE_{Y/B} \to \EE_{X/B} \to \EE_{X/Y} \xrightarrow{+1},
    \]
    \item[(ii)] \underline{one} (at least) of the three squares in the diagram
    \[
    \begin{tikzcd}
        f^{\ast}\EE_{Y/B} \arrow[r]\arrow[d, "\phi_{Y/B}"] & \EE_{X/B} \arrow[r] \arrow[d, "\phi_{X/B}"] & \EE_{X/Y} \arrow[r, "+1"]\arrow[d, "\phi_{X/Y}"] & {}\\
        Lf^{\ast}\LL_{Y/B} \arrow[r] & \LL_{X/B} \arrow[r] & \LL_{X/Y}\arrow[r, "+1"] & .{}
    \end{tikzcd}
    \]
    commutes, and
    \item[(iii)] the obstruction theory not contained in the commuting square in (ii) has a global resolution.
    \end{itemize}
\end{definition}

\begin{corollary}\label{cor:functoriality2}
    Let $X \xrightarrow{f} Y \xrightarrow{g} B$ be morphisms of algebraic stacks such that $X$ and $Y$ are Deligne-Mumford and $B$ has pure dimension.
    Let $\phi_{X/Y}, \phi_{X/B},$ and $ \phi_{Y/B}$ be a weakly compatible triple.
    Then
    \[
    f^!_\phi [Y]^{vir}_{\phi_{Y/B}} = [X]^{vir}_{\phi_{X/B}}.
    \]
\end{corollary}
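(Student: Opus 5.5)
We will deduce Corollary \ref{cor:functoriality2} from Corollary \ref{cor:functoriality} by replacing the one obstruction theory that may fail to be compatible with another obstruction theory that \emph{is} compatible and has the same underlying complex. Corollary \ref{cor:siebert2} then says the two give the same virtual pullback or class. Hypothesis (iii) guarantees a global resolution for precisely the theory being replaced, which is exactly what Corollary \ref{cor:siebert2} needs.

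Concretely, there are three cases according to which square in (ii) commutes.

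\emph{Left square commutes.} Then $\phi_{Y/B}$ and $\phi_{X/B}$ are compatible with the triangle maps, so by the axiom TR3 for morphisms of distinguished triangles there is a morphism $\phi'_{X/Y}\colon \EE_{X/Y} \to \LL_{X/Y}$ completing the diagram to a morphism of triangles.

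\emph{Middle square commutes.} By rotating the triangles we similarly obtain $\phi'_{Y/B}\colon f^*\EE_{Y/B}\to Lf^*\LL_{Y/B}$. Here there is the subtlety that we need a morphism $\EE_{Y/B} \to \LL_{Y/B}$ on $Y$, not merely its pullback to $X$. I expect this to be the main obstacle. One may need to argue via adjunction, or to restrict to the cases where this is automatic; I would first check whether the intended statement only requires the relevant square to live on a single stack.

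\emph{Right square commutes.} We obtain $\phi'_{X/B}$ the same way.

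In each case the completed morphism of triangles is then a compatible triple in the sense of \cite[Definition 4.5]{manolache-pullback}.

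Next, we check that the new morphism $\phi'$ is a perfect obstruction theory. Perfectness is automatic, since $\phi'$ has the same source complex as the original theory. For the obstruction-theory condition, apply the long exact cohomology sequences of the two triangles and the five lemma (in its four-lemma form) on $h^0$ and $h^{-1}$. This uses that the other two vertical maps induce isomorphisms on $h^0$ and surjections on $h^{-1}$, and that all complexes lie in $D^{\le 0}$. We must take care at the ends of the sequence, for example with $h^1=0$ and surjectivity. If the four lemma does not give exactly the needed statement in the middle-square case, I would instead use the criterion in Proposition \ref{prop:criterion} on deformation and obstruction spaces.

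Finally, Corollary \ref{cor:functoriality} applied to the compatible triple containing $\phi'$ gives $f^!_{\phi}[Y]^{vir}=[X]^{vir}$ with $\phi'$ in place of the original theory. By hypothesis (iii) the replaced complex has a global resolution, so Corollary \ref{cor:siebert2} (or, for the two virtual classes, Theorem \ref{thm:siebert}) shows that the relevant virtual pullback or virtual class is unchanged when $\phi'$ is swapped back for the original $\phi$. In the case where $\phi_{Y/B}$ is replaced, both $[Y]^{vir}$ and the pullback $f^!$ depend on $\EE_{Y/B}$ only through objects that Corollary \ref{cor:siebert2} and Theorem \ref{thm:siebert} control. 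This proves $f^!_\phi[Y]^{vir}_{\phi_{Y/B}}=[X]^{vir}_{\phi_{X/B}}$.
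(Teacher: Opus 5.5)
Your route is the one the paper intends. The paper's proof is only the exercise hint ``use Corollary \ref{cor:functoriality}'', stated right after Corollary \ref{cor:siebert2}. You complete the morphism of triangles by TR3, check with the four lemma that the new arrow is again a perfect obstruction theory, apply Corollary \ref{cor:functoriality}, and undo the replacement using (iii). When the commuting square is the left or the right one, this is a complete proof. In the right-square case the four lemma also needs $h^{-1}(Lf^*\phi_{Y/B})$ to be surjective; this holds because the cone of $\phi_{Y/B}$ lies in $D^{\le -2}$ and $Lf^*$ preserves that. The swap back is then Corollary \ref{cor:siebert2} for $f$ in the left case, and Theorem \ref{thm:siebert} for $X\to B$ in the right case.

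The middle-square case is a genuine gap, and your last paragraph does not close it. There TR3 only produces a morphism $\chi\colon Lf^*\EE_{Y/B}\to Lf^*\LL_{Y/B}$ in $D_{qc}(X)$. Corollary \ref{cor:functoriality}, however, requires the left vertical arrow to be $Lf^*\psi$ for a perfect obstruction theory $\psi\colon \EE_{Y/B}\to\LL_{Y/B}$ on $Y$.

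\begin{itemize}
\item Nothing makes $\chi$ descend to $Y$. Adjunction does not help: it turns $\chi$ into a map $\EE_{Y/B}\to Rf_*Lf^*\LL_{Y/B}$, not a map to $\LL_{Y/B}$.
\item Theorem \ref{thm:siebert} would let you swap $\phi_{Y/B}$ for any $\psi$ on $Y$ with the same source without changing $[Y]^{vir}$. But the issue is the existence of a $\psi$ compatible with $\phi_{X/B}$ and $\phi_{X/Y}$, not independence of the choice.
\item $\chi$ is not an obstruction theory for any morphism, so neither Corollary \ref{cor:siebert2} nor Proposition \ref{prop:criterion} applies to it.
\item Even your four-lemma check fails here. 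Injectivity of $h^0(\chi)$ would need $h^{-1}(\phi_{X/Y})$ to be injective. Surjectivity of $h^{-1}(\chi)$ would need $h^{-2}(\EE_{X/Y})=0\to h^{-2}(\LL_{X/Y})$ to be surjective. An obstruction theory provides neither.
\end{itemize}

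So, as written, your argument proves the corollary only when the left or the right square is the commuting one; the paper's hint does not address the middle case either. Closing it would need a new ingredient: for instance, a version of the functoriality theorem in which the left vertical arrow may be an arbitrary morphism on $X$ completing the triangle, together with a proof that the result does not depend on that arrow. That goes beyond Corollaries \ref{cor:functoriality} and \ref{cor:siebert2} as stated. Otherwise, the hypothesis (ii) should be restricted to the left and right squares.
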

\noindent
Corollary \ref{cor:functoriality2} is used in \cite{LW2} to prove a certain functoriality property of the Gromov-Witten virtual class. In particular, this property is part of the statement that Gromov-Witten invariants form a \textit{cohomological field theory} (see e.g. \cite{pand} for an introduction).

\noindent
\textbf{Exercises}
\begin{enumerate}[label=9.\arabic*, leftmargin=*]

\item \label{siebert-1} Let $X$ be a Deligne-Mumford stack, let $C, D,$ and $E$ be cones over $X$, and let $C \to D$ and $E \to D$ be morphisms of cones.
\begin{itemize}
\item[(a)]
Show that the fiber product $C \times_D E$ is also a cone over $X$.
\item[(b)] If $F$ is another cone over $X$ and $F \to C$ and $F \to E$ are morphisms of ones such that the compositions $F \to C \to D$ and $F \to E \to D$ agree, then the induced morphism $F \to C \times_D E$ is a morphism of cones.
\end{itemize}

\item \label{siebert0} The following result is a lemma for solving the later exercises. Let $[C/T]$ and $[D/U]$ be global presentations for two cone stacks over a base $S$ in the sense of Definition \ref{def:conestack}, let $[D/U] \to [C/T]$ be a morphism of cone stacks induced by a commuting square of morphisms of cones
\[
\begin{tikzcd}
U \arrow[r] \arrow[d] & T \arrow[d] \\
D \arrow[r, "f"] & C,
\end{tikzcd}
\]
and define $C'$ to be the fiber product
\[
C' = C \times_{[C/T]} [D/U].
\]
Assume that either (a) $U$ is trivial, or (b) $U \to T$ is an isomorphism. In either case, prove that $C'$ is a cone over $S$ and that there is an induced morphism of cones $T \to C'$ with a canonical presentation $D \simeq [C'/T]$.

\item \label{siebert1} Prove Lemma \ref{lem:independent} by following these steps (cf. \cite[Proposition 1.4]{BF}).
\begin{enumerate}
\item Let $\fX$ be an algebraic stack over a base stack $S$ with two global quotient presentations:
\[
\fX = [X/G] = [Y/H],
\]
where $X$ and $Y$ are $S$-schemes and $G$ and $H$ are $S$-group schemes. Show that there is an $S$-scheme $Z$ such that $X = [Z/H]$ and $Y = [Z/G]$. (\textit{Hint: show there is a fiber diagram}
\[
\begin{tikzcd}
Z \arrow[r] \arrow[d] & X \arrow[r] \arrow[d] & \fX \arrow[d, "\delta"] \\
X \times_S Y \arrow[r] & {[(X \times_S Y)/H]} \arrow[r] \arrow[d] & \fX \times_S \fX \arrow[d, "pr_1"]\\
& X \arrow[r] & \fX
\end{tikzcd}
\]
\textit{where $\delta$ is the diagonal and $pr_1$ is projection to the first factor.})
\item Use Exercise \ref{siebert0} to show that if $\fX$ is a cone stack and $[X/G]$, $[Y/H]$ are two presentations, then the identifications $X = [Z/H]$ and $Y = [Z/G]$ in part (a) are also presentations in the sense of Definition \ref{def:conestack}.

\item Conclude from (b) that
\[
0 \to H \to Z \to X \to 0
\]
is a short exact sequence of cones in the sense of \cite[Example 4.1.6]{fulton}
(possibly by using \cite[Lemma 1.3]{BF}), and hence by \cite[Example 4.1.6(c)]{fulton} there is an equality
\[
s(X) = c(H) \cap s(Z).
\]

\item Conclude the proof of Lemma \ref{lem:independent}.

\end{enumerate}

\item \label{siebert1.5} Let $X$ be a Deligne-Mumford stack, $B$ an algebraic stack, and fix a morphism $X \to B$. Let $\phi: \EE_{X/B} \to \LL_{X/B}$ is a perfect obstruction theory with a global resolution $\EE_{X/B} = [\mls E^{-1} \to \mls E^0]$. Show that the identification $\mf C_{X/B} = [C(E)/E^0]$ in \eqref{eq:nuther} is a presentation in the sense of Definition \ref{def:conestack}. (\textit{Hint: Use \cite[Remark on p.55]{BF}  to argue that $C(E)$ is a cone.})

\item\label{siebert4} Prove Theorem \ref{thm:siebert}. (\textit{Hints: use the formula for $[X]^{vir}_\phi$ in Section \ref{sec:globalres} and \cite[Example 4.1.8]{fulton}. Show that $C(E)$ is cone of pure dimension $\dim B + \rank(E^0)$.})

\item\label{siebert5} Prove Corollary \ref{cor:functoriality2}. (\textit{Hint: Use Corollary \ref{cor:functoriality}.})

\item\label{siebert101}
This problem is a continuation of Exercise \ref{GW4}.
\begin{enumerate}
\item[(a)]
 Granting parts (b) and (c) of Exercise \ref{GW4}, we have two natural virtual classes on $\overline{\mc M}_{0, n}(X, d)$: there is the canonical virtual class $[\overline{\mc M}_{0, n}(X, d)]^{vir}_\phi$ arising from the perfect obstruction theory \eqref{eq:pot}, but since $\overline{\mc M}_{0, n}(\PP^r, d)$ is smooth we also have a virtual class $[\overline{\mc M}_{0, n}(X, d)]^{vir}_\mls E$ arising as in Construction \ref{const:1}. Show that
 \[
 [\overline{\mc M}_{0, n}(X, d)]^{vir}_\phi = [\overline{\mc M}_{0, n}(X, d)]^{vir}_\mls E.
 \]
 (\textit{Hints: Use Theorem \ref{thm:siebert} to compute the left hand side and Exercise \ref{vc6} to compute the right hand side.})
\item[(b)] Prove Lemma \ref{lem:maps66}. (\textit{Hint: use \eqref{lem:vc2}.})
\end{enumerate}

\end{enumerate}

\newpage

\section{Solution notes}

\noindent
\textbf{Section 1}

\begin{enumerate}[leftmargin=.24in]

\item[\ref{cone3}] This is \cite[Tag 0635]{stacks-project}.

\item[\ref{cone4}] {One can show that $N_{X/Y} = \Spec(k[x])$ and $C_{X/Y} = Y = \Spec(k[x]/(x^2)).$}

\item[\ref{cone5}]
    {One can show that $C_{Y/Z} = N_{Y/Z}$ is a rank 1 vector bundle on $Y$, that $C_{X/Y} = \VV(y^2-x^2)$, and that $N_{X/Y}$ is $\AA^2$.}

\item[\ref{cone6}]
\begin{itemize}
\item[(a)] The reduced subscheme of $X$ is computed by the radical ideal $(x, xy)$ and is equal to the $y$-axis. In the original nonreduced ring, the ideal $(x, y)$ annihilates $x$, showing that $(x, y)$ is an associated prime.
\item[(b)] The displayed map clearly
factors through the coordinate ring
of $N_{X/Y}$.
It is also clearly surjective.
\item[(c)]
Let $I$ be the ideal $(x^2, xy)$ in $k[x,y]$ and let $R = k[x,y]/I$. We claim that the kernel of the graded homomorphism
\[
R[A, B] \to \oplus_n I^n/I^{n+1}
\]
sending $A$ to $x^2$ and $B$ to $xy$ is equal to $(yA - xB)$. Clearly $yA - xB$ is in the kernel.

For the converse, consider an element $f = \sum_{i=0}^n [p_i(x, y)] A^i B^{n-i}$ of $R[A, B]$ that is homogeneous of degree $n\geq 1$ (in degree zero, both $(yA - xB)$ and the kernel of the above map are just $(0)$). We can choose a representative $p_i(x,y)$ of the coset $[p_i(x, y)] \in R$ equal to $a_i + b_i x + c_i y + y^2 q_i(y)$. Then the image of $f$ is
\begin{equation}\label{eq:sol-to-cone}
\sum_i (a_i + b_i x + c_i y + y^2 q_i(y)) x^{2i}(xy)^{n-i} \in I^n
\end{equation}
and $f$ is in the kernel if and only if this quantity is actually in $I^{n+1}$.

Observe that $I^{n+1}$ is a homogeneous ideal and that every term of every element of $I^{n+1}$ has degree at least $2n+2$. Therefore, assuming that $f$ is in the kernel and equating terms of like homogeneous degree in \eqref{eq:sol-to-cone}, we find
\begin{align*}
\sum_{i=0}^n a_i x^{2i}(xy)^{n-i} &= 0 &\text{in degree }2n\\
\sum_{i=0}^n (b_ix + c_iy) x^{2i}(xy)^{n-i} &= 0 &\text{in degree }2n+1\\
\sum_{i=0}^n y^2q_i(y) x^{2i}(xy)^{n-i} & \in I^{n+1} &\text{in degree }\geq 2n+2.
\end{align*}
The first equation implies $a_i=0$ for all $i$, so the corresponding part of $f$ is 0 (and in $(yA-xB)$).

The second equation implies $b_n=c_0=0$ and $b_i + c_{i+1}=0$ for $i=0, \ldots, n-1$. It follows that the corresponding part of $f$ is in $(yA - xB)$:
\[
\sum_{i=0}^n (b_ix + c_iy) A^i B^{n-i} = \sum_{i=1}^n c_i (yA^iB^{n-i} - xA^{i-1}B^{n-i+1}) = \sum_{i=1}^n c_i(yA - xB)A^{i-1}B^{n-i}.
\]

In the third equation, observe that every monomial of an element of $I^{n+1}$ has $x$ appearing with degree at least $n+1$. Since $x$ appears in a monomial of $y^2q_i(y)x^{2i}(xy)^{n-i}$ with degree $n+i$, we see that this third part of $f$ is a sum over $i \geq 1$:
\[
\sum_{i =1}^n y^2q_i(y)A^iB^{n-i}.
\]
  Now,
    for every $i \geq 1$,
    we can show that the corresponding term of $f$ is in $(yA-xB)$:
    \[
        [y^{2}]A^{i}B^{n-i} = [y](yA-xB)A^{i-1}B^{n-i} +
            [xy]A^{i-1}B^{n-i+1}
            = [y](yA-xB)A^{i-1}B^{n-i}.
    \]

Finally, the kernel of $R[A, B] \to Sym(I/I^2)$ is contained in the kernel of $R[A, B] \to \oplus_n I^n/I^{n+1}$ (by the above discussion this equals $(yA - xB)$), but also contains $(yA - xB)$ (by direct computation). It follows that the kernel of $R[A, B] \to Sym(I/I^2)$ must also be $(yA - xB)$, so the canonical inclusion $C_{X/Y} \to N_{X/Y}$ is an isomorphism. Alternatively,
    letting $M= R\cdot A \oplus R\cdot B$,
    we may use
    \cite[Tag~00DO]{stacks-project}
    to conclude that
    $\Sym(M)/(yA-xB) \cong \Sym(M/(yA-xB))$.

\item[(d)]
This is an easy computation using
the presentation found above.

\item[(e)] Every regularly embedded subscheme of $Y$ has to be Cohen--Macaulay, but $X$ is not --- the embedded point is an associated prime of $k[X]$ with height $1$. The unmixedness theorem says that this cannot happen for a CM scheme.
\end{itemize}

\item[\ref{cone7}]

    \begin{itemize}
        \item[(a)] It is easy to see that
            $(xz, yz) = (x, y) \cap (z)$.
        \item[(b)] Both $L$ and $H$ are regularly embedded in $Y$,
            so the first two equalities hold.
            The final equality $N_{X/Y} = C_{X/Y}$
            clearly holds away from the intersection point
            $[0:0:0:1] = L \cap H$,
            so we may restrict to the affine chart $(w = 0)$
            to see what happens near this point.
            Write this affine set as $\Spec(k[x,y,z])$,
            so $X$ is cut out by the ideal $I = (xz, yz)$.
            Let $R = k[x,y,z]/I$.
            One can show that the kernel of the graded homomorphism
            \[
                R[A, B] \to \bigoplus_{n \geq 0} I^{n}/I^{n+1}
                    = \Gamma(C_{X/Y}, \OO_{C_{X/Y}})
            \]
            is generated by $yA-xB$,
            so $\Gamma(C_{X/Y}, \OO_{C_{X/Y}}) \cong
            \Sym((R \cdot A \oplus R \cdot B)/(yA-xB))
            \cong \Sym(I/I^{2})$
            as desired.
    \end{itemize}

\item[\ref{cone8}]
The total space of the deformation to the normal cone can be constructed by blowing up \cite[Sec 5.1]{fulton} or by the graph construction \cite[Remark 5.1.1]{fulton}. Show that these two constructions are the same by proving the following:

Let $E$ be a vector bundle on a scheme $Y$ with a section $s$ and let $X = \VV(s)$. Then the blowup $\operatorname{Bl}_X Y$ is the closure of the image of $Y \setminus X$ under the rational map $s: Y  \dashrightarrow \PP(E)$. (Hint: Use the Blowup Closure Lemma \cite[\S 22.2.7]{vakil}.)
\end{enumerate}

\noindent
\textbf{Section 2}

\begin{enumerate}[leftmargin=.24in]
\item[\ref{vc1}]
\begin{enumerate}
\item[(a)] Write $[Y] = \sum_i m_i [Y_i]$, so the $Y_i$ are the irreducible components of $Y$ of respective multiplicities $m_i$. By definition, we have
\[
\sigma([Y]) = \sum_i m_i [C_{X_i/Y_i}]
\]
where $X_i = X \cap Y_i$. To show that the right hand side equals $[C_{X/Y}]$ it is enough to show that the $C_{X_i/Y_i}$ are the irreducible components of $C_{X/Y}$ with respective multiplicities $m_i$.

To prove this, let $M_XY^\circ$ be the total space of the deformation to the normal cone of $X$ in $Y$. We claim the irreducible components of $M_XY^\circ$ are $M_{X_i}Y_i^\circ$ of respective multiplicities $m_i$. This claim can be checked on any dense open subset of $M_XY^\circ$, for example the complement of any divisor, for example the complement of the zero fiber. But on this complement we have $M_XY^\circ = (\AA^1\setminus \{0\}) \times Y$ and $M_{X_i}Y_i^\circ = (\AA^1 \setminus \{0\}) \times Y_i$ and the claim is manifestly true.

So we know the irreducible components of $M_XY^\circ$ are $M_{X_i}Y_i^\circ$ of respective multiplicities $m_i$. Now apply \cite[Lem 1.7.2]{fulton}---this is where we use the hypothesis that $Y$ has pure dimension. It says the irreducible components of the 0 fiber are $C_{X_i/Y_i}$ with respective multiplicities $m_i$.

\item[(b)] Let $Y = \VV(xz, xy) \subseteq \PP^3_{x:y:z:w}$ and let $X = \VV(xz, xy, x+y+z) \subset Y$. Then $Y$ is the union of a line $L$ and a hyperplane $H$, and $X$ is isomorphic to $\PP^1$ with an embedded point. By definition, we have
\[
\sigma([Y]) = [C_{H \cap X / H}] + [C_{L \cap X / L}]
\]
and we wish to compare this to $[C_{X/Y}]$ in $A_*(C_{X/Y})$.

The embedding $X \to Y$ is regular, so $C_{X/Y} = N_{X/Y}$ is a line bundle on $X$. In particular we have $A_{*+1}(C_{X/Y}) \simeq A_*(X)$. Under this isomorphism, we have
\[
[C_{X/Y}] \mapsto [X], \quad \quad [C_{H \cap X / H}] \mapsto [X], \text{ and }\quad \quad [C_{L \cap X / L}] \mapsto [pt],
\]
so that $[C_{H \cap X / X}] + [C_{L \cap X / L}] \neq [C_{X/Y}]$. The cone computations may be justified as follows. First, $[C_{X/Y}]$ is just the fundamental class of the line bundle on $X$, which retracts to $X$. Second, $H \cap X$ is just the reduced subscheme $\PP^1 \subset X$, and the cone $C_{H \cap X / H}$ is the normal cone (i.e. normal space) to a regular embedding of $\PP^1$ in $\PP^2$. The class of this cone again retracts onto $X$. Finally $L \cap X$ is a (reduced) point, again a regular subvariety of $L$, so its normal cone equals its normal space equals a rank one vector bundle over the point. The class of this cone retracts onto a point in $X$.
        \end{enumerate}

\item[\ref{vc2}]

        The first statement is elementary.
        To describe the map of $\mls O_X$-algebras,
        let $\mls{E} = \oplus_{i} \OO(d_{i}) $.
        The $f_{i}$ assemble to give a section $s \colon \OO_{\PP^{n}} \to \mls{E}$.
        The dual of this section factors as
        \[
        \mls E^\vee \twoheadrightarrow I_{X/\PP^n} \hookrightarrow \mls O_{\PP^n},
        \]
       where $I = I_{X/\PP^n}$ is the ideal sheaf of $X$. The restriction of the first factor is a surjection $\mls E^\vee|_X \to I/I^2$ of $\mls O_X$-modules that induces the first factor of the composition
        \[Sym(\mls E^\vee) \to Sym(I_{X/\PP^n}/I_{X/\PP^n}^2) \to \oplus _{k \geq 0} I^k/I^{k+1}.\]
        This composed morphism of $\mls O_X$-modules is the one inducing the obstruction theory $C_{X/\PP^n} \to \relSpec_X(Sym(\oplus \mls O(-d_i)))$.

\item[\ref{vc5}]

(b) Recall that short exact sequences of locally free sheaves
    are always locally split.
    Hence, Zariski locally on $Y$,
    the morphism $E^{\prime} \to E$
    looks like the inclusion $\AA^{r}_{Y} \to \AA^{r+d}_{Y}$
    of a coordinate $r$-plane,
    where $r$ is the rank of $E^{\prime}$ and
    $d$ is the rank of $E^{\prime\prime}$. (Either reduce to the case where $Y$ is connected, so $r$ and $d$ are integers, or work here with locally constant integer-valued functions $r$ and $d$.)
    Thus,
    $E^{\prime} \to E$
    is a regular embedding of the same codimension as
    $Y \to E^{\prime\prime}$.

    In particular,
    the induced map
    $N_{E^{\prime}/E} \to p_{E^{\prime}}^{\ast}N_{Y/E^{\prime\prime}}
    \cong p_{E^{\prime}}^{\ast} E^{\prime\prime}$
    is a closed embedding of vector bundles of the same rank,
    so it must be an isomorphism as desired.

\end{enumerate}

\noindent
\textbf{Section 3}
\begin{enumerate}[leftmargin=.24in]

\item[\ref{excess3}]
\begin{enumerate}
    \item[(a)] The given generators of the ideal sheaf $I_{X/\PP^3}$ define a surjection $\mls O_{\PP^3}(-2)^{\oplus 3} \to I_{X/\PP^3} \subseteq \mls O_{\PP^3}$. A direct computation shows that the kernel of this surjection contains the image of the morphism $f: \mls O_{\PP^3}(-3) \to \mls O_{\PP^3}(-2)^{\oplus 3}$ corresponding to the matrix $(w, y, -z)^T$. Therefore on $\PP^3$  we have morphisms  \[
        \OO_{\PP^3}(-3)\xrightarrow{f} \OO_{\PP^3}(-2)^{\oplus 3}
        \twoheadrightarrow I_{X/\PP^3}
    \]
    whose composition is zero. Pulling back to $X$ and dualizing gives morphisms
    \[
    N_{X/\PP^3} \hookrightarrow \mls O_{\PP^3}(6)^{\oplus 3} \xrightarrow{f^\vee} \mls O_{\PP^3}(9)
    \]
    whose composition is zero. In particular $N_{X/\PP^3}$ injects into $\ker(f^\vee)$. To show that the inclusion $N_{X/\PP^3} \to \ker(f^\vee)$ is an isomorphism, it is enough to show $N_{X/\PP^3}$ and $\ker(f^\vee)$ have the same rank and same degree. (If $F$ and $G$ are coherent sheaves on $\PP^1$ of the same rank and $F \hookrightarrow G$, then the cokernel has rank zero hence is torsion, hence has positive degree if it is nonzero. So if $\deg(F) = \deg(G)$ then the inclusion $F \hookrightarrow G$ is an isomorphism.)

    Let $\PP^1_k = \mathrm{Proj}(k[s, t])$ and let $(s^3, s^2t, st^2, t^3)$ be the embedding $\PP^1 \to \PP^3$. The map $f^\vee$ is given by the matrix $(t^3, s^2t, -st^2)$. By computing the corresponding morphism of free modules in each of the two affine charts $D_+(s)$ and $D_+(t)$, we see that $\ker(f^\vee)$ is locally free of rank 2 and that $\mathrm{coker}(f^\vee)$ is a skyscraper sheaf with fiber $k$ supported at $[1:0]$. Therefore the degree of $\ker(f^\vee)$ is $18-9+1 = 10$.

    On the other hand $N_{X/\PP^3}$ is also locally free of rank 2 since $X$ is a smooth subscheme of $\PP^3$ of codimension 2. By the conormal exact sequence, we have
    \[\deg(N_{X/\PP^3}) = \deg(\Omega_X) - \deg(\Omega_{\PP^3}|_X) = -2 - 3\cdot(-4) = 10.\]
    This shows that $N_{X/\PP^3} \to \ker(f^\vee)$ is an inclusion of coherent (in fact, locally free) sheaves of the same rank and same degree as desired.

\item[(b)] Following the previous solution, we have an exact sequence
\[
0 \to N_{X/Y} \to \mls O_{\PP^1}(6)^{\oplus 3} \to \mls O_{\PP^1}(9) \xrightarrow{p} \OO_{[1:0]} \to 0,
\]
and truncating gives the short exact sequence
\[
0 \to N_{X/Y} \to \mls O_{\PP^1}(6)^{\oplus 3} \to \ker(p) \to 0.
\]
We know that $\ker(p)$ is torsion-free, being the subsheaf of a torsion-free sheaf,
so it is a vector bundle because we are on a curve.
We know that it has rank 1, so it is a line bundle.
We know that it has degree equal to $9-\deg(\OO_{[1:0]}) = 8$,
so it must be $\OO_{\PP^1}(8)$.

\item[(c)] By \eqref{eq:excess1} and part (b) we have $[\PP^1]^{vir}_E = c_{top}(\mls O_{\PP^1}(8)) \cap [\PP^1] = 8[pt].$
\end{enumerate}

\item[\ref{excess5}]
\begin{enumerate}

\item[(a)] The scheme $X$ is contained in the union of the distinguished affine open subsets $D_+(y), D_+(z) \subset \PP^2$. We have
\[
X \cap D_+(y) = \VV(x/y) \subseteq \Spec(k[x/y, z/y])
\]
which is a regular embedding in $D_+(y)$ with ideal sheaf $(x/y)$, so $C_{(X \cap D_+(y))/D_+(y)} = N_{(X \cap D_+(y))/D_+(y)}$ is a (trivial) rank one vector bundle. On the other hand,
\[
X \cap D_+(z) = \VV((x/z)^2, (x/z)(y/z)) \subseteq \Spec(k[x/z, y/z]).
\]
The normal cone $C_{(X \cap D_+(z))/D_+(z)}$ was computed in Exercise \ref{cone6}: its ring of global sections is
\[
k[x/z,y/z,A, B]/((x/z)^2, (x/z)(y/z), (y/z)A - (x/z)B).
\]
Using computer algebra software to compute the primary decomposition of the ideal, we see that this cone is the union of
\[
\VV(x/z, A) \quad \quad \text{and}\quad \quad \VV((x/z)^2, (x/z)(y/z), (y/z)^2, (y/z)A - (x/z)B)
\]
in $\Spec(k[x/z, y/z, A, B])$. The second scheme is $C_0$, and the first scheme (variety) glues with $C_{(X \cap D_+(y))/D_+(y)}$ to form $L$.

\item[(b)] To compute the multiplicity of $C_0$ we consider the local ring at the generic point:
\[
(k[x, y, A, B]/(x^2, xy, y^2, yA - xB))_{(x, y)} \simeq k(A, B)[x]/(x^2).
\]
This is a vector space of dimension 2 over the residue field $k(A, B)$, so $[C_0]$ is twice the class of its reduction $\AA^2_k$. Since $C_{X/Y} \to E$ is a closed embedding and $E$ is a rank-2 vector bundle, we see that the reduction of $C_0$ must be a fiber of this vector bundle, and $(p_E^*)^{-1}([C_0]) = 2[P]$.

    \item[(c)]
    We compute the closed embedding $L \to E$ after pulling back to the open set $X \cap D_+(y)$.  The equations for $X$ give a homomorphism $\mls O_{\PP^2}(-2) \oplus \mls O_{\PP^2}(-2) \to \mls O_{\PP^2}$ that, after restriction to $D_+(y)$, is given by the homomorphism of $k[x/y, z/y]$-modules
    \[
    k[x/y, z/y]^{\oplus 2} \to k[x/y, z/y],\quad \quad (a, b) \mapsto (x/y)^2a + (x/y)b \text{ for }a, b\in k[x/y, z/y].
    \]
    Observe that the image of this homomorphism is indeed the ideal $(x/y) \subset k[x/y, z/y]$ defining $X \cap D_+(y)$. Replacing the codomain with this ideal, and tensoring with $k[x/y, z/y]/(x/y)$, the kernel is the first summand of $k[x/y, z/y]^{\oplus 2}$, showing that the closed embedding $L|_{D_+(y)} \to \mls O_{\PP^2}(2) \oplus \mls O_{\PP^2}(2)|_{D_+(Y)}$ is the inclusion of the second summand.

    Since $L$ is a line bundle on the reduced subscheme of $X$ we conclude $L \simeq \mls O_{\PP^1}(2)$ and that the quotient bundle $E/L$ is $\mls O_{\PP^1}(2)$. By \eqref{eq:excess8} it follows that
    \[
    (p_E^*)^{-1}[L] = 2[P].
    \]

    \item[(d)] This is an immediate consequence of (a)-(c).

\end{enumerate}

\end{enumerate}

\noindent
\textbf{Section 4}
\begin{enumerate}[leftmargin=.24in]
\item[\ref{BF0}]
\begin{enumerate}

\item[(a)] We can reduce to a situation where $i, j$ are locally closed embeddings, $Y = \Spec(A)$, $Y' = \Spec(A')$, and $X$ is also affine. Let $\phi: A' \to A$ be the ring homomorphism inducing $f$. (Later, we will heavily use that $\phi$ is flat.) If $J \subset A'$ and $I \subset A$ are the ideals corresponding to the closed embeddings $X \to Y'$ and $X \to Y$, respectively,
then $I = \langle \phi(J)\rangle$ is the ideal generated by $\phi(J)$. A direct computation shows that for a general ring homomorphism $\phi: A' \to A$ and ideal $J \subset A'$, we have
\[
\langle \phi(J) \rangle^n = \langle \phi(J^n) \rangle,
\]
so in our case $I^n = \langle \phi(J^n) \rangle.$

Our goal is to show that the square
\[
\begin{tikzcd}
C_{X/Y} \arrow[r] \arrow[d] & C_{X/Y'} \arrow[d] \\
N_{X/Y} \arrow[r] & N_{X/Y'}
\end{tikzcd}
\]
is fibered, or in other words that this diagram is a tensor diagram:
\[
\begin{tikzcd}
\bigoplus_{n \geq 0} I^n/I^{n+1} & Sym(I/I^2) \arrow[l]\\
\bigoplus_{n \geq 0} J^n/J^{n+1} \arrow[u] & Sym(J/J^2) \arrow[l] \arrow[u].
\end{tikzcd}
\]

Since $\phi$ is flat, for $n \geq 1$ the tensor product of
\[
0 \to J^n \to A' \to A'/J^n \to 0
\]
with $A$ is still exact, which shows $J^n \otimes_{A'} A$ is isomorphic to $\langle \phi(J^n) \rangle$, i.e. to $I^n$ by the above discussion. In particular there is a tensor square
\[
\begin{tikzcd}
Sym(I/I^2) & A \arrow[l]\\
Sym(J/J^2) \arrow[u] & A' \arrow[l] \arrow[u],
\end{tikzcd}
\]
which reduces our problem to showing
\[
    (\bigoplus_{n \geq 0} J^n/J^{n+1}) \otimes_{A^{\prime}} A \simeq \bigoplus_{n \geq 0} I^n/I^{n+1}.
\]
But this is immediate from the formula $J^n \otimes_{A'} A = I^n$ and the fact that tensor is (always) right exact.

\item[(b)]
By Exercise \ref{cone1} the embedding $C_{X/Y} \to E_{X/Y}$ factors through a closed embedding $N_{X/Y} \to E_{X/Y}$. The morphism $i^*T_{Y/Y'} \to E_{X/Y}$ arising as a composition is therefore also a closed embedding,  and so has a well-defined quotient $E_{X/Y'}$ as in Definition \ref{def:quotient-bundle}. Build a commuting diagram of solid arrows
\[
\begin{tikzcd}
&&N_{X/Y} \arrow[r] \arrow[dd, bend left] & N_{X/Y'}\arrow[dd, bend left, dashrightarrow] \arrow[r] & 0 \\
0 \arrow[r] & i^*T_{Y/Y'} \arrow[ur] \arrow[r] & C_{X/Y} \arrow[r] \arrow[d] \arrow[u] & C_{X/Y'} \arrow[u] \arrow[d, dashrightarrow] \arrow[r] & 0\\
0 \arrow[r] &i^*T_{Y/Y'} \arrow[r] \arrow[u, equal] &E_{X/Y} \arrow[r] & E_{X/Y'} \arrow[r] & 0
\end{tikzcd}
\]
The top (crooked) sequence is exact, as is the bottom, so by the universal property of cokernels we get the dashed arrow $N_{X/Y'} \to E_{X/Y'}$. Now the dashed arrow $C_{X/Y'} \to E_{X/Y'}$ arises as a composition. The square with $C_{X/Y}, C_{X/Y'}, N_{X/Y},$ and $N_{X/Y'}$ is fibered by part (a); as is the square with $N_{X/Y}, N_{X/Y'}, E_{X/Y},$ and $E_{X/Y'}$ by Exercise \ref{excess6}; so the square with $C_{X/Y}, C_{X/Y'}, E_{X/Y},$ and $E_{X/Y'}$ is fibered. Since $C_{X/Y} \to E_{X/Y}$ is a closed embedding and $E_{X/Y} \to E_{X/Y'}$ is flat, the dashed arrow $C_{X/Y'} \to E_{X/Y'}$ is a closed embedding \cite[Tag 02L6]{stacks-project}; i.e., it is an obstruction theory.

To see that the virtual classes agree, we again use the fiber square
\[
\begin{tikzcd}
C_{X/Y} \arrow[r] \arrow[d] & C_{X/Y'} \arrow[d] \\
E_{X/Y} \arrow[r] & E_{X/Y'}.
\end{tikzcd}
\]
The virtual cycle associated to $i'$ is the class on $X$ whose flat pullback to (inverse image in) $E_{X/Y'}$ is $C_{X/Y'}$. Since the square is fibered, whatever this class is, it also pulls back to $C_{X/Y}$ in $E_{X/Y}$, so it must also be the virtual class of $i$.

\end{enumerate}

\item[\ref{BF2}]
We begin with commuting diagrams
\[
\begin{tikzcd}
U \arrow[r, "i"] \arrow[dr] & V \arrow[d] \\
& B
\end{tikzcd} \quad \quad \quad \begin{tikzcd}
U \arrow[r, "\Delta"] \arrow[dr, "f"'] & V\times_B V \arrow[d, "p_i"] \\
& V
\end{tikzcd}
\]
where $f$ and $\Delta$ are local immersions and $p_i: V \times_B V \to B$ is projection to the $i$th factor (in particular it is smooth). By \eqref{eq:cotangent} applied to the rightmost diagram we have a sequence of abelian cones
\[
0 \to \Delta^* T_{p_i} \to N_{U/V \times_B V} \xrightarrow{p_{i*}} N_{U/V} \to 0
\]
and by Exercise \ref{BF0} the normal cone $C_{U/V \times_B V} \subseteq N_{U/V \times_B V}$ is the preimage of $C_{U/V} \subseteq N_{U/V}$.
Since the compositions $V \to V \times_B V \xrightarrow{p_i} V$
are the identity for $i=1, 2$ (the first morphism $V \to V \times_B V$ being the diagonal), we have a map $s: N_{U/V} \to N_{U/V \times_B V}$ that splits both the maps $p_{i*}$. In particular we have commuting diagrams
\[
\begin{tikzcd}
0 \arrow[r] & \Delta^* T_{p_i} \arrow[r, "\partial p_i"] & N_{U/V \times_B V} \arrow[r, "p_{i*}"] & N_{U/V} \arrow[r] \arrow[l, bend left, "s"]&  0\\\
&&C_{U/V\times_B V} \arrow[u, hookrightarrow] \arrow[r] & C_{U/V} \arrow[u, hookrightarrow]\arrow[l, bend left, "s"]
\end{tikzcd}
\]
It remains to show that under the isomorphism $N_{U/V \times_B V} \simeq \Delta^*T_{p_1} \times  N_{U/V} \simeq T_{V/B} \times N_{U/V}$ induced by the splitting of $p_1$, the action of $i^*T_{V/B}$ on $N_{U/V}$ is identified with the map $p_{2*}$. (Since the isomorphism above restricts to an isomorphism $C_{U/V \times_B V} \simeq i^*T_{V/B} \times C_{U/V}$, commutativity of the above diagram shows the action of $i^*T_{V/B}$ preserves $C_{U/V}$.) More precisely, we want to show that the curved arrow in the diagram below is the canonical inclusion.
\begin{equation}\label{eq:action}
\begin{tikzcd}
&&\Delta^* T_{p_1} \times N_{U/V} \arrow[d, "\sim"] \arrow[dl]\\
i^*T_{V/B} \arrow[rrr, bend right] & \Delta^*T_{p_1} \arrow[l, "\sim"] \arrow[r, "\partial p_1"] & N_{U/V \times_B V}  \arrow[r, "p_{2*}"] & N_{U/V}
\end{tikzcd}
\end{equation}

To prove this compatibility, note that by functoriality of the cotangent  complex, the commuting diagram
\begin{equation}
    \label{eq:morphism of ctgt triangles}
    \begin{tikzcd}
        U \arrow[r, "\Delta"] \arrow[d, equal] & V \times_B V \arrow[r, "p_1"] \arrow[d, "p_2"] & V \arrow[d] \\
        U \arrow[r, "i"] & V \arrow[r] & B
    \end{tikzcd}
\end{equation}
leads to a morphism of distinguished triangles
\[
\begin{tikzcd}
i^*\LL_{V/B} \arrow[d, "\sim"] \arrow[r] & \LL_{U/B} \arrow[d] \arrow[r] & \LL_{U/V} \arrow[d] \arrow[r] & {}\\
\Delta^*\LL_{p_1} \arrow[r] & \LL_{U/V} \arrow[r] & \LL_{U/V \times_B V} \arrow[r] &.
\end{tikzcd}
\]
In particular, looking at the partially complete square at the right side of this diagram and taking homology in degree -1, we get a commuting square
\begin{equation}
    \label{eq:commuting square of ctgt and conormal}
\begin{tikzcd}
\mls C_{U/V} \arrow[r] \arrow[d] & i^*\Omega_{V/B} \arrow[d, "\sim"] \\
\mls C_{U/V \times_B V} \arrow[r] & \Delta^*\Omega_{p_1}.
\end{tikzcd}
\end{equation}
where $\mls C_{U/V}$ and $\mls C_{U/V \times_B V}$ are the conormal sheaves. (Commutativity of this square can also be checked directly, without the theory of the cotangent complex.) Applying $\relSpec_U(Sym(-))$ to this square and comparing with \eqref{eq:action} we see that it is precisely the compatibility we needed to show.

\item[\ref{BF3}] (Sketch) If $X \to B$ is a closed embedding of schemes then $\mf C_{X/B} \to \mf N_{X/B}$ is the usual embedding $C_{X/B} \to N_{X/B}$ of the normal cone in the normal space. If $X \to B$ is smooth then both $\mf C_{X/B}$ and $\mf N_{X/B}$ are the quotient stack $[X/T_{X/B}]$.

\item[\ref{BF5}]
\begin{itemize}
\item[(a)]

Without loss of generality,
we may assume that $X$ is a scheme. Let $x \in X$ be the image of the geometric point $\overline{x}$ and let $b \in B$ be the image of $x$. Let $W$ be an affine scheme with a smooth morphism $W \to B$ containing $b$.
Then, we may choose an affine scheme $U'$ with an \'etale morphism $U^{\prime} \to X \times_B W$ such that the image of $U'$ in $X$ contains $x$.
Moreover, since $U'$ is of finite type over $k$, it admits a closed embedding into $\AA^r_k$ for some $r \geq 0$.
The composition $U^{\prime} \xrightarrow{i} \AA^r_k \times_k W \to \AA^r$ is a closed embedding
and the second morphism is separated,
so $i$ is a closed embedding.

We can almost take $U = U'$ and $V = \AA^r_k \times_k W$, but $U^{\prime} \to X$ need not be \'etale.
By
\cite[Tag~057G, 056U]{stacks-project},
there is a locally closed immersion $U \to U'$ such that the composition $U \to U' \to X$ is \'etale and contains the image of $\bar x$.
Now $U' \to U \to \AA^r_k \times_k W$
is a locally closed immersion, so there is an open subset $V \subset \AA^r_k \times_k W$ (smooth over $B$) such that $U \to V$ is a closed embedding.

\end{itemize}

\item[\ref{BF4}]

For (a), define a functor from $[E^1/E^0]$ to $[F^1/F^0]$ to send $(P, \alpha) \in [E^1/E^0](T)$ to the pair
\[
(P \times_{E^0, \phi^0} F^0,  \phi \alpha + d )
\]
where $P \to T$ is a principal $E_0$-bundle, $\alpha: P \to E^1$ is an equivariant morphism, $P \times_{E^0, \phi^0} F^0$ is the quotient of $P \times F^0$ by the action of $E^0$ given by
\[
e \cdot (p, f) = (ep, f-\phi^0(e)),
\]
and $\phi\alpha + d$ is induced by the morphism  $P \times F^0 \to F^1$ given by
\[
(p, f) \mapsto \phi^1\alpha(p) + df.
\]

For (b), use the isomorphism of principal bundles
\[
P \times_{E^0, \phi^0} F^0 \to P \times_{E^0, \psi^0} F^0
\]
induced by the endomorphism of $P \times F^0$ given by the rule
\[
(p, f) \mapsto (p, f - k\alpha(p)).
\]
\end{enumerate}

\noindent
\textbf{Section 5}

\begin{itemize}[leftmargin=.24in]

\item[\ref{POT3}]
\begin{itemize}

\item[(a)]

\begin{lemma}\label{lem:pullback-qiso}
Let $A$ be a ring, and let $N_1 \to M$, $N_2 \to M$ be morphisms of $A$ -modules such that $N_1 \oplus N_2 \to M$ is surjective. Let $K$ be the kernel of $N_1 \oplus N_2 \to M$. Then $[K \to N_1]$ is quasi-isomorphic to $[N_2 \to M]$.
\end{lemma}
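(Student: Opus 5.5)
The plan is to write down an explicit morphism of two-term complexes and check that it is a quasi-isomorphism by comparing cohomology in each degree. Put both complexes in degrees $[-1,0]$. Write $a: N_1 \to M$ and $b: N_2 \to M$ for the given maps, so that
\[
K = \{(n_1,n_2) \in N_1 \oplus N_2 : a(n_1) + b(n_2) = 0\}.
\]
The differential of $[K \to N_1]$ is the first projection $p_1: K \to N_1$. We compare it with $[N_2 \xrightarrow{b} M]$.

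The natural chain map $[K \to N_1] \to [N_2 \to M]$ takes the second projection $p_2: K \to N_2$ in degree $-1$. In degree $0$ it takes $-a: N_1 \to M$, with the sign forced by commutativity. We check the square commutes: for $(n_1,n_2) \in K$ we have $b(p_2(n_1,n_2)) = b(n_2) = -a(n_1) = -a(p_1(n_1,n_2))$. So this is a morphism of complexes.

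Next we compute cohomology on each side.

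In degree $-1$: $h^{-1}$ of the source is $\ker p_1 = \{(0,n_2) : b(n_2)=0\}$. The map $p_2$ sends this isomorphically onto $\ker b = h^{-1}$ of the target.

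In degree $0$: $h^0$ of the source is $N_1/p_1(K)$, and $h^0$ of the target is $M/b(N_2)$. The induced map is $[n_1] \mapsto [-a(n_1)]$.
\begin{itemize}
\item \emph{Surjectivity.} Given $m \in M$, the surjectivity of $N_1 \oplus N_2 \to M$ gives $m = a(n_1) + b(n_2)$. Then $m \equiv a(n_1)$ modulo $b(N_2)$, so $m$ is hit by $[-n_1]$. This is the only place the surjectivity hypothesis is used.
\item \emph{Injectivity.} Suppose $-a(n_1) = b(n_2)$ for some $n_2$. Then $a(n_1) + b(n_2) = 0$, so $(n_1,n_2) \in K$, and hence $n_1 = p_1(n_1,n_2)$ lies in $p_1(K)$.
\end{itemize}

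There is no serious obstacle. The only care needed is the sign convention in degree $0$, or equivalently replacing $b$ by $-b$, which does not change the quasi-isomorphism class. The argument is functorial in $A$-modules, so it sheafifies verbatim to quasi-coherent sheaves. That sheaf version is the form needed in Exercise \ref{POT3}(a), where the lemma is applied with $N_1 = \mls E^0$ (or $\Omega$) and $N_2 = \mls I/\mls I^2$.
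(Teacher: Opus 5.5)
Your proof is correct and is exactly the diagram chase the paper leaves to the reader: the chain map $(p_2,-a)$ is an isomorphism on $h^{-1}$ and on $h^0$, and surjectivity of $N_1\oplus N_2\to M$ is needed only for surjectivity on $h^0$. One small correction to your closing aside: in Exercise \ref{POT3}(a) the lemma is applied with $M=\mls E^0$, $N_2=\mls E^{-1}$ (via $d$) and $N_1=\Omega_{V/B}$ (via the lift $f$), so $K\cong \mls E^{-1}\times_{\mls E^0}\Omega_{V/B}$ up to a sign, and $\mls I/\mls I^2$ plays no role in that application.
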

\begin{proof}
This can be solved by diagram chasing.
\end{proof}

By Exercise \ref{BF5}, there is a commuting diagram
\[
\begin{tikzcd}
U \arrow[d] \arrow[r] & V \arrow[d] \\
X \arrow[r] & B
\end{tikzcd}
\]
with $U \to V$ a closed embedding of schemes, $U \to X$ \'etale, and $V \to B$ smooth.
By replacing $V$ with an affine open subset and replacing $U$ with its intersection with this affine open subset, we can moreover assume $U = \Spec(A)$ and $V$ are affine. So localizing further if necessary we have that $\EE|_U \simeq [\mls E^{-1} \to \mls E^0]$ and $\mls E^{-1}$, $\mls E^0$, and $\Omega_{V/B}|_U$ correspond to projective $A$-modules.
Moreover we compute
\[
\tau_{\geq -1}(\LL_{X/B}|_U) = \tau_{\geq -1}\LL_{U/B} = [\mls I/\mls I^2 \to \Omega_{V/B}|_U]
\]
where
the first morphism uses Exercise \ref{POT1}(a) and the second uses property \ref{eq:C2} of cotangent complexes. Since $U$ is affine and $\mls E^{-1}|_U \to \mls E^{0}|_U$ corresponds to a morphism of projective modules, the morphism $\tau_{\geq -1}(\phi|_U)$
\[[\mls E^{-1}|_U \to \mls E^0|_U] \to [\mls I/\mls I^2 \to \Omega_{V/B}|_U]\]
in the derived category is represented by an honest morphism of complexes. That is, we have a commuting diagram
\[
\begin{tikzcd}
\mls E^{-1} \arrow[r, "d"] \arrow[d, "\phi_1"] & \mls E^0 \arrow[d, "\phi_0"] \\
\mls I/\mls I^2 \arrow[r] & \Omega_{V/B}
\end{tikzcd}
\]
of sheaves on $U$, where here and going forward we omit restrictions to $U$ from the notation.
This diagram induces a surjection on cohomology in degree $-1$ and an isomorphism in degree $0$.

We now construct a complex quasi-isomorphic to $\mls E^{-1} \to \mls E^{0}$ with a morphism to $\mls I/\mls I^2 \to \Omega_{V/B}$ that is a genuine surjection in degree $-1$ and an isomorphism in degree 0. For this, observe that $\Omega_{V/B}$ has a morphism to $\mathrm{coker}(d)$. Since $\Omega_{V/B}$ corresponds to a projective module this map lifts to a map $f: \Omega_{V/B} \to \mls E^0$. In fact $\mls E^{-1} \oplus \Omega_{V/B} \xrightarrow{d \oplus f} \mls E^0$ is surjective: given $a \in \mls E^0$, the element $a - f(\phi_0(a))$ maps to zero under $\mls E^0 \to \mathrm{coker}(d)$. Hence we may apply Lemma \ref{lem:pullback-qiso} and we see that $[\mls E^{-1} \to \mls E^0]$ is quasi-isomorphic to $[\mls E^{-1} \times_{\mls E^0} \Omega_{V/B} \to \Omega_{V/B}].$ There is an induced morphism of complexes
\[
[\mls E^{-1} \times_{\mls E^0} \Omega_{V/B} \to \Omega_{V/B}] \to [\mls I/\mls I^2 \to \Omega_{V/B}]
\]
which uses the identity map on $\Omega_{V/B}$ and induces a surjection on cohomology in degree -1 and an isomorphism in degree 0. Therefore we have a commuting diagram with exact rows
\[
\begin{tikzcd}
\ker(\mls E^{-1} \times_{\mls E^0} \Omega_{V/B} \to \Omega_{V/B}) \arrow[d, twoheadrightarrow] \arrow[r] & \mls E^{-1} \times_{\mls E^0} \Omega_{V/B} \arrow[d, "g"] \arrow[r] & \Omega_{V/B} \arrow[d, equal] \arrow[r] & \mathrm{coker}(\mls E^{-1} \times_{\mls E^0} \Omega_{V/B} \to \Omega_{V/B}) \arrow[d, equal] \\
\ker(\mls I/\mls I^2 \to \Omega_{V/B}) \arrow[r] & I/I^2 \arrow[r] & \Omega_{V/B} \arrow[r] & \mathrm{coker}(I/I^2 \to \Omega_{V/B})
\end{tikzcd}
\]
and hence by the 4-lemma the arrow labeled $g$ is surjective. The middle square in the diagram above is thus the square required in the exercise.

\item[(b)] A perfect obstruction theory $\EE_{X/B} \to \LL_{X/B}$ induces a morphism $\EE_{X/B} \to \tau_{\geq -1}\LL_{X/B}$. That the corresponding morphism of abelian cone stacks is a closed embedding can be checked \'etale locally on $X$. By part (a) and Lemma \ref{lem:BF88}, we can find an affine \'etale cover $U \to X$ where the morphism $st(\LL_{X/B}) \to st(\EE_{X/B})$ arises from applying $\relSpec_X(Sym(-))$ to a
commuting diagram
\[
\begin{tikzcd}
\mls E^{-1} \arrow[r] \arrow[d, "\phi_1"] & \mls E^0 \arrow[d, "\phi_0"] \\
\mls I/\mls I^2 \arrow[r] & \Omega_{V/B}
\end{tikzcd}
\]
with $\phi_1$ surjective and $\phi_0$ an isomorphism. Therefore $st(\LL_{X/B})_U \to st(\EE_{X/B})_U$ is a closed embedding.

\end{itemize}
\item[\ref{POT100}]

The natural complex map $\EE_{X/B} \to \tau_{\geq -1}\LL_{X/B}$ is induced by the commuting diagram
\[
\begin{tikzcd}
\mls E|_X \arrow[r] \arrow[d] & \Omega_{Y/B}|_X \arrow[d, equal]\\
\mls I/\mls I^2 \arrow[r] & \Omega_{Y/B}|_X
\end{tikzcd}
\]
Note that the left vertical arrow is surjective and the right is an isomorphism by construction.

   For ease of notation,
    write $\EE = \EE_{X/B}$ and $\LL = \LL_{X/B}$.
    Applying internal hom $R\Hom$ for $D_{qc}(X)$ to the distinguished triangle
    \[
        \tau_{\leq -2}\LL \to \LL \to \tau_{\geq -1}\LL \xrightarrow{+1}
    \]
    yields a long exact sequence
    \[
        \begin{tikzcd}
            \cdots \rar & H^{0}(X, R\Hom(\EE, \tau_{\leq -2}\LL)) \rar&
            H^{0}(X, R\Hom(\EE, \LL)) \rar
                        \ar[draw=none]{d}[name=X, anchor=center]{}&
            H^{0}(X, R\Hom(\EE, \tau_{\geq -1}\LL))
            \ar[rounded corners,
            to path={ -- ([xshift=2ex]\tikztostart.east)
                      |- (X.center) \tikztonodes
                      -| ([xshift=-2ex]\tikztotarget.west)
                  -- (\tikztotarget)}]{dll}[at end]{} \\
                                           & H^{1}(X, R\Hom(\EE, \tau_{\leq -2}\LL)) \rar & \cdots &.
        \end{tikzcd}
    \]
    Since $X$ is affine, $\EE$ corresponds to a finite complex of projective modules, and by \cite[Tag 0A66]{stacks-project} $R\Hom(\EE, -)$ is computed by the hom complex. Moreover since $X$ is affine the hypercohomology of the hom complex is just its cohomology. Applying these observations to the outer groups and  \cite[Tag 08JA]{stacks-project} to the inner groups, the long exact sequence above becomes
    \[
        \dots \to
        \Hom_{\mls O_X}(\EE, \tau_{\leq -2}\LL) \to \Hom_{D_{qc}(X)}(\EE, \LL)
        \to \Hom_{D_{qc}(X)}(\EE, \tau_{\geq -1}\LL)
        \to \Hom_{\mls O_X}(\EE, \tau_{\leq -2}\LL[1])
        \to \cdots
    \]
    where $\Hom_{\mls O_X}$ is the set of morphisms of chain complexes and $\Hom_{D_{qc}(X)}$ is the set of morphisms in the derived category.

    The groups on the two ends of the displayed sequence must vanish: this is because $\EE$ has terms in degrees $-1$ and above, but $\tau_{\leq -2}\LL$ and  $\tau_{\leq -2}\LL[1]$ have terms in degrees $-2$ and below. Thus we have a canonical isomorphism
    \[
    \Hom_{D_{qc}(X)}(\EE, \LL)
        \simeq \Hom_{D_{qc}(X)}(\EE, \tau_{\geq -1}\LL).
    \]

    It remains to see that the lift of $\EE \to \tau_{\geq -1}\LL$ to a morphism $\EE \to \LL$ is a perfect obstruction theory. This can be checked on the truncation $\EE \to \tau_{\geq -1}\LL$, where it is clear from the construction.

\item[\ref{POT5}]
 We first prove Proposition \ref{prop:local1}.
The fiber square
\[
\begin{tikzcd}
X \arrow[r]\arrow[d] & Y \arrow[d, "0"]\\
Y \arrow[r, "s"] & E
\end{tikzcd}
\]
leads to a canonical morphism of cotangent complexes $\LL_{Y/E} \to \LL_{X/Y}$. Since $0: Y \to E$ is l.c.i., its cotangent complex is $[\mls E \to 0]$ for $\mls E$ the dual of the sheaf of sections of $E$.
Applying $\tau_{\geq -1}$ to the morphism $\LL_{Y/E} \to \LL_{X/Y}$ yields a commuting square
\[
\begin{tikzcd}
\mls E^\vee|_X \arrow[d] \arrow[r] & 0 \arrow[d] \\
\mls I/\mls I^2 \arrow[r] & 0
\end{tikzcd}
\]
where the leftmost vertical arrow is the surjection induced by $s$.  It follows that $\EE_{X/Y} := \LL_{Y/E} \to \LL_{X/Y}$ is a Behrend-Fantechi obstruction theory, and that the inclusion of cones $\mf C_{X/Y} \to st(\EE_{X/Y})$ associated to the obstruction theory is precisely the inclusion $C_{X/Y} \to E$ induced by $s$. So the virtual pullbacks agree.

For Proposition \ref{prop:local2}, let $\phi: \EE_{X/B} \to \LL_{X/B}$ be a perfect obstruction theory. By Exercise \ref{POT3} we have a commuting square
\[
\begin{tikzcd}
U \arrow[d] \arrow[r] & Y \arrow[d] \\
X \arrow[r] & B
\end{tikzcd}
\]
where $U \to X$ is \'etale, $U \to Y$ is a closed embedding, and $Y \to B$ is smooth, and moreover the truncation of the restriction of $\EE_{X/B} \to  \LL_{X/B}$ to $U$ is quasi-isomorphic to a diagram
\begin{equation}\label{eq:otshere}
\begin{tikzcd}
\mls E^{-1} \arrow[r] \arrow[d, twoheadrightarrow] & \mls E^0 \arrow[d, "\simeq"] \\
\mls I/\mls I^2 \arrow[r] & \Omega_{Y/B}
\end{tikzcd}
\end{equation}
where $\mls I$ is the ideal sheaf of $U$ in $Y$.
By further localizing on $U$ we can assume $U$ is affine and $\mls E^{-1}$ is a free sheaf of rank $r$. By Nakayama's lemma, after further localizing on $Y$ we can lift the surjection $\mls O_U^{\oplus r} = \mls E^{-1}  \to \mls I/\mls I^2$ to a surjection
\begin{equation}\label{eq:surjhere}
\mls O_Y^{\oplus r} \to \mls I.
\end{equation}
We set $E = \relSpec_Y(Sym(\mls O_Y^{\oplus r}))$ and we let $s: Y \to E$ be the section arising from \eqref{eq:surjhere}.
With these definitions we have that \eqref{eq:otshere} is precisely the truncation of the obstruction theory for $U \to X$ associated to $(E, s)$. Since truncated obstruction theories lift uniquely to obstruction theories (Lemma \ref{lem:lift}) the result follows.

\end{itemize}

\noindent
\textbf{Section 6}

\begin{itemize}[leftmargin=.24in]

\item[\ref{POT14}]

\begin{itemize}

  \item[(a)]      Consider the commutative diagram

    \[\begin{tikzcd}
	T & {X^{\prime}} & X \\
	{T^{\prime}} & {B^{\prime}} & B
	\arrow["h", from=1-1, to=1-2]
	\arrow["i"', from=1-1, to=2-1]
	\arrow["q", from=1-2, to=1-3]
	\arrow["g"', from=1-2, to=2-2]
	\arrow["f", from=1-3, to=2-3]
	\arrow[dashed, from=2-1, to=1-2]
	\arrow[color={rgb,255:red,214;green,92;blue,92}, dashed, from=2-1, to=1-3]
	\arrow[from=2-1, to=2-2]
	\arrow["p"', from=2-2, to=2-3]
\end{tikzcd}\]

    Since the right square is cartesian,
    producing the black dotted arrow
    is equivalent to producing the red dotted arrow.

The black arrow exists if and only if the obstruction $o(h)$ given by
\[
h^{\ast}\LL_{X'/B'}
    \to \LL_{T/T^{\prime}} \to \tau_{\geq -1} \LL_{T/T^{\prime}},
\]
vanishes, while the red arrow exists if and only if the obstruction $o(q \circ h)$ given by
\[
(q \circ h)^{\ast}\LL_{X/B}
    \to \LL_{T/T^{\prime}} \to \tau_{\geq -1} \LL_{T/T^{\prime}},
\]
vanishes. Since $o(h)$ maps to $o(q \circ h)$ under the canonical morphism $\Ext^1(Lh^*\LL_{X'/B'}, I) \to \Ext^1(L(q \circ h)^*\LL_{X/B}, I)$ (given by precomposition with the canonical map $(q \circ h)^*\LL_{X/B} \to h^*\LL_{X'/B'}$), we have verified
    part (a) of
    Proposition \ref{prop:criterion}, item 2.

    For part (b) of the item,
    we note that the canonical morphism
    $h^{\ast}q^{\ast}\LL_{X/B} \to h^{\ast}\LL_{X^{\prime}/B^{\prime}}$
    induces an isomorphism on $h^{0}$,
    since cokernels are preserved by pullbacks and the canonical map $q^*\Omega_{X/B} \to \Omega_{X'/B'}$ is an isomorphism.
    Thus,
    the morphism
    $\Ext^{0}(h^{\ast}\LL_{X^{\prime}/B^{\prime}}, I) \to
    \Ext^{0}(h^{\ast}q^{\ast}\LL_{X/B}, I)$
    is an isomorphism.

    \item[(b)]
    The statement that $\EE_{X/B} \to \LL_{X/B}$ induces an isomorphism on $h^0$ and a surjection on $h^{-1}$ is equivalent to the statement that the mapping cone of $\EE_{X/B} \to \LL_{X/B}$ has $h^i=0$ for $i \geq -1$. The latter statement is clearly preserved by derived pullback, so $Lp^*\EE_{X/B} \to Lp^*\LL_{X/B}$ induces an isomorphism on $h^0$ and a surjection on $h^{-1}$. Since $Lp^*\LL_{X/B} \to \LL_{X'/B'}$ has the same properties by part (a), we see that the composition is an obstruction theory. Finally the complex $Lp^*\EE_{X/B}$ is perfect of the correct tor amplitude because pullback preserves this property.
\end{itemize}

\item[\ref{POT7}]

By the axioms of triangulated categories we have a morphism of distinguished triangles
\[
\begin{tikzcd}
\EE_{X/Y'} \arrow[r] \arrow[d] & \EE_{X/Y} \arrow[r, "h"] \arrow[d] & i^*\LL_{Y/Y'}[1] \arrow[r] \arrow[d, equal]& {}\\
\LL_{X/Y'} \arrow[r] & \LL_{X/Y} \arrow[r] & i^*\LL_{Y/Y'}[1] \arrow[r] & {}.
\end{tikzcd}
\]
This leads to a morphism of long exact sequences
\[
\begin{tikzcd}
0 \arrow[r] & h^{-1}(\EE_{X/Y'}) \arrow[d]\arrow[r] & h^{-1}(\EE_{X/Y}) \arrow[r] \arrow[d]& \Omega_{Y/Y'}|_X \arrow[r]\arrow[d] & h^{0}(\EE_{X/Y'}) \arrow[r] \arrow[d]& h^{0}(\EE_{X/Y}) \arrow[r] \arrow[d]& 0\\
0 \arrow[r] & h^{-1}(\LL_{X/Y'}) \arrow[r] & h^{-1}(\LL_{X/Y}) \arrow[r] & \Omega_{Y/Y'}|_X \arrow[r] & h^{0}(\LL_{X/Y'}) \arrow[r] & h^{0}(\LL_{X/Y}) \arrow[r] & 0
\end{tikzcd}
\]
Using the top row and the fact that $\EE_{X/Y}$ has tor amplitude in $[-1, 0]$ we see that $\EE_{X/Y'}$ is perfect. Multiple applications of the 4-lemma show that $\EE_{X/Y'} \to \LL_{X/Y'}$ is an obstruction theory.

To show that the virtual classes agree, it is enough to show the existence of a fiber diagram
\begin{equation}\label{eq:rew}
\begin{tikzcd}
\mf C_{X/Y} \arrow[r] \arrow[d] & \mf N_{X/Y} \arrow[r] \arrow[d] & st(\EE_{X/Y}) \arrow[d] \\
\mf C_{X/Y'} \arrow[r] & \mf N_{X/Y'} \arrow[r] & st(\EE_{X/Y'}).
\end{tikzcd}
\end{equation}
The squares can be checked independently and locally on $X$. For the leftmost square, we can find a commuting diagram
\[
\begin{tikzcd}
U\arrow[r, hookrightarrow] \arrow[d] & V \arrow[d] \\
X \arrow[r] \arrow[dr] & Y \arrow[d] \\
&Y'
\end{tikzcd}
\]
with $U \to X$ \'etale, $U\to V$ a closed embedding, and $V \to Y$ smooth. Then the leftmost square is the square
\[
\begin{tikzcd}
{[C_{U/V}/T_{V/Y}|_U]} \arrow[d] \arrow[r] & {[N_{U/V}/T_{V/Y}|_U]} \arrow[d] \\
{[C_{U/V}/T_{V/Y'}|_U]} \arrow[r] & {[N_{U/V}/T_{V/Y'}|_U]}
\end{tikzcd}
\]
which is easily seen to be fibered.

To show the other square is fibered, by Exercise \ref{POT3} we can in fact choose $U$ and $V$ such that $\EE_{X/Y'}|_U \to \LL_{X/Y'}|_U$ and $\EE_{X/Y}|_U \to \LL_{X/Y}|_U$ are given respectively by commuting squares
\[
\begin{tikzcd}
\mls F^{-1} \arrow[r] \arrow[d, twoheadrightarrow] & \Omega_{V/Y'} \arrow[d, "\simeq"] \\
I/I^2 \arrow[r] & \Omega_{V/Y'}
\end{tikzcd} \quad\quad\text{ and }\quad\quad \begin{tikzcd}
\mls E^{-1} \arrow[r] \arrow[d, twoheadrightarrow] & \Omega_{V/Y} \arrow[d, "\simeq"] \\
I/I^2 \arrow[r] & \Omega_{V/Y}
\end{tikzcd}
\]
with $\mls E^{-1}$ and $\mls F^{-1}$ locally free sheaves on $U$ and $I$ the ideal sheaf of $U \hookrightarrow V$. We have moreover that the induced morphism of mapping cones
\begin{equation}\label{eq:wer}
\begin{tikzcd}
\mls F^{-1} \arrow[d] \arrow[r, "i"] & \mls E^{-1} \oplus \Omega_{V/Y'} \arrow[r, "p"]\arrow[d ] & \Omega_{V/Y} \arrow[d]\\
I/I^2 \arrow[r, "j"] & I/I^2 \oplus \Omega_{V/Y'} \arrow[r, "q"] & \Omega_{V/Y}
\end{tikzcd}
\end{equation}
is a quasi-isomorphism. Note that $i$ and $j$ are injective and $q$ and $p$ are surjective, as are all vertical arrows. Therefore we have a commuting diagram of short exact sequences
\[
\begin{tikzcd}
0 \arrow[r] & \mls F^{-1} \arrow[d] \arrow[r, "i"] & \ker(p) \arrow[r, "p"]\arrow[d ] & \ker(p)/\mathrm{img}(i) \arrow[r] \arrow[d, "\sim"]& 0\\
0 \arrow[r] & I/I^2 \arrow[r, "j"] & \ker(q) \arrow[r, "q"] & \ker(q)/\mathrm{img}(j) \arrow[r] & 0
\end{tikzcd}
\]
where all vertical arrows are still surjective (and the rightmost is again an isomorphism). The leftmost square in this diagram is Cartesian: injectivity of the map
\[
\mls F^{-1} \to I/I^2 \times_{\ker(q)} \ker(p)
\]
follows from injectivity of $i$, and surjectivity is a diagram chase. It follows that the leftmost square of \eqref{eq:wer} is also Cartesian, and hence the canonical morphism $\mls F^{-1} \to \mls E^{-1}$ is an isomorphism. At last, we see that the rightmost square of \eqref{eq:rew}, after pullback to $U$, may be written
\[
\begin{tikzcd}
{[C_{U/V}/T_{V/Y}|_U]} \arrow[r] \arrow[d] & {[E^1/T_{V/Y}|_U]} \arrow[d]\\
{[C_{U/V}/T_{V/Y'}|_U]} \arrow[r] & {[E^1/T_{V/Y'}|_U]}
\end{tikzcd}
\]
where $E^1 = \relSpec_U(Sym(\mls E^{-1})) \simeq \relSpec_U(Sym(\mls F^{-1}))  $. This square is clearly fibered.

\end{itemize}

\noindent
\textbf{Section 7}
\begin{itemize}[leftmargin=.24in]
\item[\ref{maps2}]
Since $X$ is smooth we have $\LL_X = \Omega_X[0]$. Let $\EE = R\pi_*(f^*\Omega_X \otimes \omega[1])$. Perfection of $\EE$ is a smooth local property by definition, and the tor amplitude of $\EE$ can also be computed locally by \cite[Tag 0DJJ]{stacks-project}. So replace $\mc M_{g, n}(X)$ by an \'etale cover $U$ that is a Noetherian scheme (in particular $\EE$ is now a complex on $U$).

 Then \cite[Tag 08EV]{stacks-project} says this complex is perfect, and it remains to show it has tor amplitude in $[-1, 0]$. Since the cohomology of $\EE$ is nonzero only in $[-1, 0]$, by \cite[Prop 8.3.6.4]{fgaexplained}, to show that $\EE$ has tor amplitude in $[-1, 0]$ it is enough to show that for every $u \in U$ the derived fiber $\kappa(u) \otimes^{\mathrm{L}} \EE$ has vanishing cohomology in degree -2 (here $\kappa(u)$ is the residue field of $u$). From the fiber square
\[
\begin{tikzcd}
C \arrow[d, "\pi_C"'] \arrow[r, "j"] & \mls C_{\mc M_{g, n}(X)} \arrow[d, "\pi"]\\
\kappa(u) \arrow[r, "i"] & \mc M_{g, n}(X),
\end{tikzcd}
\]
 since $\pi$ is flat, by \cite[Tag 08IR]{stacks-project} we have
 \[
 \kappa(u) \otimes^{\mathrm{L}} \EE = R\pi_{C, *} Lj^*(f^*\Omega_X \otimes \omega[1]).
 \]
 But since $X$ is smooth, $f^*\Omega_X \otimes \omega[1]$ is a locally free sheaf (in degree -1), so we can compute $Lj^*$ of this complex by directly applying $j^*$, and the derived fiber is
 \[
 R\pi_{C, *}((f \circ j)^*\Omega_X \otimes \omega_C[1]).
 \]
 This has vanishing cohomology in degree -2.

\item[\ref{maps3}]
\begin{itemize}

\item[(a)]

The rank of $R\pi_*(f^*\Omega_X \otimes \omega_\pi[1])$ may be computed after replacing $\mathcal{M}_{g, n}(X)$ with any of its geometric points, assuming the rank turns out to be constant (not just locally constant). So letting $f_K: C \to X$ be a stable map corresponding to a $K$-point of $\mathcal{M}_{g,n}(X)$ and $\pi_K: C \to \Spec(K)$ be the structure morphism, for $K$ an algebraically closed field, we wish to compute the rank of $\EE = R\pi_{K,*}(f_K^*\Omega_X \otimes \omega_{\pi_K}[1])$ (using the derived base change formula). Writing $\EE = [\sE^{-1} \xrightarrow{a} \sE^0 ]$ for some $K$-modules $\sE^i$, we have that the rank is $\rank(\sE^0) - \rank(\sE^{-1})$. Since rank is additive in short exact sequences of vector spaces we have
\begin{align*}
\rank(\EE) &= \big(h^0(\EE) + \mathrm{dim}(\mathrm{img}(a))\big) - \big(h^{-1}(\EE) +  \mathrm{dim}(\mathrm{img}(a))\big)\\
&= h^0(\EE) - h^{-1}(\EE)\\
&= h^1(C, f_K^*\Omega_X \otimes \omega_{\pi_K})  - h^0(C, f_K^*\Omega_X \otimes \omega_{\pi_K})\\
&= -\chi(f_K^*\Omega_X \otimes \omega_{\pi_K}).
\end{align*}
By Serre duality this is equal to $\chi(f_K^*\Omega_X^\vee).$

\end{itemize}
\end{itemize}

\noindent
\textbf{Section 8}
\begin{itemize}[leftmargin=.24in]
\item[\ref{GW1}]

\begin{itemize}
\item[(a)]
Recall the statement of Grothendieck duality for $\pi: C \to M$ a family of curves (a sheafified version of the statement that $\pi^*(-) \otimes \omega_\pi$ is right adjoint to $R\pi_*$):
\[
    R\mls Hom_M(R\pi_*F, G) = R\pi_*R\mls Hom(F, \pi^*G \otimes \omega_\pi[1]).
\]
We will work in the situation where $\pi$ is the universal family of curves on $\overline{\mc M}_{g, n}(X)$.

First set $G = \mls O_M$ and $F = f^*\Omega_X \otimes \omega_{\pi}[1]$. Then Grothendieck duality says
\begin{equation}\label{eq:sdf}
(R\pi_*(f^*\Omega_X \otimes \omega_\pi[1]))^\vee = R\pi_*(f^*\Omega_X^\vee).
\end{equation}
(We will only apply the dual symbol ${}^\vee$ to complexes that are known to be perfect, so applying dual twice returns the original complex.) The complexes on both sides of this equation have potentially nonzero cohomology in degrees 0 and 1. By assumption $h^{-1}(R\pi_*(f^*\Omega_X \otimes \omega_\pi[1])) = 0.$ We claim this implies $h^1$ of the left-hand side of \eqref{eq:sdf} vanishes. Indeed, this can be checked locally, where since $R\pi_*(f^*\Omega_X \otimes \omega_\pi[1])$ is perfect we may represent it as a 2-term complex of finite rank locally free sheaves $\mls E^{-1} \to \mls E^0$. The assumption that $h^{-1}$ of this complex vanishes implies that the map is injective; i.e., $\mls E^{-1} \hookrightarrow \mls E^0$. The dual of $R\pi_*(f^*\Omega_X \otimes \omega_\pi[1])$ can also be computed locally and we see it is given by a surjection $(\mls E^0)^\vee \twoheadrightarrow (\mls E^{-1})^\vee$. In particular its $h^1$ vanishes. The conclusion of this whole paragraph is that, using \eqref{eq:sdf}, $h^1$ of $R\pi_*(f^*\Omega_X^\vee)$ vanishes.

Now we use Grothendieck duality again, this time with $G = \mls O_M$ and $F = f^*\Omega_X^\vee$. We obtain
\begin{equation}\label{eq:fds}
(R\pi_*f^*\Omega_X^\vee)^\vee = R\pi_*(f^*\Omega_X\otimes\omega_{\pi}[1])
\end{equation}
We claim that since $h^1(R\pi_*f^*\Omega_X^\vee)$ vanishes, $h^0$ of both sides of \eqref{eq:fds} is locally free (and this is precisely what we want to show). As before this claim can be checked locally on $M$ where $R\pi_*f^*\Omega_X^\vee$ is represented by a complex of finite rank locally free sheaves $\mls F^0 \to \mls F^1$. The assumption that $h^1$ vanishes implies this map is surjective; i.e., we have $\mls F^0 \twoheadrightarrow \mls F^1$. Therefore the kernel $\mls K$ of the map is locally free. Dualizing we see that $(R\pi_*f^*\Omega_X^\vee)^\vee$ is locally represented by the complex $(\mls F^1)^\vee \hookrightarrow (\mls F^0)^\vee$ with locally free cokernel $\mls K^\vee$. In other words, $h^0$ of $(R\pi_*f^*\Omega_X^\vee)^\vee$ is locally free.

\item[(b)] To show $h^{-1}(\EE)=0$, it is enough to show $R^1\pi_*f^*\Omega^\vee_X = 0$. Indeed, if $R^1\pi_*f^*\Omega_X^\vee = 0$ then the complex $R\pi_*f^*\Omega_X^\vee$ is locally represented by a complex of finite rank locally free sheaves $\mls F^0 \to \mls F^1$ where the morphism is surjective. Dualizing we see that $(R\pi_*f^*\Omega_X^\vee)^\vee$ is locally represented by an injective morphism $(\mls F^1)^\vee \hookrightarrow (\mls F^0)^\vee$, and in particular $h^{-1}$ of this complex vanishes. By the isomorphism \eqref{eq:fds} we have that $h^{-1}(\EE)$ vanishes.

Moreover, to show $R^1\pi_*f^*\Omega_X^\vee = 0$, it is enough to show $H^1(C, f^*\Omega^\vee_X)=0$ for each stable map $f: C \to X$ over an algebraically closed field.
This is because the vector spaces $H^1(C, f^*\Omega^\vee_X)$ are the fibers of $R^1\pi_*f^*\Omega_X^\vee$ (by Cohomology and Base Change, \cite[Thm 25.1.6]{vakil}) and we can apply Nakayama's lemma.

We now check that $H^1(C, f^*\Omega_X^\vee) = 0$ for each stable map $f: C \to X$ in each of the situations given.

For $\overline{\mc M}_{0, n}(X, 0)$, every stable map $f: C \to X$ is constant. Hence $f^*T_X = \mls O_{C}$ and $H^1(C, f^*T_X)=0$ since $C$ has genus zero.

For $\overline{\mc M}_{g, n}(\Spec(k), d)$ we have that $T_X$ and hence  $f^*T_X$ are the zero locally free sheaf for any stable map $f: C \to \Spec(k)$. Clearly $H^1(C, 0)=0$.

For $\overline{\mc M}_{0, n}(X, \beta)$ for $X$ convex, we have $H^1(C, f^*\Omega_X^\vee) = 0$ for every stable map by definition of convexity.
\end{itemize}

\item[\ref{GW3}]

\begin{itemize}
\item[(a)] The dimension of the right hand side is
\[
\dim \overline{\mc M}_{g, n} + \dim X - g \dim X = (3g-3+n) + \dim X(1-g).
\]
This is equal to the dimension of the left hand side by Lemma \ref{lem:maps6}.

\item[(b)]
The product family on $\overline{\mc M}_{g, n} \times X$ defines a morphism
\begin{equation}\label{eq:triv-family}
\overline{\mc M}_{g, n} \times X \to \overline{\mc M}_{g, n}(X, 0).
\end{equation}
We claim that for any degree-zero stable map $f: C \to X$ over a base scheme $S$, there is a unique morphism $S \to X$ such that $C \to S \to X$ recovers $f$. Granting this claim, it is easy to check that \eqref{eq:triv-family} is essentially surjective and fully faithful.

    We now prove the claim.
    Since $X$ is projective,
    we may embed it in some $\PP^{n}$.
    Thus,
    it suffices to prove the claim when $X = \PP^{n}$. Furthermore, by descent it suffices to prove the claim when $S$ is affine, and since $C \to S$ is of finite presentation we may assume $S$ is Noetherian as well. (We will need $S$ to be Noetherian later, when we apply the Cohomology and Base Change theorem.)

    The data of $f$ is then equivalent to specifying a line bundle
    $L$ on $C$
    along with $n + 1$ sections $s_{0}, \dots, s_{n}$
    that do not vanish simultaneously.
    The condition on curve class says that $L$ must be fibrewise trivial.
    We claim that in this case,
    there must exist a line bundle $M$ on $S$
    such that $\pi^{\ast}(M) \cong L$.

    Let us first prove that the pushforward
    $\pi_{\ast}L$ is locally free.
    Indeed,
    the Cohomology and Base Change theorem
    says that it suffices to verify that
    for every $s \in S$,
    the canonical morphism
        $(\pi_{\ast}L) \otimes \kappa(s) \to H^{0}(C_{s }, L_{s })$
        is surjective,
        where $\kappa(s)$ is the residue field of $s$.
        Since $C_{s }$ is fibrewise trivial,
        we know that $H^{0}(C_{s }, L_{s })$ is one-dimensional.
        Surjectivity then amounts to verifying that there exists some section of
        $L$ whose image in $H^{0}(C_{s }, L_{s })$ is nonzero.
        This holds because $L$ is globally generated.

    Now,
    let $M = \pi_{\ast}L$.
    By our previous computation,
    $M$ is locally free of rank $1$.
    We claim that $\pi^{\ast}M \cong L$,
     and in fact this isomorphism is witnessed by the counit of adjunction
    $\pi^{\ast}\pi_{\ast}L \to L$.
    We first show that this morphism is surjective.
    By Nakayama's lemma,
    this amounts to verifying that for all
    $s \in S$ and $y \in C_{s }$ with $s = \pi(y)$,
    the natural map $(\pi_{\ast}L)_{s} \to L_{y }$
    is surjective.
    This is again a direct consequence of the fact that $L$
    is globally generated. (Alternatively, $\pi_*L$ is a line bundle whose formation commutes with arbitrary base change, so $(\pi_* L)_s = H^0(C_s, L_s) = k$ surjects onto $L_y $.)

    To show that $\pi^{\ast}M \to L$ is injective,
    note that the kernel of $\pi^{\ast}\pi_{\ast}L \to L$
    must be locally free of rank $0$,
    and thus is zero.
    Thus,
    $\pi^{\ast}\pi_{\ast}L \cong L$,
    and the sections $s_{0}, \dots, s_{n}$
    are all pulled back from $S$.
    Hence,
    the morphism
    $C \to \PP^{n}$
    must factor through $S$,
    as desired.

    For uniqueness, we use that surjective flat morphisms are epimorphisms \cite[Tag 02VW]{stacks-project}.

\item[(c)] By Lemma \ref{lem:BF8} we have that
\[
[\overline{\mc M}_{g, n}(X, 0)]^{vir}_\phi = c_{top}(E) \cap [\overline{\mc M}_{g, n}(X, 0)],
\]
where $E = \relSpec_{\overline{\mc M}_{g, n}(X, 0)}(Sym(h^{-1}(\EE)))$. By part (b), the universal family on $\overline{\mc M}_{g, n}(X, 0)$ has the form
\[
\begin{tikzcd}
\mls C \times X \arrow[d, "\pi"] \arrow[r, "f"] & X\\
\overline{\mc M}_{g, n} \times X
\end{tikzcd}
\]
where $\mls C \to \overline{\mc M}_{g, n}$ is the universal curve and $f = pr_2 \circ \pi$.
It follows that
\[
h^{-1}(\EE) = R^0\pi_*(\pi^*pr_2^*\Omega_X \otimes (\omega_{\mls C} \boxtimes \mls O_X))
\]
and by the projection formula \cite[Tag 01E8]{stacks-project}, since $\Omega_X$ is finite rank locally free, we have
\[
h^{-1}(\EE) = R^0\pi_*\omega_{\mls C} \boxtimes \Omega_X.
\]
Since the rank of $\pi_*\omega_{\mls C}$ is $g$ and the rank of $\Omega_X$ is $\dim X$, applying $\relSpec_{\overline{\mc M}_{g, n}(X, 0)}(Sym(-))$ to this locally free sheaf yields a vector bundle whose sheaf of sections is dual to $h^{-1}(\EE)$.

\end{itemize}

\item[\ref{GW4}]

\begin{itemize}

\item[(a)] By Lemma \ref{lem:maps6} the dimension of the right hand side is
\[
(1)(r-3) - K_{\PP^r} \cdot d[C] + n - (\ell d + 1)
\]
where $[C]$ is the class of a line in $H_2(\PP^r)$.
Let $[L] \in H_{2}(X)$ be the unique class whose pushforward to $\PP^{r}$
is $[C]$.
The same lemma implies that the dimension of the left hand side is
\[
(1)(r-1-3) - K_X \cdot d[L] + n.
\]
The adjunction formula states $K_X = (K_{\PP^r} + [X])|_X$.
Then the projection formula implies
$K_X \cdot d[L] = K_{\PP^r} \cdot d[C]+ [X] \cdot d[C]$
(since $[L]$ is by definition the class in $H_2(X)$ that pushes forward to $[C] \in H_2(\PP^r)$).
Since $X$ is a hypersurface of degree $\ell$, we have $[X] \cdot d[C] = \ell d$ and the two expressions are equal.

\item[(b)]
We claim first that $R^1\pi_*f^*\mls O_{\PP^r}(\ell) = 0$. Indeed, as explained in the solution to Exercise \ref{GW1}(b), to prove this it is enough to show $H^1(C, f^*\mls O_{\PP^r}(\ell))=0$ for every degree-$d$ stable map $f: C \to \PP^r$ over an algebraically closed field.
This is clear when $C$ is smooth; when $C$ is nodal we use the following lemma.

\begin{lemma}\label{lem:ql2}
    Let $C$ be a tree of $\PP^1$'s, and let $L$ be a line bundle on $C$ such that the restriction of $L$ to each component of $C$ has nonnegative degree. Then $h^1(C, L) = 0$.
\end{lemma}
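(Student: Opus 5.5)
I will argue by induction on the number $m$ of irreducible components of $C$. Here a tree of $\PP^1$'s means a connected nodal curve of arithmetic genus zero whose components are all smooth rational curves and whose dual graph is a tree. When $m=1$ we have $C \simeq \PP^1$ and $L \simeq \mls O_{\PP^1}(a)$ with $a \geq 0$. Serre duality gives $h^1(\PP^1, \mls O(a)) = h^0(\PP^1, \mls O(-2-a)) = 0$, which settles this case.

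For the inductive step, I use that every finite tree with at least two vertices has a leaf. So I choose a component $C_1 \simeq \PP^1$ of $C$ that meets the rest of the curve $C' := \overline{C \setminus C_1}$ in exactly one node $p$. Then $C'$ is again a tree of $\PP^1$'s, with $m-1$ components. The restriction $L|_{C'}$ still has nonnegative degree on each component, so by induction $h^1(C', L|_{C'}) = 0$. I will use the normalization-type exact sequence for the partial normalization at $p$:
\[
0 \to L \to L|_{C_1} \oplus L|_{C'} \to L|_{p} \to 0,
\]
where the last map is the difference of the two evaluations at $p$. Exactness can be checked locally at the node: there the local ring is $\mls O_{C,p} = \mls O_{C_1,p} \times_{k} \mls O_{C',p}$, and tensoring with the line bundle $L$ preserves this.

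Taking the long exact sequence in cohomology gives
\[
H^0(C_1, L|_{C_1}) \oplus H^0(C', L|_{C'}) \to k \to H^1(C, L) \to H^1(C_1, L|_{C_1}) \oplus H^1(C', L|_{C'}).
\]
The right-hand group vanishes by the base case and the induction hypothesis. It therefore suffices to show the first map is surjective. For this it is enough that the evaluation $H^0(C_1, L|_{C_1}) \to L|_p$ is surjective. Since $L|_{C_1} \simeq \mls O_{\PP^1}(a)$ with $a \geq 0$, this bundle is globally generated, so some section is nonzero at $p$.

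The only point needing care is the exactness of the displayed sequence at the node, together with the existence of a leaf component. Both are standard, and I expect no real obstacle.
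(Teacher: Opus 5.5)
Your proof is correct, but it takes a different route from the paper's.

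The paper dualizes first. By Serre duality it suffices to kill global sections of $\omega_C \otimes L^\vee$. Using $\deg(\omega_C|_{C'}) = n_{C'}-2$, where $n_{C'}$ is the number of nodes on the component $C'$, it gets $\deg((\omega_C\otimes L^\vee)|_{C'}) \le n_{C'}-2$. It then roots the dual tree and inducts from the leaves upward: a global section already vanishes at the nodes joining $C'$ to its children, so it must vanish on $C'$.

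You work with $L$ directly. You peel off a leaf with the gluing sequence $0 \to L \to L|_{C_1}\oplus L|_{C'} \to L|_p \to 0$, and you kill the connecting map using global generation of $\mls O_{\PP^1}(a)$ for $a \ge 0$.

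What your route buys:
\begin{itemize}
\item It is more elementary. It needs neither the dualizing sheaf of a nodal curve nor the formula for its degree on components, only cohomology of line bundles on $\PP^1$.
\item It meshes with the next lemma in the notes. The paper proves $h^0(C,L)=1+\deg L$ using exactly your exact sequence. Your induction gives that formula at the same time, since the $H^0$ sequence is short exact, and it works even under the weaker hypothesis of nonnegative degrees.
\end{itemize}

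What the paper's route buys: it avoids the long exact sequence and connecting map, at the cost of that extra input about $\omega_C$.
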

\begin{proof}
By Serre duality it is enough to show
\begin{equation}\label{eq:h1-vanishing}
\quad H^{0}(C^{\prime}, (\omega_{C} \otimes L^{\vee})|_{C^{\prime}}) = 0
\end{equation}
    for every irreducible component $C'$ of $C$.  Note that by \cite[Tag~0E34]{stacks-project}, the degree of $\omega_C|_{C'}$ is $n_{C'}-2$, where $n_{C'}$ is the number of nodes of $C$ contained in $C'$. Hence $\deg((\omega_C \otimes L^\vee)|_{C'}) \leq n_{C'}-2$.

 We prove \eqref{eq:h1-vanishing} by rooting the dual graph of $C$ (which is a tree) at a fixed vertex, and proving that \eqref{eq:h1-vanishing} holds on every vertex (i.e. component of $C$) by inducting on the height of the minimal subtree containing that vertex.    Note that to show \eqref{eq:h1-vanishing} holds on a component $C'$, because of the bound $\deg((\omega_C \otimes L^\vee)|_{C'}) \leq n_{C'}-2$, it is enough to show that any global section $s \in H^0(C, (\omega_C \otimes L^\vee)|_{C'})$ has to vanish along at least $n_{C'}-1$ points of $C'$. This vanishing holds when $C'$ has a unique node, i.e. the corresponding vertex is a leaf, i.e. $n_{C'}=1$. This handles the base case. For the inductive step,
    take any vertex $v$ and let $C^{\prime}$ be the corresponding component.
    By induction hypothesis,
    we may assume that the restriction of any global section
    $s \in H^{0}(C, (\omega_C \otimes L^\vee)|_{C'})$ to any child vertex of $v$ must vanish.
    Since $v$ has exactly $n_{C^{\prime}}-1$ child vertices,
    this shows that $s|_{C^{\prime}}$ must vanish along at least
    $n_{C^{\prime}}-1$ points, as desired.
    \end{proof}

By \cite[Tag 0D4E]{stacks-project} it follows that $\pi_*f^*\mls O_{\PP^r}(\ell)$ is locally free. Its rank is computed by the rank of any fiber, namely by $H^0(C, f^*\mls O_{\PP^r}(\ell))$. This is easily seen to be $d\ell+1$ when $C$ is smooth. When $C$ is nodal we apply the following lemma.

\begin{lemma}
    \label{lem: RR for tree of PP1}
    Let $C$ be a tree of $\PP^{1}$'s,
    and let $L$ be a line bundle on $C$
    such that the restriction of $L$ to each component of $C$
    has positive degree.
    Then $h^{0}(C, L) = 1 + \deg(L).$
\end{lemma}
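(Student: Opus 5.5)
The cleanest route is to combine the vanishing of Lemma \ref{lem:ql2} with Riemann--Roch on the nodal curve $C$. Since $L$ has positive (in particular nonnegative) degree on every component, Lemma \ref{lem:ql2} gives $h^1(C, L) = 0$. Hence $h^0(C,L) = \chi(C,L)$, and it remains to show $\chi(C, L) = \deg(L) + 1$. This follows from Riemann--Roch for a connected nodal curve, $\chi(C,L) = \deg(L) + 1 - p_a(C)$, together with the fact that a tree of $\PP^1$'s has arithmetic genus zero. Here $\deg(L)$ means the sum of the degrees of the restrictions of $L$ to the components.

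To keep things self-contained, I would instead verify $\chi(C,L) = \deg(L)+1$ by induction on the number of components, using normalization along a single node. Choose a leaf $C_1$ of the dual tree, meeting the rest $C' = \overline{C \setminus C_1}$ in a single node $p$. Then $C'$ is again a tree of $\PP^1$'s. There is a short exact sequence
\[
0 \to \mls O_C \to \mls O_{C_1} \oplus \mls O_{C'} \to k_p \to 0.
\]
Tensoring with $L$ gives
\[
\chi(C,L) = \chi(C_1, L|_{C_1}) + \chi(C', L|_{C'}) - 1.
\]
On $C_1 \simeq \PP^1$ we have $\chi = \deg(L|_{C_1}) + 1$, and by induction $\chi(C', L|_{C'}) = \deg(L|_{C'}) + 1$. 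The two sides therefore add up to $\deg(L) + 1$. The base case, a single $\PP^1$, is the standard computation $h^0(\PP^1, \mls O(m)) = m+1$.

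The main subtlety is applying Lemma \ref{lem:ql2} correctly: its hypothesis requires nonnegative degree on each component, which our positivity assumption implies. Beyond that, the only point to check is the exactness of the normalization-type sequence at a node, which is a local computation. Note that positivity is not needed for the Euler characteristic count; it is used only through Lemma \ref{lem:ql2}. In fact nonnegativity would already suffice for the formula, so the stated hypothesis is harmless.
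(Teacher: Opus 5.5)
Your proof is correct and takes essentially the same route as the paper: induction on the number of components using $0 \to \mls O_C \to \mls O_{C'} \oplus \mls O_{C''} \to \mls O_q \to 0$, with Lemma~\ref{lem:ql2} supplying $H^1(C,L)=0$. The only difference is cosmetic: you run the induction on $\chi$ and use the vanishing once, for $C$ itself (which, as you note, shows nonnegative degrees suffice), whereas the paper inducts on $h^0$ directly and uses the vanishing at each step.
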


\begin{proof}
    We induct on the number of irreducible components of $C$.
In the base case $C$ is $\PP^1$ and the claim is familiar.

    For the inductive step, write $C = C^{\prime} \cup C^{\prime\prime}$
    as the union of two subcurves intersecting at a node $q \in C$.
    We may assume,
    by the induction hypothesis,
    that our desired statement holds for both
    $C^{\prime}$ and $C^{\prime\prime}.$

    Consider the short exact sequence
    \[
        0 \to \OO_{C} \to \OO_{C^{\prime}} \oplus \OO_{C^{\prime\prime}}
        \to \OO_{q} \to 0.
    \]
    Tensoring with $L$ gives the short exact sequence
    \[
        0 \to L \to L|_{C^{\prime}} \oplus L_{C^{\prime\prime}} \to \OO_{q} \to 0.
    \]
    Lemma \ref{lem:ql2} shows that
    $H^{1}(C, L)$ vanishes,
    so
    \begin{align*}
        h^{0}(C, L) &= h^{0}(C^{\prime}, L|_{C^{\prime}}) +
            h^{0}(C^{\prime\prime}, L|_{C^{\prime\prime}}) - 1 \\
                    &= 1 + \deg(L|_{C^{\prime}}) + \deg(L|_{C^{\prime\prime}}) \\
                    &= 1 + \deg(L).
    \end{align*}
\end{proof}

\item[(c)] Global sections of $\pi_*f^*\mls O_{\PP^r}(\ell)$ can be identified with global sections of $f^*\mls O_{\PP^r}(\ell)$. We take $\sigma$ to be the global section of $\pi_*f^*\mls O_{\PP^r}(\ell)$ that maps to $f^*s$ under this identification. The section $\sigma$ vanishes on the $T$-point of $\overline{\mc M}_{0, n}(\PP^r, d)$ given by $f_T: C_T \to \PP^r$ if $f_T^*s=0$, or equivalently if $f_T$ factors through $X$. Hence $\VV(\sigma) = \overline{\mc M}_{0, n}(X, d)$.

\end{itemize}

\end{itemize}

\noindent
\textbf{Section 9}

\begin{itemize}[leftmargin=.3in]

\item[\ref{siebert1}]

\begin{itemize}
\item[(a)]
We first construct a fiber diagram
\begin{equation}\label{eq:s1}
\begin{tikzcd}
Z \arrow[r] \arrow[d] & X \arrow[r] \arrow[d] & \fX \arrow[d, "\delta"] \\
X \times_S Y \arrow[r] & {[(X \times_S Y)/H]} \arrow[r] \arrow[d] & \fX \times_S \fX \arrow[d, "pr_1"]\\
& X \arrow[r] & \fX
\end{tikzcd}
\end{equation}
where in the middle, $H$ acts on $X$ trivially and on $Y$ by the given action. The fiber product $\fX \times_S \fX$ is a global quotient $[(X \times_S Y)/(G \times_S H)]$ and the horizontal arrows in the right column are all quotients by $H$. It is clear that the bottom right square is fibered, as is the composition of the two squares in the right column, hence all squares in the right column are fibered. We define $Z$ to be the fiber product of the top left square, so the entire diagram is fibered. In particular the top left square implies $X = [Z/H]$.

Note that $Z$ is the fiber product $(X \times_S Y) \times_{\fX \times_S \fX} \fX.$

There is an analogous fiber diagram with the roles of $X$ and $Y$ reversed:
\[
\begin{tikzcd}
Z \arrow[r] \arrow[d] & Y \arrow[r] \arrow[d] & \fX \arrow[d, "\delta"] \\
X \times_S Y \arrow[r] & {[(X \times_S Y)/G]} \arrow[r] \arrow[d] & \fX \times_S \fX \arrow[d, "pr_1"]\\
& Y \arrow[r] & \fX.
\end{tikzcd}
\]
The $Z$ in this diagram is the same as the $Z$ in the former, since both are given by $(X \times_S Y) \times_{\fX \times_S \fX} \fX$. However the second diagram implies that $Y = [Z/G]$.

\item[(b)]

From the fiber diagram \eqref{eq:s1} we extract the fiber square
\begin{equation}\label{eq:siebert101}
\begin{tikzcd}
Z \arrow[r] \arrow[d] & X \times_S Y \arrow[d] \\
X \arrow[r] & {[(X \times_S Y)/H]}.
\end{tikzcd}\end{equation}
Since $X$ and $Y$ are cones, by Exercise \ref{siebert-1} the fiber product $X \times_S Y$ is also a cone and $[(X \times_S Y)/H]$ is (a global presentation for) a cone stack. By Exercise \ref{siebert0}(a) we have a presentation $X \simeq [Z/H]$. An analogous argument constructs a presentation $Y = [Z/G]$.

\item[(c)]
Let $\mls H_\bullet$, $\mls Z_\bullet$, and $\mls X_\bullet$ be graded quasicoherent sheaves on $S$ such that $H = \relSpec_S(\mls H_\bullet)$ and so forth. So we have homomorphisms $\mls X_\bullet \to \mls Z_\bullet \to \mls H_\bullet$ and $\mls H_\bullet = Sym(\mls H_1)$ with $\mls H_1$ finite rank locally free. To show that $0 \to H \to Z \to X \to 0$ is exact we must check three things.

First, we must show that $\mls X_\bullet \to \mls Z_\bullet$ is injective. Since $\pi: Z \to X$ is an $H$-torsor, in particular it is faithfully flat, so $\pi^*$ is fully faithful. Hence the unit $\mls O_X \to \pi_*\mls O_Z$ for the $(\pi^*, \pi_*)$ adjunction is a monomorphism; i.e., an injective morphism of quasicoherent sheaves. The pushforward of this morphism to $S$ is still injective (since pushforward is left exact) and this recovers the morphism $\mls X_\bullet \to \mls Z_\bullet$.

Second, we must show that $\mls Z_\bullet \to \mls H_\bullet$ is surjective. From \eqref{eq:siebert101} we can deduce the existence of a fiber diagram
\[
\begin{tikzcd}
H \arrow[r] \arrow[d] & Z \arrow[d, "\pi"]\\
S \arrow[r, "0"] & X
\end{tikzcd}
\]
where crucially $0: S \to X$ is a closed embedding (the zero section, corresponding to the surjection $\mls X_\bullet \to \mls X_0$). Hence $H \to Z$ is a closed embedding and $\mls Z_\bullet \to \mls H_\bullet$ is surjective.

Finally, we must show that locally on $S$ the morphism $\mls Z_1 \to \mls H_1$ has a section (necessarily injective) and that this section (locally) realizes $Z$ as the fiber product $H \times_S X$. The required section exists locally because $\mls H_1$ is locally free, hence locally on $S$ corresponds to a projective module. This section induces a morphism
\[
\mls H_\bullet = Sym(\mls H_1) \to Sym(\mls Z_1)
\]
that is a section of the composition $Sym(\mls Z_1) \to \mls Z_\bullet \to \mls H_\bullet$. Therefore the composition of the arrows
\[
H \to Z \to A(Z) \to H
\]
is the identity,
where $A(Z)  = \relSpec_S(Sym(\mls Z_1))$ is the abelian hull of $Z$. Recall that a priori the action of $H$ is defined on $A(Z)$, and the action of $H$ on $Z$ is defined to be the restriction of this a-priori action. Note also that $A(Z) \to H$ is a homomorphism of abelian group schemes as it arises from a homomorphism of coherent sheaves. Commutativity of the diagram then shows that $A(Z) \to H$ is equivariant for the action of $H$, and hence $Z \to H$ is also equivariant. Then
\[
\begin{tikzcd}
Z \arrow[r] \arrow[d, "\pi"] & H \arrow[d] \\
X \arrow[r] & S
\end{tikzcd}
\]
is a commuting square whose vertical arrows are $H$-torsors, hence fibered.

\item[(d)] We have two equalities $s(X) = c(H) \cap s(Z)$ and $s(Y) = c(G) \cap s(Z)$. Then
\[
c(G) \cap s(X) = c(G) \cap (c(H)\cap s(Z)) = c(H) \cap (c(G) \cap s(Z)) = c(H) \cap s(Y).
\]
\end{itemize}

\item[\ref{siebert4}]

    By Section \ref{sec:globalres} we have
\[
[X]^{vir}_\phi = 0^!_{E^1}[C(E)]
\]
i.e., the virtual class is the intersection of the cone $C(E)$ (representable over $X$) with the zero section of $E^1$, a vector bundle on $X$. By \cite[Example 4.1.8]{fulton}, this is equal to the product
\[
\{ \; c(E^1) \cap s(C(E)) \;\}_{d - \rank(E^1)}
\]
where $d$ is the dimension of $C(E)$. Since $\mathfrak{C}_{X/B} = [C(E)/E^0]$ has pure dimension $\dim B$ (Exercise \ref{POT0}), we see that $\dim C(E) = \rank(E^0) + \dim B$.  So $d-\rank(E^1) = \dim B + \rank(\EE_{X/B})$, and the above is equal to
\[
\{ \; (c(E^1) \cup c(E^0)^{-1}) \cap (c(E^0) \cap s(C(E)) )\;\}_{\dim B + \rank(\EE_{X/B})}
\]
But $c(E^0) \cap s(C(E))$ is Fulton's canonical class and we recover the desired formula.

\item[\ref{siebert101}]
\begin{itemize}
\item[(a)]

        For ease of notation,
        we write $\mc{M}^{\prime} = \overline{\mc{M}}_{0, n}(X, \beta)$,
        $\mc{M} = \overline{\mc{M}}_{0, n}(\PP^{r}, d)$,
        and $\mf{M} = \mf{M}_{0, n}$.
        Since $j \colon \mc{M}^{\prime} \to \mc{M}$ is a closed embedding
        and $\mc{M} \to \mf{M}$ is smooth,
        we have $\mf{C}_{\mc{M}^{\prime}/\mf{M}} =
        [C_{\mc{M}^{\prime}/\mc{M}}/j^{\ast}T_{\mc{M}/\mf{M}}]$.
        Thus,
        the Segre class of $\mf{C}_{\mc{M}^{\prime}/\mf{M}}$
        is
        \[
            s(\mf{C}_{\mc{M}^{\prime}/\mf{M}}) =
            c(j^{\ast}T_{\mc{M}/\mf{M}}) \cap s(C_{\mc{M}^{\prime}/\mc{M}}),
        \]
        and by Siebert's formula (Theorem \ref{thm:siebert}),
        we have
        \begin{equation}
            \label{eq:ex9.5 virt class}
            [\mc{M}^{\prime}]^{vir}_{\phi} =
            \left\{
                c(\EE_{\mc{M}^{\prime}/\mf{M}}^\vee)^{-1}c(j^{\ast}T_{\mc{M}/\mf{M}})
                \cap s(C_{\mc{M}^{\prime}/\mc{M}})
            \right\}_{\dim(\mf M) + \rank(\EE_{\mc M'/\mf M})}.
        \end{equation}

        We  proceed to simplify this expression. From the cotangent exact sequence
        \[
            0 \to \OO_{X}(-\ell) \to \Omega_{\PP^{r}}|_{X} \to \Omega_{X} \to 0
        \]
        we obtain the distinguished triangle
        \begin{equation}
            \label{eq:ex9.5 dist triangle}
            R\pi_{\ast}(f^{\ast}\OO_{X}(-\ell) \otimes \omega_{\pi}[1]) \to
            R\pi_{\ast}(f^{\ast}\Omega_{\PP^{r}}|_{X} \otimes \omega_{\pi}[1]) \to
            R\pi_{\ast}(f^{\ast}\Omega_{X} \otimes \omega_{\pi}[1]).
            \xrightarrow{+1}
        \end{equation}
        By Grothendieck duality,
        the first term is
        $(R\pi_{\ast}(f^{\ast}\OO_{X}(\ell)))^{\vee}$.
        From Exercise \ref{GW4} we know that
        $R^{1}\pi_{\ast}(f^{\ast}\OO_{X}(\ell))$
        vanishes,
        so the first term is precisely
        $\mls{E}^{\vee}[0]$.
        By compatibility of the dualising complex with base change
        and the derived base change formula,
        we see that the middle term is
        $j^{\ast}\EE_{\mc{M}/\mf{M}} =
        j^{\ast}\LL_{\mc{M}/\mf{M}} = j^{\ast}(\Omega_{\mc{M}/\mf{M}})[0]$.
The last term is $\EE_{\mc M'/\mf M}.$

        Hence,
        since the Chern class is multiplicative across
        distinguished triangles,
        dualising \eqref{eq:ex9.5 dist triangle} gives
        \[
            c(\EE_{\mc{M}^{\prime}/\mf{M}}^{\vee}) =
            c(E)^{-1}c(j^{\ast}T_{\mc{M}/\mf{M}}).
        \]
        Substituting this into
        \eqref{eq:ex9.5 virt class} gives
        \[
            [\mc{M}^{\prime}]^{vir}_{\phi} =
            \left\{ c(E) \cap s(C_{\mc{M}^{\prime}/\mc{M}})\right\}_{\dim(\mf M) + \rank(\EE_{\mc M'/\mf M})}.
        \]

        On the other hand, by Exercise \ref{vc6} we have
        \[
        [\mc{M}^{\prime}]^{vir}_{\mls{E}} = \left\{ c(E) \cap s(C_{\mc{M}^{\prime}/\mc{M}})\right\}_{\dim(\mc M) - \rank(E)}.
        \]
        We are thus done,
        since $\dim(\mc M) = \dim(\mf M) + \rank(\EE_{\mc M/\mf M})$ and from the distinguished triangle \eqref{eq:ex9.5 dist triangle} we have
        \[
        \rank(\EE_{\mc M'/\mf M}) = \rank(\EE_{\mc M/\mf M}) - \rank(\mls E^\vee).
        \]

\item[(b)]Lemma \ref{lem:maps66} follows immediately from (a) and the generalization of \eqref{lem:vc2} to Deligne-Mumford stacks.

\end{itemize}
\end{itemize}

\newpage

\normalem
\printbibliography

@article {KKP,
    AUTHOR = {Kim, Bumsig and Kresch, Andrew and Pantev, Tony},
     TITLE = {Functoriality in intersection theory and a conjecture of
              {C}ox, {K}atz, and {L}ee},
   JOURNAL = {J. Pure Appl. Algebra},
  FJOURNAL = {Journal of Pure and Applied Algebra},
    VOLUME = {179},
      YEAR = {2003},
    NUMBER = {1-2},
     PAGES = {127--136},
      ISSN = {0022-4049,1873-1376},
   MRCLASS = {14N35 (14C17 14J10)},
  MRNUMBER = {1958379},
MRREVIEWER = {Kevin\ Joseph\ Costello},
       DOI = {10.1016/S0022-4049(02)00293-1},
       URL = {https://doi.org/10.1016/S0022-4049(02)00293-1},
}

@book {weibel,
    AUTHOR = {Weibel, Charles A.},
     TITLE = {An introduction to homological algebra},
    SERIES = {Cambridge Studies in Advanced Mathematics},
    VOLUME = {38},
 PUBLISHER = {Cambridge University Press, Cambridge},
      YEAR = {1994},
     PAGES = {xiv+450},
   MRCLASS = {18-01 (16-01 17-01 20-01 55Uxx)},
  MRNUMBER = {1269324},
MRREVIEWER = {Kenneth\ A.\ Brown},
       DOI = {10.1017/CBO9781139644136},
       URL = {https://doi.org/10.1017/CBO9781139644136},
}

@article {kresch,
    AUTHOR = {Kresch, Andrew},
     TITLE = {Cycle groups for {A}rtin stacks},
   JOURNAL = {Invent. Math.},
  FJOURNAL = {Inventiones Mathematicae},
    VOLUME = {138},
      YEAR = {1999},
    NUMBER = {3},
     PAGES = {495--536},
      ISSN = {0020-9910,1432-1297},
   MRCLASS = {14A20 (14C17)},
  MRNUMBER = {1719823},
MRREVIEWER = {Burt\ Totaro},
       DOI = {10.1007/s002220050351},
       URL = {https://doi.org/10.1007/s002220050351},
}

@book{fulton,
    AUTHOR = {Fulton, William},
     TITLE = {Intersection theory},
    SERIES = {Ergebnisse der Mathematik und ihrer Grenzgebiete. 3. Folge. A
              Series of Modern Surveys in Mathematics [Results in
              Mathematics and Related Areas. 3rd Series. A Series of Modern
              Surveys in Mathematics]},
    VOLUME = {2},
   EDITION = {2nd ed},
 PUBLISHER = {Springer-Verlag, Berlin},
      YEAR = {1998},
     PAGES = {xiv+470},
      ISBN = {0-387-98549-2},
   MRCLASS = {14C17 (14-02)},
  MRNUMBER = {1644323},
       DOI = {10.1007/978-1-4612-1700-8},
       URL = {https://doi.org/10.1007/978-1-4612-1700-8},
}

@article {webb,
    AUTHOR = {Webb, Rachel},
     TITLE = {The moduli of sections has a canonical obstruction theory},
   JOURNAL = {Forum Math. Sigma},
  FJOURNAL = {Forum of Mathematics. Sigma},
    VOLUME = {10},
      YEAR = {2022},
     PAGES = {Paper No. e78, 47},
      ISSN = {2050-5094},
   MRCLASS = {14D23 (18D99)},
  MRNUMBER = {4479830},
MRREVIEWER = {Ryo\ Ohkawa},
       DOI = {10.1017/fms.2022.61},
       URL = {https://doi.org/10.1017/fms.2022.61},
}

@inproceedings {pand,
    AUTHOR = {Pandharipande, Rahul},
     TITLE = {Cohomological field theory calculations},
 BOOKTITLE = {Proceedings of the {I}nternational {C}ongress of
              {M}athematicians---{R}io de {J}aneiro 2018. {V}ol. {I}.
              {P}lenary lectures},
     PAGES = {869--898},
 PUBLISHER = {World Sci. Publ., Hackensack, NJ},
      YEAR = {2018},
      ISBN = {978-981-3272-87-3},
   MRCLASS = {14H10 (14H60 14H81 14N35)},
  MRNUMBER = {3966747},
MRREVIEWER = {Hsian-Hua\ Tseng},
}

@incollection {fantechi,
    AUTHOR = {Fantechi, Barbara},
     TITLE = {Stacks for everybody},
 BOOKTITLE = {European {C}ongress of {M}athematics, {V}ol. {I} ({B}arcelona,
              2000)},
    SERIES = {Progr. Math.},
    VOLUME = {201},
     PAGES = {349--359},
 PUBLISHER = {Birkh\"auser, Basel},
      YEAR = {2001},
      ISBN = {3-7643-6417-3},
   MRCLASS = {14A20},
  MRNUMBER = {1905329},
MRREVIEWER = {Gabriele\ Vezzosi},
}

@misc{stacks-project,
    author       = {The {Stacks Project Authors}},
    title        = {Stacks Project},
    howpublished = {\url{https://stacks.math.columbia.edu}},
    year         = {2026},
  }

@article {Olsson07,
    AUTHOR = {Olsson, Martin},
     TITLE = {Sheaves on {A}rtin stacks},
   JOURNAL = {J. Reine Angew. Math.},
  FJOURNAL = {Journal f\"ur die Reine und Angewandte Mathematik. [Crelle's
              Journal]},
    VOLUME = {603},
      YEAR = {2007},
     PAGES = {55--112},
      ISSN = {0075-4102,1435-5345},
   MRCLASS = {14A20 (14D20)},
  MRNUMBER = {2312554},
MRREVIEWER = {Charles\ D.\ Cadman},
       DOI = {10.1515/CRELLE.2007.012},
       URL = {https://doi.org/10.1515/CRELLE.2007.012},
}

@book {fgaexplained,
    AUTHOR = {Fantechi, Barbara and G\"ottsche, Lothar and Illusie, Luc and
              Kleiman, Steven L. and Nitsure, Nitin and Vistoli, Angelo},
     TITLE = {Fundamental algebraic geometry},
    SERIES = {Mathematical Surveys and Monographs},
    VOLUME = {123},
      NOTE = {Grothendieck's FGA explained},
 PUBLISHER = {American Mathematical Society, Providence, RI},
      YEAR = {2005},
     PAGES = {x+339},
      ISBN = {0-8218-3541-6},
   MRCLASS = {14-06 (14A15 14D15 14F20)},
  MRNUMBER = {2222646},
MRREVIEWER = {Liam\ O'Carroll},
       DOI = {10.1090/surv/123},
       URL = {https://doi.org/10.1090/surv/123},
}

@incollection {siebert,
    AUTHOR = {Siebert, Bernd},
     TITLE = {Virtual fundamental classes, global normal cones and
              {F}ulton's canonical classes},
 BOOKTITLE = {Frobenius manifolds},
    SERIES = {Aspects Math.},
    VOLUME = {E36},
     PAGES = {341--358},
 PUBLISHER = {Friedr. Vieweg, Wiesbaden},
      YEAR = {2004},
      ISBN = {3-528-03206-5},
   MRCLASS = {14N35 (14C15)},
  MRNUMBER = {2115776},
MRREVIEWER = {Charles\ D.\ Cadman},
}

@article {BF,
    AUTHOR = {Behrend, K. and Fantechi, B.},
     TITLE = {The intrinsic normal cone},
   JOURNAL = {Invent. Math.},
  FJOURNAL = {Inventiones Mathematicae},
    VOLUME = {128},
      YEAR = {1997},
    NUMBER = {1},
     PAGES = {45--88},
      ISSN = {0020-9910,1432-1297},
   MRCLASS = {14F99 (14C15 14D20)},
  MRNUMBER = {1437495},
MRREVIEWER = {Tohru\ Nakashima},
       DOI = {10.1007/s002220050136},
       URL = {https://doi.org/10.1007/s002220050136},
}

@article {CJW,
    AUTHOR = {Chen, Qile and Janda, Felix and Webb, Rachel},
     TITLE = {Virtual cycles of stable (quasi-)maps with fields},
   JOURNAL = {Adv. Math.},
  FJOURNAL = {Advances in Mathematics},
    VOLUME = {385},
      YEAR = {2021},
     PAGES = {Paper No. 107781},
      ISSN = {0001-8708,1090-2082},
   MRCLASS = {14N35 (14D23)},
  MRNUMBER = {4261166},
MRREVIEWER = {Nathan\ Priddis},
       DOI = {10.1016/j.aim.2021.107781},
       URL = {https://doi.org/10.1016/j.aim.2021.107781},
}

@misc{vakil,
  title        = {The Rising Sea: Foundations of Algebraic Geometry},
  author       = {Vakil, Ravi},
  howpublished = "\url{https://math.stanford.edu/~vakil/216blog/FOAGoct2125public.pdf}",
  year         = 2025,
}

@article {AGOT,
    AUTHOR = {Abramovich, Dan and Graber, Tom and Olsson, Martin and Tseng,
              Hsian-Hua},
     TITLE = {On the global quotient structure of the space of twisted
              stable maps to a quotient stack},
   JOURNAL = {J. Algebraic Geom.},
  FJOURNAL = {Journal of Algebraic Geometry},
    VOLUME = {16},
      YEAR = {2007},
    NUMBER = {4},
     PAGES = {731--751},
      ISSN = {1056-3911,1534-7486},
   MRCLASS = {14D22 (14A20 14C05 14C40)},
  MRNUMBER = {2357688},
MRREVIEWER = {Charles\ D.\ Cadman},
       DOI = {10.1090/S1056-3911-07-00443-2},
       URL = {https://doi.org/10.1090/S1056-3911-07-00443-2},
}

@article {BCM,
    AUTHOR = {Battistella, Luca and Carocci, Francesca and Manolache,
              Cristina},
     TITLE = {Virtual classes for the working mathematician},
   JOURNAL = {SIGMA Symmetry Integrability Geom. Methods Appl.},
  FJOURNAL = {SIGMA. Symmetry, Integrability and Geometry. Methods and
              Applications},
    VOLUME = {16},
      YEAR = {2020},
     PAGES = {Paper No. 026},
      ISSN = {1815-0659},
   MRCLASS = {14C17 (14D23 14N35)},
  MRNUMBER = {4082488},
MRREVIEWER = {Nicola\ Pagani},
       DOI = {10.3842/SIGMA.2020.026},
       URL = {https://doi.org/10.3842/SIGMA.2020.026},
}

@article {vistoli,
    AUTHOR = {Vistoli, Angelo},
     TITLE = {Intersection theory on algebraic stacks and on their moduli
              spaces},
   JOURNAL = {Invent. Math.},
  FJOURNAL = {Inventiones Mathematicae},
    VOLUME = {97},
      YEAR = {1989},
    NUMBER = {3},
     PAGES = {613--670},
      ISSN = {0020-9910,1432-1297},
   MRCLASS = {14C17 (14A20 14D20)},
  MRNUMBER = {1005008},
MRREVIEWER = {Steven\ E.\ Landsburg},
       DOI = {10.1007/BF01388892},
       URL = {https://doi.org/10.1007/BF01388892},
}

@incollection {FP,
    AUTHOR = {Fulton, W. and Pandharipande, R.},
     TITLE = {Notes on stable maps and quantum cohomology},
 BOOKTITLE = {Algebraic geometry---{S}anta {C}ruz 1995},
    SERIES = {Proc. Sympos. Pure Math.},
    VOLUME = {62, Part 2},
     PAGES = {45--96},
 PUBLISHER = {Amer. Math. Soc., Providence, RI},
      YEAR = {1997},
      ISBN = {0-8218-0493-6},
   MRCLASS = {14H10 (14E99 14N10)},
  MRNUMBER = {1492534},
MRREVIEWER = {Alexandre\ I.\ Kabanov},
       DOI = {10.1090/pspum/062.2/1492534},
       URL = {https://doi.org/10.1090/pspum/062.2/1492534},
}

@article {GP,
    AUTHOR = {Graber, T. and Pandharipande, R.},
     TITLE = {Localization of virtual classes},
   JOURNAL = {Invent. Math.},
  FJOURNAL = {Inventiones Mathematicae},
    VOLUME = {135},
      YEAR = {1999},
    NUMBER = {2},
     PAGES = {487--518},
      ISSN = {0020-9910,1432-1297},
   MRCLASS = {14C17 (14D20 14N10 14N35)},
  MRNUMBER = {1666787},
MRREVIEWER = {Paolo\ Aluffi},
       DOI = {10.1007/s002220050293},
       URL = {https://doi.org/10.1007/s002220050293},
}

@MISC {example,
    TITLE = {Is there an example of a variety over the complex numbers with no embedding into a smooth variety?},
    AUTHOR = {David Rydh},
    HOWPUBLISHED = {MathOverflow},
    NOTE = {URL:https://mathoverflow.net/q/203 (version: 2009-10-08)},
    EPRINT = {https://mathoverflow.net/q/203},
    URL = {https://mathoverflow.net/q/203}
}

@misc{RydhLuna,
Author = {David Rydh},
Title = {A generalization of Luna's fundamental lemma for stacks with good moduli spaces},
Year = {2020},
Eprint = {arXiv:2008.11118},
}

@article {manolache-pullback,
    AUTHOR = {Manolache, Cristina},
     TITLE = {Virtual pull-backs},
   JOURNAL = {J. Algebraic Geom.},
  FJOURNAL = {Journal of Algebraic Geometry},
    VOLUME = {21},
      YEAR = {2012},
    NUMBER = {2},
     PAGES = {201--245},
      ISSN = {1056-3911,1534-7486},
   MRCLASS = {14C15 (14A20 14N35)},
  MRNUMBER = {2877433},
MRREVIEWER = {Hsian-Hua\ Tseng},
       DOI = {10.1090/S1056-3911-2011-00606-1},
       URL = {https://doi.org/10.1090/S1056-3911-2011-00606-1},
}

@unpublished{webbtalk,
title= {GIT, Stacks, and Quasimap Theory},
author = {Webb, Rachel},
year = {2024},
note= {Physics Latam seminar series},
URL= {https://www.youtube.com/watch?v=_-I4wIo2mQI&list=PLaFLp8EAyd7WugflX8GXIXth5RivmqgZ6&index=3},
}

@article {LT,
    AUTHOR = {Li, Jun and Tian, Gang},
     TITLE = {Virtual moduli cycles and {G}romov-{W}itten invariants of
              algebraic varieties},
   JOURNAL = {J. Amer. Math. Soc.},
  FJOURNAL = {Journal of the American Mathematical Society},
    VOLUME = {11},
      YEAR = {1998},
    NUMBER = {1},
     PAGES = {119--174},
      ISSN = {0894-0347,1088-6834},
   MRCLASS = {14D20 (14D15 14N10)},
  MRNUMBER = {1467172},
MRREVIEWER = {Ralph\ Martin\ Kaufmann},
       DOI = {10.1090/S0894-0347-98-00250-1},
       URL = {https://doi.org/10.1090/S0894-0347-98-00250-1},
}

@article {STV,
    AUTHOR = {Sch\"urg, Timo and To\"en, Bertrand and Vezzosi, Gabriele},
     TITLE = {Derived algebraic geometry, determinants of perfect complexes,
              and applications to obstruction theories for maps and
              complexes},
   JOURNAL = {J. Reine Angew. Math.},
  FJOURNAL = {Journal f\"ur die Reine und Angewandte Mathematik. [Crelle's
              Journal]},
    VOLUME = {702},
      YEAR = {2015},
     PAGES = {1--40},
      ISSN = {0075-4102,1435-5345},
   MRCLASS = {14D20 (14F05)},
  MRNUMBER = {3341464},
MRREVIEWER = {Nicolas\ Perrin},
       DOI = {10.1515/crelle-2013-0037},
       URL = {https://doi.org/10.1515/crelle-2013-0037},
}

@article {HS,
    AUTHOR = {Hirschi, Amanda and Swaminathan, Mohan},
     TITLE = {Global {K}uranishi charts and a product formula in symplectic
              {G}romov-{W}itten theory},
   JOURNAL = {Selecta Math. (N.S.)},
  FJOURNAL = {Selecta Mathematica. New Series},
    VOLUME = {30},
      YEAR = {2024},
    NUMBER = {5},
     PAGES = {Paper No. 87, 74},
      ISSN = {1022-1824,1420-9020},
   MRCLASS = {53D45 (14N35)},
  MRNUMBER = {4807086},
       DOI = {10.1007/s00029-024-00982-y},
       URL = {https://doi.org/10.1007/s00029-024-00982-y},
}

@article {BehrendGW,
    AUTHOR = {Behrend, K.},
     TITLE = {Gromov-{W}itten invariants in algebraic geometry},
   JOURNAL = {Invent. Math.},
  FJOURNAL = {Inventiones Mathematicae},
    VOLUME = {127},
      YEAR = {1997},
    NUMBER = {3},
     PAGES = {601--617},
      ISSN = {0020-9910,1432-1297},
   MRCLASS = {14D20 (14C25 14D22)},
  MRNUMBER = {1431140},
MRREVIEWER = {Barbara\ Fantechi},
       DOI = {10.1007/s002220050132},
       URL = {https://doi.org/10.1007/s002220050132},
}

@article {albano-katz,
    AUTHOR = {Albano, Alberto and Katz, Sheldon},
     TITLE = {Lines on the {F}ermat quintic threefold and the infinitesimal
              generalized {H}odge conjecture},
   JOURNAL = {Trans. Amer. Math. Soc.},
  FJOURNAL = {Transactions of the American Mathematical Society},
    VOLUME = {324},
      YEAR = {1991},
    NUMBER = {1},
     PAGES = {353--368},
      ISSN = {0002-9947,1088-6850},
   MRCLASS = {14J30 (14C30 14K30)},
  MRNUMBER = {1024767},
MRREVIEWER = {Fabio\ Bardelli},
       DOI = {10.2307/2001512},
       URL = {https://doi.org/10.2307/2001512},
}

@ARTICLE{LW2,
       author = {{Lu}, Xuanchun and {Webb}, Rachel},
        title = "{Siebert's formula for virtual pullbacks}",
      journal = {arXiv e-prints},
         year = 2026,
        month = sep,
          eid = {arXiv:2609.12213},
        pages = {arXiv:2609.12213},
archivePrefix = {arXiv},
       eprint = {2609.12213},
 primaryClass = {math.AG},
       adsurl = {https://ui.adsabs.harvard.edu/abs/2026arXiv260912213L}
}
\end{document}